\def\p{\partial} 
\def\eps{\varepsilon}
\def\les{\lesssim}
\newcommand{\inorm}[1]{\| #1 \|}
\newcommand*{\sgn}{\ensuremath{\mathrm{sgn\,}}}
\newcommand{\RR}{\mathbb R}
\newcommand{\TT}{\mathbb T}
\newcommand{\OO}{\mathcal O}
\renewcommand*{\tilde}{\widetilde}
\renewcommand*{\hat}{\widehat}
\renewcommand*{\bar}{\overline}
\newcommand*{\Id}{\ensuremath{\mathrm{Id\,}}}
\renewcommand{\epsilon}{\varepsilon}
\newtheorem{theorem}{Theorem}[section]
\newtheorem{lemma}[theorem]{Lemma}
\newtheorem{proposition}[theorem]{Proposition}
\newtheorem{corollary}[theorem]{Corollary}
\newtheorem{remark}[theorem]{Remark}
\numberwithin{equation}{section}
\newcommand{\be}{\begin{equation}}
\newcommand{\ee}{\end{equation}}
\newcommand{\R}{\mathbb{R}}
\def\cir{ \! \circ \! }
\newcommand{\loc}{\text{loc}}
\newcommand{\W}{\mathring{W}}
\newcommand{\Z}{\mathring{Z}}
\newcommand{\K}{\mathring{K}}
\newcommand{\C}{\mathcal{C}}
\newcommand{\CC}{\mathbb{C}}
\newcommand{\E}{\mathcal E}
\newcommand{\ZZ}{\mathbb{Z}}
\newcommand{\ndash}{--}
\newcommand{\x}{\mathring{x}}
\newcommand{\T}{\mathring{T}}
\title[An infinite hierarchy of H\"older cusps for 1D Euler]{Gradient catastrophes and an infinite hierarchy of H\"older\\ cusp-singularities for 1D Euler}
\author{Isaac Neal}
\address{Department of Mathematics, University of California Davis, Davis, CA 95616.}
\email{\href{ineal@ucdavis.edu}{ineal@ucdavis.edu}}
\author{Steve  Shkoller}
\address{Department of Mathematics, University of California Davis, Davis, CA 95616.}
\email{\href{shkoller@math.ucdavis.edu}{shkoller@math.ucdavis.edu}}
\author{Vlad Vicol}
\address{Courant Institute of Mathematical Sciences, New York University, New York, NY 10012.}
\email{\href{vicol@cims.nyu.edu}{vicol@cims.nyu.edu}}
\begin{document}

\begin{abstract}
We establish an infinite hierarchy of finite-time gradient catastrophes for smooth solutions of the 1D Euler equations of gas dynamics with non-constant entropy.  Specifically, for all integers $n\geq 1$, we prove that there exist classical solutions, emanating from  smooth, compressive,  and non-vacuous initial data, which form a cusp-type gradient singularity in finite time,  in which the gradient of the solution has precisely  $C^{0,\frac{1}{2n+1}}$ H\"older-regularity. We show that such Euler solutions are codimension-$(2n-2)$  stable in the Sobolev space $W^{2n+2,\infty}$.
\end{abstract}

\maketitle

\setcounter{tocdepth}{1}
\tableofcontents

\allowdisplaybreaks

\section{Introduction}
One of the central problems in the mathematical theory of the compressible Euler equations, the canonical macroscopic model of gas dynamics, is  the {\it{formation}} of gradient catastrophes  from {\it{smooth initial data}}. The archetypal {\it singular solution} contains a {\it{shock}}, a codimension-1 subset of spacetime across which the solution experiences jump discontinuities.  Compression in the initial conditions ensures that smooth sound waves steepen 
further and further, until an infinite gradient is formed along a spacetime set of ``first gradient singularities'', which we call the {\it pre-shock}. For localized one-dimensional initial data, the pre-shock is a point in spacetime, along which the gradient of the Euler solution  forms a H\"{o}lder-type cusp singularity, in which the state variables remain H\"older continuous, but with gradients that  blow up. It is only {\it {after the pre-shock}}, instantaneously, that the state variables become discontinuous, resulting in an entropy-producing shock wave. As such, a detailed analysis of the formation of the gradient singularity at the pre-shock is crucial to our understanding of the transition from smooth to discontinuous solutions of the Euler system. 

The goal of this paper is to prove that the one-dimensional Euler dynamics, starting from  from smooth and non-vacuous initial data,  can attain an infinite hierarchy of finite-time  cusp-type singularities at the pre-shock,
with a H\"{o}lder exponent that is indexed by an integer $n\geq 1$.   By a slight abuse of terminology, we shall refer to the Euler solution at the time of the
first gradient blowup as the {\it pre-shock solution} or simply as the {\it pre-shock}.
 Then,  the $n^{\rm th}$  pre-shock solution  in this hierarchy corresponds to a cusp-singularity with H\"older exponent $\sfrac{1}{(2n+1)}$. We analyze the stability of these singular pre-shock solutions and establish that the set of initial conditions leading to a $C^{0,\frac{1}{2n+1}}$ cusp forms a $(2n-2)$-dimensional Banach submanifold of the Sobolev space $W^{2n+2, \infty}$; in particular, for all $n\geq 2$, only finite codimension stability holds.

The case $n=1$, corresponding to the fully-stable $C^{0,\frac 13}$ pre-shock, has been fully investigated in 1D and under symmetry in \cite{Le1994,ChDo2001,Ko2002,Yi2004,BuShVi2022,BuDrShVi2022,NeShVi2023}, and in multiple space dimensions in \cite{BuShVi2023a,BuShVi2023b,ShVi2024}. 
The {\em only} prior result concerning the formation of an unstable pre-shock was obtained in~\cite{BuIy2022}, which treats the special case $n=2$ in the setting of  azimuthal symmetry, using a modulated stability analysis of an explicit self-similar blowup profile. 

Herein, we simultaneously analyze all $C^{0,\frac{1}{2n+1}}$ pre-shock solutions in the cusp hierarchy (meaning,  all $n\geq 1$) without making reference to an self-similar blowup profiles. We achieve this by taking a {\it{purely characteristic perspective}}, in which the pre-shock singularity is characterized by the 
narrowing distance between the nearby fast-acoustic characteristics. In this Lagrangian-like characteristic framework, we show that carefully designed {\it{differentiated Riemann variables}} remain uniformly smooth up to the time of the first gradient singularity. In turn, this allows us to apply the  Implicit Function Theorem in a smooth setting,  to precisely characterize the finite-codimension stability.  Our method of analysis for stable and unstable shock formation is sufficiently robust so as
to easily generalize to more complicated hyperbolic systems of conservation laws.

\subsection{The 1D Euler equations}
The one-dimensional Euler equations are given as the system of conservation laws
\begin{subequations} 
\label{euler-weak}
\begin{align}
\partial_t \rho + \p_y (  \rho u ) &=0\,, \label{ee1}\\
\partial_t (\rho u)  + \p_y (  \rho u^2 + p  ) & =0\,,  \label{ee2}\\
\partial_t E + \p_y ( (p+ E) u ) &=0 \label{ee3}\, ,
\end{align}
\end{subequations}
where the unknowns $u, \rho, E,$ and $p$ are scalar functions defined for $(y,t) \in  \TT \times \R$. Here $u$ is the fluid velocity, $\rho$ is the (strictly positive) density, $E$ is the specific total energy, and $p$ is the pressure. To close the system, one introduces an equation of state that relates the internal energy $E-\tfrac{1}{2}\rho u^2$ to $p$ and $\rho$. For an  ideal gas, the equation of state is
\begin{equation}
\label{eq:ig}
p  = (\gamma-1)(E-\tfrac{1}{2}\rho u^2),
\end{equation}
where $\gamma >1$ is a fixed adiabatic exponent.

For the study of shock formation, it is convenient to express the internal energy and pressure as functions of $\rho$ and the specific entropy $S$, and to 
express the Euler evolution in terms of the variables $(u, \rho,S)$. For ideal gases the specific entropy $S$ is defined by the relation 
\begin{equation}
\label{idealgas}
p = \tfrac{1}{\gamma} \rho^\gamma e^S.
\end{equation}
Introducing the rescaled adiabtic exponent $\alpha : = \tfrac{\gamma-1}{2}$ and defining the rescaled sound speed\footnote{The actual sound speed in a compressible fluid is $c =\sqrt{\sfrac{\p p}{\p \rho} } = e^{\sfrac{S}{2}} \rho^\alpha = \alpha \sigma$.} by
\begin{equation}
\sigma = \tfrac{1}{\alpha} \sqrt{\tfrac{\p p}{\p \rho} } = \tfrac{1}{\alpha} e^{\sfrac{S}{2}} \rho^\alpha \,,
\end{equation}
we can rewrite the one-dimensional Euler equations \eqref{euler-weak}--\eqref{eq:ig} in terms of the variables $(u,\sigma,S)$ as 
\begin{subequations}
\label{eq:euler:2}
\begin{align}
\p_t \sigma +  u \p_y\sigma + \alpha \sigma \p_y  u & = 0 \,,
\label{eq:euler:2:a}  \\
\p_tu + u \p_y u+ \alpha \sigma \p_y \sigma & = \tfrac{\alpha}{2\gamma} \sigma^2 \p_y S \,, 
\label{eq:euler:2:b} \\
\p_t S + u \p_y S & = 0 \,.
\label{eq:euler:2:c}
\end{align}
\end{subequations}
Prior to the development of a discontintuous shock wave,  the systems \eqref{euler-weak}--\eqref{eq:ig} and~\eqref{idealgas}--\eqref{eq:euler:2} are equivalent.

\subsection{Prior results}
\label{sec:priorresults}
The theory of  shock wave solutions for systems of hyperbolic conservation laws in {\it{one space dimension}} is well-developed. For a detailed exposition we refer the reader to~\cite{CoFr1948,LaLi1987,ChWa2002,Da2005,Li2021} and the references therein. Herein, we  shall only summarize results concerning the finite-time formation of gradient singularities for the Euler equations, from smooth initial conditions. 

The fact that smooth Euler solutions generically have a finite lifespan is well-known~\cite{Ri1860,La1964,Jo1974,Li1979,Ma1984,Si1985}; such results rely upon a  {\it{proof by contradiction}} argument to show that a finite-time breakdown of the smooth solution occurs. 
While such arguments apply in great generality, they  generally do not  provide detailed information about the emerging gradient catastrophe. We emphasize that a precise characterization of the first gradient singularity is crucial for solving the physical shock development problem (see, for example, the discussion in  \cite{BuDrShVi2022b}).

Recently, a number of {\it{constructive singularity formation}} results  have 
been established for the Euler equations. Constructive results show that for an appropriate class of smooth initial data,  the {\em very first} gradient singularity can be precisely described, either as being of {\it{shock-type}}\footnote{For shocks the gradients of the fundamental variables (such as velocity and density) blow up, while the fundamental variables themselves remain bounded, and even retain limited H\"older regularity, which characterizes the type of cusp that forms.} or as being an {\it{implosion}}\footnote{For implosions the fundamental variables (such as velocity and density) blow up themselves, along with their gradients.}, and that no other singularity can occur prior. Since in one space dimension neither implosion\footnote{Smooth solutions that implode at the very first singularity have been recently constructed rigorously in {\it{multiple space dimensions}}, for the isentropic problem in radial symmetry~\cite{MeRaRoSz2022a, MeRaRoSz2022b, BuCLGS2022}. These results have been extended perturbatively to hold also outside of symmetry constraints~\cite{CLGSShSt2023}, and they have been used to establish the finite time blowup of vorticity~\cite{ChCiShVi2024,Ch2024} for the multi-dimensional Euler system. We note however that all known smooth implosions have only been proven to be finite codimensions stable, and that the dimension of the instability remains unquantified.} 
 nor vacuum formation  is possible as a first singularity from smooth solutions~\cite{ChYoZh2013,Chen2017}, we henceforth focus exclusively on constructive proofs of shock-type singularities. More precisely, when the fundamental variables $(u,\sigma,S)$ remain H\"older continuous at the time of the first gradient catastrophe, we refer to the emerging singularity as a {\it{pre-shock}}.

With the exception of the recent papers~\cite{BuIy2022} and~\cite{NeRiShVi2024}, all existing constructive blowup results for the Euler system establish the formation of a $C^{0,\frac{1}{3}}$ H\"{o}lder pre-shock as the first singularity. This is because the $C^{0,\frac{1}{3}}$ pre-shock emerges in a {\it{stable}} fashion from a large open set of smooth and generic initial data.  A compendium is listed as follows:
\begin{itemize}[leftmargin=*]
\item The paper~\cite{Le1994} established both shock formation (as a $C^{0,\frac{1}{3}}$ pre-shock) and shock development for the one-dimensional $p$-system. Further refinements were obtained  in~\cite{ChDo2001} and~\cite{Ko2002}. For the three-dimensional Euler equations in spherical symmetry, \cite{Yi2004} used similar ideas to construct solutions which form a $C^{0,\frac{1}{3}}$ cusp (with respect to the radial coordinate) in finite time, and to show that the resulting weak entropy solution instantaneously develops a radial shock (jump discontinuity). 

\item Results similar to~\cite{Yi2004} have been established in~\cite{ChLi2016}, by using techniques originating in mathematical general relativity. More precisely,~\cite{ChLi2016} appeals to part of the framework developed in~\cite{Ch2007}. The monograph~\cite{Ch2007} established shock formation for multi-dimensional irrotational flows (see also~\cite{ChMi2014}), by studying the second order wave equation satisfied by the velocity potential. This framework was also used in~\cite{Ch2019} to analyze the {\it{restricted}}\footnote{In the {\it{restricted}} shock development problem,  the flow is artificially constrained to remain irrotational and isentropic even after the shock forms, which violates the second law of thermodynamics and the Rankine-Hugoniot jump conditions.} shock development problem for multi-dimensional irrotational and isentropic flows, and it was used in~\cite{LuSp2018,LuSp2024} to prove shock formation for the two- and three-dimensional Euler equations for initial data close to a plane wave, in the presence of vorticity and entropy. We emphasize that the shock formation results obtained in~\cite{Ch2007,ChMi2014,ChLi2016,Ch2019,LuSp2018,LuSp2024} do not provide the precise H\"older regularity of the Euler solution at the time of the very first singularity.

\item An alternative approach to shock formation for the Euler equations was proposed in~\cite{BuShVi2022}. This approach employs modulated self-similar  analysis\footnote{Finite-time blowup results for $2$+$1$-dimensional second-order quasilinear wave equations, which do not satisfy the null-condition (akin to irrotational isentropic Euler), were previously obtained in~\cite{Al1999a,Al1999b}. The proofs in~\cite{Al1999a,Al1999b} use a ``blowup technique'' which resembles self-similar analysis, but without the modulation parameters corresponding to unstable modes arising from the Galilean symmetry of the equations.} to establish the stability of an explicit blowup profile. The paper~\cite{BuShVi2022} considers the two-dimensional Euler equations in azimuthal symmetry and establishes the stable formation of a $C^{0,\frac{1}{3}}$ pre-shock (with respect to the angular variable) in finite time, from generic and smooth initial data. A more refined self-similar approach was then used in~\cite{BuShVi2023a,BuShVi2023b} to prove stable gradient singularity formation for the full three-dimensional Euler system, in the absence of any symmetry assumptions. The papers~\cite{BuShVi2023a,BuShVi2023b} establish the formation of a {\it{stable point-shock}} from smooth and generic initial data. Point-shock solutions are multi-dimensional analogues of the one-dimensional pre-shock
solutions; they occur about a distinguished point in space-time and quantify how the solution forms a $C^{0,\frac{1}{3}}$ cusp singularity in one direction, while remaining smooth in the orthogonal directions. We emphasize that while proofs of shock formation based on self-similar analysis provide detailed 
information about the solution at the point of the very first singularity (the point-shock mentioned earlier), these methods are ill-suited for resolving the physical shock-development problem.
 
\item In~\cite{NeShVi2023} we have revisited the results of~\cite{BuShVi2022} from a   {\it{characteristics-based}} perspective, adapted to the three distinct wave-families present in the Euler equations. For the two-dimensional Euler equations in azimuthal symmetry, we proved the stable formation of $C^{0,\frac{1}{3}}$ cusps in finite time, from smooth and generic initial conditions, in a more general setting than~\cite{BuShVi2022}. A  three-wave-family characteristics-based approach, was also used in~\cite{BuDrShVi2022} to resolve the shock development problem for the two-dimensional Euler equations in azimuthal symmetry. By utilizing the  $C^{0,\frac{1}{3}}$ pre-shock as new Cauchy data for the Euler evolution, the paper~\cite{BuDrShVi2022} shows that simultaneously to the discontinuous shock, a weak-rarefaction and a weak-contact singularity emerge from the pre-shock. 

\item Recently, pre-shock formation {\it{past the  time of the first singularity}} has been established in the context of the maximal globally hyperbolic development (MGHD) of smooth and generic Cauchy data, for the multi-dimensional Euler equations. In~\cite{ShVi2024}, the last two authors of this paper have constructed a fundamental piece of the boundary of the MGHD, which is necessary for the local shock development
problem. Specifically,~\cite{ShVi2024} used a precise geometric analysis of fast-acoustic characteristic surfaces and a new type of differentiated Riemann variables to smoothly evolve the Euler solution up to the future temporal boundary of the MGHD, which is a  singular hypersurface in spacetime containing the union of three sets: (i) a codimension-$2$ set of first singularities all of which are $C^{0,\frac{1}{3}}$ pre-shocks; (ii) a codimension-$1$ hypersurface emanating from the pre-shock set in the downstream direction, on which the Euler solution experiences a continuum of gradient catastrophes; and (iii) a codimension-$1$ Cauchy horizon emanating upstream from the pre-shock set, which the Euler solution cannot reach. We also mention the preprint~\cite{AbSp2022} which evolves the Euler solution up to a portion of the boundary of the MGHD, containing only the sets in (i) and (ii) above, but not the Cauchy horizon. We note that the proof framework employed in this manuscript closely resembles the one in~\cite{ShVi2024}, except that, in this case, we are not confronted with the geometric complexities inherent in multiple space dimensions.

\end{itemize}

To reiterate, every constructive singularity formation result cited in the list above demonstrates the formation of a stable $C^{0,\frac{1}{3}}$ cusp as the initial gradient singularity of a smooth Euler evolution. So far, only two the papers~\cite{BuIy2022} and~\cite{NeRiShVi2024} have established the formation of gradient singularity which is {\it{not}} a $C^{0,\frac{1}{3}}$ cusp:
\begin{itemize}[leftmargin=*] 
\item  For the compressible Euler equations in one space dimension, in~\cite{NeRiShVi2024} we have exhibited an open set of initial data, such that the classical solution of~\eqref{eq:euler:2}  forms a $C^{0,\alpha}$ pre-shock as a first singularity, for any $\alpha \in [\sfrac 12, 1)$. Note that these pre-shock singularities correspond to H\"older exponents {\it{strictly larger}} than $\sfrac 13$. Moreover, their formation is stable with respect to small perturbations of the initial data in the topology of $C^{1, \frac{1-\alpha}{\alpha}-\delta}$, for any small $\delta>0$. We emphasize however that these exotic cusps (with H\"older exponent larger that $\sfrac 13$) do not arise from smooth initial conditions; in fact, such initial data cannot lie in $C^{2,0}$.

\item For smooth initial conditions, the only paper which constructed a non-$C^{0,\frac{1}{3}}$ pre-shock for the Euler equations is~\cite{BuIy2022}. The authors of~\cite{BuIy2022} have used modulated self-similar stability analysis of an explicit blowup profile to construct solutions of the two-dimensional isentropic Euler equations in azimuthal symmetry, which form a $C^{0,\frac{1}{5}}$ cusp as a first singularity, and have proven that such a solution is codimension-$2$ stable with respect to $C^8$ perturbations. The paper~\cite{BuIy2022} builds on the self-similar analysis developed in~\cite{BuShVi2022}, and uses information about the explicit smooth unstable blowup profiles for the 1D Burgers equation (see also~\cite{CoGhMa2022} and the discussion in \S~\ref{sec:burgers} below).\footnote{We mention here also the paper~\cite{OhPa2024}, which does not treat the Euler system, but instead considers dispersive and dissipative perturbations of the Burgers equation such as fractional KdV, fractal Burgers, and the Whitham equation. Theorem 1.1 in~\cite{OhPa2024} shows that for a codimension $2n-2$ subset of $H^{2n+3}(\TT)$, the aforementioned models form a $C^{0,\frac{1}{2n+1}}$ cusp as the first gradient singularity. As with~\cite{BuIy2022}, the proof in~\cite{OhPa2024} is based on modulation theory, where the well-known smooth self-similar solutions to the inviscid Burgers equation (see~\cite{CoGhMa2022}) are used as ``profiles'' in weighted $L^2$-based stability estimates. The proof approach taken in our paper avoids self-similar analysis, or knowledge of any explicit blowup profiles.} The result in~\cite{BuIy2022} corresponds to the {\it{special case $n=2$ and $S_0 \equiv 0$ in our main result}}, Theorem~\ref{thm:HR:a} below. We emphasize that the proof presented in this manuscript is fundamentally different than the one in~\cite{BuIy2022}: instead of modulated self-similar stability analysis, we appeal to the analysis of differentiated Riemann-type variables in the coordinates of the fast-acoustic characteristics. We believe that this perspective is more robust (as seen also in~\cite{NeRiShVi2024,ShVi2024}), and generalizes in a straightforward way to general classes of hyperbolic systems of conservation laws.
\end{itemize}

\subsection{Main results}
\label{sec:main}

Our main result concerns the construction and the stability analysis of an infinite hierarchy of pre-shock solutions which arise as the first gradient singularity for velocity and sound speed in smooth one-dimensional Euler dynamics. The $n^{\rm th}$ pre-shock in this hierarchy ($n\geq 1$ is an integer) corresponds 
to a $C^{0,\frac{1}{2n+1}}$ cusp-singularity, and forms from initial data lying on a $(2n-2)$-dimensional Banach submanifold of $W^{2n+2, \infty}$. A precise statement of our main result is given in Theorem~\ref{thm:HR} below, while an abbreviated statement is given next in Theorem~\ref{thm:HR:a}.

Fix $n \geq 1$ an integer. Let us introduce a function $\bar w_0 \colon \TT \to \RR$ which has $2n+2$ bounded derivatives, whose derivative $\bar{w}_0^\prime$ attains a global minimum  at a distinguished point in $\TT$, and such that at this global minimum the function $\bar{w}_0^\prime$ exhibits flatness of order $2n$,  meaning that the functions $\{\p_y^j \bar{w}_0^\prime \}_{j=1}^{2n-1}$ all vanish here, but $\p_y^{2n} \bar{w}_0^\prime$ does not. Using the translation invariance of the Euler equations we may assume w.l.o.g. that $\bar{w}_0^\prime$ attains its global minimum at $y=0$. Moreover, using the hyperbolic scaling invariance of the Euler equations (see, for example, Section 1.3 in \cite{BuShVi2023a}), we may assume w.l.o.g. that $\bar{w}_0^\prime(0) = -1$. The prototypical such function is given as
\begin{equation*}
\bar{w}_0^\prime(y)  = -1+ y^{2n}\,,
\qquad \mbox{for } y \mbox{ close to } 0\,,
\end{equation*} 
suitably smooth for $y$ away from $0$, $\TT$-periodic, and with zero mean on $\TT$; this last condition ensures that the function $w_0(y) = w_0(0) + \int_0^y \bar{w}_0^\prime(\bar y) d\bar y$ is $\TT$-periodic. The constant $w_0(0)$ is chosen such that $w_0 >0$ uniformly on $\TT$, which is a  ``no-vacuum'' assumption on the initial data.
A precise definition of all permissible functions $\bar w_0$ satisfying the above properties is given in~\eqref{def:w_0:bar} below. We henceforth fix such a function $\bar w_0$.

\begin{theorem}[\bf Finite codimension-stable shock formation, abbreviated]
\label{thm:HR:a}
Let $n \geq 1$ be an integer, and fix the adiabatic exponent $\gamma > 1$. 
Then there exists a codimension $(2n-2)$ Banach submanifold  $\mathcal{M}_n$ of $(W^{2n+2, \infty}(\TT))^3$ which contains the point $(\bar u_0, \bar \sigma_0,\bar S_0):= \frac 12 (\bar{w}_0, \bar{w}_0, 0)$, and an $\eps_0 = \eps_0(n,\gamma)>0$ such that for all $0<\eps \leq \eps_0$ the following holds. For any initial data   $(u_0, \sigma_0, S_0) \in \mathcal{M}_n \cap B_\eps(\bar u_0, \bar \sigma_0,\bar S_0)$ (the ball of radius $\eps$ is taken with respect to the topology of $W^{2n+2, \infty}(\TT)$), the unique classical solution $(u,\sigma, S)$ of the one-dimensional Euler equations \eqref{eq:euler:2} with initial data $(u,\sigma, S) \vert_{t=0} = (u_0,\sigma_0, S_0)$ forms a first gradient singularity for $u$ and $\sigma$ at time $T_* =  \frac{2}{1+\alpha} + \OO_\gamma(\eps)$.\footnote{Here and throughout the paper, the notation $A \les B$ is used to say that there exists a constant $C > 0$, which is independent of $\gamma, n, \eps$, such that $A \leq C B$. We write $A = \OO(B)$ when $|A| \les B$. If the implicit constant $C$ in the $\les$ symbol also depends on a parameter $F$, we write $A \les_F B$, and respectively $A = \OO_F(B)$. See \S~\ref{sec:notation} for details.} The function $S$ remains $C^{1,\frac{1}{2n+1}}$ smooth uniformly up to time $T_*$. Furthermore, at time $T_*$ there exists a distinguished blowup point $y_* \in \TT$, such that away from $y=y_*$, the variables $(u,\sigma, S)(\cdot,T_*)$ remain $W^{2n+2, \infty}$ smooth. Finally, the gradient singularity experienced by the velocity and sound speed correspond to a {\it{$C^{0,\frac{1}{2n+1}}$ pre-shock}}, and the corresponding cusp profile is given by
\begin{subequations}
\label{eq:power:series:u:sigma:S}
\begin{align}
u(y,T_*) & = u(y_*,T_*) - \mathsf{b}\,  (y-y_*)^{\frac{1}{2n+1}} + \OO_{\gamma,n}( |y-y_*|^{\frac{2}{2n+1}}), 
\label{eq:power:series:u:sigma:S:a}\\
\sigma(y,T_*) & = \sigma(y_*,T_*)- \mathsf{b}\, (y-y_*)^{\frac{1}{2n+1}} + \OO_{\gamma,n}( |y-y_*|^{\frac{2}{2n+1}}), \label{eq:power:series:u:sigma:S:b}\\
S(y,T_*) & = S(y_*,T_*) + \p_y S(y_*,T_*) (y-y_*) +  \OO_{\gamma,n}(\eps |y-y_*|^{1+ \frac{1}{2n+1}}), 
\label{eq:power:series:u:sigma:S:c}
\end{align}
\end{subequations}
for all $y$ such that $|y-y_*|^{\frac{1}{2n+1}} \les 1$  and a constant $\mathsf{b}$ with the asymptotic $\mathsf{b}  = (2n+1)^{\frac{1}{2n+1}} + \OO_{\gamma,n}(\eps)$.
\end{theorem}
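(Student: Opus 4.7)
The plan is to work entirely in Lagrangian coordinates adapted to the fast-acoustic wave family, following the characteristics-based philosophy of \cite{NeShVi2023,ShVi2024}. First I pass to the Riemann-type variables $w = u+\sigma$, $z = u-\sigma$, $k = S$, which recast \eqref{eq:euler:2} as three coupled transport equations with speeds $\lambda_+ = u + \alpha\sigma$, $\lambda_- = u - \alpha\sigma$, and $\lambda_0 = u$. Since only the $\lambda_+$ family carries the cusp, I introduce the flow $\p_t \phi(x,t) = \lambda_+(\phi(x,t),t)$, $\phi(x,0) = x$, with Jacobian $J(x,t) := \p_x \phi(x,t)$. A pre-shock at Eulerian point $(y_*, T_*) = (\phi(x_*, T_*), T_*)$ is precisely the first time and location where $J$ vanishes, and the H\"older exponent of the cusp that $w(\cdot, T_*)$ develops in the Eulerian variable $y$ is determined by the order of vanishing of $J(\cdot, T_*)$ in the Lagrangian variable $x$.

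The first main step is to prove that the Lagrangian profiles $W(x,t) = w(\phi,t)$, $Z(x,t) = z(\phi,t)$, $K(x,t) = k(\phi,t)$, together with $J$ itself, remain uniformly of class $W^{2n+2,\infty}$ in $x$ on $[0,T_*]$, even as $J$ degenerates. Along $\p_t$ in these coordinates, $W$ solves a simple ODE whose forcing is a bounded function of $Z,K$, while $Z$ and $K$ transport with the strictly nonzero relative speeds $\lambda_- - \lambda_+$ and $\lambda_0 - \lambda_+$ in the $(x,t)$-frame and stay smooth by standard transport estimates. The Jacobian obeys an ODE whose leading-order solution, combined with the flatness assumption on $\bar w_0$ and normalization $\bar w_0^\prime(0) = -1$, gives
\begin{equation*}
J(x,t) \;=\; 1 \;+\; \tfrac{1+\alpha}{2}\, t\, \bar w_0^\prime(x) \;+\; \OO_{\gamma,n}(\eps).
\end{equation*}
Hence on the reference profile $J$ vanishes to order exactly $2n$ at $(x_*, T_*)$ with $T_* = \tfrac{2}{1+\alpha} + \OO_\gamma(\eps)$ and $x_* = \OO_\gamma(\eps)$.

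The submanifold $\mathcal{M}_n$ is constructed via the Implicit Function Theorem. The two generic conditions $J(x_*, T_*) = \p_x J(x_*, T_*) = 0$ are solved for the modulation parameters $(x_*, T_*)$ as smooth functions of the initial data, near the reference profile. The remaining obstruction to producing a $C^{0,\sfrac{1}{2n+1}}$ cusp rather than a $C^{0,\sfrac{1}{3}}$ cusp is the vanishing of $\p_x^j J(x_*, T_*)$ for $j = 2, \ldots, 2n-1$; these $2n-2$ constraints define a smooth map
\begin{equation*}
\mathcal{F} : (W^{2n+2,\infty}(\TT))^3 \to \RR^{2n-2}, \qquad (u_0,\sigma_0,S_0) \mapsto \bigl(\p_x^2 J,\, \p_x^3 J,\, \ldots,\, \p_x^{2n-1} J\bigr)(x_*, T_*),
\end{equation*}
whose differential at $(\bar u_0, \bar \sigma_0, \bar S_0)$ is surjective, as verified by explicit perturbations of $\bar w_0$ that independently activate each higher-order Taylor coefficient of $\bar w_0^\prime$ at zero. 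The IFT then yields $\mathcal{M}_n := \mathcal{F}^{-1}(0)$ as a codimension-$(2n-2)$ Banach submanifold through the reference data.

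For data on $\mathcal{M}_n$, the expansion \eqref{eq:power:series:u:sigma:S} follows by inversion: since $J(x,T_*)$ vanishes to order exactly $2n$ at $x_*$ with strictly positive leading coefficient $\mathsf c$, integrating yields $\phi(x,T_*) - y_* = \tfrac{\mathsf c}{2n+1}(x-x_*)^{2n+1} + \OO_{\gamma,n}((x-x_*)^{2n+2})$, and the local inverse satisfies $x - x_* = ((2n+1)/\mathsf c)^{\sfrac{1}{2n+1}} (y-y_*)^{\sfrac{1}{2n+1}} + \ldots$. Composing with the Taylor expansion of the smooth Lagrangian profile $W(\cdot, T_*)$ gives \eqref{eq:power:series:u:sigma:S:a}--\eqref{eq:power:series:u:sigma:S:b} with constant $\mathsf b = (2n+1)^{\sfrac{1}{2n+1}} + \OO_{\gamma,n}(\eps)$, while \eqref{eq:power:series:u:sigma:S:c} for $S$ is smoother because $S$ is transported along the non-degenerate entropy characteristics and inherits an extra power from the inversion. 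The principal obstacle is the high-order smoothness step: differentiating the $(W,Z,K)$-system $2n+2$ times in $x$ generates nonlinear interactions between the three wave families that naively appear to lose factors of $\sfrac{1}{J}$. The resolution, in the spirit of \cite{ShVi2024}, is to identify a hierarchy of weighted ``good unknowns'' in which the dangerous $\sfrac{1}{J}$-factors cancel through algebraic identities linking $\p_x^j J$ to $\p_x^j W$, allowing the bootstrap to close uniformly up to $T_*$.
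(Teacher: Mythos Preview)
Your overall architecture is right and matches the paper's characteristics-based approach, but there is a genuine gap in how you define the modulation parameters $(x_*,T_*)$. You propose to solve the two ``generic'' equations $J(x_*,T_*)=0$ and $\p_x J(x_*,T_*)=0$ for $(x_*,T_*)$ via the Implicit Function Theorem, and then impose $\p_x^2 J,\ldots,\p_x^{2n-1}J$ as the $2n-2$ constraints defining $\mathcal{M}_n$. At the reference profile $\bar w_0$ (with $z_0=k_0=0$) one has $J(x,t)=1+\tfrac{1+\alpha}{2}t(-1+x^{2n})$ near $x=0$, and at the putative solution $(0,\tfrac{2}{1+\alpha})$ the Jacobian of $(x,t)\mapsto(J,\p_xJ)$ is
\[
\begin{pmatrix} \p_x J & \p_t J \\ \p_x^2 J & \p_t\p_x J \end{pmatrix}
=\begin{pmatrix} 0 & -\tfrac{1+\alpha}{2} \\ 0 & 0 \end{pmatrix},
\]
which is singular for every $n\geq 2$. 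So the IFT does not yield $(x_*,T_*)$ as a smooth function of the data, and without that you cannot differentiate $\mathcal{F}$ to check surjectivity; the construction of $\mathcal{M}_n$ as stated collapses.

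The paper's remedy is to choose a \emph{different} pair of equations to pin down the modulation parameters: it solves $\tilde\eta_x=0$ and $\p_x^{2n-1}\tilde\eta_x=0$ (packaged as the map $G$ of \S\ref{sec:FCBM:prelim}), whose Jacobian at the reference profile is the identity, precisely because $\p_x^{2n}\tilde\eta_x$ is uniformly bounded below by $(2n)!/2$. The remaining $2n-2$ constraints are then $\p_x^i\tilde\eta_x(\x,\T)=0$ for $i=1,\ldots,2n-2$ (the map $f_n$). The total set of conditions is the same as yours, but this splitting is non-degenerate. A secondary point your sketch omits: for data \emph{off} $\mathcal{M}_n$ the desired root may lie at a time past $T_*$, where the PDE no longer makes sense; the paper handles this by linearly extending $\eta_x$ to $\tilde\eta_x$ for $t>T_*$ before running the IFT.
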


The precise statement of our main result is given in Theorem~\ref{thm:HR} below, where both the assumptions and the conclusions are stated in terms of the Riemann variables $(w,z,k):= (u+\sigma,u-\sigma,S)$ defined in~\eqref{riemann}. At this stage, we make a few comments concerning the statement of Theorem~\ref{thm:HR:a}: 
\begin{enumerate}[leftmargin=*] 

\item In the case $n=1$,  $\mathcal M_1$ is a codimension-$0$ submanifold  of $(W^{4, \infty}(\TT))^3$ containing the point $(\bar u_0, \bar \sigma_0,\bar S_0)$. A different way to state the {\it{codimension-$0$}} property of $\mathcal M_1$ is to say that it is an {\it{open subset}} of $(W^{4, \infty}(\TT))^3$ containing $(\bar u_0, \bar \sigma_0,\bar S_0)$. In this case the cusp structure described by~\eqref{eq:power:series:u:sigma:S} is that of a stable $C^{0,\frac 13}$ pre-shock, which recovers the known results (see e.g.~\cite{BuShVi2022,BuDrShVi2022,NeShVi2023}). For $n\geq 2$, $\mathcal M_n$ is the graph of a Lipschitz function from a codimension-$(2n-2)$ subspace to $\R^{2n-2}$.

\item In the case $n=2$,  $\mathcal M_2$ is a codimension-$2$ submanifold of $(W^{6, \infty}(\TT))^3$ containing the point $(\bar u_0, \bar \sigma_0,\bar S_0)$, and the expansion~\eqref{eq:power:series:u:sigma:S} describes the cusp structure of a $C^{0,\frac 15}$ pre-shock. This recovers (with a different proof) the result in~\cite{BuIy2022}, under milder regularity and smallness assumptions on the initial data (e.g.~$W^{6,\infty}$ instead of $C^8$). 

\item The intuition behind the cusp-expansion~\eqref{eq:power:series:u:sigma:S} stems from an exact computation performed on the 1D Burgers equation (presented in \S~\ref{sec:burgers} below) and the transport-structure of the evolution equation satisfied by the Riemann variables (see \S~\ref{sec:DRV} below). At this stage we just note that if the initial data of the one-dimensional Euler equation is precisely the smooth function $(\bar u_0, \bar \sigma_0,\bar S_0) := \frac 12 (\bar{w}_0, \bar{w}_0, 0)$, which corresponds to the limiting case $\eps\to0$ at fixed $n\geq 1$, then for all $t\in (0,T_*)$ the solution $(u,\sigma,S)(\cdot,t)$ of~\eqref{eq:euler:2} is given by $ \frac 12 (\bar w, \bar w, 0)(\cdot,t)$, where $\bar w$ solves the 1D Burgers equation $\partial_t \bar w + \frac{1+\alpha}{2} \bar w \p_y \bar w = 0$, with initial data $\bar{w}_0$. Now since $\bar{w}_0^\prime(y) = -1 + y^{2n}$ for $y$ close to $0$,  and $\bar{w}_0^\prime(y) \geq -1 + C$ for  $y$ far from $0$ (for some $C>0$), the 1D Burgers equation develops a gradient singularity  at exactly time $T_* = \frac{2}{1+\alpha}$,  at exactly the location $y_* = \frac{2}{1+\alpha} \bar{w}_0(0)$, and the Puiseux expansion $\bar w(y,T_*)  = \bar w_0(0) - 2 (2n+1)^{\frac{1}{2n+1}}(y-y_*)^{\frac{1}{2n+1}} + \OO_n(|y-y_*|^{\frac{1}{2n+1}})$ holds for all $y$ sufficiently close to $y_*$. For 1D Burgers these facts  were established earlier in~\cite[Proposition 9]{CoGhMa2022} using self-similar techniques.

\item If further regularity assumptions are placed on the initial conditions, such as $W^{2n+2+L,\infty}$ for some $L\geq 1$, then the $\OO_{\gamma,n}( |y-y_*|^{\frac{2}{2n+1}})$ term present in~\eqref{eq:power:series:u:sigma:S:a}--\eqref{eq:power:series:u:sigma:S:b} and the $\OO_{\gamma,n}(\eps |y-y_*|^{1+ \frac{1}{2n+1}})$ term present in~\eqref{eq:power:series:u:sigma:S:c} may be further expanded as 
$(y-y_*)^{\frac{2}{2n+1}}$, respectively $(y-y_*)^{1+\frac{1}{2n+1}}$ multiplied by an 
$(L-1)^{\rm th}$ order Taylor polynomial in the variable $(y-y_*)^{\frac{1}{2n+1}}$ (a truncated Puiseux series). In fact, our proof (see~\S~\ref{sec:thm} and Appendix~\ref{sec:appendix:poly}) gives an algorithm for computing the coefficients of this truncated Puiseux series, resulting in explicit expressions of these coefficients similar to that for $\mathsf{b}$, up to an $\OO_{\gamma,n}(\eps)$ correction. To sum up, if  the initial data is assumed to have  $L$ more bounded derivatives, then~\eqref{eq:power:series:u:sigma:S}  may be expanded to $L$ orders higher, with controlled coefficients and error bounds. For conciseness,  in this paper we choose not to pursue these higher order expansions.

\item \label{item:v} We have chosen to pay great attention to how the parameter $n$ enters the description about the Euler solution on the pre-shock; this is the main reason for the extended length of the paper. For instance, the estimate for the blowup time $T_* = \sfrac{2}{(1+\alpha)} + \OO_\gamma(\eps)$ is independent of $n$. A much more subtle and delicate issue is the description  of the parameter $\mathsf{b}$ appearing in \eqref{eq:power:series:u:sigma:S} --- to leading order in $\eps$ it is given by an explicit formula, $(2n+1)^{\frac{1}{2n+1}}$. Equally subtle is the description of the range of $y$ (close to $y_*$) for which \eqref{eq:power:series:u:sigma:S} holds --- the implicit constant in $|y-y_*|^{\frac{1}{2n+1}} \les 1$ is independent of $n$ (the precise bound is $\leq \sfrac{1}{C_0}$, where $C_0$ depends only on the choice of $\bar w_0$). These $n$-independent bounds are important for Remark~\ref{rem:Riemann} below.

\end{enumerate}

\begin{remark}[\bf The Riemann data and the infinitely unstable limit $n\to \infty$]
\label{rem:Riemann} 
Consider the limit as $n\to \infty$ of the function $\mathsf{E}_n(y):= \mathsf{a} - \mathsf{b}_n (y-y_*)^{\frac{1}{2n+1}}$, where $\mathsf{b}_n := (2n+1)^{\frac{1}{2n+1}}$ and $\mathsf{a} >0$ are constants. In view of item~\ref{item:v} in the above remarks, the function $\mathsf{E}_n$ precisely matches the cusp expressions~\eqref{eq:power:series:u:sigma:S:a} and~\eqref{eq:power:series:u:sigma:S:b} appearing in Theorem~\ref{thm:HR:a}. While the pointwise limit may be easily computed as $\lim_{n\to \infty}\mathsf{E}_n(y) := \mathsf{a} - \sgn( y- y_*)$, which we recognize as the standard {\it{Riemann data}} of 1D gas dynamics (with left state $\mathsf{a}+1$ and right state $\mathsf{a}-1$), the constraint $|y-y_*|^{\frac{1}{2n+1}} \leq \sfrac{1}{C_0}$ present in \eqref{eq:power:series:u:sigma:S} (see also item~\ref{item:v} above) means that the range of validity of this limit is trivial ($y=y_*$). We thus highlight the fact that while formally the Puiseux series expansion at the pre-shock converges to the Riemann data in the infinitely unstable limit $n\to \infty$, drawing a direct connection between the solutions of 1D Euler constructed in Theorem~\ref{thm:HR:a} (as $n\to \infty$) and the solution of the classical Riemann problem for 1D Euler, remains to date elusive.
\end{remark}

\subsection{Outline and new ideas}
\label{sec:outline}

As we have already noted, the proof of Theorem~\ref{thm:HR:a} relies on the analysis of differentiated Riemann variables in the coordinates of the fast-acoustic characteristics. When coupled to energy estimates and geometric considerations, this perspective has proven itself to be fundamental in analyzing shock formation for  multi-dimensional Euler in~\cite{ShVi2024}. 
The proof of Theorem~\ref{thm:HR:a} proceeds as follows:
\begin{itemize}[leftmargin=*] 
\item First, we change variables and study the 1D Euler system in {\it{differentiated Riemann variables}} (see \S~\ref{sec:DRV}) which allows us to include non-trivial entropy evolution without any derivative loss. Then, we change coordinates by analyzing the system when the functions are composed with the {\it{fast acoustic characteristics}} $\eta$ of the system (see \S~\ref{sec:system}). In these new {\it{Lagrangian-type coordinates}}, the dominant Riemann variable is effectively ``frozen'', analogous to how the solution of Burgers equation is frozen along its corresponding characteristics (see \S~\ref{sec:burgers}). By making this change to Lagrangian-type coordinates, we are able to reduce the task of controlling the derivatives of the solution to that of controlling the derivatives of the subdominant Riemann variable, and the entropy (see e.g.~\S~\ref{sec:apriori} and \S~\ref{sec:ineq:dx}).

\item Second, we show in \S~\ref{chp-estimates} that for a broad class of initial data the solution forms a singularity in finite time, and that this singularity is characterized by the flow $\eta$ of the fast acoustic characteristics becoming degenerate; that is $\eta_x \to 0$ at a distinguished point in spacetime.

\item Third, in \S~\ref{sec:HOE} we prove that {\it{in Lagrangian coordinates the solution stays as smooth as the initial data}}, uniformly up to the time of the first singularity. To do this, we prove novel $L^q$ energy estimates for the time derivatives of the system which are uniform in $1<q<\infty$, and then send $q \rightarrow \infty$ to obtain $L^\infty$ bounds. After we have obtained $L^\infty$ bounds on the time derivatives, the fact that the wave speeds are uniformly transverse to one another (since we are bounded away from vacuum) allows us extract bounds on mixed space-time derivatives from the time derivatives.

\item Next, in \S~\ref{sec:stab} we establish stability estimates for the Euler solution  with respect to perturbations of the initial data. For instance, if the initial data for the dominant Riemann variable is given by $w_0 = \bar{w}_0 + \lambda \tilde{w}_0$, for some parameter $\lambda \in \mathbb{R}$ and with $\bar{w}_0$, $\tilde{w}_0$ fixed, we compute $\p_\lambda$ of the resulting one-parameter family of solutions and establish uniform-in-$\lambda$ bounds. In \S~\ref{sec:cont} we bound the difference of two solutions in terms of the size of the difference of their initial data in $(W^{2n+2, \infty}(\TT))^3$.  
\item After this, in \S~\ref{FCBM} we use the implicit function theorem to show that for initial data in a given open subset of $(W^{2n+2, \infty}(\TT))^3$ there exists a solution $(x_*, T_*)$ of the system
\begin{equation*}
\eta_x(x_*, T_*) = 0 \,, \qquad
\p_x \eta_{x}(x_*,T_*) = 0 \,, \qquad \ldots \qquad 
\p^{2n-1}_x \eta_x(x_*, T_*) = 0\,,
\end{equation*}
if and only if this data lies on a codimension-$(2n-2)$ Banach submanifold of $(W^{2n+2, \infty}(\TT))^3$. Because this system has $2n$ equations and only $2$ unknowns, this is precisely what one would expect. For all such data where a solution $(x_*, T_*)$ exists, the solution is then shown to be unique.
\item Finally, in \S~\ref{sec:thm} we  invert the fast-acoustic flow map $\eta$ at the site of the pre-shock using a {\it{Puiseux series}},  to obtain a description of the solution in the original Eulerian coordinates. Because in coordinates adapted to the fast acoustic characteristic,  the solution remains as smooth as the initial data up to the  time of the first gradient blowup, the regularity of the solution $(w,z,k)$ in Eulerian variables is determined by the regularity of the inverse flow map $\eta^{-1}$.
\end{itemize}

\subsection{Notation}
\label{sec:notation}
Throughout the paper we shall appeal to the following notational conventions: 
\begin{itemize}[leftmargin=*] 
\item The notation $A \les B$ is used to say that there exists a constant $C > 0$,  such that $A \leq C B$. Here, the implicit constant $C$ is assumed to be {\it{independent}} of the parameters $\alpha, n, \eps$, {\it{independent}} of $(x, t) \in \TT \times \R$, {\it{independent}} of the constant $C_0 \geq 3$ (appearing in~\eqref{data:all}), and {\it{independent}} of our choice of $(w_0,z_0, k_0)  \in \mathcal A_n(\eps, C_0)$ (defined in~\eqref{data:all}).
\item  The notation $A \les_F B$ is used to indicate that there exists a constant $C = C(F)> 0$, such that $A \leq C B$; that is, the implicit constant depends on the parameter $F$.

\item We write $A = \OO(B)$ when $|A| \les B$, and $A = \OO_F(B)$ when $|A| \les_F B$.

\item We write ``If $A \ll B$ then $X$'' to mean that there exists a constant $C \geq 1$, independent of $\alpha,  n,  \eps, x,t, C_0$ and independent of our choice of $(w_0,z_0,k_0) \in\mathcal A_n(\eps, C_0)$, such that ``if $C \cdot A \leq B$, then $X$ is true''.

\item If $f$ is a function defined on $\TT \times \R$, and $\eta$ is a map defined on $\TT \times \R$, we will write ``$f\circ \eta$'' to denote the function
$
f\cir \eta (x,t) : = f(\eta(x,t),t).
$

\item We use the notation $\mathbbm{1}_A$ to denote the characteristic function of a set $A$.
\item We write $\p_y f, \p_x f, \p_t f$ to denote the partial derivative of a function $f$ with respect to $y,x,$ and $t$ respectively. If $f$ is a function of $(x,t)$, and $\beta = (\beta_x, \beta_t)$ is a multi-index, we write $\p^\beta f$ to denote $\p^{\beta_x}_x \p^{\beta_t}_t f$. There are a few exceptions where we will use subscript notation to denote derivatives: $\eta_x : = \p_x \eta, \eta_{xt} : = \p_x \p_t \eta, \Sigma_t : = \p_t \Sigma, \Sigma_x : = \p_x\Sigma, (\eta_x \W)_t : = \p_t (\eta_x \W), (\Sigma^{-1})_t : = \p_t (\Sigma^{-1}), (\Sigma^{-1})_x : = \p_x(\Sigma^{-1})$. In \S~\ref{sec:stab} we will also use subscript notation to denote a derivative with respect to a parameter $\lambda$ introduced in \S~\ref{sec:stab}, except when $x$ or $t$ is already being used as a subscript; so, for example, we will write $\p^\beta \K_\lambda$ to mean $\p^{\beta_x}_x \p^{\beta_t}_t \p_\lambda \K$ but we will write $\p_\lambda \eta_x$ to denote $\p_x \p_\lambda \eta$.
\item We write $w'_0, z'_0,$ and $k'_0$ to denote the first derivatives of $w_0, z_0,$ and $k_0$ respectively.
\end{itemize}


\section{Intuition from the Burgers equation}
\label{sec:burgers}
This section aims to revisit some well-known aspects of the 1D Burgers equation from a specific perspective that will inform our proof for the 1D Euler equation. Though the examples presented are not novel, they offer valuable intuition;  moreover, the 1D Burgers equation is embedded in the 1D Euler system. Indeed, upon inspecting~\eqref{eq:euler:2}, we see that if $S|_{t=0} \equiv 0$, then $S(\cdot,t) \equiv 0$ for all times $t$ prior to the first gradient singularity. In this case, the remaining equations~\eqref{eq:euler:2:a}--\eqref{eq:euler:2:b} have a particular solution: if $u|_{t=0} = \sigma|_{t=0}$ and $S|_{t=0} \equiv 0$, then the relation $u(\cdot,t) \equiv \sigma(\cdot,t)$ holds for all times $t$ prior to the first gradient singularity. Denoting $u+\sigma = 2 u = 2\sigma$ by $w$, we see that in this special case, \eqref{eq:euler:RV} reduces to:
\begin{equation}\label{eq:euler:burgers}
\p_tw + \tfrac{1+\alpha}{2}w\p_yw = 0,
\end{equation}
Rescaling time with $t \rightarrow \tfrac{1+\alpha}{2} t$, this becomes the classical {\it{inviscid Burgers equation}}
\begin{equation}
\label{eq:burgers}
\p_tw + w\p_yw = 0,
\end{equation}
the archetypal equation for shock formation and development.

Classically,~\eqref{eq:burgers} is solved by the method of characteristics. If $w$ is a solution of \eqref{eq:burgers} on $\R \times [0,T]$ and we define the {\it{characteristics}} $\eta$ to be the flow of of the scalar field $w$, namely as the solution of
$$
\tfrac{d}{dt} \eta(x,t) = w(\eta(x,t),t),
\qquad
\eta(x,0) = x,
$$
then $w$ must remain constant along the flow lines of $\eta$. From this it follows that if $w_0(x):= w(x,0)$ then
\begin{equation}\label{eq:eta_x:burgers}
\eta(x,t) = x+ tw_0(x).
\end{equation} 
Thus, characteristics are straight lines in spacetime, and the line originating at the spacetime point $(x,0)$ has slope $w_0(x)$. Conversely, if we are given initial data $w_0$ and we define $\eta$ via \eqref{eq:eta_x:burgers}, then one can construct a unique solution $w$ via the implicit equation
\begin{equation*}
w(\eta(x,t), t) = w_0(x)
\end{equation*}
at all points $(y,t)$ such that the equation
\begin{equation}
\label{eq:y:burgers}
(y,t) = (\eta(x,t), t)
\end{equation}
has a unique solution $x \in \R$. 
For initial data $w_0$ that is not monotone increasing in $x$, it is clear that \eqref{eq:y:burgers} will not be uniquely solvable for all $(y,t) \in \R \times [0, \infty)$, resulting in a gradient singularity at some finite time $T_* \in (0,\infty)$. In particular, when $w_0 \in C^1_\loc(\R)$,  the maximal time of existence and uniqueness of the $C^1$ solution $w$ of~\eqref{eq:burgers} is
\begin{equation*}
T_*  : = 
\begin{cases} -\frac{1}{\inf w'_0} & \inf w'_0 < 0\,,
\\ 
+\infty & \inf w'_0 \geq 0 \,.
\end{cases}
\end{equation*}
Note that for all prior times $0 \leq t < T_*$ we have
\begin{equation*}
\eta_x(x,t) = 1 + tw'_0(x) > 0,
\end{equation*}
and hence $\eta(\cdot, t) : \R \rightarrow \R$ is a $C^1$ diffeomorphism, but at the point $(x_*, T_*)$ where $\eta_x(x_*, t) \rightarrow 0$ as $t \rightarrow T_*$, we have $\p_y w(\eta(x_*, t), t) = w_0^\prime(x_*) (\eta_x(x_*,t))^{-1} \rightarrow -\infty$. Therefore, the first singularity in a $C^1$ solution of Burgers equation is characterized by $\eta$ ceasing to be a $C^1$ diffeomorphism, i.e.~the function $\eta_x$ attaining a zero.

Now suppose that $w_0$ is smooth, $w'_0$ attains its global minimum at a point $x_*$, and $w'_0(x_*) < 0$.  Our objective is to give a precise description
of the solution $w$ near the pre-shock $(y_*, T_*) = (\eta(x_*,T_*), T_*)$. Generically, $x_*$ is  a {\it{non-degenerate}} local minimum of $w_0'$, i.e.~the second derivative test of calculus applies: 
\begin{equation*}
\p_x w'_0(x_*) = 0, \qquad \mbox{and} \qquad 
\p^2_x w'_0(x_*) > 0 \,.
\end{equation*}
In this case, from~\eqref{eq:eta_x:burgers} we deduce 
\begin{equation*}
\eta_x(x_*, T_*)= 0,
\qquad
\p_x \eta_{x}(x_*, T_*) = T_* \p_xw'_0(x_*) = 0, 
\qquad \mbox{and} \qquad
\p^2_x \eta_{x}(x_*,T_*) = T_* \p^2_xw'_0(x_*) =: 3! a_3 > 0.
\end{equation*}
Therefore, assuming that $w_0 \in C^4$, Taylor's theorem yields
\begin{equation*}
\eta(x, T_*) = \eta(x_*, T_*) +  a_3(x-x_*)^3[1 + \OO(|x-x_*|)] \,.
\end{equation*}
If $y = \eta(x,T_*)$ and $y_* = \eta(x_*,T_*)$, then for $y$ near $y_*$ we have
\begin{equation*}
x-x_* = \big(\tfrac{y-y_*}{a_3} \big)^{1/3} + \OO(|y-y_*|^{2/3}),
\end{equation*}
and thus
\begin{align}
w(y,T_*) = w(\eta(x,T_*), T_*)
= w_0(x)
& = w_0(x_*) + \tfrac{w'_0(x_*)}{1!}(x-x_*) +  \OO((x-x_*)^{2}) \notag \\
& = w(y_*, T_*) + w'_0(x_*) \big(\tfrac{y-y_*}{a_3} \big)^{1/3} + \OO(|y-y_*|^{2/3}).
\label{eq:puke}
\end{align}
Thus, the Taylor series expansion of the smooth function $(w\circ \eta)(\cdot, T_*) = w_0(\cdot)$ with respect to $x-x_*$, becomes a {\it{fractional series expansion}} of the H\"{o}lder continuous function $w(\cdot, T_*)$ with respect to $(y-y_*)^{1/3}$. Since by assumption we have that $w_0^\prime(x_*)<0$, the expansion~\eqref{eq:puke} shows that 
$w(\cdot, T_*)$ forms a $C^{0,\frac 13}$-cusp at $(y_*, T_*)$, resembling
$
- y^{1/3}.
$
The above described behavior is moreover {\it{stable}} under small $W^{4,\infty}$ perturbations to $w_0$, because all functions $w_0$ close enough to our original function in $W^{4,\infty}$ are going to be such that $w'_0$ attains its minimum at a point nearby $x_*$, the minimum will still be negative, and it will still be a non-degenerate critical point of $w'_0$.

Consider next the case when $w_0^\prime$ attains its global minimum at a {\it{degenerate}} critical point $x_*$, and the derivatives of $w_0^\prime$ do not vanish to infinite order at $x_*$. Then, necessarily there would exist some integer $n \geq 2$ such that
\begin{equation*}
\p^i_x w'_0(x_*) = 0  \; \mbox{ for all } \; i \in \{ 1, \ldots, 2n-1\}, 
\qquad
\mbox{and}
\qquad
\p^{2n}_x w'_0(x_*) > 0 \,.
\end{equation*}
The integer $n$ is sometimes referred to as the {\it{order of flatness}} of $w_0^\prime$ at the critical point $x_*$.
In this case, the same arguments as before would imply that
\begin{equation*}
x-x_* =  
\big(\tfrac{y-y_*}{a_{2n+1}} \big)^{\frac{1}{2n+1}} + \OO(|y-y_*|^{\frac{2}{2n+1}})\,,
\end{equation*}
where
\begin{equation*}
1 + T_* w_0^\prime(x_*)= 0\,,
\quad
T_* \p_x w_0^\prime(x_*) = 0\,,
\quad
\ldots
\quad
T_* \p_x^{2n-1} w_0^\prime(x_*) = 0\,,
\quad
T_* \p_x^{2n} w_0^\prime(x_*) =: (2n+1)! a_{2n+1} > 0 
\,.
\end{equation*}
The result is that $w(\cdot, T_*)$ forms a $C^{0,\frac{1}{2n+1}}$-cusp at $(y_*,T_*)$,  resembling
$
- y^{\frac{1}{2n+1}}.
$
Such a behavior, however, is {\it{unstable}} under smooth perturbations. 

For example, the  prototypical function which attains a global minimum at a degenerate critical point $x_*$, and whose derivative  exhibits flatness of order $2n$ at $x_*$, is given by 
\begin{equation}
\label{data:keyexample}
\bar{w}_0(x) = - x + \tfrac{1}{2n+1} x^{2n+1} \,,
\end{equation} 
for which with $x_*=0$. When $n\geq 2$, the  $C^\infty$-smooth and locally small (take $0 < \eps \leq 1$ small) perturbation $w_0(x)
:= \bar{w}_0(x) + \frac 13 \eps  x^3$ represents a function which attains its global minimum at $x_*(\eps) := 0$, but now this global minimum is non-degenerate since $\p_x^2 w_0'(x_*(\eps)) = 2\eps >0$. A more interesting example is given by the  $C^\infty$-smooth and locally small perturbation $w_0(x)
:= \bar{w}_0(x) - \frac 13  \eps  x^3$, which attains its global minimum simultaneously at two points $x_*(\eps) := \pm (\sfrac{\eps}{n})^{\frac{1}{2n-2}}$, and each of these global minima are non-degenerate since $\p_x^2 w_0'(x_*(\eps)) = 2 \eps (2n-2) >0$. 
These examples provide the simple reason behind the instability of $C^{0,\frac{1}{2n+1}}$-cusp singularities for 1D Burgers.

Next, we discuss the finite codimension stability of $C^{0,\frac{1}{2n+1}}$-cusp singularities for 1D Burgers dynamics, from a perspective which will turn out to be useful when discussing 1D Euler. Recall the function $\bar{w}_0$ defined in~\eqref{data:keyexample}, 
and let $\bar w$ be the solution of 1D Burgers~\eqref{eq:burgers} on $\R$ with initial data $\bar{w}_0$. Since $\bar{w}'_0$ has a unique global minimum at $x_*=0$, and at this point $\bar{w}_0^\prime(0) = -1$, the solution $\bar w$ develops a singularity at time $T_* =1$. At the blowup time, we have
\begin{equation*}
\eta(x,1) = \tfrac{1}{2n+1} x^{2n+1} \,,
\end{equation*}
and therefore if $y = \eta(x,1)$, so that $y_* = \eta(x_*,1) = 0$, then
\begin{equation*}
x =( (2n+1) y)^{\frac{1}{2n+1}} \,, 
\end{equation*}
resulting in the precise and global description of the $C^{0,\frac{1}{2n+1}}$-cusp at $y=0$:
\begin{equation}
\label{data:keyexample:sol}
\bar w(y,1) = \bar{w}_0(x) = -  (2n+1)^{\frac{1}{2n+1}} y^{\frac{1}{2n+1}} + y. 
\end{equation}
The above scenario is in fact stable under suitable perturbations.

\begin{proposition}[Stability and finite codimension-stability]
Let $n \geq 1$ be an integer, let  $\bar{w}_0$ be defined as in~\eqref{data:keyexample}, and define for $\eps \in (0,1]$ the ball
\begin{equation*}
U^n_\eps : = \{ w_0 : \|w_0-\bar{w}_0\|_{C^{2n+1}} < \eps \}
\,.
\end{equation*}
Then there is a codimension-$(2n-2)$ Banach manifold $\mathcal M_n$ of initial data in $U^n_\eps$ such that for all data $w_0 \in \mathcal M_n$ the unique solution $w$ has a first singularity at a unique point $(y_*, T_*) = (0,1) +  \OO_n(\eps)$ and at that point $w$ of~\eqref{eq:burgers} forms a cusp resembling~\eqref{data:keyexample:sol}.
\end{proposition}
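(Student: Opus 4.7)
The plan is to realize $\mathcal{M}_n$ as the zero set of an $\R^{2n-2}$-valued $C^1$ map on $U^n_\eps$ obtained from the implicit function theorem applied to the characteristic representation \eqref{eq:eta_x:burgers}. Since the pre-shock profile \eqref{data:keyexample:sol} corresponds to $\eta_x(\cdot,T_*) = 1+T_* w_0'(\cdot)$ vanishing to order exactly $2n$ at $x_*$, the unknown pair $(x_*,T_*)$ must satisfy the $2n$ scalar equations
\begin{equation*}
1+T_* w_0'(x_*)=0, \qquad w_0^{(j)}(x_*)=0 \quad (j=2,\ldots,2n),
\end{equation*}
together with the open condition $w_0^{(2n+1)}(x_*)>0$. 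At $w_0=\bar w_0$ one reads off $x_*=0$, $T_*=1$, and $\bar w_0^{(2n+1)}(0)=(2n)!\neq 0$, so the base point is a regular point in the sense of IFT.

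First I would eliminate the two unknowns $(x_*,T_*)$ in favor of $w_0$. The nondegeneracy $\bar w_0^{(2n+1)}(0)\neq 0$ lets the scalar IFT applied to $(w_0,x_*)\mapsto w_0^{(2n)}(x_*)$ produce a unique root $x_*(w_0)$ of size $\OO_n(\eps)$ for each $w_0\in U^n_\eps$; this is what forces the topology to be $C^{2n+1}$, since we must differentiate $w_0^{(2n)}$ once in $x_*$. Setting $T_*(w_0) := -1/w_0'(x_*(w_0)) = 1 + \OO_n(\eps)$ disposes of a second equation. The remaining $2n-2$ functional constraints
\begin{equation*}
\Phi_j(w_0) := w_0^{(j)}(x_*(w_0)) = 0, \qquad j=2,\ldots,2n-1,
\end{equation*}
have Fréchet derivatives at $\bar w_0$ which, acting on $\tilde w_0 \in C^{2n+1}$, collapse to $\tilde w_0 \mapsto \tilde w_0^{(j)}(0)$, because the correction $\bar w_0^{(j+1)}(0)\cdot Dx_*[\tilde w_0]$ vanishes thanks to $\bar w_0^{(j+1)}(0)=0$ for $j\leq 2n-1$. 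These $2n-2$ linear functionals are linearly independent, jointly surjective onto $\R^{2n-2}$, and their common kernel is closed and complemented, so $\mathcal{M}_n:=\{w_0\in U^n_\eps:\Phi(w_0)=0\}$ is a $C^1$ Banach submanifold of codimension $2n-2$ through $\bar w_0$.

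To close the argument I would verify that on $\mathcal{M}_n$ the pair $(x_*(w_0),T_*(w_0))$ is indeed the first \emph{and} unique singularity. The estimate $|w_0'(x)-\bar w_0'(x)|\les \eps$ coming from $C^{2n+1}\hookrightarrow C^1$ implies $w_0'(x) \geq -1+x^{2n}-\eps$, so $w_0'(x)>-1/T_*(w_0)$ holds uniformly outside the shrinking window $|x|\les \eps^{1/(2n)}$, inside of which $x_*(w_0)$ is the only root produced by IFT; hence no earlier characteristic crossing can occur. The cusp profile then follows by inverting $\eta(\cdot,T_*)$ exactly as in \eqref{eq:puke}: the Taylor expansion $\eta(x,T_*)=y_*+a_{2n+1}(x-x_*)^{2n+1}(1+\OO(|x-x_*|))$ with $a_{2n+1}=T_* w_0^{(2n+1)}(x_*)/(2n+1)!$ inverts to
\begin{equation*}
w(y,T_*)=w(y_*,T_*)-(2n+1)^{\frac{1}{2n+1}}\bigl(1+\OO_n(\eps)\bigr)(y-y_*)^{\frac{1}{2n+1}}+\OO_n\bigl(|y-y_*|^{\frac{2}{2n+1}}\bigr),
\end{equation*}
matching \eqref{data:keyexample:sol} up to $\OO_n(\eps)$ corrections in both the coefficient and the location.

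The main difficulty is not conceptual but quantitative: confirming that $x_*(\cdot)$ and the constraint map $\Phi$ are genuinely $C^1$ on the ball $U^n_\eps$ with bounds depending only on $n$ and $\eps$, and that the Puiseux inversion is valid on a neighborhood of $y_*$ whose size is $n$-dependent but $\eps$-uniform. Once this bookkeeping is in place, every other step is a direct consequence of the explicit characteristic structure \eqref{eq:eta_x:burgers} and involves no hidden analytic obstacle beyond checking the submersion property of $\Phi$ at $\bar w_0$.
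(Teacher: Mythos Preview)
Your proposal is correct and follows essentially the same approach as the paper: both locate $x_*(w_0)$ as the unique zero of $w_0^{(2n)}$ via the scalar implicit function theorem, define the same constraint map $\Phi_j(w_0)=w_0^{(j)}(x_*(w_0))$ for $j=2,\ldots,2n-1$, and verify full rank of its differential before invoking the submersion/implicit function theorem. The only cosmetic difference is that the paper chooses an explicit finite-dimensional complement (polynomial perturbations $\tfrac{\lambda_j}{(j+1)!}x^{j+1}\chi(x)$) and shows $\partial x_*/\partial\lambda_j=0$ directly, while you compute the abstract Fr\'echet derivative and kill the $Dx_*$ correction via $\bar w_0^{(j+1)}(0)=0$; both mechanisms yield $D\Phi=\mathrm{Id}+\OO_n(\eps)$ in the relevant coordinates.
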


\begin{proof}
Since $\p^{2n}_x w'_0(x) \geq (2n)! - \eps$ uniformly in $x$ for all $w_0 \in U^n_\eps$, there is a unique $\x$ in $\R$ such that
$\p^{2n-1}_x w'_0(\x) = 0$,  and $\x$ satisfies $|\x| \leq \frac{\eps}{(2n)!-\eps}$.  In the case $n =1$, $\x$ is the unique critical point of $w'_0$ and is its global minimum. In the case $n \geq 2$, define the function $f : U^n_\eps \rightarrow \R^{2n-2}$,
\begin{equation*}
f(w_0) : = \begin{bmatrix} \p_x w'_0(\x) \\ \vdots \\ \p^{2n-2}_x w'_0(\x) \end{bmatrix}
\end{equation*}
If $f(w_0) = (0,0 \hdots, 0)$, then Taylor expanding $w'_0$ about $x=\x$ gives us
$
w'_0(x) \geq w'_0(\x) + [(2n)!-\eps](x-\x)^{2n}
$
for all $x \in \R$, so $\x$ is the unique global minimum of $w'_0$. If we define $\mathcal M_1 = U^1_\eps$, and $\mathcal M_n = \{ f = 0\}$ for $n \geq 2$, then it is straightforward to show that at time $-1/ w_0^\prime(\x)$ the solution $w$ of~\eqref{eq:burgers} with initial data $w_0 \in \mathcal M_n$, develops a cusp resembling $-y^{\frac{1}{2n+1}}$.

When $n \geq 2$, we can observe that $\mathcal M_n$ is a Banach manifold using the Implicit Function Theorem. All $w_0 \in U^n_\eps$ can be decomposed as
\begin{equation*}
w_0(x) = \bar{w}_0(x) +  \big( \tfrac{\lambda_1}{2!} x^2 + \tfrac{\lambda_2}{3!} x^3 + \hdots + \tfrac{\lambda_{2n-2}}{(2n-1)!} x^{2n-1}\big) \chi (x)+ \tilde{w}_0(x)
\end{equation*}
where $\chi$ is a $C^\infty$ bump function with 
$
\mathbbm{1}_{[-1,1]} \leq \chi \leq \mathbbm{1}_{[-3,3]}
$,
the $\{\lambda_j\}_{j=1}^{2n-2}$ are sufficiently small (with respect to $\eps$ and $n$) real numbers,
and $\tilde{w}_0$ is a sufficiently small (with respect to $\eps$ and $n$) $C^{2n+1}$ function satisfying $\p^{j}_x \tilde{w}_0^\prime (0) = 0$ for $j =1, \hdots, 2n-2$. 
To see this, simply define $\lambda_j := \p_x^j w_0^\prime(0)$, and then declare $\tilde{w}_0$ to be the remainder in the above formula; since $\p_x^j \bar{w}_0^\prime(0) = 0$ for all $j =1, \hdots, 2n-2$, it is clear that $\tilde{w}_0$ will then satisfy $\p^{j}_x \tilde{w}_0^\prime (0) = 0$ for $j =1, \hdots, 2n-2$. Viewing the critical point $\x$ as a function of $\{\lambda_j\}_{j=1}^{2n-2}$ and $\tilde{w}_0$, and differentiating the expression $\p^{2n-1}_x w'_0(\x) = 0$ respect to $\lambda_j$, taking into account that both $w_0^\prime$ and $\x$ are implicitly functions of $\lambda_j$, and using that $\chi^\prime(\x) = 0$, we deduce
\begin{equation*}
\p^{2n}_x w'_0(\x) \tfrac{\p \x}{\p \lambda_j} = 0\,, \qquad \mbox{for all} \qquad j = 1, \hdots, 2n-2\,.
\end{equation*}
From the positive lower bound $\p^{2n}_x w'_0(\x) \geq (2n)! - \eps$ it follows that $\frac{\p \x}{\p \lambda_j} = 0$ for all $j = 1, \hdots, 2n-2$. Therefore, writing the $i^{\rm th}$ component of the function $f$ as $f^i = \p_x^i w_0^\prime(\x)$, and using that $\frac{\p \x}{\p \lambda_j} = 0$, we may similarly obtain
\begin{equation*}
\tfrac{\p f_i}{\p \lambda_j} 
= \p^{i+1}_x \bar{w}'_0(\x) \tfrac{\p \x}{\p \lambda_j}  + \tfrac{\p}{\p \lambda_j} \Bigl( \displaystyle{\sum}_{k=i}^{2n-2} \tfrac{\lambda_k}{(k-i)!} \x^{k-i} \Bigr) + \p^{i+1}_x \tilde{w}'_0(\x) \tfrac{\p \x}{\p \lambda_j} 
= \tfrac{\x^{j-i}}{(j-i)!}\mathbbm{1}_{j \geq i} \,,
\end{equation*} 
for all $i,j = 1, \hdots, 2n-2$. Since $|\x| \leq \frac{\eps}{(2n)!-\eps}$, it follows that
\begin{equation*}
D_\lambda f = \Id + \OO_n(\eps)
\end{equation*}
everywhere in $U^n_\eps$. Therefore, for each choice of $\tilde{w}_0$ (lying in a sufficiently small ball, of radius related to $\eps$ and $n$, in $C^{2n+1}$) there is a unique choice of $(\lambda_1, \hdots, \lambda_{2n-2})$ such that $f(\lambda_1,\hdots,\lambda_{2n-2},\tilde w_0) = 0$, i.e., such that the corresponding $w_0$ lies in $\mathcal M_n$. The Implicit Function Theorem yields that his choice of $\{\lambda_j\}_{j=1}^{2n-2}$ is given by a $C^1$ function of $\tilde{w}_0$ (in the sense of Fr\'{e}chet derivatives).
\end{proof}

\section{The differentiated Riemann variables, the initial data, and a blowup criterion}
\label{chp-preliminaries}

Returning to the 1D Euler equations~\eqref{eq:euler:2}, in this section we introduce the 
classical and the differentiated Riemann-type variables which are used in our analysis, we precisely characterize the set of initial conditions with which we work with, and we give a characterization of the time $T_*$ of the first gradient singularity.

\subsection{Classical and differentiated Riemann variables}
\label{sec:DRV}

For many first-order hyperbolic systems in one space dimension, it can be fruitful to analyze the PDE in terms of {\it{Riemann variables}}, which in a sense diagonalize the system. For the 1D Euler equations, the Riemann variables $w$ and $z$ are defined as follows:
\begin{align}\label{riemann}
w : = u + \sigma \,,
\qquad
z := u-\sigma \,,
\qquad 
k: = S.
\end{align}
Above we have also introduced the notation $k$ for the specific entropy $S$ in order to distinguish the Riemann set of variables $(w,z,k)$ from the classical ones, $(u,\sigma,S)$.
With respect to the $(w,z,k)$ variables  the 1D Euler equations \eqref{eq:euler:2} become
\begin{subequations}
\label{eq:euler:RV}
\begin{align}
\p_t w + \lambda_3 \p_y w & = \tfrac{\alpha}{2\gamma} \sigma^2 \p_y k,  \\
\p_t z + \lambda_1 \p_y z & = \tfrac{\alpha}{2\gamma} \sigma^2 \p_y k,  \\
\p_t k + \lambda_2 \p_y k & = 0,
\end{align}
\end{subequations}
where
\begin{align*}
\lambda_1 & : = u-\alpha \sigma = \tfrac{1-\alpha}{2} w + \tfrac{1+\alpha}{2} z, \\
\lambda_2 & : = u = \tfrac{1}{2} w + \tfrac{1}{2} z, \\
\lambda_3 & : = u+ \alpha \sigma = \tfrac{1+\alpha}{2} w + \tfrac{1-\alpha}{2} z.
\end{align*}
The  functions $\lambda_i$ are the {\it{wave speeds}} of the system. Most important to us will be $\lambda_3 = u+ \alpha \sigma = u + c$, which is the {\it{fast acoustic wave speed}}. The {\it{fast acoustic characteristics}} are denoted by $\eta$ and are as the solutions of the ODE via
\begin{equation}\label{def:eta}
\p_t \eta(x,t) = \lambda_3 (\eta(x,t),t), \qquad \eta(x,0 ) = 0\,.
\end{equation}
That is, $\eta$ is the flow of the fast acoustic wave speed.

Because the 1D Euler equations have  both Galilean transformations and additions of a constant entropy as symmetries, all of the essential dynamics of \eqref{eq:euler:2} are captured by studying $\sigma, \p_y u,\p_y \sigma$, and $\p_y S$. Therefore, it makes sense to study \eqref{eq:euler:RV} in terms of $\sigma, \p_y w, \p_y z,$ and $\p_y k$. If one takes $\p_y$ of the system \eqref{eq:euler:RV}, and diagonalizes the system (omitting terms which are quadratic or cubic in $\p_yw, \p_y z,$ and $\p_y k$), one arrives at the {\it{differentiated Riemann variables}} $(\mathring{w}, \mathring{z}, \mathring{k})$,  defined via
\begin{subequations}
\label{def:q^w}
\begin{align}
\mathring{w} & : = \p_y w - \tfrac{1}{2\gamma} \sigma \p_y k, \\
\mathring{z} & : = \p_y z + \tfrac{1}{2\gamma} \sigma \p_y k,\\
\mathring{k} & : = \p_y k,
\end{align}
\end{subequations}
and which satisfy the evolution equations
\begin{subequations}
\label{eq:euler:RV:diff}
\begin{align}
\p_t \mathring{w} + \lambda_3 \p_y \mathring{w} 
&= -   \mathring{w} \p_y \lambda_3    + \tfrac{\alpha}{4\gamma} \sigma \mathring{k}  ( \mathring{w} + \mathring{z}   )
= -   \mathring{w} (\tfrac{1+\alpha}{2} \mathring{w} + \tfrac{1-\alpha}{2} \mathring{z} +  \tfrac{\alpha}{2\gamma}\sigma \mathring{k} )   + \tfrac{\alpha}{4\gamma} \sigma \mathring{k}  ( \mathring{w} + \mathring{z}   )
\\
\p_t \mathring{z} + \lambda_1 \p_y \mathring{z}  
&= - \mathring{z} \p_y \lambda_1 - \tfrac{\alpha}{4\gamma} \sigma \mathring{k}  ( \mathring{w} + \mathring{z}  ) 
= - \mathring{z}  (\tfrac{1-\alpha}{2} \mathring{w} + \tfrac{1+\alpha}{2} \mathring{z} - \tfrac{\alpha}{2\gamma}\sigma \mathring{k} ) - \tfrac{\alpha}{4\gamma} \sigma \mathring{k}  ( \mathring{w} + \mathring{z}  )\\
\p_t \mathring{k} + \lambda_2 \p_y \mathring{k}  
& = - \mathring{k} \p_y \lambda_2
= - \mathring{k} (\tfrac{1}{2} \mathring{w} + \tfrac{1}{2} \mathring{z} ) \,.
\end{align} 
\end{subequations}
The differentiated Riemann variables will allow us to study the evolution of the system without derivative loss; indeed, it is evident from~\eqref{eq:euler:RV:diff} that these are three coupled transport equations with right hand sides which are polynomial expressions of the unknowns (see also~\eqref{system:lagrange} below). The ``ring super-index'' in the notation of~\eqref{def:q^w} is present to signify that differentiation has taken place; this notation is chosen for consistency with~\cite{ShVi2024}. 

For the remainder of this paper, we will study the Cauchy problem for the system  \eqref{eq:euler:RV} and~\eqref{eq:euler:RV:diff}. Next, we describe the initial data for these systems, by specifying the set $\mathcal A_n(\eps, C_0)$ where $(w_0,z_0,k_0)$ lies. The functions $(\mathring{w}_0,\mathring{z}_0,\mathring{k}_0) := (w_0^\prime - \frac{1}{2\gamma} \sigma_0 k_0^\prime, z_0^\prime + \frac{1}{2\gamma} \sigma_0 k_0^\prime,k_0^\prime)$ and $\sigma_0 = \frac 12 (w_0 + z_0)$ are then defined as a consequence.


\subsection{Assumptions on the initial data}

The domain of our functions will be $\TT \times \R$, where $\TT : = \R/\ZZ$ is the torus. We will use the notation
$|x-x'|$
to denote the distance between two points $x,x'$ on the torus. We will often implicitly identify the torus with the interval $[-\frac{1}{2}, \frac{1}{2})$ when describing functions in a neighborhood of the point $0 = 0+ \ZZ \in \TT$.

Fix a constant $C_0 \geq 3$ and an integer $n\geq 1$. Assume that the initial data $(w_0,z_0,k_0)$ of \eqref{eq:euler:RV} satisfies
\begin{subequations}
\label{data:all}
\begin{align}
\label{data:1}
\tfrac{1}{2}  < \sigma_0 = \tfrac 12 (w_0 + z_0) & < 2,  & \\
\label{data:2}
-1-\eps < \min_\TT w'_0 & < -1 + \eps, & \\
\frac{\|\p^{i+1}_x w_0\|_{L^\infty_x}}{i!} & < C_0^i(1+\eps) & i=0, \hdots, 2n+1, \\
\frac{\|\p^{i+1}_x z_0\|_{L^\infty_x}}{i!} & < C_0^i \eps & i=0, \hdots, 2n+1, \\
\label{data:9}
\frac{\|\p^{i+1}_x k_0\|_{L^\infty_x}}{i!} & < C_0^i \eps & i=0, \hdots, 2n+1.
\end{align}
\end{subequations}
The constraints listed in \eqref{data:1}--\eqref{data:9} {\it{define}} an open set 
\begin{equation}
\mathcal A_n(\eps, C_0) \subset (W^{2n+2,\infty}(\TT))^3
\,.
\label{eq:cal:A:n:def}
\end{equation} We will show that the solution $(w,z,k)$ emanating from data in this open set all form a gradient singularity at roughly the same time, and we will prove uniform derivative estimates for all solutions with data in $\mathcal A_n(\eps, C_0)$.

The definition of $\mathcal A_n(\eps, C_0)$ does not require any zeroth order constraints other than \eqref{data:1} because the Euler equations have Galilean transformations and additions of constant entropy as symmetries (though they are not invariant under constant perturbations to the mass density). The constraint \eqref{data:2} is not strictly necessary for the formation of the shock to occur, but is rather used to pin down the blowup time $T_*$ (see \S~ \ref{sec:LWP} for a definition of $T_*$); this follows from the time-rescaling symmetry of the Euler equations. In \S~\ref{sec:FCBM} we will impose additional constraints on the initial data in order to say more about how the spatial location of the pre-shock is approximately determined by the initial data; since the Euler equations are invariant under spatial translations, we will not need such specifications until then.


\subsection{Local well-posedness and the Eulerian blowup criterion}
\label{sec:LWP}

The Euler equations \eqref{eq:euler:2} are locally well-posed in $H^s(\TT)$ for $s > \frac{3}{2}$. In particular, for $u_0, \sigma_0, S_0 \in H^s(\TT) $ with $\sigma_0 > 0$ everywhere,  there exists a positive {\it{maximal time of existence}} $T_*  \in (0, +\infty]$ such that \eqref{eq:euler:2} has a unique  $C^{1}_{y,t}$-smooth solution $(u,\sigma, S)$ on $\TT \times [0,T_*)$, and furthermore 
\begin{equation*}
(u, \sigma, S) \in C([0,T_*) ; (H^s(\TT))^3) \cap C^1([0,T_*) ; (H^{s-1}(\TT))^3).
\end{equation*}
Additionally, $T_*$ can only be finite if: (a) the solution $(u, \sigma, S)$ develops vacuum or implodes in finite time, meaning that either $\min_{y\in \TT} \sigma(\cdot,t) \to 0$ as $t\to T_*$  or $\|(u,\sigma, S)(\cdot,t)\|_{L^\infty_y} \to \infty$ as $t\to T_*$; (b) the solutions' gradients blow up in finite time, at a non-integrable rate, meaning that it satisfies the {\it{Eulerian blowup criterion}}
\begin{equation}
\label{eq:LCC:1}
\int^{T_*}_0 
\Bigl(\|\p_y u(\cdot, t)\|_{L^\infty} + \|\p_y \sigma(\cdot, t)\|_{L^\infty} + \|\p_y S(\cdot, t)\|_{L^\infty}\Bigr)    dt = +\infty 
\,.
\end{equation}
These facts follow  from the classical local well-posedness theory of symmetrizable quasilinear hyperbolic systems (see e.g.~\cite[Chapter V]{Da2005}, or \cite[Chapter 2]{Ma1984}).

For the 1D Euler equations~\eqref{eq:euler:2}, it is known that neither vacuum formation nor finite-time implosion is not possible on $\TT \times [0, T_*)$ (see~\cite{ChYoZh2013,Chen2017} and Proposition~\ref{prop:0thorder}  for the initial data studied in this paper),  for any choice of $u_0, \sigma_0, S_0 \in H^s(\TT)$ with $\sigma_0 > 0$ everywhere and $s > \frac{3}{2}$; using this uniform upper and lower bound on the sound speed, one can show that $(u,\sigma, S)$ remain uniformly bounded on $\TT \times [0, T_*)$. That is, option (a) above is not available to the 1D Euler\footnote{In multiple space dimensions, this is not the case: smooth finite-time implosions may be constructed~\cite{MeRaRoSz2022a, MeRaRoSz2022b, BuCLGS2022,CLGSShSt2023,ChCiShVi2024,Ch2024}. The formation of vacuum in finite time, for smooth multi-dimensional Euler dynamics remains however to date open.} dynamics with smooth and non-vacuous initial conditions. Thus,  the Eulerian blowup criterion~\eqref{eq:LCC:1} determines whether a finite-time singularity occurs at a time $T_* < \infty$.  

The Eulerian blowup criterion~\eqref{eq:LCC:1} simultaneously serves as a continuation criterion for smooth solutions, and as a criterion for the propagation of higher regularity. Of relevance to this manuscript is the fact that if the initial data $u_0, \sigma_0, S_0 \in W^{2n+2,\infty}(\TT) \subset H^2(\TT)$, $\sigma_0$ is positive, and $T_*$ is defined via~\eqref{eq:LCC:1} as the maximal time of existence of a $C^{1}_{y,t}$-smooth solution, then for all $t \in (0,T_*)$ we have that $\|(u,\sigma,S)(\cdot,t)\|_{W^{2n+2,\infty}(\TT)} < \infty$ (naturally, the upper bound blows up as $t\to T_*$).

The Eulerian blowup criterion may also be stated in terms of Riemann variables. If $n \geq 1$, then for all initial data $w_0,z_0, k_0 \in W^{2n+2, \infty}(\TT)$ with $w_0 -z_0 > 0$ there exists a positive maximal time of existence $T_* \in (0, +\infty]$ for which \eqref{eq:euler:RV} has a unique $C^1_{y,t}$ solution $(w,z,k)$ on $\TT \times [0, T_*)$ and if $T_* < \infty$ then  
\begin{equation}
\label{eq:LCC:2}
\int^{T_*}_0 
\Bigl(\|\p_y w(\cdot, t)\|_{L^\infty} + \|\p_y z(\cdot, t)\|_{L^\infty} + \|\p_y k(\cdot, t)\|_{L^\infty}\Bigr)    dt = +\infty 
\,.
\end{equation}
For the initial data $(w_0, z_0, k_0) \in \mathcal A_n(\eps, C_0)$, we will prove that $T_*$ is finite, so we will refer to $T_*$ in this context as the {\it{blowup time}}.




\section{Initial estimates along the fast acoustic characteristic}
\label{chp-estimates}

Let $\eta$ be the fast acoustic characteristics defined in \eqref{def:eta}. We will show that for all initial data $(w_0,z_0, k_0) \in \mathcal A_n(\eps, C_0)$, the corresponding solution $(w,z,k)$ of \eqref{eq:euler:RV} stays as smooth as the initial data when precomposed with $\eta$, i.e. the functions $w\circ \eta, z\circ \eta,$ and $k\circ \eta$ stay as smooth as $w_0,z_0,$ and $k_0$ {\it{ all the way up until its maximal time of existence, $T_*$}}. For all data in $\mathcal A_n(\eps, C_0)$, the maximal time of existence will be shown to be finite, and will be approximately determined as
\begin{equation*}
T_* = \tfrac{2}{1+\alpha} + \OO_\alpha(\eps) \,.
\end{equation*}
Furthermore, the time $T_*$ will be characterized as the first time when $\eta$ {\it{ceases being a diffeomorphism}}. That is, $T_*$ will be equivalently characterized in terms of the {\it{Lagrangian blowup criterion}} 
\begin{equation*}
\lim_{t\to T_*} \Bigl(\min_{x\in \TT} \eta_x (x,t) \Bigr) = 0\,.
\end{equation*}

If $(y,t) \in \TT \times \R$ are the {\it{Eulerian variables}} which the functions $w,z,k$ take as inputs, we will define the {\it{Lagrangian variables}} $(x,t) \in \TT \times \R$ via the equation
\begin{align}
(y,t) = (\eta(x,t), t).
\end{align}
We will spend the rest of the paper studying the Euler system in terms of functions of these Lagrangian variables defined in \S~ \ref{sec:system}. In \S~ \ref{sec:initialE}, we prove that for all data in $\mathcal A_n(\eps, C_0)$ the solution exists up until a time $T_* = \frac{2}{1+\alpha} + \OO_\alpha(\eps)$ at which point $\eta_x$ must have a zero, which corresponds to $\p_y w$ diverging to $-\infty$. Then in \S~ \ref{sec:HOE} we will perform energy estimates in $L^p$ for the higher order derivatives with respect to $x$ and $t$. In \S~\ref{sec:stab} we utilize similar energy estimates to quantify the stability of solutions  with data in $\mathcal A_n(\eps, C_0)$ with respect to small perturbations in $w_0$.


\subsection{The system in Lagrangian variables}
\label{sec:system}

With the   differentiated Riemann variables $\mathring{w}, \mathring{z},  \mathring{k}$ introduced   in~\eqref{def:q^w}, we  define 
\begin{equation*}
\Sigma : = \sigma \cir \eta \, , \qquad
\W : = \mathring{w}\cir \eta \, , \qquad
\Z : = \mathring{z} \cir \eta \, , \qquad
\K : = \mathring{k} \cir \eta\,.
\end{equation*}
These are functions of the Lagrangian variables $(x,t)$. 

It is straightforward to compute that
\begin{subequations}
\begin{align}
\label{id:Sigma_t}
\Sigma_t & = -\alpha \Sigma \Z + \tfrac{\alpha}{2\gamma} \Sigma^2 \K, \\
\label{id:Sigma_x}
\Sigma_x & = \tfrac{1}{2}\eta_x \W -\tfrac{1}{2}\eta_x \Z + \tfrac{1}{2\gamma} \eta_x \Sigma \K, \\
\label{id:Sigma^-1}
(\Sigma^{-1})_t & = \alpha \Sigma^{-1} \Z - \tfrac{\alpha}{2\gamma} \K, \\
\label{id:eta_xt}
\eta_{xt} & = \tfrac{1+\alpha}{2} \eta_x \W + \tfrac{1-\alpha}{2} \eta_x \Z + \tfrac{\alpha}{2\gamma} \eta_x \Sigma \K \\
\label{id:W_t}
(\eta_x \W)_t & = \tfrac{\alpha}{4\gamma} \Sigma \K (\eta_x \W + \eta_x \Z).
\end{align}
\end{subequations}

One can deduce from \eqref{eq:euler:RV} that
$$ \p_t(k\cir \eta) = \alpha \Sigma \K. $$
Letting $\p_x$ act on both sides  of this equation, we have that
\begin{equation*}
\p_t(\eta_x \K) = \alpha \Sigma \p_x \K + \alpha \Sigma_x \K,
\quad 
\Rightarrow 
\quad 
\eta_x \p_t \K = \alpha \Sigma \p_x \K + (\alpha \Sigma_x - \eta_{xt}) \K.
\end{equation*}
Plugging in \eqref{id:Sigma_x} and \eqref{id:eta_xt} gives us
\begin{equation}
\label{id:K_t}
\eta_x \p_t \K = \alpha \Sigma \p_x \K - \tfrac{1}{2} \K \eta_x \W - \tfrac{1}{2} \eta_x \K \Z.
\end{equation}

We can also derive a similar equation for $\Z$. Since
\begin{equation*}
 \p_t u + \lambda_3 \p_y u = \alpha \sigma \mathring{z}, 
 \end{equation*}
we conclude that
\begin{equation*}
 \p_t(u\cir \eta) = \alpha \Sigma \Z. 
 \end{equation*}
Letting $\p_x$ act on both sides of this equation and rearranging shows that
\begin{equation*}
\p_t( \eta_x \Z) = 2\alpha \p_x(\Sigma \Z) - (\eta_x \W)_t.
\end{equation*}
Expanding this equation and using \eqref{id:Sigma_x} - \eqref{id:W_t} gives us the identity
\begin{equation}
\label{id:Z_t}
\eta_x \p_t \Z = 2\alpha \Sigma \p_x \Z - \tfrac{1-\alpha}{2} \eta_x \W \Z - \tfrac{\alpha}{4\gamma} \Sigma \K \eta_x \W - \tfrac{1+\alpha}{2} \eta_x \Z^2 +\tfrac{\alpha}{4\gamma} \eta_x \Sigma \K \Z.
\end{equation}

For the rest of the paper, we will work with the variables $\eta_x, \Sigma, \eta_x \W, \Z,$ and $\K$, which solve the following system:
\begin{subequations}
\label{system:lagrange}
\begin{align}
\Sigma_t & = -\alpha \Sigma \Z + \tfrac{\alpha}{2\gamma} \Sigma^2 \K, \\
\Sigma_x & = \tfrac{1}{2}\eta_x \W -\tfrac{1}{2}\eta_x \Z + \tfrac{1}{2\gamma} \eta_x \Sigma \K, \\
(\Sigma^{-1})_t & = \alpha \Sigma^{-1} \Z - \tfrac{\alpha}{2\gamma} \K, \\
\eta_{xt} & = \tfrac{1+\alpha}{2} \eta_x \W + \tfrac{1-\alpha}{2} \eta_x \Z + \tfrac{\alpha}{2\gamma} \eta_x \Sigma \K \\
(\eta_x \W)_t & = \tfrac{\alpha}{4\gamma} \Sigma \K (\eta_x \W + \eta_x \Z) \\
\eta_x \p_t \K & = \alpha \Sigma \p_x \K - \tfrac{1}{2} \K \eta_x \W - \tfrac{1}{2} \eta_x \K \Z \\
\eta_x \p_t \Z & = 2\alpha \Sigma \p_x \Z - \tfrac{1-\alpha}{2} \eta_x \W \Z - \tfrac{\alpha}{4\gamma} \Sigma \K \eta_x \W - \tfrac{1+\alpha}{2} \eta_x \Z^2 +\tfrac{\alpha}{4\gamma} \eta_x \Sigma \K \Z.
\end{align}
\end{subequations}
Notice that we are treating the product $\eta_x \W$ as one of the basic variables rather than analyzing  $\W$ on its own. This is because the product $\eta_x \W$ will remain as smooth as the initial data while $\W$ does not.


\subsection{Initial estimates and determination of the blowup time}
\label{sec:initialE}

We will show that $\Sigma, \Sigma^{-1}, \eta_x, \eta_x \W, \Z$ and $\K$ all remain bounded as long as the solution exists, that the solution exists up until a time $T_* = \frac{2}{1+\alpha} + \OO_\alpha(\eps)$, and that at time $T_*$ $\eta_x$ must have a zero.

\begin{proposition}\label{prop:0thorder} Define the constants
\begin{equation}
B_z := 6^{\frac{2}{\min(1,\alpha)}}(2+\tfrac{1}{\gamma})e^{21}, 
\qquad 
\mbox{and}
\qquad
B_k  := 6^{\frac{1}{\alpha}}.
\end{equation}
Note that $B_k < B_z$. If $\eps$ is chosen small enough such that
\begin{equation*}
(1+ B_z) \eps \ll 1,
\end{equation*}
then for all $(x,t) \in \TT\times [0, T_* \wedge \tfrac{2}{1+\alpha}(1+\eps^{\frac{1}{2}}))$ we have that
\begin{subequations}
\label{ineq:0thorder}
\begin{align}
\tfrac{1}{3} \leq \Sigma & \leq 3, \\
|\eta_x \W| & \leq \tfrac{4}{3}, \\
\eta_x & \leq 3 , \\
|\K| & \leq B_k \eps, \\ 
|\Z| & \leq B_z \eps ,
\end{align} 
\end{subequations}
and
\begin{align}
\label{ineq:W}
\eta_x \W & \leq -\tfrac{1}{2} + 4 \eta_x.
\end{align}
\end{proposition}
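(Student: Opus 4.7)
The proof is a bootstrap argument combined with the judicious use of the slow ($\lambda_2$) and acoustic ($\lambda_1$) characteristic families to control the subdominant Riemann quantities. I would fix weakened versions of the bounds (e.g., $2B_k$, $2B_z$, $6$, $1/6$, $8/3$ in place of $B_k$, $B_z$, $3$, $1/3$, $4/3$), and let $T_B\in(0, T_*\wedge \tfrac{2}{1+\alpha}(1+\eps^{1/2})]$ be the supremum of times on which the weakened bounds all hold. The data assumptions in \eqref{data:all} together with continuity guarantee $T_B>0$. The goal is to strictly improve these bootstrap bounds on $[0,T_B]$ to those stated in \eqref{ineq:0thorder}--\eqref{ineq:W}, which by a standard continuation argument forces $T_B = T_*\wedge \tfrac{2}{1+\alpha}(1+\eps^{1/2})$.

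The bounds on $\eta_x\W$, $\Sigma$, $\eta_x$, and the compressive inequality \eqref{ineq:W} follow directly from the identities \eqref{id:Sigma_t}--\eqref{id:W_t}, which do not involve $\p_x$. Equation \eqref{id:W_t} yields $|(\eta_x\W)_t|=O(\eps)$ under the bootstrap, so $\eta_x\W=\mathring{w}_0(x)+O(\eps)$ and hence $|\eta_x\W|\leq 4/3$, since $|\mathring{w}_0|\leq 1+O(\eps)$ from \eqref{data:all}. Similarly, \eqref{id:Sigma_t} gives $|\Sigma_t|=O(\eps)$, so $\Sigma\in[1/3,3]$; and \eqref{id:eta_xt} gives $|\eta_{xt}|\leq \tfrac{2(1+\alpha)}{3}+O(\eps)$, hence $\eta_x\leq 3$ throughout $t\leq \tfrac{2(1+\eps^{1/2})}{1+\alpha}$. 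The compressive inequality \eqref{ineq:W} then follows by Taylor expansion of $4\eta_x-\eta_x\W-\tfrac{1}{2}\approx \tfrac{7}{2}+\mathring{w}_0(x)\bigl(2(1+\alpha)t-1\bigr)+O(\eps^{1/2})$, combined with $\mathring{w}_0(x)\geq -1-O(\eps)$ and $2(1+\alpha)t-1\leq 3+O(\eps^{1/2})$.

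The $\K$-bound uses that $k$ is conserved along $\lambda_2=u$. Along an integral curve of $\lambda_2$, the Euler equations yield the scalar ODEs $\tfrac{d}{dt}\log\sigma=-\alpha\p_y u$ (from $\p_t\sigma+u\p_y\sigma=-\alpha\sigma\p_y u$) and $\tfrac{d}{dt}\log|\mathring{k}|=-\p_y\lambda_2=-\p_y u$ (from the transport equation for $\mathring{k}$). Eliminating $\p_y u$ yields the pointwise conservation $|\mathring{k}|^\alpha/\sigma \equiv |\mathring{k}_0|^\alpha/\sigma_0$ along each $\lambda_2$-characteristic, i.e.\
\[
|\mathring{k}(y(t),t)| = |\mathring{k}_0(x)|\,\bigl(\sigma(y(t),t)/\sigma_0(x)\bigr)^{1/\alpha}.
\]
Since $\sigma\in[1/3,3]$, $\sigma_0\in(1/2,2)$, and $|\mathring{k}_0|<\eps$, this yields $|\K|\leq \eps\cdot 6^{1/\alpha}=B_k\eps$ directly, without any need to bound $\|\mathring{w}\|_{L^\infty_y}$ (which in fact diverges as $t\to T_*$).

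The $\Z$-bound is analogous in spirit but more delicate, since $\mathring{z}$ satisfies a \emph{non-homogeneous} transport equation along $\lambda_1=u-\alpha\sigma$ with source $-\tfrac{\alpha}{4\gamma}\sigma\mathring{k}(\mathring{w}+\mathring{z})$. The corresponding computation for $\sigma$ along $\lambda_1$ gives $\tfrac{d}{dt}\log\sigma\vert_{\lambda_1}=-\alpha\p_y w$, producing the uniform integrated bound $\sabs{\int_0^t \p_y w(\eta^{(1)}(x,s),s)\,ds}\leq \tfrac{\log 6}{\alpha}$ (and hence control on $\int_0^t\mathring{w}(\eta^{(1)},s)\,ds$ using the already-established $\K$-bound). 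An integrating-factor argument applied to the $\mathring{z}$-equation along $\lambda_1$, combined with these integrated controls, absorbs the $\mathring{w}$-dependent source and produces $|\Z|\leq B_z\eps$ provided $B_z$ is chosen as in the statement. The main technical obstacle throughout is precisely that $\mathring{w}$ is not uniformly bounded in the Eulerian sup-norm; the consistent resolution is to work along the transverse characteristics $\lambda_1$, $\lambda_2$, where \emph{integrated} quantities involving $\mathring{w}$ can be replaced by $\log$-ratios of $\sigma$, which remain uniformly controlled by the already-closed $\Sigma$-bound.
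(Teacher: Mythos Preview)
Your bootstrap structure and the improvements for $\Sigma$, $\eta_x\W$, $\eta_x$, and inequality \eqref{ineq:W} are correct and parallel the paper's. Your $\K$-bound via the exact conservation law $|\mathring{k}|\,\sigma^{-1/\alpha}=\text{const}$ along $\lambda_2$ is both correct and more direct than the paper's route: the paper instead runs the weighted $L^q$ energy $\int\Sigma^{-q/\alpha}\eta_x|\K|^q\,dx$ and sends $q\to\infty$, but the choice $\delta=1/\alpha$ there is precisely what encodes your conservation law, and both arguments yield exactly $B_k=6^{1/\alpha}$.

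There is, however, a genuine gap in your $\Z$-argument. The identity $\tfrac{d}{ds}\log\sigma\big|_{\lambda_1}=-\alpha\,\p_yw$ gives only the \emph{signed} bound $\bigl|\int_0^t\mathring{w}\,ds\bigr|\le\tfrac{\log6}{\alpha}+O(\eps)$ along $\lambda_1$. This suffices to bound the integrating factor for the homogeneous term $-\tfrac{1-\alpha}{2}\mathring{w}\,\mathring{z}$, but after Duhamel the inhomogeneous source $-\tfrac{\alpha}{4\gamma}\sigma\mathring{k}\,\mathring{w}$ leaves you with $\int_0^t I(s)\,\sigma\mathring{k}\,\mathring{w}\,ds$, in which $I$ and $\sigma\mathring{k}$ are bounded but \emph{variable}. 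Controlling this requires $\int_0^t|\mathring{w}|\,ds$ along $\lambda_1$ (equivalently, the total variation of $\log\sigma$ along $\lambda_1$), not merely the net change; since $\p_yw$ changes sign, your signed bound does not deliver this, and an integration-by-parts attempt introduces $\p_y\mathring{k}$, which is not controlled at this level. The paper avoids the issue altogether: it works with the coupled weighted $L^q$ energies $\int\Sigma^{-\delta q}\eta_x|\Z|^q\,dx$ and $\int\Sigma^{-\delta q}\eta_x|\K|^q\,dx$ with $\delta=\tfrac{2}{\min(1,\alpha)}$, where after integrating $\Sigma\,\p_x|\Z|^q$ by parts every appearance of $\mathring{w}$ is through the \emph{bounded} combination $\eta_x\W$. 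The compressive inequality \eqref{ineq:W} then turns the leading $\eta_x\W$-term into damping, and the cross term $\int|\Z|^{q-1}\Sigma\K\,\eta_x\W$ is handled directly via $|\eta_x\W|\le\tfrac{4}{3}$ and Young's inequality; sending $q\to\infty$ yields the stated constant $B_z$. Your characteristic route for $\Z$ can be salvaged, but it needs an additional ingredient you did not supply: for instance, changing variables $ds=-\tfrac{\eta_x}{2\alpha\Sigma}\,dx$ along $\lambda_1$ converts $\int|\mathring{w}|\,ds$ into $\int\tfrac{|\eta_x\W|}{2\alpha\Sigma}\,dx=O(\tfrac{1}{\alpha})\cdot(\text{total $x$-excursion})$, and the number of $x$-wraps can then be bounded using $\int_\TT\eta_x(\cdot,t)\,dx=1$ together with $|\eta_{xt}|=O(1+\alpha)$.
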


\begin{proof}
We will proceed with a bootstrap argument. Fix $T \in [0, T_* \wedge \tfrac{2}{1+\alpha}(1+\eps^{\frac{1}{2}}))$ and pick a constant
\begin{equation*}
B_z < A < 2B_z.
\end{equation*}
We make the following bootstrap assumptions:
\begin{align*}
\tfrac{1}{4} \leq \Sigma & \leq 4 & \forall \: (x,t) \in \TT \times [0,T], \\
|\eta_x \W| & \leq 2 & \forall \: (x,t) \in \TT \times [0,T], \\
\eta_x & \leq 4 & \forall \: (x,t) \in \TT \times [0,T],\\
|\K| & \leq A \eps & \forall \: (x,t) \in \TT \times [0,T], \\ 
|\Z| & \leq A\eps & \forall \: (x,t) \in \TT \times [0,T]. 
\end{align*}
It follows from \eqref{id:Sigma_t} that
\begin{equation*}
\Sigma(x,t) = \sigma_0(x) \exp\bigg( \int^t_0 (-\alpha \Z + \tfrac{\alpha}{2\gamma} \Sigma\K)(x,s) \: ds \bigg).
\end{equation*}
Since $T \leq \frac{2}{1+\alpha}(1+\eps^{\frac{1}{2}})$ and $\frac{2\alpha}{1+\alpha}(1+ \frac{2}{\gamma}) < 2$, it follows that
\begin{equation*}
\Sigma, \Sigma^{-1} 
\leq 2e^{\alpha t A(1 + \frac{2}{\gamma}) \eps} 
\leq 2e^{2A(1+\eps^{\frac{1}{2}})\eps} \,.
\end{equation*}
It follows that if $\eps \ll B_z$ and $\eps \ll 1$ then
\begin{align*}
\tfrac{1}{3} \leq \Sigma & \leq 3 & \forall \: (x,t) \in \TT \times [0,T].
\end{align*}

To improve upon the second bootstrap assumption, plugging $\Sigma \leq 3$ and our bootstrap assumptions into \eqref{id:W_t} gives us the bound
\begin{align*}
|(\eta_x \W)_t| & \leq\tfrac{3}{8} A \eps (2+ 4A\eps).
\end{align*}
Since
\begin{equation*}
(\eta_x \W)(x,t) = w'_0(x) - \tfrac{1}{2\gamma} \sigma_0(x) k'_0(x) + \int^t_0 (\eta_x \W)_t (x,s) \: ds
\end{equation*}
it follows that
\begin{align}
\label{ineq:W-w_0'}
|(\eta_x \W)(x,t)-w'_0(x)| & \leq \tfrac{1}{\gamma} \eps + \tfrac{2}{1+\alpha}(1+ \eps^{\frac{1}{2}})\tfrac{3}{8} A \eps (2+ 4A\eps) \,.
\end{align}
Therefore,
\begin{equation*}
|\eta_x \W|
\leq 1 + \eps \bigl[ 1+ \tfrac{1}{\gamma} + (1+ \eps^{\frac{1}{2}}) \tfrac{3}{4} A (2+ 4A\eps) \bigr] 
\leq \tfrac{4}{3}
\end{equation*}
if $B_z\eps \ll 1$ and $\eps \ll 1$. This proves our second inequality.

To prove the third inequality in \eqref{ineq:0thorder}, our identity \eqref{id:eta_xt} for $\eta_{xt}$ paired with \eqref{ineq:W-w_0'} gives us
\begin{align*}
\eta_x = 1 + \int^t_0 \eta_{xt} \: ds 
& = 1 + \tfrac{1+\alpha}{2}t w'_0 + \tfrac{1+\alpha}{2} \int^t_0 (\eta_x \W-w'_0) \: ds +  \tfrac{1-\alpha}{2} \int^t_0 \eta_x \Z \: ds + \tfrac{\alpha}{2\gamma} \int^t_0 \eta_x \Sigma \K \: ds \\
& = 1 + \tfrac{1+\alpha}{2}t w'_0 + \OO ( \tfrac{1+\alpha}{2} t(1+ A\eps)(1+A) \eps) \\
& =  1 + \tfrac{1+\alpha}{2}t w'_0 + \OO ( (1+ A\eps)(1+A) \eps)
\end{align*}
provided that $\eps \ll 1$. Therefore
\begin{align}
\label{ineq:eta_xt-stuff}
|\eta_{xt}(x,t) -1 - \tfrac{1+\alpha}{2}tw'_0(x)| & \leq \OO ( (1+ A\eps)(1+A) \eps) \,.
\end{align}
From here it follows that
\begin{equation*}
\eta_x 
\leq 1 + (1+\eps^{\frac{1}{2}})(1+\eps) + \OO ( (1+ A\eps)(1+A) \eps) 
\leq 3
\end{equation*}
for all $(x,t) \in \TT \times [0, T]$.

Before improving upon the last two bootstrap assumptions, we will prove \eqref{ineq:W} for $t \leq T$. When $0 \leq t \leq T \wedge \frac{1}{1+\alpha}$, \eqref{ineq:eta_xt-stuff} gives us
\begin{align*}
\eta_x & \geq 1+ \tfrac{1+\alpha}{2} t w'_0 + \OO ( (1+ A\eps)(1+A) \eps) \\
& \geq 1 -\tfrac{1}{2}(1+\eps) + \OO ( (1+ A\eps)(1+A) \eps).
\end{align*}
Using this lower bound in conjunction with \eqref{ineq:W-w_0'} results in
\begin{align*}
 \eta_x \W & \leq 1 + \OO ( (1+ A\eps)(1+A) \eps) \\
& = 1- 4 \eta_x + 4\eta_x + \OO ( (1+ A\eps)(1+A) \eps) \\
& \leq -1 + 4\eta_x + \OO ( (1+ A\eps)(1+A) \eps) \\
& \leq -\tfrac{1}{2} + 4\eta_x.
\end{align*}
The last inequality here is true provided that $(1+B_z)\eps \ll 1$. For $\frac{1}{1+\alpha} \leq t \leq T$  we have $1+\alpha \geq \frac{1}{t} \geq \frac{1+\alpha}{2(1+\eps^{\frac{1}{2}})}$, so the inequalities \eqref{ineq:W-w_0'} and \eqref{ineq:eta_xt-stuff} imply that
\begin{align*}
\eta_x \W & = \tfrac{2}{1+\alpha} \tfrac{1}{t} (\eta_x -1) + \tfrac{2}{1+\alpha}\tfrac{1}{t}(1+ \tfrac{1+\alpha}{2}t w'_0 -\eta_x) + (\eta_x \W-w'_0) \\
& \leq 2 \eta_x-\tfrac{1}{1+\eps^{\frac{1}{2}}} + \tfrac{2}{1+\alpha}\tfrac{1}{t}(1+ \tfrac{1+\alpha}{2}t w'_0 -\eta_x) + (\eta_x \W-w'_0) \\
& = 2 \eta_x-\tfrac{1}{1+\eps^{\frac{1}{2}}} + \OO ( (1+ A\eps)(1+A) \eps) \\
& \leq 2\eta_x -\tfrac{1}{2}. 
\end{align*} 
This proves \eqref{ineq:W} for $(x,t) \in \TT \times [0,T]$. We will now use this inequality to prove the last two inequalities.

For $1< q< \infty$ and a constant $\delta \geq 0$ to be determined, define the quantities
\begin{align*}
\E^{0,q}_k(t) & : = \int_\TT \Sigma^{-\delta q} \eta_x |\K|^q \: dx, \\
\E^{0,q}_z(t) & : = \int_\TT \Sigma^{-\delta q} \eta_x |\Z|^q \: dx.
\end{align*}
Using \eqref{system:lagrange} we compute that
\begin{align*}
\dot{\E}^{0,q}_k 
& = -\delta q \int \tfrac{\Sigma_t}{\Sigma} \Sigma^{-\delta q} \eta_x |\K|^q + \int \Sigma^{-\delta q} \eta_{xt} |\K|^q + q\int \Sigma^{-\delta q}  |\K|^{q-1} \sgn(\K)  \eta_x \p_t \K \\
& = \int \Sigma^{-\delta q} \eta_x |\K|^q[ \alpha \delta q(\Z-\tfrac{1}{2\gamma} \Sigma \K) + ( \tfrac{1-\alpha}{2} \Z + \tfrac{\alpha}{2\gamma} \Sigma \K)] \\
&\quad 
+ \int \Sigma^{-\delta q} |\K|^q \tfrac{1+\alpha}{2} \eta_x \W 
+ q\int \Sigma^{-\delta q}  |\K|^{q-1} \sgn(\K)  \eta_x \p_t \K \\
& = \int \Sigma^{-\delta q} \eta_x |\K|^q[ \alpha \delta q(\Z-\tfrac{1}{2\gamma} \Sigma \K) + ( \tfrac{1-\alpha}{2} \Z + \tfrac{\alpha}{2\gamma} \Sigma \K)] 
+ \int \Sigma^{-\delta q}  |\K|^q \tfrac{1+\alpha}{2} \eta_x \W  
\\
&\quad 
+ \alpha \int \Sigma^{1-\delta q} \p_x \big( |\K|^q \big) 
+ \int \Sigma^{-\delta q}  |\K|^q [-\tfrac{q}{2} \eta_x \W] 
+ \int \Sigma^{-\delta q} \eta_x |\K|^q[-\tfrac{q}{2} \Z] \\
& = \int \Sigma^{-\delta q} \eta_x |\K|^q[ \alpha \delta q(\Z-\tfrac{1}{2\gamma} \Sigma \K) + ( \tfrac{1-\alpha}{2} \Z + \tfrac{\alpha}{2\gamma} \Sigma \K)] \\
&\quad + \int \Sigma^{-\delta q}  |\K|^q \tfrac{1+\alpha}{2} \eta_x \W
+ \int \Sigma^{-\delta q}  |\K|^q [\tfrac{1}{2}\alpha(\delta q-1) \eta_x \W] \\
&\quad + \alpha(\delta q-1)\int \Sigma^{-\delta q} \eta_x |\K|^q[-\tfrac{1}{2} \Z + \tfrac{1}{2\gamma} \Sigma \K] 
+ \int \Sigma^{-\delta q}  |\K|^q [-\tfrac{q}{2} \eta_x \W] + \int \Sigma^{-\delta q} \eta_x |\K|^q[-\tfrac{q}{2} \Z] \\
& = [\tfrac{1+\alpha}{2} + \tfrac{\alpha}{2}(\delta q-1) -\tfrac{q}{2}]  \int \Sigma^{-\delta q}  |\K|^q \eta_x \W \\
&\quad + [\alpha \delta q + \tfrac{1-\alpha}{2} - \tfrac{\alpha}{2} (\delta q-1) - \tfrac{q}{2}]\int \Sigma^{-\delta q} \eta_x |\K|^q \Z  + [-\alpha \delta q + \alpha + \alpha(\delta q-1)]\int \Sigma^{-\delta q} \eta_x |\K|^q \tfrac{1}{2\gamma} \Sigma \K \\
& = [ \tfrac{\alpha}{2}\delta q - \tfrac{q-1}{2} ]  \int \Sigma^{-\delta q}  |\K|^q \eta_x \W +  [ \tfrac{\alpha}{2}\delta q - \tfrac{q-1}{2} ] \int \Sigma^{-\delta q} \eta_x |\K|^q \Z.
\end{align*}
Performing analogous computations for $\E^{0,q}_z$, we find that
\begin{subequations}
\begin{align}
\label{eq:dtE:K:0}
\dot{\E}^{0,q}_k & = [ \tfrac{\alpha}{2}\delta q - \tfrac{q-1}{2} ]  \int \Sigma^{-\delta q}  |\K|^q \eta_x \W +  [ \tfrac{\alpha}{2}\delta q - \tfrac{q-1}{2} ] \int \Sigma^{-\delta q} \eta_x |\K|^q \Z, \\
\dot{\E}^{0,q}_z & =  [\alpha \delta q - \tfrac{1-\alpha}{2}(q-1)]\int \Sigma^{-\delta q}  |\Z|^q \eta_x \W  
- q\tfrac{\alpha}{4\gamma} \int \Sigma^{-\delta q} |\Z|^{q-1} \sgn(\Z) \Sigma \K \eta_x \W
\notag \\
&\quad  - \tfrac{1+\alpha}{2}(q-1) \int \Sigma^{-\delta q} \eta_x |\Z|^q \Z + [\alpha \delta q + \alpha (\tfrac{q}{2}-1)]  \int \Sigma^{-\delta q} \eta_x |\Z|^q \tfrac{1}{2\gamma} \Sigma \K 
. 
\label{eq:dtE:Z:0}
\end{align}
\end{subequations}

First, let us derive bounds on $\K$. Choosing $\delta = \frac{1}{\alpha}$ and using \eqref{eq:dtE:K:0}, \eqref{ineq:W}, and our bound $\eta_x \leq 3$ gives us
\begin{equation*}
\dot{\E}^{0,q}_k = \tfrac{1}{2} \int \Sigma^{-\frac{q}{\alpha}}  |\K|^q \eta_x \W +  \tfrac{1}{2}\int \Sigma^{-\frac{q}{\alpha}} \eta_x |\K|^q \Z 
 \leq -\tfrac{1}{4}\int \Sigma^{-\frac{q}{\alpha}}  |\K|^q + (2+ \tfrac{A}{2}\eps) \E^{0,q}_k 
\leq (\tfrac{23}{12} + \tfrac{A}{2}\eps) \E^{0,q}_k.
\end{equation*}
and thus
\begin{equation*}
 \E^{0,q}_k(t)^{1/q} 
 \leq \E^{0,q}_k(0)^{1/q} e^{\frac{1}{q}(\frac{23}{12} + \frac{A}{2}\eps) t}  
 \leq 2^{\frac{1}{\alpha}} \|k'_0\|_{L^q_x}  e^{\frac{1}{q}(\frac{23}{12} + \frac{A}{2}\eps) t} .
\end{equation*}
Therefore, for any $t \in [0,T]$ we have that
\begin{equation*}
\| \Sigma^{-\frac{1}{\alpha}} \K \|_{L^\infty_x} =
\lim_{q \rightarrow \infty} \E^{0,q}_k(t)^{1/q}
\leq \limsup_{q \rightarrow \infty}  2^{\frac{1}{\alpha}} \|k'_0\|_{L^\infty_x} e^{\frac{1}{q}(\frac{23}{12} + \frac{A}{2}\eps) t}
= 2^{\frac{1}{\alpha}}  \|k'_0\|_{L^\infty_x}.
\end{equation*}
So
\begin{equation*}
|\K| \leq 6^{\frac{1}{\alpha}} \|k'_0\|_{L^\infty_x} , \qquad \forall \: (x,t) \in \TT \times [0,T].
\end{equation*}

To bound $\Z$, it is best to bound $\E^{0,q}_k + \E^{0,q}_z$. If $\delta$ is large enough that $\tfrac{\alpha}{2}\delta q - \tfrac{q-1}{2} > 0$ and $\alpha \delta q - \tfrac{1-\alpha}{2}(q-1) > 0$, then adding \eqref{eq:dtE:K:0} and \eqref{eq:dtE:Z:0} together and using \eqref{ineq:W} gives us 
\begin{align*}
& \dot{\E}^{0,q}_k + \dot{\E}^{0,q}_z 
+ \tfrac{1}{2}[\tfrac{\alpha}{2}\delta q - \tfrac{q-1}{2}] \int \Sigma^{-\delta q} |\K|^q + \tfrac{1}{2}[ \alpha \delta q - \tfrac{1-\alpha}{2}(q-1)] \int \Sigma^{-\delta q} |\Z|^q \\
&  \leq [\tfrac{\alpha}{2}\delta q - \tfrac{q-1}{2}](4+ A\eps) \E^{0,q}_k  + \big( 4 [\alpha \delta q - \tfrac{1-\alpha}{2}(q-1)] + \tfrac{3}{2\gamma} A\eps [\alpha \delta q + \alpha (\tfrac{q}{2}-1)] + \tfrac{1+\alpha}{2}(q-1)A \eps \big) \E^{0,q}_z \\
&\quad - q\tfrac{\alpha}{4\gamma} \int \Sigma^{-\delta q} |\Z|^{q-1} \sgn(\Z) \Sigma \K \eta_x \W.
\end{align*}
Since $|\Sigma \eta_x \W| \leq 3 \cdot \frac{4}{3} = 4$ we have
\begin{align*}
-q\tfrac{\alpha}{4\gamma} \int \Sigma^{-\delta q} |\Z|^{q-1} \sgn(\Z) \Sigma \K \eta_x \W & \leq \tfrac{\alpha}{\gamma}(q-1) \int \Sigma^{-\delta q} |\Z|^q + \tfrac{\alpha}{\gamma} \int \Sigma^{-\delta q} |\K|^q,
\end{align*}
and our bound on $\dot{\E}^{0,q}_k + \dot{\E}^{0,q}_z$ becomes
\begin{align*}
& \dot{\E}^{0,q}_k + \dot{\E}^{0,q}_z  
+ \tfrac{1}{2}[\tfrac{\alpha}{2}\delta q - \tfrac{q-1}{2}-\tfrac{2\alpha}{\gamma}] \int \Sigma^{-\delta q} |\K|^q + \tfrac{1}{2}[ \alpha \delta q - \tfrac{1-\alpha}{2}(q-1) - \tfrac{2\alpha}{\gamma}(q-1)] \int \Sigma^{-\delta q} |\Z|^q \\
&  \leq [\tfrac{\alpha}{2}\delta q - \tfrac{q-1}{2}](4+ A\eps) \E^{0,q}_k 
+ \big( 4 [\alpha \delta q - \tfrac{1-\alpha}{2}(q-1)] + \tfrac{3}{2\gamma} A\eps [\alpha \delta q + \alpha (\tfrac{q}{2}-1)] + \tfrac{1+\alpha}{2}(q-1)A \eps \big) \E^{0,q}_z.
\end{align*}
Now choose $\delta = \frac{2}{\min(1,\alpha)}$.  This choice of $\delta$ is large enough that the factors in front of the damping terms are positive, and we get
\begin{align*}
 \dot{\E}^{0,q}_k + \dot{\E}^{0,q}_z  & \leq 10 \max(1, \alpha) q [1 + \tfrac{1}{2}A \eps] ( \E^{0,q}_k + \E^{0,q}_z).
 \end{align*}
Since $T \leq \frac{2}{1+\alpha} (1+ \eps^{\frac{1}{2}})$, we conclude that
\begin{align*}
(\E^{0,q}_k(t) + \E^{0,q}_z(t))^{1/q} & \leq (\E^{0,q}_k(0) + \E^{0,q}_z(0))^{1/q} e^{10 \max(1, \alpha)  [1 + \frac{1}{2}A \eps] t} \\
& \leq 2^{\frac{2}{\min(1,\alpha)}}(2+ \tfrac{1}{\gamma}) \max(\|k'_0\|_{L^\infty_x}, \|z'_0\|_{L^\infty_x}) e^{20(1+\eps^{\frac{1}{2}})[1+ \frac{1}{2}A\eps]}
\end{align*}
for all $t \in [0,T]$. If $\eps$ is small enough that $20(1+\eps^{\frac{1}{2}})(1+B_z \eps) < 21$, sending $q \rightarrow \infty$ now gives us the bound $|\Z| \leq B_z \max(\|k'_0\|_{L^\infty_x}, \|z'_0\|_{L^\infty_x})$ in the same manner as before, which finishes our bootstrap argument.
\end{proof}

\begin{proposition}\label{prop:T_*}
If $\eps$ is chosen small enough so that
\begin{equation*}
(1+B_z)^2 \eps \ll 1
\end{equation*}
then the blowup time $T_*$ satisfies
\begin{subequations}\label{approx:T_*}
\begin{equation}
\label{ineq:T_*}
 (1-\eps^{\frac{1}{2}}) \tfrac{2}{1+\alpha} < T_* < (1+\eps^{\frac{1}{2}})\tfrac{2}{1+\alpha},
\end{equation}
and in particular
\begin{equation}
\label{eq:T_*}
T_* = \tfrac{2}{1+\alpha}[1 + \OO( (1+B_z)\eps) ].
\end{equation}
\end{subequations}
It follows immediately from \eqref{ineq:T_*} that \eqref{ineq:0thorder} holds for all $(x,t) \in \TT \times [0, T_*)$.
\end{proposition}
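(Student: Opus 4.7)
The strategy is to sandwich $T_*$ using the first-order expansion $\eta_x(x,t) = 1 + \tfrac{1+\alpha}{2} t w_0'(x) + \OO((1+B_z)\eps)$ established in the course of proving Proposition~\ref{prop:0thorder} (see~\eqref{ineq:eta_xt-stuff}), together with the equivalence between the Eulerian blowup criterion~\eqref{eq:LCC:2} and the Lagrangian criterion $\min_x \eta_x(\cdot,t) \to 0$.

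The preliminary observation, used throughout, is that whenever the bounds of Proposition~\ref{prop:0thorder} hold and $\eta_x$ remains uniformly bounded away from zero on $[0,T]$, the Eulerian blowup criterion cannot be saturated on $[0,T]$: indeed, $\p_x(w\!\cir\!\eta) = \eta_x \W + \tfrac{1}{2\gamma}\eta_x\Sigma\K$ is uniformly bounded by Proposition~\ref{prop:0thorder}, so $\p_y w \!\cir\!\eta = \p_x(w\!\cir\!\eta)/\eta_x$ remains bounded, and analogously for $\p_y z$ and $\p_y k$. For the \emph{lower bound}, I would argue by contradiction: if $T_* \leq (1-\eps^{1/2}) \tfrac{2}{1+\alpha}$, then $T_* < \tfrac{2}{1+\alpha}(1+\eps^{1/2})$, so Proposition~\ref{prop:0thorder} applies on $[0,T_*)$ and the expansion yields, using $w_0'(x) > -1-\eps$ from~\eqref{data:2},
\[
\eta_x(x,t) > 1 - \tfrac{1+\alpha}{2} t (1+\eps) - \OO((1+B_z)\eps) \geq \eps^{1/2} - \OO(\eps) - \OO((1+B_z)\eps) \geq \tfrac{1}{2}\eps^{1/2}
\]
on $[0,T_*)$, provided $(1+B_z)^2 \eps \ll 1$. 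The preliminary observation then contradicts the Eulerian blowup criterion at time $T_*$.

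For the \emph{upper bound}, assume for contradiction that $T_* \geq (1+\eps^{1/2})\tfrac{2}{1+\alpha}$, so that Proposition~\ref{prop:0thorder} applies on $[0, \tfrac{2}{1+\alpha}(1+\eps^{1/2}))$. By~\eqref{data:2}, pick $x_0 \in \TT$ with $w_0'(x_0) \leq -1 + \eps$. Evaluating the expansion at $x_0$ and $T := \tfrac{2}{1+\alpha}(1+\tfrac{3}{4}\eps^{1/2})$ yields
\[
\eta_x(x_0, T) \leq 1 - (1+\tfrac{3}{4}\eps^{1/2})(1-\eps) + \OO((1+B_z)\eps) \leq -\tfrac{1}{2}\eps^{1/2} < 0,
\]
again under $(1+B_z)^2 \eps \ll 1$. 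Since $\eta_x(x_0,0) = 1$, by continuity there exists $t_* \in (0,T)$ with $\eta_x(x_0, t_*) = 0$ (and $\eta_x(x_0, \cdot) > 0$ on $[0,t_*)$). Inequality~\eqref{ineq:W} forces $(\eta_x\W)(x_0, t_*) \leq -\tfrac{1}{2}$, so $\W(x_0, t) \to -\infty$ as $t \to t_*^-$; moreover, since the numerator in $\p_y w\!\cir\!\eta = (\eta_x\W + \tfrac{1}{2\gamma}\eta_x\Sigma\K)/\eta_x$ stays bounded away from zero while $\eta_x(x_0,t) \downarrow 0$ linearly (its time derivative, given by~\eqref{id:eta_xt}, is strictly negative at $t_*$), we obtain $\|\p_y w(\cdot,t)\|_{L^\infty_y} \gtrsim (t_*-t)^{-1}$, whose integral on $[0,t_*)$ diverges. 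This forces $T_* \leq t_* < T < (1+\eps^{1/2})\tfrac{2}{1+\alpha}$, a contradiction.

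With both sandwich inequalities established, Proposition~\ref{prop:0thorder} applies on the full interval $[0,T_*)$, and one obtains the refined statement~\eqref{eq:T_*} by rerunning the two arguments at time $t$ approaching $T_*$: since $\eta_x$ is forced to a zero at some $x_*$ exactly at time $T_*$, the expansion gives $0 = 1 + \tfrac{1+\alpha}{2} T_* w_0'(x_*) + \OO((1+B_z)\eps)$, and combined with $\min w_0' \in (-1-\eps, -1+\eps)$ this yields $T_* = \tfrac{2}{1+\alpha}[1 + \OO((1+B_z)\eps)]$. The main obstacle is bookkeeping: one must verify that the $\OO((1+B_z)\eps)$ error in the expansion of $\eta_x$ is genuinely smaller than the $\eps^{1/2}$ gap used to push past and to trap the blowup time, which is exactly why the smallness assumption is chosen as $(1+B_z)^2 \eps \ll 1$.
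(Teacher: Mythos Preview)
Your proposal is correct and follows the same strategy as the paper: sandwich $T_*$ using the expansion $\eta_x = 1 + \tfrac{1+\alpha}{2}t\,w_0' + \OO((1+B_z)\eps)$ together with the equivalence between the Eulerian blowup criterion and the vanishing of $\min_x \eta_x$. The paper streamlines the upper-bound step by invoking the elementary fact that $\eta_x > 0$ automatically on $[0,T_*)$ (since $\eta(\cdot,t)$ is a diffeomorphism while the classical solution exists), which at the near-minimum point of $w_0'$ directly yields the sharp estimate $\tfrac{1+\alpha}{2}T_* \leq 1 + \OO((1+B_z)\eps)$ without your detour through constructing an explicit first zero $t_*$ and re-invoking the blowup criterion there.
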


\begin{proof}
Recall from the proof of the previous proposition that
\begin{align*}
\eta_x \W & = w'_0 + \OO( (1+B_z)\eps), \\
\eta_x & = 1 + \tfrac{1+\alpha}{2}t [w'_0 + \OO( (1+B_z)\eps)],
\end{align*}
for all $(x,t) \in \TT \times [0, T_* \wedge \frac{2}{1+\alpha}(1+ \eps^{\frac{1}{2}}) )$.
Since $\eta_x > 0$ for all $t < T_*$, it follows from \eqref{data:2} that 
\begin{align*}
0 < \eta_x < 1 - \tfrac{1+\alpha}{2} t [(1-\eps) + \OO( (1+B_z)\eps)]
\end{align*}
for all $(x,t) \in \TT \times [0, T_* \wedge \frac{2}{1+\alpha}(1+ \eps^{\frac{1}{2}}) )$. Therefore,
\begin{equation*}
\tfrac{1+\alpha}{2} T_* \wedge (1+\eps^{\frac{1}{2}}) 
\leq \tfrac{1}{1-\eps + \OO( (1+B_z)\eps)} 
= 1 + \OO( (1+B_z)\eps).
\end{equation*}
Since $(1+B_z)^2 \eps \ll 1$, we can conclude that
\begin{align*}
 T_* \leq \tfrac{2}{1+\alpha}[1 + \OO( (1+B_z)\eps)] < \tfrac{2}{1+\alpha}(1+ \eps^{\frac{1}{2}}).
\end{align*}
Therefore, $T_* = T_* \wedge \frac{2}{1+\alpha}(1+ \eps^{\frac{1}{2}})$,  and the inequalities \eqref{ineq:0thorder} are true on $\TT \times [0, T_*)$.

It follows that
\begin{align*}
\int^{T_*}_0 \Bigl(\|\p_y z(\cdot, t)\|_{L^\infty_y} + \|\p_y k(\cdot, t)\|_{L^\infty_y}\Bigr)    dt & \leq T_*[ B_z + (1+ \tfrac{3}{2\gamma}) B_k] \eps,
\end{align*}
and
\begin{align*}
\int^{T_*}_0 \|\p_y w(\cdot, t)\|_{L^\infty_y} \: dt & \leq \int^{T_*}_0 \tfrac{4}{3} \tfrac{1}{\min_{x \in \TT} \eta_x(x, t)} + \tfrac{3}{2\gamma} B_k \eps \: dt.
\end{align*}
Since $T_*$ is finite, it follows from \eqref{eq:LCC:2} that
\begin{equation*}
\lim_{t \rightarrow T_*} \min_{x \in \TT} \eta_x(x, t) = 0.
\end{equation*}
For all $t < T_*$, our initial data assumption \eqref{data:2} implies that
\begin{align*}
\eta_x & > 1 - \tfrac{1+\alpha}{2} t [1+ \eps + \OO( (1+B_z)\eps)].
\end{align*}
Sending $t \rightarrow T_*$ and letting $(1+B_z)^2 \eps$ be sufficiently small gives us
\begin{equation*}
\tfrac{1+\alpha}{2} T_* 
\geq \tfrac{1}{1+ \OO((1+B_z) \eps)}
= 1+ \OO((1+B_z)\eps)
> 1-\eps^{\frac{1}{2}}.
\end{equation*}

\end{proof}


\section{Higher order estimates}
\label{sec:HOE}

In this section, we will prove estimates on the derivatives of $\Sigma, \eta_x, \eta_x\W, \Z,$ and $\K$  with constants that do not depend on our choice of $(w_0,z_0, k_0) \in \mathcal A_n(\eps, C_0)$, and whose dependence on $\eps, \alpha, n,$ and $C_0$ is explicit. For the entirety of this section, we assume that $\eps$ is chosen small enough such that the results of the previous section hold.


\subsection{Estimates at time zero}
\label{sec:ineq:t=0}

\begin{proposition}\label{prop:t=0}
 Let $\C_x$ and $\C_t$ and be constants satisfying
\begin{itemize}[leftmargin=*]
\item $\C_x \geq 2e^3 C_0$,
\item $\C_x \gg 1$,
\item $\C_t \gg (1+\alpha)$,
\item $\alpha \C_x \ll \C_t$.
\end{itemize}
Then if $(1+\alpha)\eps \ll 1$ the following inequalities hold at time $t=0$ for all $|\beta| \leq 2n+1$:
\begin{subequations}\label{ineq:prop:t=0}
\begin{align}
\label{ineq:prop:t=0:1}
\frac{(|\beta|+1)^2\inorm{\p^\beta \K}_{L^\infty_x}}{|\beta|! \C^{\beta_x}_x \C^{\beta_t}_t} & \leq \eps \\
\label{ineq:prop:t=0:2}
\frac{(|\beta|+1)^2\inorm{\p^\beta \Z}_{L^\infty_x}}{|\beta|! \C^{\beta_x}_x \C^{\beta_t}_t} & \leq (1+\tfrac{1}{\gamma}) \eps  \\
\label{ineq:prop:t=0:3}
\frac{(|\beta|+1)^2\inorm{\p^\beta \Sigma_t}_{L^\infty_x}}{|\beta|! \C^{\beta_x}_x \C^{\beta_t}_t} & \leq 7\alpha \eps   \\
\label{ineq:prop:t=0:4}
\frac{(|\beta|+1)^2\inorm{\p^\beta \eta_{xt}}_{L^\infty_x}}{|\beta|! \C^{\beta_x}_x \C^{\beta_t}_t} & \leq \tfrac{1+\alpha}{2}(1+2\eps) \\
\label{ineq:prop:t=0:5}
\frac{(|\beta|+1)^2\inorm{\p^\beta (\eta_x \W)_t}_{L^\infty_x}}{|\beta|! \C^{\beta_x}_x \C^{\beta_t}_t} & \leq 8\tfrac{\alpha}{\gamma}\eps.
\end{align}
\end{subequations}
\end{proposition}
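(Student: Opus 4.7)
The plan is to prove the five estimates of Proposition~\ref{prop:t=0} simultaneously, by induction on $|\beta|$ (and, within each $|\beta|$-level, on $\beta_t$), using that at $t=0$ every quantity reduces to an explicit algebraic expression in $w_0, z_0, k_0, \sigma_0$ and their spatial derivatives. Indeed, since $\eta(x,0)=x$ we have $\eta_x|_{t=0}\equiv 1$ and $\p_x^j\eta_x|_{t=0}\equiv 0$ for $j\geq 1$, so plugging into \eqref{system:lagrange} yields the closed-form initial values
\[
\Sigma_t|_{t=0} = -\alpha\sigma_0 z_0',\qquad
\eta_{xt}|_{t=0} = \tfrac{1+\alpha}{2}w_0'+\tfrac{1-\alpha}{2}z_0',\qquad
(\eta_x\W)_t|_{t=0} = \tfrac{\alpha}{4\gamma}\sigma_0 k_0'(w_0'+z_0'),
\]
together with $\K|_{t=0}=k_0'$ and $\Z|_{t=0}=z_0'+\tfrac{1}{2\gamma}\sigma_0 k_0'$, where $\sigma_0=\tfrac{1}{2}(w_0-z_0)$.

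The base case $\beta_t=0$ is handled by Leibniz in $x$. The crucial combinatorial device is that the weight $\tfrac{(|\beta|+1)^2}{|\beta|!\,\C_x^{\beta_x}\C_t^{\beta_t}}$ is submultiplicative under products, thanks to the convolution estimate $\sum_{i=0}^n(i+1)^{-2}(n-i+1)^{-2}\lesssim (n+1)^{-2}$; together with the elementary inequality $(\beta_x+1)^2(C_0/\C_x)^{\beta_x}\leq 1$ for all $\beta_x\geq 0$ (valid because $\C_x\geq 2e^3C_0$, as one checks at $\beta_x=0,1$ and then uses exponential decay), this turns the initial-data bounds \eqref{data:1}--\eqref{data:9} directly into \eqref{ineq:prop:t=0:1}--\eqref{ineq:prop:t=0:5}. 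The precise leading constants $1+\tfrac{1}{\gamma}$, $7\alpha$, $\tfrac{1+\alpha}{2}(1+2\eps)$, $8\alpha/\gamma$ are read off from the $\beta=0$ evaluation of the three algebraic formulas above (using $|w_0'|<1+\eps$, $|z_0'|,|k_0'|<\eps$, $|\sigma_0|<2$), the residual contributions being $\OO(\eps)$.

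For the inductive step $\beta_t\geq 1$, I would differentiate either the algebraic identities \eqref{id:Sigma_t}, \eqref{id:eta_xt}, \eqref{id:W_t} (for $\Sigma_t, \eta_{xt}, (\eta_x\W)_t$) or the transport identities \eqref{id:K_t}, \eqref{id:Z_t} (for $\K$ and $\Z$) a total of $\beta_t-1$ times in $t$ and $\beta_x$ times in $x$, then expand via Leibniz and evaluate at $t=0$. In the transport case, the Leibniz expansion of $\p_x^{\beta_x}\p_t^{\beta_t-1}(\eta_x\p_t\K)$ strips off the $\eta_x$ factor cleanly at $t=0$ (because $\p_x^j\eta_x|_{t=0}=0$ for $j\geq 1$); the surviving lower-order corrections involve $\p_t^{j-1}\eta_{xt}|_{t=0}$ and are controlled by the inductive bound \eqref{ineq:prop:t=0:4}. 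The key cost is the trade $\p_t\K\leftrightarrow \alpha\Sigma\p_x\K$ (and similarly for $\Z$), which in the weighted norm exchanges one factor of $\C_t$ for $\C_x$ together with $\alpha$; this is precisely absorbed by the hypothesis $\alpha\C_x\ll\C_t$. The remaining Leibniz products each carry at least one $\OO(\eps)$ factor by induction, so they contribute at order $\OO(\eps^2)$ and the induction closes with room to spare.

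The main obstacle is the combinatorial bookkeeping: one must verify that the exact numerical prefactors ($7\alpha$, $8\alpha/\gamma$, $\tfrac{1+\alpha}{2}(1+2\eps)$, $1+\tfrac{1}{\gamma}$) survive the induction without degradation. This is exactly what the three structural hypotheses on $\C_x,\C_t$ are designed to accomplish: $\C_x\gg 1$ absorbs the submultiplicativity constant coming from the Leibniz sums, $\C_t\gg (1+\alpha)$ absorbs the $(1+\alpha)$ prefactor appearing in \eqref{id:eta_xt}, and $\alpha\C_x\ll\C_t$ absorbs the cost of the single $\p_t\to\p_x$ trade via \eqref{id:K_t}--\eqref{id:Z_t}. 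Because each of these losses enters multiplicatively with a factor $1+\OO(\text{small})$, the desired leading constants are preserved at every inductive step.
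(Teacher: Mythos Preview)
Your approach is essentially the paper's: induction on the number of time derivatives, using that at $t=0$ all quantities are explicit polynomials in $\sigma_0, w_0', z_0', k_0'$ and their $x$-derivatives, with the three hypotheses on $\C_x,\C_t$ absorbing respectively the Leibniz convolution constants, the $(1+\alpha)$ prefactor from $\eta_{xt}$, and the single $\p_t\to\p_x$ trade from the transport identities. Two points of precision are worth tightening.

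First, the order within each $\beta_t$-level matters and is not symmetric: you must establish the $\K,\Z$ bounds at the new $\beta_t$-level \emph{before} the $\Sigma_t,\eta_{xt},(\eta_x\W)_t$ bounds, because applying $\p^\beta$ to the algebraic identities \eqref{id:Sigma_t}--\eqref{id:W_t} produces top-order terms $\Sigma\,\p^\beta\Z$ and $\Sigma^2\,\p^\beta\K$ at the \emph{same} $\beta_t$, not lower. (Your description ``differentiate $\beta_t-1$ times in $t$'' is correct for the transport identities but off by one for the algebraic ones; there you apply $\p^\beta$ directly.) The paper handles this by explicitly doing \eqref{ineq:prop:t=0:1}--\eqref{ineq:prop:t=0:2} first at each step. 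Relatedly, the outer induction on $|\beta|$ is superfluous: since the trade $\p_t\K\leftrightarrow\alpha\Sigma\p_x\K$ preserves $|\beta|$ and lowers $\beta_t$, inducting on $\beta_t$ alone suffices.

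Second, the claim that ``the remaining Leibniz products each contribute at order $\OO(\eps^2)$'' is not quite right. Several surviving products---for instance $\p_x^j\sigma_0\cdot\p^{\beta-(j-1)e_x}\K$ and $\p_x^j(\eta_x\W)\cdot\p^{\beta-je_x}\K$---pair an $\OO(1)$ factor with an $\OO(\eps)$ factor, so they are only $\OO(\eps)$. What actually makes them harmless is that the target $\p^{\beta+e_t}\K$ carries one more power of $\C_t$ in the denominator than the right-hand side supplies, so every such term picks up a factor $1/\C_t$; the smallness then comes from $\C_t\gg 1+\alpha$ (and $\alpha\C_x\ll\C_t$ for the principal term), not from an extra $\eps$. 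This is the mechanism you describe in your final paragraph, but the $\OO(\eps^2)$ accounting in the paragraph before it misidentifies the source of the gain.
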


\begin{proof}
We will prove this via induction on $\beta_t$.

{\it{Base Case:}} At time $t =0$
\begin{equation*}
\frac{(m+1)^2\inorm{ \p^m_x \K}_{L^\infty_x}}{m! \C^{m}_x} 
= \frac{(m+1)^2\inorm{ \p^{m+1}_x k_0}_{L^\infty_x}}{m! \C^{m}_x} 
\leq \big( \tfrac{e^2 C_0}{\C_x} \big)^m \eps 
\leq \eps.
\end{equation*}

Since
\begin{align*}
\p^m_x \Z 
= \p^{m+1}_x z_0 + \tfrac{1}{2\gamma} \sigma_0 \p^{m+1}_x k_0 + \tfrac{1}{2\gamma} \sum^{m-1}_{j=0} {m \choose{j}} \p^{j+1}_x k_0 \p^{m-1-j+1}_x \sigma_0,
\end{align*}
at time zero, our assumptions $C_0 \geq 3$ and $\C_x \geq 2e^3 C_0$ imply that
\begin{align*}
\frac{(m+1)^2 \inorm{\p^m_x \Z}_{L^\infty_x}}{m! \C^m_x \eps} & \leq \big( \tfrac{e^2 C_0}{\C_x} \big)^m(1+\tfrac{1}{\gamma}) + \tfrac{1}{2\gamma} ( \tfrac{1}{2} + \eps) \tfrac{1}{\C_x} \big( \tfrac{C_0}{\C_x} \big)^{m-1} (m+1)^2 \sum^{m-1}_{j=0} {m\choose{j}} \tfrac{j! (m-1-j)!}{m!} \\
& \leq \big( \tfrac{e^2 C_0}{\C_x} \big)^m(1+\tfrac{1}{\gamma}) + \tfrac{1}{2\gamma} ( \tfrac{1}{2} + \eps) \tfrac{1}{C_0} \big( \tfrac{e^2C_0}{\C_x} \big)^{m}  \sum^{m-1}_{j=0} \tfrac{1}{m-j} \\
& \leq (\tfrac{1}{2e})^m(1+ \tfrac{1}{\gamma}) + \tfrac{1}{2\gamma} ( \tfrac{1}{2} + \eps) \mathbbm{1}_{\{m \geq 1\}} (\tfrac{1}{2e})^{m}(1+\log m) \\
& \leq (\tfrac{1}{2e})^m(1+ \tfrac{1}{\gamma}) + \tfrac{1}{2\gamma} ( \tfrac{1}{2} + \eps)\tfrac{1}{2e}\mathbbm{1}_{\{m \geq 1\}}  
\leq 1 + \tfrac{1}{\gamma}.
\end{align*}
Similar computations applied to $\eta_x\mathring{W}$ at time zero also give us the bound
\begin{equation}
\label{ineq:W_x:t=0}
\frac{(m+1)^2\|\p^m_x (\eta_x \W)\|_{L^\infty_x}}{m! \C^m_x} \leq 1 + (1+ \tfrac{1}{\gamma}) \eps < 1+ 2\eps
\end{equation}
for all $m= 0, \hdots, 2n+1$.

Since $\Sigma_t = -\alpha \Sigma \p_y z\circ \eta$, at time zero we have
\begin{align*}
\Sigma_t = - \alpha \sigma_0 z'_0.
\end{align*}
Therefore
\begin{align*}
-\frac{\p^m_x \Sigma_t}{\alpha} & = \sigma_0 \p^{m+1}_x z_0 + \sum^{m-1}_{j=0} {m \choose{j}} \p^{j+1}_x z_0 \p^{m-1-j+1}_x \sigma_0,
\end{align*}
and computations analogous to those applied to $\p^m_x \Z$ and $\p^m_x (\eta_x \W)$ give us
\begin{equation*}
\frac{(m+1)^2\inorm{\p^m_x \Sigma_t}_{L^\infty_x}}{m! \C^m_x \alpha \eps} 
\leq 2(\tfrac{1}{2e})^m + ( \tfrac{1}{2} + \eps)\tfrac{1}{2e}\mathbbm{1}_{\{m \geq 1\}} 
\leq 2.
\end{equation*}

Let $0 \leq m \leq 2n+1$. At time zero
\begin{align*}
(\eta_x \W)_t & = \tfrac{\alpha}{4\gamma} \sigma_0 k'_0 (w'_0 + z'_0), \\
\tfrac{4\gamma}{\alpha}\p^m_x (\eta_x \W)_t & = \sigma_0 \sum^m_{j=0} {m \choose{j}} \p^{j+1}_x k_0 \p^{m-j+1}_x (w_0 + z_0)  \\& + \sum_{\substack{j_1+j_2+j_3 = m \\ j_1 \geq 1}} {m\choose{j_1 j_2 j_3}} \p^{j_1-1+1}_x \sigma_0 \p^{j_2+1}_x k_0 \p^{j_3+1}_x(w_0+z_0). \\
\implies \tfrac{4\gamma}{\alpha} \frac{(m+1)^2 \| \p^m_x (\eta_x \W)_t\|_{L^\infty_x}}{m! \C^m_x \eps} & \leq 2(1+2\eps)\big( \tfrac{ C_0}{\C_x}\big)^m (m+1)^3 + \tfrac{(1+2\eps)^2}{2} \tfrac{1}{C_0} \big( \tfrac{C_0}{\C_x} \big)^{m} (m+1)^2 \sum_{\substack{j_1+j_2+j_3 = m \\ j_1 \geq 1}} \tfrac{1}{j_1}.
\end{align*}
Since
\begin{equation*}
\sum_{\substack{j_1+j_2+j_3 = m \\ j_1 \geq 1}} \tfrac{1}{j_1} = (m+1) \sum^m_{j=1} \tfrac{1}{j} - m,
\end{equation*}
our hypothesis that $\C_x \geq 2e^3 C_0$ and $C_0 \geq 3$ now gives us
\begin{align*}
\tfrac{4\gamma}{\alpha} \frac{(m+1)^2 \| \p^m_x (\eta_x \W)_t\|_{L^\infty_x}}{m! \C^m_x \eps} & \leq 2(1+2\eps)\big( \tfrac{ e^3C_0}{\C_x}\big)^m + \tfrac{(1+2\eps)^2}{2}\tfrac{1}{C_0} \big( \tfrac{e^3C_0}{\C_x} \big)^m \sum^m_{j=1} \tfrac{1}{j} \\
& \leq 2(1+2\eps)(\tfrac{1}{2})^m + \mathbbm{1}_{\{m \geq 1\}} \tfrac{(1+2\eps)^2}{4}\tfrac{1}{C_0} 
\leq 3.
\end{align*}

Lastly, since $\eta_{xt} = \tfrac{1+\alpha}{2} w'_0 + \tfrac{1-\alpha}{2} z_0'$ at time zero, we compute that
\begin{align*}
\frac{(m+1)^2 \|\p^m_x \eta_{xt}\|_{L^\infty_x}}{m! \C^m_x} 
&\leq \big( \tfrac{e^2 C_0}{\C_x} \big)^m( \tfrac{1+\alpha}{2}(1+\eps) + \tfrac{|1-\alpha|}{2} \eps) \\
&= (\tfrac{1+\alpha}{2} + \max(1, \alpha) \eps) \big( \tfrac{e^2 C_0}{\C_x} \big)^m 
\leq \tfrac{1+\alpha}{2} (1 + 2\eps) (\tfrac{1}{2e})^m
\end{align*}
for all $m = 0, \hdots, 2n+1$. This concludes the base case.

{\it{Inductive Step:}} Now fix $0 \leq m \leq 2n$ and suppose our result is true for all multi indices $\beta$ with $|\beta| \leq 2n+1$ and $\beta_t \leq m$.

Let $\beta$ have $|\beta| \leq 2n$ and $\beta_t = m$. Taking $\p^\beta$ of \eqref{id:K_t}, setting $t=0$, and simplifying using the fact that
\begin{equation}\label{eq:eta_x:dx:t=0}
\p^i_x \eta_x(x, 0) = \delta_{i0}
\end{equation}
for all nonnegative integers $i$, we arrive at the identity
\begin{align*}
\p^{\beta + e_t} \K 
& = - \sum_{e_t \leq \gamma \leq \beta} {\beta\choose{\gamma}} \p^{\gamma-e_t} \eta_{xt} \p^{\beta+e_t-\gamma} \K \\
&\quad  + \alpha \sum_{ e_t \leq \gamma \leq \beta} {\beta\choose{\gamma}} \p^{\gamma-e_t} \Sigma_t \p^{\beta+e_x-\gamma} \K + \alpha \sigma_0 \p^{\beta+e_x} \K \\
&\quad + \alpha \sum^{\beta_x}_{j=1} {\beta_x\choose{j}} \p^j_x \sigma_0 \p^{\beta-(j-1)e_x} \K - \tfrac{1}{2} \sum^{\beta_x}_{j=0} {\beta_x\choose{j}} \p^{j}_x (\eta_x \W) \p^{\beta-je_x} \K \\
&\quad - \tfrac{1}{2} \sum_{e_t \leq \gamma \leq \beta} {\beta\choose{\gamma}} \p^{\beta-\gamma} \K \p^{\gamma-e_t}(\eta_x \W)_t - \tfrac{1}{2} \sum_{0 \leq \gamma \leq \beta} {\beta \choose{\gamma}} \p^\gamma \K \p^{\beta-\gamma} \Z \\
&\quad  - \tfrac{1}{2} \sum_{\substack{\gamma_1 + \gamma_2 + \gamma_3 = \beta \\ \gamma_1 \geq e_t}} {\beta \choose{\gamma_1 \gamma_2 \gamma_3}} \p^{\gamma_1 -e_t}\eta_{xt} \p^{\gamma_2} \K \p^{\gamma_3} \Z.
\end{align*}
Applying our inductive hypotheses, the inequality  \eqref{ineq:W_x:t=0},  Lemma \ref{lem:id:multi},  and \eqref{ineq:sum:nk}--\eqref{ineq:appendix:B} yields
\begin{align*}
&\frac{(|\beta+e_t|+1)^2 \inorm{\p^{\beta+e_t}\K}_{L^\infty_x}}{|\beta+e_t|! \C^{\beta_x}_x \C^{\beta_t+1}_t \eps} \notag\\
& \leq \tfrac{1}{\C_t}(\tfrac{1+\alpha}{2}(1+2\eps) +2 \alpha^2 \tfrac{\C_x}{\C_t}  \eps) \sum^{|\beta|}_{j=1} {|\beta|\choose{j}} \tfrac{(|\beta|+2)^2 (j-1)! (|\beta|+1-j)!}{(|\beta|+1)! j^2 (|\beta| + 2-j)^2} \\
&\quad + 2\alpha \tfrac{\C_x}{\C_t} 
+ \alpha(\tfrac{1}{2} + \eps)\tfrac{1}{\C_t} \sum^{\beta_x}_{j=1} {\beta_x\choose{j}} \tfrac{(|\beta|+2)^2 (j-1)! (|\beta|+1-j)!}{(|\beta|+1)! j^2 (|\beta| + 2-j)^2} \big( \tfrac{C_0}{\C_x} \big)^{j-1} \\
&\quad + \tfrac{1}{2}(1+2\eps)\tfrac{1}{\C_t} \sum^{\beta_x}_{j=0} {\beta_x\choose{j}} \tfrac{(|\beta|+2)^2 j! (|\beta|-j)!}{(|\beta|+1)! (j+1)^2 (|\beta| + 1-j)^2} \big( \tfrac{C_0}{\C_x} \big)^{j} \\
&\quad + \tfrac{1}{2\C^2_t} \tfrac{3\alpha}{4\gamma} \eps \sum^{|\beta|}_{j=1} {|\beta|\choose{j}} \tfrac{(|\beta|+2)^2 (|\beta|-j)! (j-1)!}{(|\beta|+1)! (|\beta|+1-j)^2 j^2} 
+ \tfrac{1}{2\C_t}(1+ \tfrac{1}{\gamma}) \eps \sum^{|\beta|}_{j=0} {|\beta| \choose{j}} \tfrac{(|\beta|+2)^2 j! (|\beta|-j)!}{(|\beta|+1)! (j+1)^2 (|\beta|+1-j)^2} \\
&\quad +  \tfrac{1+\alpha}{4}(1+2\eps)(1+\tfrac{1}{\gamma}) \eps \tfrac{1}{\C^2_t}  \sum_{\substack{j_1 + j_2 + j_3 = |\beta| \\ j_1 \geq 1}} {|\beta| \choose{j_1 j_2 j_3}} \tfrac{(|\beta|+2)^2 (j_1-1)! j_2! j_3!}{(|\beta|+1)! j_1^2 (j_2+1)^2 (j_3+1)^2} \\
&\leq \tfrac{1}{\C_t}(\tfrac{1+\alpha}{2}(1+2\eps) + 2\alpha^2 \tfrac{\C_x}{\C_t} \eps)  \sum^{|\beta|}_{j=1}  \tfrac{|\beta|+2}{j^3 (|\beta|+2-j)}
+ 2\alpha \tfrac{\C_x}{\C_t} 
+ \alpha(1 + 2\eps)\tfrac{1}{\C_t} \sum^{|\beta|}_{j=1} \tfrac{|\beta|+2}{j^3(|\beta|+2-j)}\\
&\quad + (1+\eps)\tfrac{1}{\C_t} \sum^{|\beta|}_{j=0} \tfrac{|\beta|+2}{(j+1)^2(|\beta|+1-j)^2}
+ \tfrac{1}{\C^2_t} \tfrac{3\alpha}{4\gamma}  \eps \sum^{|\beta|}_{j=1} \tfrac{|\beta|+2}{ j^3(|\beta|+1-j)^2 } 
+ \tfrac{1}{\C_t}(1+ \tfrac{1}{\gamma}) \eps \sum^{|\beta|}_{j=0} \tfrac{|\beta|+2}{(j+1)^2 (|\beta|+1-j)^2} \\
&\quad +  \tfrac{1+\alpha}{2}(1+2\eps)(1+\tfrac{1}{\gamma}) \eps \tfrac{1}{\C^2_t}  \sum_{\substack{j_1 + j_2 + j_3 = |\beta| \\ j_1 \geq 1}} \tfrac{|\beta|+2}{ j_1^3 (j_2+1)^2 (j_3+1)^2} \\
& \lesssim \tfrac{1}{\C_t} [ \alpha \C_x(1+\alpha \eps) + (1+\alpha)(1+\eps)(1+ \tfrac{1}{\C_t}) ].
\end{align*}
It follows that the bound \eqref{ineq:prop:t=0:1} holds for $\p^{\beta+e_t} \K$ provided that $\C_t \gg \alpha \C_x$, $\C_t \gg 1+\alpha$, and $(1+\alpha)\eps \ll 1$.

The inductive step for \eqref{ineq:prop:t=0:2} works out in the same way: pick $\beta$ with $|\beta| \leq 2n$ and $\beta_t = m$; take $\p^\beta$ of \eqref{id:Z_t}; set $t=0$ and simplify; carry out the same type of computations as done above; if $\C_t$ is large enough with respect to $\alpha \C_x$ and $1+\alpha$, then the right hand side of the inequalities  can be made smaller than $1+\tfrac{1}{\gamma}$ provided that $(1+\alpha)\eps \ll 1$.

The inductive steps for proving \eqref{ineq:prop:t=0:3}--\eqref{ineq:prop:t=0:5} will  now utilize the fact that we have already proven \eqref{ineq:prop:t=0:1} and \eqref{ineq:prop:t=0:2} for $\beta$ with $|\beta| \leq 2n+1$ and $\beta_t \leq m+1$. Pick $\beta$ with $|\beta| \leq 2n+1$ and $\beta_t = m+1$. Note that since $\beta_t \geq 1$, we must have $\beta_x \leq 2n$. Taking $\p^\beta$ of \eqref{id:Sigma_t}, setting $t=0$, and simplifying yields
\begin{align*}
\tfrac{1}{\alpha}\p^\beta \Sigma_t 
& = -\sigma_0\p^\beta \Z + \tfrac{1}{2\gamma} \sigma^2_0 \p^\beta \K 
 + \sum^{\beta_x}_{j=1} {\beta_x\choose{j}} [ -\p^{j-1+1}_x \sigma_0 \p^{\beta-je_x} \Z + \tfrac{1}{\gamma} \sigma_0 \p^{j-1+1}_x \sigma_0 \p^{\beta-je_x} \K] \\
&\quad + \sum_{e_t \leq \gamma \leq \beta} {\beta\choose{\gamma}} [- \p^{\gamma-e_t} \Sigma_t \p^{\beta-\gamma} \Z + \tfrac{1}{\gamma} \sigma_0 \p^{\gamma-e_t} \Sigma_t \p^{\beta-\gamma} \K] \\
&\quad + \tfrac{1}{2\gamma}\sum_{\substack{j_1+j_2 +j_3 = \beta_x \\ j_1, j_2 \geq 1}} {\beta_x\choose{j_1 j_2 j_3}} \p^{j_1-1+1}_x \sigma_0 \p^{j_2-1+1}_x \sigma_0 \p^{\beta_t e_t + j_3 e_x} \K \\
&\quad + \tfrac{1}{2\gamma} \sum_{\substack{je_x + \gamma \leq \beta \\ \gamma \geq e_t \\ j \geq 1}} {\beta \choose{je_x \:  \gamma \: \:  \beta-je_x-\gamma}} \p^{j-1+1}_x \sigma_0 \p^{\gamma-e_t}_t \Sigma \p^{\beta - \gamma - je_x} \K \\
&\quad + \tfrac{1}{2\gamma} \sum_{\substack{\gamma_1+\gamma_2 + \gamma_3 = \beta \\ \gamma_1, \gamma_2 \geq e_t}} {\beta \choose{\gamma_1 \gamma_2 \gamma_3}} \p^{\gamma_1-e_t} \Sigma_t \p^{\gamma_2-e_t} \Sigma_t \p^{\gamma_3} \K.
\end{align*}
Therefore, applying our inductive hypotheses, the fact that \eqref{ineq:prop:t=0:1} and \eqref{ineq:prop:t=0:2} have been proven for multiindices with $\beta_t \leq m+1$, and the inequalities in \S~\ref{sec:appendix:freqcomp} gives us
\begin{align*}
\frac{(|\beta|+1)^2 \inorm{ \p^\beta \Sigma_t}_{L^\infty_x}}{|\beta|! \C^{\beta_x}_x \C^{\beta_t}_t \alpha \eps} & \leq 2+ \tfrac{4}{\gamma} + \OO( \tfrac{1}{\C_x} + \tfrac{\alpha \eps}{\C_t} + \tfrac{1}{\C^2_x} + \tfrac{\alpha \eps}{\C_x \C_t} + \tfrac{\alpha^2 \eps^2}{\C^2_t} ).
\end{align*}
Since $\C_x \gg 1, \C_t \gg (1+\alpha),$ and $\eps \ll 1$, we have proven \eqref{ineq:prop:t=0:3} for our choice of $\beta$.

The proof for \eqref{ineq:prop:t=0:4} is analogous: if $|\beta| \leq 2n+1$ and $\beta_t = m+1$, then taking $\p^\beta$ of \eqref{id:eta_xt}, setting $t=0$, and simplifying produces
\begin{align*}
\p^\beta \eta_{xt} & =  \tfrac{1-\alpha}{2} \p^\beta \Z + \tfrac{\alpha}{2\gamma} \sigma_0 \p^\beta \K \\
&\quad + \tfrac{1+\alpha}{2} \p^{\beta-e_t} (\eta_x \W)_t   + \sum_{e_t \leq \gamma \leq \beta} {\beta\choose{\gamma}}\big[ \tfrac{1-\alpha}{2}  \p^{\gamma-e_t} \eta_{xt} \p^{\beta-\gamma} \Z +  \tfrac{\alpha}{2\gamma} \sigma_0\p^{\gamma-e_t} \eta_{xt} \p^{\beta-\gamma} \K\big] \\
&\quad + \tfrac{\alpha}{2\gamma} \sum^{\beta_x}_{j=1} {\beta_x\choose{j}} \p^{j-1+1}_x \sigma_0 \p^{\beta-je_x} \K 
+ \tfrac{\alpha}{2\gamma} \sum_{\substack{\gamma_1 + je_x + \gamma_3 = \beta \\ \gamma_1 \geq e_t, j \geq 1}} {\beta\choose{\gamma_1 je_x \gamma_3}} \p^{\gamma_1-e_t} \eta_{xt} \p^{j-1+1}_x \sigma_0 \p^{\gamma_3} \K \\
&\quad  + \tfrac{\alpha}{2\gamma} \sum_{\substack{\gamma_1 + \gamma_2 + \gamma_3 = \beta \\ \gamma_1,\gamma_2 \geq e_t, }} {\beta\choose{\gamma_1 \gamma_2 \gamma_3}} \p^{\gamma_1-e_t} \eta_{xt} \p^{\gamma_2-e_t}\Sigma_t \p^{\gamma_3} \K.
\end{align*}
Therefore,
\begin{align*}
\frac{(|\beta|+1)^2 \inorm{\p^\beta \eta_{xt}}_{L^\infty_x}}{|\beta|! \C^{\beta_x}_x \C^{\beta_t}_t (1+\alpha) \eps} & \leq \tfrac{1}{1+\alpha}(\tfrac{|1-\alpha|}{2}(1+ \tfrac{1}{\gamma}) + \tfrac{\alpha}{\gamma}) + \OO ( \tfrac{1+|1-\alpha|}{\C_t} + \tfrac{1}{(1+\alpha)\C_x} + \tfrac{1}{\C_x \C_t} + \tfrac{\alpha \eps}{\C^2_t} ) \\
& \leq 1 + \OO ( \tfrac{1+|1-\alpha|}{\C_t} + \tfrac{1}{(1+\alpha)\C_x} + \tfrac{1}{\C_x \C_t} + \tfrac{\alpha \eps}{\C^2_t}) .
\end{align*}
Since $\eps \ll 1$, $\C_x \gg 1$, and $\C_t \gg (1+\alpha)$, this proves \eqref{ineq:prop:t=0:4}.

The inductive step for \eqref{ineq:prop:t=0:5} works out in the same way as the inductive steps for \eqref{ineq:prop:t=0:3} and \eqref{ineq:prop:t=0:4}. If $|\beta| \leq 2n+1$ and $\beta_t = m+1$, then using \eqref{ineq:W_x:t=0} in addition to the same types of bounds as in the previous computations gives us
\begin{align*}
\frac{(|\beta|+1)^2\inorm{\p^\beta (\eta_x \W)_t}_{L^\infty_x}}{|\beta|! \C^{\beta_x}_x \C^{\beta_t}_t \tfrac{\alpha}{4\gamma} \eps} & \leq 2(1+2\eps) \sum^{|\beta|}_{j=0} \tfrac{(|\beta|+1)^2}{(j+1)^2(|\beta|+1-j)^2} + \OO( \eps + \tfrac{(1+\alpha)\eps}{\C_t} + \tfrac{1}{\C_x} )\\
&  \leq \tfrac{8\pi^2}{3}(1+2\eps) +  \OO( \eps + \tfrac{(1+\alpha)\eps}{\C_t} + \tfrac{1}{\C_x} ).
\end{align*}
Our hypotheses on the relative sizes of $\C_t, \C_x, \eps,$ and $\alpha$ now imply our desired bound on $\p^\beta(\eta_x \W)_t$.
\end{proof}

\subsection{ A Priori estimates for time derivatives}
\label{sec:apriori}

In this section, we will fix $T \in (0, T_*]$, a function $C_t : [0,T_*) \rightarrow \R^+$, and a constant $\delta \geq 0$. Suppose that $A_k \geq B_k$ and $A_z \geq B_z$ are constants such that $A_z \geq 10 A_k$ and
\begin{subequations}
\begin{align}
\label{ineq:apriori:K_t}
\frac{(m+1)^2 \|\Sigma^{-\delta m} \p^m_t \K\|_{L^\infty_x}}{m! C^m_t} & \leq A_k \eps, \\
\label{ineq:apriori:Z_t}
\frac{(m+1)^2 \|\Sigma^{-\delta m} \p^m_t \Z\|_{L^\infty_x}}{m! C^m_t} & \leq A_z \eps,
\end{align}
\end{subequations}
for all  $t \in [0,T], \: 0 \leq m \leq 2n+1.$

\begin{proposition}\label{prop:apriori:est}
If 
\begin{itemize}[leftmargin=*]
\item $(1+\alpha)3^\delta \ll C_t$ for all $t \in [0,T]$, and
\item $(1+A_z) \eps \ll 1$,
\end{itemize}
then
\begin{subequations}\label{ineq:apriori:est}
\begin{align}\label{apriori:Sigma_t}
\frac{(m+1)^2 \|\Sigma^{-\delta m} \p^{m}_t (\Sigma^{-1})_t\|_{L^\infty_x}}{m! C^m_t} & \leq 4\alpha A_z \eps, &
\frac{(m+1)^2 \|\Sigma^{-\delta m} \p^m_t \Sigma_t\|_{L^\infty_x}}{m! C^m_t} & \leq 4\alpha A_z \eps, \\
\label{apriori:W_t}
\frac{(m+1)^2 \|\Sigma^{-\delta m} \p^m_t \eta_{xt} \|_{L^\infty_x}}{m! C^m_t} & \leq (1+\alpha)(\tfrac{2}{3}\delta_{0m} + 3A_z\eps), &
\frac{(m+1)^2 \|\Sigma^{-\delta m} \p^m_t (\eta_x\W)_t\|_{L^\infty_x}}{m! C^m_t} & \leq \tfrac{5}{8} A_k \eps,
\end{align}
\end{subequations}
for all  $t \in [0,T], \: 0 \leq m \leq 2n+1.$
\end{proposition}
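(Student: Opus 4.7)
My plan is to prove the four bounds in~\eqref{ineq:apriori:est} simultaneously by strong induction on $m\in\{0,1,\ldots,2n+1\}$. Each of the identities~\eqref{id:Sigma_t}, \eqref{id:Sigma^-1}, \eqref{id:eta_xt}, \eqref{id:W_t} expresses one of $\Sigma_t, (\Sigma^{-1})_t, \eta_{xt}, (\eta_x\W)_t$ as a polynomial in the variables $\Sigma, \Sigma^{-1}, \eta_x, \eta_x\W, \Z, \K$. Applying $\p^m_t$ and using the Leibniz rule produces a sum indexed by partitions of $m$, whose factors are: (i) $\p^j_t\Z$ or $\p^j_t\K$, controlled by the hypotheses~\eqref{ineq:apriori:K_t}--\eqref{ineq:apriori:Z_t}; and (ii) $\p^j_t\Sigma$, $\p^j_t\Sigma^{-1}$, $\p^j_t\eta_x$, or $\p^j_t(\eta_x\W)$. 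For each factor of type (ii) with $j\geq 1$ I would rewrite $\p^j_t\Sigma=\p^{j-1}_t\Sigma_t$ (and analogously for the other three quantities) and invoke the inductive hypothesis at order $j-1\leq m-1$; for $j=0$, the zeroth-order bounds from Proposition~\ref{prop:0thorder} apply.

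The base case $m=0$ reduces to direct substitution. For example, $|\Sigma_t|\leq 3\alpha|\Z|+\tfrac{9\alpha}{2\gamma}|\K|\leq \bigl(3+\tfrac{9}{20\gamma}\bigr)\alpha A_z\eps\leq 4\alpha A_z\eps$, using $A_k\leq A_z/10$ and $\gamma>1$. The constant $\tfrac{2}{3}$ in~\eqref{apriori:W_t} comes from $\tfrac{1+\alpha}{2}\cdot\tfrac{4}{3}$ (the leading $\eta_x\W$ contribution to $\eta_{xt}$), with the $\eta_x\Z$ and $\eta_x\Sigma\K$ terms fitting into the $3A_z\eps$ correction; the $\tfrac{5}{8}$ in the bound on $(\eta_x\W)_t$ comes from $\tfrac{\alpha}{4\gamma}\cdot 3\cdot\tfrac{4}{3}=\tfrac{\alpha}{\gamma}=\tfrac{1}{2}(1-\tfrac{1}{\gamma})<\tfrac{1}{2}$ plus an $\OO(A_z\eps)$ correction from $\eta_x\Z$.

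For the inductive step I would distribute the weight across each binomial split via $\Sigma^{-\delta m}=\Sigma^{-\delta j}\,\Sigma^{-\delta(m-j)}$ and use the key identity $\Sigma^{-\delta j}\p^j_t\Sigma=\Sigma^{-\delta}\cdot\bigl[\Sigma^{-\delta(j-1)}\p^{j-1}_t\Sigma_t\bigr]$, which costs a factor $\Sigma^{-\delta}\leq 3^\delta$ per such expansion. After normalizing by $\tfrac{(m+1)^2}{m!\,C_t^m}$, the resulting combinatorial sums over partitions of $m$ are of the same shape as those already bounded in the inductive step of Proposition~\ref{prop:t=0} via Lemma~\ref{lem:id:multi} and the frequency-comparison inequalities of Section~\ref{sec:appendix:freqcomp}. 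The $j=0$ pieces of each Leibniz expansion reproduce the leading right-hand sides of~\eqref{ineq:apriori:est}, while every other term carries at least one factor of the form $\p^{j-1}_t\Sigma_t$, $\p^{j-1}_t(\Sigma^{-1})_t$, $\p^{j-1}_t\eta_{xt}$, or $\p^{j-1}_t(\eta_x\W)_t$, and hence inductively acquires a smallness factor of the form $\tfrac{(1+\alpha)\,3^\delta A_z\eps}{C_t}$.

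The main obstacle I anticipate is bookkeeping the $3^\delta$ factors accumulated from successive weight redistributions: in trinomial expansions such as the one arising from $\p^m_t(\eta_x\Sigma\K)$ in~\eqref{id:eta_xt}, several such factors can compound. This is precisely the role of the hypothesis $(1+\alpha)3^\delta\ll C_t$, which guarantees that the $1/C_t$ gain from each differentiation absorbs the accumulated weight, so that every non-leading term is genuinely perturbative. Preserving the tight numerical constants $4\alpha$, $\tfrac{2}{3}$, $\tfrac{5}{8}$ across the induction requires no further effort beyond the base case, since all higher-order corrections are $\OO\!\bigl(\tfrac{(1+\alpha)3^\delta A_z\eps}{C_t}\bigr)$ and hence negligible under the two smallness hypotheses.
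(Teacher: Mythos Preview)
Your proposal is correct and follows essentially the same approach as the paper: induction on $m$, with the base case handled by direct substitution into \eqref{id:Sigma_t}--\eqref{id:W_t} together with the zeroth-order bounds of Proposition~\ref{prop:0thorder}, and the inductive step handled by applying $\p^m_t$ via Leibniz, redistributing the $\Sigma^{-\delta m}$ weight, and absorbing all non-leading terms using the smallness of $(1+\alpha)3^\delta/C_t$ and $(1+A_z)\eps$. One small inaccuracy: not every non-leading correction carries the factor $\tfrac{(1+\alpha)3^\delta}{C_t}$---for instance, the cross term $\eta_x\Sigma\sum_j\binom{m}{j}\p^j_t\K\,\p^{m-j}_t\Z$ in the expansion of $\p^m_t(\eta_x\W)_t$ is bounded via $A_z\eps$ alone---but this is still covered by your second smallness hypothesis and does not affect the argument.
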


\begin{proof}
We prove our result via induction on $m$. The base case, $m=0$, is immediate from \eqref{system:lagrange}, \eqref{ineq:0thorder}, and the fact that $A_k \geq B_k$ and $A_z \geq B_z$.

For the inductive step, suppose that our inequalities are true up to $m-1$ for some $1 \leq m \leq 2n+1$. 

Applying $\p^m_t$ to both sides of \eqref{id:Sigma^-1} yields
\begin{align*}
\tfrac{1}{\alpha} \p^{m}_t (\Sigma^{-1})_t & = \Sigma^{-1} \p^m_t \Z - \tfrac{1}{2\gamma} \p^m_t \K + \sum^m_{j=1} {m\choose{j}} \p^{j-1}_t (\Sigma^{-1})_t \p^{m-j}_t \Z.
\end{align*}
The bounds from \S~\ref{sec:appendix:freqcomp} now give us
\begin{align*}
\frac{(m+1)^2 \|\Sigma^{-\delta m} \p^{m}_t (\Sigma^{-1})_t\|_{L^\infty_x}}{m! C^m_t \alpha \eps} & \leq 3A_z + \tfrac{1}{2\gamma} A_k  + 4\alpha  A^2_z \eps \tfrac{3^\delta}{C_t}\sum^m_{j=1} \tfrac{(m+1)^2}{j^3 (m+1-j)^2} \\
& \leq [3 + \tfrac{1}{20\gamma}+ 100 A_z\eps \tfrac{3^\delta \alpha}{C_t}] A_z  \\
& \leq 4A_z .
\end{align*}
Here the last inequality holds because of our hypotheses $(1+A_z) \eps \ll 1$ and $C_t \gg 3^\delta(1+\alpha)$.

Taking $\p^m_t$ of \eqref{id:Sigma_t} gives us
\begin{align*}
\tfrac{1}{\alpha} \p^m_t \Sigma_t & = - \Sigma \p^m_t \Z + \tfrac{1}{2\gamma} \Sigma^2 \p^m_t \K 
- \sum^m_{j=1} {m\choose{j}} \p^{j-1}_t \Sigma_t \p^{m-j}_t \Z + \tfrac{1}{\gamma}\Sigma \sum^m_{j=1} {m\choose{j}} \p^{j-1}_t \Sigma_t \p^{m-j}_t \K \\
&\quad + \tfrac{1}{2\gamma} \sum_{\substack{j_1 + j_2 + j_3 = m \\ j_1, j_2 \geq 1}} {m\choose{j_1 j_2 j_3}} \p^{j_1-t}_t \Sigma_t \p^{j_2-1}_t \Sigma_t \p^{j_3}_t \K.
\end{align*}
Therefore,
\begin{align*}
\frac{(m+1)^2 \|\Sigma^{-m\delta} \p^m_t \Sigma_t \|_{L^\infty_x}}{m! C^m_t \alpha \eps} 
& \leq 3A_z + \tfrac{9}{2\gamma} A_k 
 +   \tfrac{3^\delta 4\alpha A_z \eps}{C_t}  (   A_z + \tfrac{3}{\gamma} A_k) \sum^m_{j=1} \tfrac{(m+1)^2}{j^3 (m+1-j)^2} \\
&\quad + \tfrac{1}{2\gamma} \big(\tfrac{3^\delta 4\alpha A_z \eps}{C_t}\big)^2 A_k \sum_{\substack{j_1 + j_2 + j_3 = m \\ j_1, j_2 \geq 1}} \tfrac{(m+1)^2}{j_1^3 j^3_2 (j_3+1)^2} \\
& \leq 3A_z + \tfrac{9}{2\gamma} A_k 
+   25\tfrac{3^\delta 4\alpha A_z  \eps}{C_t}  (   A_z + \tfrac{3}{\gamma} A_k)  
+ 66\tfrac{1}{2\gamma} \big(\tfrac{3^\delta 4\alpha A_z\eps}{C_t}\big)^2 A_k \\
& \leq 4A_z.
\end{align*}

Taking $\p^m_t$ of \eqref{id:eta_xt} gives us
\begin{align*}
\p^m_t \eta_{xt} 
& = \tfrac{1-\alpha}{2} \eta_x \p^m_t \Z + \tfrac{\alpha}{2\gamma} \eta_x \Sigma \p^m_t \K \\
&\quad + \tfrac{1+\alpha}{2} \p^{m-1}_t (\eta_x \W)_t + \tfrac{1-\alpha}{2} m \eta_{xt} \p^{m-1}_t \Z + \tfrac{\alpha}{2\gamma} m \eta_{xt}  \Sigma \p^{m-1}_t \K \\
&\quad + \tfrac{1-\alpha}{2} \sum^m_{j=2} {m\choose{j}} \p^{j-1}_t \eta_{xt} \p^{m-j}_t \Z + \tfrac{\alpha}{2\gamma} \sum^m_{j=2} {m\choose{j}} \p^{j-1}_t \eta_{xt} \Sigma \p^{m-j}_t \K \\
&\quad +\tfrac{\alpha}{2\gamma} \sum^m_{j=1} {m\choose{j}} \eta_x \p^{j-1}_t \Sigma_t \p^{m-j}_t \K 
 + \tfrac{\alpha}{2\gamma} \sum_{\substack{j_1 + j_2 + j_3 = m \\ j_1, j_2 \geq 1}} {m\choose{j_1 j_2 j_3}} \p^{j_1-t}_t \eta_{xt} \p^{j_2-1}_t \Sigma_t \p^{j_3}_t \K.
\end{align*}
Therefore,
\begin{align*}
\frac{(m+1)^2 \|\Sigma^{-m\delta} \p^m_t \eta_{xt}\|_{L^\infty_x}}{m! C^m_t \eps} 
& \leq \tfrac{3|1-\alpha|}{2}A_z + \tfrac{9 \alpha}{2\gamma} A_k
+ \tfrac{(1+\alpha)3^\delta}{C_t} (\tfrac{m+1}{m})^2\bigg[ \tfrac{1}{2m} \tfrac{5}{8} A_k + \tfrac{|1-\alpha|}{2} A_z + \tfrac{3\alpha}{2\gamma} A_k \bigg] \\
&\quad + \tfrac{3^\delta \eps}{C_t}\bigg[ \tfrac{3|1-\alpha|}{2} (1+\alpha)A_z^2 + \tfrac{9\alpha}{2\gamma} (1+\alpha)A_k A_z + 6\tfrac{\alpha^2}{\gamma} A_k A_z \bigg] \sum^m_{j=1} \tfrac{(m+1)^2}{j^3 (m+1-j)^2} \\
&\quad + \tfrac{3}{2\gamma} \big( \tfrac{3^\delta\alpha \eps}{ C_t}\big)^2 (1+\alpha)A_kA^2_z \sum_{\substack{j_1 + j_2 + j_3 = m \\ j_1, j_2 \geq 1}} \tfrac{(m+1)^2}{j_1^3 j^3_2 (j_3+1)^2}.
\end{align*}
Applying our hypotheses on $C_t$ and $\eps$, as well as our bounds from  \S~\ref{sec:appendix:freqcomp}, we obtain our inequality for $\p^m_t \eta_{xt}$.

Lastly, taking $\p^m_t$ of \eqref{id:W_t} gives us
\begin{align*}
\tfrac{4\gamma}{\alpha} \p^m_t(\eta_x \W)_t 
& = \Sigma \eta_x \W \p^m_t \K 
 + \eta_x \Sigma \sum^m_{j=0} {m\choose{j}} \p^j_t \K \p^{m-j}_t \Z \\
&\quad + \sum^m_{j=1} {m\choose{j}} (\Sigma \p^{j-1}_t (\eta_x \W)_t \p^{m-j}_t \K + \p^{j-1}_t \Sigma_t \eta_x \W \p^{m-j}_t \K) \\
&\quad + \sum_{\substack{j_1 + j_2 +j_3 = m \\ j_1 \geq 1}} {m\choose{j_1 j_2 j_3}} (\eta_x \p^{j_1-1}_t \Sigma_t \p^{j_2}_t \K \p^{j_3}_t \Z + \p^{j_1-1}_t \eta_{xt} \Sigma \p^{j_2}_t \K \p^{j_3}_t \Z) \\
&\quad + \sum_{\substack{j_1+j_2+j_3=m \\ j_1, j_3 \geq 1}} {m\choose{j_1 j_2 j_3}} \p^{j_1-1}_t \Sigma_t \p^{j_2}_t \K \p^{j_3-1}_t (\eta_x \W)_t \\
&\quad + \sum_{\substack{j_1+j_2+j_3+j_4=m \\ j_1, j_2 \geq 1}} {m\choose{j_1 j_2 j_3 j_4}} \p^{j_1-1}_t \eta_{xt} \p^{j_2-1}_t \Sigma_t \p^{j_3}_t \K \p^{j_4}_t \Z.
\end{align*}
The same types of bounds as before give us
\begin{align*}
&\tfrac{4\gamma}{\alpha} \frac{(m+1)^2 \|\Sigma^{-\delta m} \p^m_t (\eta_x \W)_t \|_{L^\infty_x}}{m! C^m_t A_k \eps} \\
&\leq  4 
+ 9 A_z \eps \sum^m_{j=0} \tfrac{(m+1)^2}{(j+1)^2(m+1-j)^2}
+ \tfrac{3^\delta \eps}{C_t}(\tfrac{15 }{8}A_k + \tfrac{16}{3}\alpha A_z) \sum^m_{j=1} \tfrac{(m+1)^2}{j^3(m+1-j)^2} \\
&\quad 
+ 2(1+\alpha)\tfrac{3^\delta \eps}{C_t} A_z \sum^{m-1}_{j=0} \tfrac{(m+1)^2}{(j+1)^2(m-j)^2}  
+ 3(4\alpha  + 3(1+\alpha))\tfrac{3^\delta \eps}{C_t}A^2_z \eps\sum_{\substack{j_1+j_2+j_3=m \\ j_1 \geq 1}} \tfrac{(m+1)^2}{j^3_1(j_2+1)^2(j_3+1)^2} \\
&\quad 
+ \tfrac{5\alpha}{2} A_kA_z \big( \tfrac{3^\delta \eps}{C_t}\big)^2\sum_{\substack{j_1+j_2+j_3=m \\ j_1, j_3 \geq 1}} \tfrac{(m+1)^2}{j^3_1(j_2+1)^2j^3_3}
+ \tfrac{8}{3}\alpha (1+\alpha) A^2_z \big( \tfrac{3^\delta \eps}{C_t}\big)^2 \sum_{\substack{j_2+j_3+j_4 = m-1\\ j_2 \geq 1}} \tfrac{(m+1)^2}{j^3_2 (j_3+1)^2(j_4+1)^2} \\
&\quad + 12\alpha (1+\alpha)A^3_z\eps \big( \tfrac{3^\delta \eps}{C_t}\big)^2 \sum_{\substack{j_1+j_2+j_3+j_4=m \\ j_1, j_2 \geq 1}} \tfrac{(m+1)^2}{j^3_1 j^3_2 (j_3+1)^2 (j_4+1)^2} \\
& \leq 5.
\end{align*}
\end{proof}

\begin{corollary}\label{cor:apriori:est}
Under the same hypotheses as the previous proposition, we have
\begin{subequations}
\begin{align}\label{cor:apriori:W}
\frac{(m+1)^2 \|\Sigma^{-\delta m} \p^m_t \eta_x\|_{L^\infty_x}}{m! C^m_t} & \leq 3,  & 
\frac{(m+1)^2 \|\Sigma^{-\delta m} \p^m_t( \eta_x \W)\|_{L^\infty_x}}{m! C^m_t} & \leq \tfrac{4}{3}, \\
\label{cor:apriori:Sigma_x}
\frac{(m+1)^2 \|\Sigma^{-\delta m} \p^m_t \Sigma_x\|_{L^\infty_x}}{m! C^m_t} & \leq 1,  & 
\frac{(m+1)^2 \|\Sigma^{-\delta m} \p^m_t( \Sigma^{-1})_x\|_{L^\infty_x}}{m! C^m_t} & \leq 10, 
\end{align}
\end{subequations}
for all  $t \in [0,T], \: 0 \leq m \leq 2n+1$ and
\begin{align}\label{cor:apriori:KZ:dx}
\frac{(m+1)^2 \|\Sigma^{-\delta m} \p^{m-1}_t \p_x \K\|_{L^\infty_x}}{m! C^m_t} & \leq \tfrac{10}{\alpha} A_k \eps,  &
\frac{(m+1)^2 \|\Sigma^{-\delta m} \p^{m-1}_t \p_x \Z \|_{L^\infty_x}}{m! C^m_t} & \leq \tfrac{5}{\alpha}A_z\eps , 
\end{align}
for all  $t \in [0,T], \: 1 \leq m \leq 2n+1$.
\end{corollary}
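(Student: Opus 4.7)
The plan is to derive each of the six bounds directly from Proposition \ref{prop:apriori:est} and Proposition \ref{prop:0thorder} using the identities in \eqref{system:lagrange}. A recurring device is that $\Sigma, \Sigma^{-1} \leq 3$, so wherever the normalization involves $\Sigma^{-\delta j}$ for some $j \neq m$, we can freely convert to $\Sigma^{-\delta m}$ at the cost of a constant factor bounded by $3^\delta$, which is absorbed using the hypothesis $C_t \gg 3^\delta(1+\alpha)$.

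First I would handle $\p^m_t \eta_x$ and $\p^m_t(\eta_x \W)$. At $m=0$, these reduce to $|\eta_x| \leq 3$ and $|\eta_x \W| \leq 4/3$, both delivered by Proposition \ref{prop:0thorder}. For $m \geq 1$, write $\p^m_t \eta_x = \p^{m-1}_t \eta_{xt}$ and $\p^m_t(\eta_x \W) = \p^{m-1}_t (\eta_x\W)_t$ and invoke \eqref{apriori:W_t}; the normalization factor $\frac{(m+1)^2 (m-1)!}{m! \, m^2 \, C_t} = \frac{(m+1)^2}{m^3 C_t}$ produced by shifting $m-1 \to m$ is bounded by $\frac{4}{C_t}$, so after collecting the extra $3^\delta(1+\alpha)$ and multiplying by either $\tfrac{2}{3} + 3A_z\eps$ or $\tfrac{5}{8}A_k\eps$, the hypotheses $C_t \gg 3^\delta(1+\alpha)$ and $(1+A_z)\eps \ll 1$ leave these quantities far below the target constants $3$ and $\tfrac{4}{3}$.

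Next I would tackle $\p^m_t \Sigma_x$ by applying $\p^m_t$ to the identity \eqref{id:Sigma_x} and expanding via Leibniz. The three resulting terms produce binomial and trinomial sums of products $\p^j_t \eta_x \cdot \p^{m-j}_t(\eta_x\W)$, $\p^j_t \eta_x \cdot \p^{m-j}_t \Z$, and $\p^{j_1}_t \eta_x \cdot \p^{j_2}_t \Sigma \cdot \p^{j_3}_t \K$; substituting the normalized bounds from Proposition \ref{prop:apriori:est} and the two bounds just proved, the combinatorial sums collapse via \S~\ref{sec:appendix:freqcomp}, and the factor of $\eps$ present in every $\Z$ or $\K$ factor makes all non-leading terms negligible. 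The leading contribution is $\tfrac{1}{2} \cdot \tfrac{4}{3} < 1$, as required. The bound on $\p^m_t(\Sigma^{-1})_x$ follows analogously by differentiating $(\Sigma^{-1})_x = -\Sigma^{-2}\Sigma_x$ with $\p^m_t$: two powers of $\Sigma^{-1} \leq 3$ and the interaction between $\p^j_t \Sigma^{-1}$ (controlled through \eqref{apriori:Sigma_t}) and the freshly bounded $\p^{m-j}_t \Sigma_x$ yield a bound comfortably below $10$.

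The most delicate step is the pair \eqref{cor:apriori:KZ:dx}, since those bounds isolate a spatial derivative of $\K$ or $\Z$, which is not directly controlled by Proposition \ref{prop:apriori:est}. Rearranging \eqref{id:K_t} as
\begin{equation*}
\p_x \K = \tfrac{1}{\alpha \Sigma}\bigl(\eta_x \p_t \K + \tfrac{1}{2}\eta_x \W \, \K + \tfrac{1}{2}\eta_x \K \, \Z\bigr),
\end{equation*}
and analogously for $\p_x \Z$ from \eqref{id:Z_t} (where the prefactor is instead $\tfrac{1}{2\alpha\Sigma}$, accounting for the slightly smaller constant $\tfrac{5}{\alpha}$), applying $\p^{m-1}_t$ and Leibniz reduces the problem to product estimates involving only time derivatives and the factor $\Sigma^{-1}$, all of which are now controlled. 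The leading-order contribution comes from $\tfrac{1}{\alpha}\p^{m-1}_t(\eta_x \Sigma^{-1} \p_t \K)$ and produces a coefficient close to $\tfrac{3}{\alpha}A_k \eps$ after using $\|\eta_x \Sigma^{-1}\|_{L^\infty_x} \leq 9$ together with \eqref{ineq:apriori:K_t}; the remaining two terms are quadratic in the small quantities $A_k\eps$ and $A_z\eps$ and thus easily absorbed into the final constants $\tfrac{10}{\alpha}$ and $\tfrac{5}{\alpha}$. The main practical obstacle is the same bookkeeping encountered in Proposition \ref{prop:t=0}: matching the $\frac{(m+1)^2}{m! \, C_t^m}$ normalization across factors carrying differing indices $j$ and $m-j$, a step handled by the frequency-composition inequalities \eqref{ineq:sum:nk}--\eqref{ineq:appendix:B} of \S~\ref{sec:appendix:freqcomp}, after which the hypothesis $(1+A_z)\eps \ll 1$ closes the estimate.
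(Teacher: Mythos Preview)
Your approach is essentially identical to the paper's: you handle \eqref{cor:apriori:W} via Proposition~\ref{prop:0thorder} at $m=0$ and via $\p_t^m\eta_x=\p_t^{m-1}\eta_{xt}$, $\p_t^m(\eta_x\W)=\p_t^{m-1}(\eta_x\W)_t$ for $m\geq 1$; you handle $\p_t^m\Sigma_x$ by differentiating \eqref{id:Sigma_x} and $\p_t^m(\Sigma^{-1})_x$ via $(\Sigma^{-1})_x=-\Sigma^{-2}\Sigma_x$; and you handle \eqref{cor:apriori:KZ:dx} by solving \eqref{id:K_t}, \eqref{id:Z_t} for $\p_x\K$, $\p_x\Z$ and taking $\p_t^{m-1}$. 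Two minor slips: the first term in \eqref{id:Sigma_x} is the single quantity $\tfrac{1}{2}(\eta_x\W)$, not a product $\eta_x\cdot(\eta_x\W)$, so there is no Leibniz sum there (your leading-term computation $\tfrac{1}{2}\cdot\tfrac{4}{3}$ is nonetheless correct); and in the $\p_x\K$ estimate the leading coefficient should be $\tfrac{9}{\alpha}A_k\eps$ rather than $\tfrac{3}{\alpha}A_k\eps$, which still fits under $\tfrac{10}{\alpha}$.
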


\begin{proof}
The bounds \eqref{cor:apriori:W} follow immediately from \eqref{ineq:0thorder} in the case where $m=0$, and the $1 \leq m \leq 2n+1$ case follows immediately from \eqref{apriori:W_t} and our assumption that $(1+\alpha) 3^\delta \ll C_t$. The bounds on $\p^m_t \Sigma_x$ are proven by taking $\p^m_t$ of \eqref{id:Sigma_x},  applying \eqref{ineq:apriori:est}, and using the bounds from \S~\ref{sec:appendix:freqcomp} along with our assumption that $(1+A_z) \eps \ll 1$. To get our bounds on $\p^m_t (\Sigma^{-1})_x$, take $\p^m_t$ of the equation
\begin{equation}
\label{id:Sigma-1:dx}
(\Sigma^{-1})_x = -\Sigma^{-2} \Sigma_x
\end{equation}
and apply \eqref{ineq:apriori:est} together with our bounds on $\p^m_t \Sigma_x$.

The estimates \eqref{cor:apriori:KZ:dx} can be obtained by taking $\p^{m-1}_t$ of the identities
\begin{subequations}
\begin{align}\label{id:K_x}
\alpha \p_x \K & = \Sigma^{-1} \eta_x \p_t \K + \tfrac{1}{2} \Sigma^{-1} \K \eta_x \W + \tfrac{1}{2} \Sigma^{-1} \eta_x \K \Z, \\
\label{id:Z_x}
\alpha \p_x \Z & = \tfrac{1}{2} \Sigma^{-1} \eta_x \p_t \Z + \tfrac{1-\alpha}{4} \Sigma^{-1}(\eta_x \W) \Z + \tfrac{\alpha}{8\gamma} \K \eta_x \W + \tfrac{1+\alpha}{4} \Sigma^{-1}\eta_x \Z^2 -\tfrac{\alpha}{8\gamma} \eta_x \K \Z.
\end{align}
\end{subequations}
and then using \eqref{ineq:apriori:est}, our inequalities from \S~\ref{sec:appendix:freqcomp}, and our assumption that $(1+A_z)\eps \ll 1$.
\end{proof}

\subsection{Energy estimates for time derivatives}
\label{sec:EE:dt}

Pick constants $\delta, \kappa$ with
\begin{subequations}
\label{def:delta}
\begin{align}
\delta  & \geq \tfrac{6}{\min(1,\alpha)}, \\
\kappa & :=  \max(1,\alpha)(2+ 5\delta)
\end{align}
\end{subequations}
and define the function $C_t : [0,T_*) \rightarrow \RR^+$,
\begin{equation}\label{def:C_t}
C_t(t) : = C_t(0) e^{\kappa t}
\end{equation}
where $C_t(0)$ is some positive constant to be determined.

\begin{proposition}\label{prop:EE:infty}
Suppose $C_t(0)$ satisfies
\begin{align*}
C_t(0) & \gg (1+\alpha) 3^\delta, & 
C_t(0) & \gg \max(1,\alpha) 2^\delta C_0,
\end{align*}
and $\eps$ satisfies
\begin{equation*}
 (1+\alpha)^2(1+B_z)^2 \eps \ll 1.
\end{equation*}
Then
\begin{subequations}\label{ineq:prop:EE}
\begin{align}
\label{ineq:prop:EE:K_t}
\frac{(m+1)^2 \|\Sigma^{-\delta m} \p^m_t \K \|_{L^\infty_x}}{m! C^m_t} & \leq B_k \eps, \\
\label{ineq:prop:EE:Z_t}
\frac{(m+1)^2 \|\Sigma^{-\delta m} \p^m_t \Z \|_{L^\infty_x}}{m! C^m_t} & \leq B_z \eps, 
\end{align}
\end{subequations}
for all $t \in [0,T_*)$ and $0 \leq m \leq 2n+1$.
\end{proposition}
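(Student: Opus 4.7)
The plan is a bootstrap/continuity argument that simultaneously controls all orders $m \leq 2n+1$.  I would set $A_k := 2 B_k$ and $A_z := 2 B_z$, and let $T \in (0, T_*]$ be the largest time on which the bootstrap hypotheses \eqref{ineq:apriori:K_t}--\eqref{ineq:apriori:Z_t} hold with these enlarged constants.  Proposition~\ref{prop:t=0} at $t=0$ gives $T > 0$.  The goal is to upgrade these hypotheses on $[0,T]$ into the improved bounds \eqref{ineq:prop:EE:K_t}--\eqref{ineq:prop:EE:Z_t} with the smaller constants $B_k, B_z$ (half of $A_k, A_z$), so that a standard continuity argument forces $T = T_*$.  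On $[0,T]$, Proposition~\ref{prop:apriori:est} and Corollary~\ref{cor:apriori:est} give uniform control of $\p^m_t \Sigma_t$, $\p^m_t \eta_{xt}$, $\p^m_t (\eta_x \W)_t$, $\p^m_t \eta_x$, $\p^m_t(\eta_x \W)$, $\p^m_t \Sigma_x$, $\p^m_t(\Sigma^{-1})_x$, $\p^{m-1}_t \p_x \K$, and $\p^{m-1}_t \p_x \Z$ for $0 \leq m \leq 2n+1$, and these will serve as the ``already known'' quantities in the energy estimates.

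For each such $m$ and $q \in (1,\infty)$, I would introduce the weighted $L^q$ energies
$$\E^{m,q}_k(t) := \int_\TT \Sigma^{-\delta m q} \eta_x |\p^m_t \K|^q \, dx, \qquad \E^{m,q}_z(t) := \int_\TT \Sigma^{-\delta m q} \eta_x |\p^m_t \Z|^q \, dx,$$
in direct analogy with those appearing in the proof of Proposition~\ref{prop:0thorder}.  Applying $\p^m_t$ to \eqref{id:K_t} and \eqref{id:Z_t} yields identities of the form $\eta_x \p^{m+1}_t \K = \alpha \Sigma \p_x \p^m_t \K + R^k_m$ and $\eta_x \p^{m+1}_t \Z = 2 \alpha \Sigma \p_x \p^m_t \Z + R^z_m$, where $R^k_m, R^z_m$ are multilinear polynomials in the quantities listed above.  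Differentiating each energy in time, integrating the transport term $\alpha \Sigma \p_x \p^m_t \K$ by parts, using $\Sigma_x = \tfrac{1}{2} \eta_x \W + \OO(\eps \eta_x)$ together with the sign condition $\eta_x \W \leq -\tfrac{1}{2} + 4 \eta_x$ from Proposition~\ref{prop:0thorder}, extracts a damping term of size $\alpha \delta m q \int \Sigma^{-\delta m q} |\p^m_t \K|^q$.  Summing $\E^{m,q}_k + \E^{m,q}_z$ (so that the factor-of-$2$ asymmetry between the $\K$- and $\Z$-equations is averaged out exactly as in Proposition~\ref{prop:0thorder}), the choice $\delta \geq 6/\min(1,\alpha)$ is large enough that the damping dominates the residual contributions from the weight derivative, from $\eta_{xt}$, and from the quadratic couplings.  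The result is a differential inequality of the schematic form
$$\tfrac{d}{dt} \bigl( \E^{m,q}_k + \E^{m,q}_z \bigr)^{1/q} \leq \kappa m \bigl( \E^{m,q}_k + \E^{m,q}_z \bigr)^{1/q} + \bigl\| \Sigma^{-\delta m} R^k_m \bigr\|_{L^q_x} + \bigl\| \Sigma^{-\delta m} R^z_m \bigr\|_{L^q_x},$$
with $\kappa$ as in \eqref{def:delta}.  Gr\"onwall, the initial data from Proposition~\ref{prop:t=0}, and $q \to \infty$ would then turn the $L^q$ bounds into $L^\infty$ bounds; dividing by $m! \, C_t^m = m! \, C_t(0)^m e^{\kappa m t}$ and multiplying by $(m+1)^2$, the exponential $e^{\kappa m t}$ cancels exactly against the explicit time growth of $C_t$.

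After this cancellation the remaining constant can be driven below $B_k \eps$ and $B_z \eps$ provided $C_t(0) \gg (1+\alpha) 3^\delta$ (so that every forcing term carries a small prefactor $\lesssim (1+\alpha)3^\delta/C_t(0)$, absorbing the bootstrap constants $A_k, A_z$) and $C_t(0) \gg \max(1,\alpha) 2^\delta C_0$ (so that the initial-data contribution from Proposition~\ref{prop:t=0}, which enters through a factor of the form $(e^2 C_0/C_t(0))^m$, is summable and fits strictly inside $B_k \eps, B_z \eps$).  This closes the bootstrap with room to spare.  The main obstacle I anticipate is controlling $R^k_m$ and $R^z_m$ in the weighted $L^q_x$ norm \emph{uniformly in $q$}: each is a sum of products of up to four lower-order space-time derivatives, and the combinatorial weight of every term must be reduced to $m! \, C_t^m / (m+1)^2$ by means of the combinatorial identities of \S~\ref{sec:appendix:freqcomp}.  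The specific choices $\delta \geq 6/\min(1,\alpha)$ and $\kappa = \max(1,\alpha)(2+5\delta)$ are dictated precisely by the worst combinatorial constant that arises in this bookkeeping.
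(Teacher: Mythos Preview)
Your proposal is correct and matches the paper's approach: a bootstrap via the weighted $L^q$ energies $\E^{m,q}_k,\E^{m,q}_z$ with weight $\Sigma^{-\delta mq}\eta_x$, the a~priori estimates of \S\ref{sec:apriori}, integration by parts of the $\alpha\Sigma\p_x$ term coupled with the sign condition \eqref{ineq:W} to extract damping, and the limit $q\to\infty$. Two minor points where the paper's execution differs: it also treats $E^{m,q}_k$ on its own (needed to get the sharper constant $B_k\eps$ in \eqref{ineq:prop:EE:K_t} rather than just $B_z\eps$ from the sum), and it works directly with the normalized energy $E^{m,q}=(m+1)^{2q}\E^{m,q}/(m!\,C_t^m)^q$ to obtain a \emph{damped} ODE handled by Lemma~\ref{lem:ODE}, with the smallness of the forcing coming from an extra factor of $\eps^{1/2}$ (after Young's inequality, absorbed against the damping) rather than from a prefactor $(1+\alpha)3^\delta/C_t(0)$.
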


\begin{proof}
These estimates will be proven via $L^q$ energy estimates similar to those performed in  the proof of Proposition \ref{prop:0thorder}. Fix $T \in [0, T_*)$ and pick a constants $A_k, A_z$ satisfying
\begin{equation*}
B_k < A_k < 2B_k < B_z <  A_z < 2B_z.
\end{equation*}
Our bootstrap hypothesis will be that 
\begin{align*}
\frac{(m+1)^2 \|\Sigma^{-\delta m} \p^m_t \K\|_{L^\infty_x}}{m! C^m_t} & \leq A_k \eps, &
\frac{(m+1)^2 \|\Sigma^{-\delta m} \p^m_t \Z\|_{L^\infty_x}}{m! C^m_t} & \leq A_z \eps,
\end{align*}
for all  $t \in [0,T], \: 0 \leq m \leq 2n+1.$ Our assumptions on $C_t(0)$ paired with the fact that $A_z < 2 B_z$ and $(1+B_z) \eps \ll 1$ imply that the hypotheses of Proposition \ref{prop:apriori:est} and Corollary \ref{cor:apriori:est} are satisfied with our choice of $\delta$, $T$, $A_k$, and $A_z$. Additionally, if we define $\C_t : = \frac{1}{2}2^{-\delta} C_t(0)$, then our hypotheses on $C_t(0)$ imply that $\C_t \gg \alpha C_0$ and $\C_t \gg 1+ \alpha$ so that Proposition \ref{prop:t=0} gives us
\begin{equation}\label{ineq:bsh:t=0}
\begin{aligned}
\frac{(m+1)^2 \|\Sigma^{-\delta m} \p^m_t \K\|_{L^\infty_x}}{m! C^m_t} & \leq \tfrac{\eps}{2^m} , &
\frac{(m+1)^2 \|\Sigma^{-\delta m} \p^m_t \Z\|_{L^\infty_x}}{m! C^m_t} & \leq \tfrac{\eps}{2^{m-1}} ,
\end{aligned}
\end{equation}
at time $t = 0$. Therefore, our bootstrap hypothesis is true for $T = 0$.

For $1< q < \infty$ and $1 \leq m \leq 2n+1$ define the energies
\begin{align*}
\E^{m,q}_k(t) & : = \int_\TT \Sigma^{-\delta m q} \eta_x |\p^m_t \K|^q \: dx, &
\E^{m,q}_z(t) & : = \int_\TT \Sigma^{-\delta m q} \eta_x |\p^m_t \Z|^q \: dx, \\
E^{m,q}_k(t) & : = \frac{(m+1)^{2q} \E^{m,q}_k(t)}{(m!)^q C_t(t)^{mq}}, &
E^{m,q}_z(t) & : = \frac{(m+1)^{2q} \E^{m,q}_z(t)}{(m!)^q C_t(t)^{mq}}.
\end{align*}
Taking $\p_t$ of $\E^{m,q}_k$ yields
\begin{align*}
\dot{\E}^{m,q}_k 
& = -\delta m q \int \Sigma^{-\delta m q} \tfrac{\Sigma_t}{\Sigma} \eta_x |\p^m_t \K|^q 
+ \int \Sigma^{-\delta m q} \eta_{xt} | \p^m_t \K|^q \\
&\quad + \int \Sigma^{-\delta m q} |\p^m_t \K|^{q-1} \sgn(\p^m_t \K) q \eta_x \p^{m+1}_t \K \\
& = \int \Sigma^{-\delta m q} \eta_x |\p^m_t \K|^q [ ( \alpha \delta m q + \tfrac{1-\alpha}{2}) \Z + (-\alpha \delta m q + \alpha) \tfrac{1}{2\gamma} \Sigma \K ] \\
&\quad + \tfrac{1+\alpha}{2} \int \Sigma^{-\delta m q} |\p^m_t \K|^q \eta_x \W
+ \int \Sigma^{-\delta m q} |\p^m_t \K|^{q-1} \sgn(\p^m_t \K) q \eta_x \p^{m+1}_t \K. 
\end{align*}

Taking $\p^m_t$ of \eqref{id:K_t} produces the identity
\begin{align} 
\eta_x \p^{m+1}_t \K 
& = \alpha \Sigma \p^m_t \p_x \K - [ \tfrac{1}{2} \eta_x \W + \tfrac{1}{2} \eta_x \Z + m \eta_{xt}] \p^m_t \K
\notag \\
&\quad - \sum^m_{j=2} {m\choose{j}} \p^{j-1}_t \eta_{xt} \p^{m+1-j}_t \K + \alpha \sum^m_{j=1} {m\choose{j}} \p^{j-1}_t \Sigma_t \p^{m-j}_t \p_x \K 
\notag \\
&\quad - \tfrac{1}{2} \eta_x \sum^m_{j=1} {m\choose{j}} \p^j_t \Z \p^{m-j}_t \K 
  - \tfrac{1}{2}  \sum^m_{j=1} {m\choose{j}} \p^{j-1}_t (\eta_x \W)_t \p^{m-j}_t \K 
  \notag \\
&\quad - \tfrac{1}{2} \sum_{\substack{j_1+j_2+j_3 = m \\ j_1 \geq 1}} {m\choose{j_1 j_2 j_3}} \p^{j_1-1}_t \eta_{xt} \p^{j_2}_t \K \p^{j_3}_t \Z.
\label{id:K_t:n+1}
\end{align}
This means that
\begin{align*}
&\int \Sigma^{-\delta m q} |\p^m_t \K|^{q-1} \sgn(\p^m_t \K) q \eta_x \p^{m+1}_t \K \\
& = \alpha \int \Sigma^{1-\delta m q} \p_x\big( |\p^m_t \K|^q \big)
- q \int \Sigma^{-\delta m q} |\p^m_t \K|^q [ \tfrac{1}{2} \eta_x \W + \tfrac{1}{2} \eta_x \Z + m \eta_{xt}]  \\
&\quad - q\sum^m_{j=2} {m\choose{j}}\int \Sigma^{-\delta m q} |\p^m_t \K|^{q-1} \sgn(\p^m_t \K) \p^{j-1}_t \eta_{xt} \p^{m+1-j}_t \K \\
&\quad + \alpha q \sum^m_{j=1} {m\choose{j}}\int \Sigma^{-\delta m q} |\p^m_t \K|^{q-1} \sgn(\p^m_t \K) \p^{j-1}_t \Sigma_t \p^{m-j}_t \p_x \K \\
&\quad - \tfrac{1}{2}q  \sum^m_{j=1} {m\choose{j}} \int \Sigma^{-\delta m q} |\p^m_t \K|^{q-1} \sgn(\p^m_t \K) \eta_x \p^j_t \Z \p^{m-j}_t \K \\
&\quad - \tfrac{1}{2}q  \sum^m_{j=1} {m\choose{j}} \int \Sigma^{-\delta m q} |\p^m_t \K|^{q-1} \sgn(\p^m_t \K)\p^{j-1}_t (\eta_x \W)_t \p^{m-j}_t \K \\
&\quad - \tfrac{1}{2}q \sum_{\substack{j_1+j_2+j_3 = m \\ j_1 \geq 1}} {m\choose{j_1 j_2 j_3}} \int \Sigma^{-\delta m q} |\p^m_t \K|^{q-1} \sgn(\p^m_t \K) \p^{j_1-1}_t \eta_{xt} \p^{j_2}_t \K \p^{j_3}_t \Z.
\end{align*}
Since
\begin{align*}
\alpha \int \Sigma^{1-\delta m q} \p_x\big( |\p^m_t \K|^q \big) = \tfrac{\alpha}{2}(\delta m q -1) \int \Sigma^{-\delta m q} |\p^m_t \K|^q [\eta_x \W-\eta_x \Z + \tfrac{1}{\gamma} \eta_x \Sigma \K],
\end{align*}
we get
\begin{align*}
\dot{\E}^{m,q}_k 
& = \int \Sigma^{-\delta m q} \eta_x |\p^m_t \K|^q [ ( \alpha \delta m q + \tfrac{1-\alpha}{2}) \Z + (-\alpha \delta m q + \alpha) \tfrac{1}{2\gamma} \Sigma \K ] \\
&\quad + \tfrac{1+\alpha}{2} \int \Sigma^{-\delta m q} |\p^m_t \K|^q \eta_x \W 
+ \tfrac{\alpha}{2}(\delta m q -1) \int \Sigma^{-\delta m q} |\p^m_t \K|^q [\eta_x \W-\eta_x \Z + \tfrac{1}{\gamma} \eta_x \Sigma \K] \\
&\quad  - q \int \Sigma^{-\delta m q} |\p^m_t \K|^q [ \tfrac{1}{2} \eta_x \W + \tfrac{1}{2} \eta_x \Z + m \eta_{xt}]  \\
&\quad  - q\sum^m_{j=2} {m\choose{j}}\int \Sigma^{-\delta m q} |\p^m_t \K|^{q-1} \sgn(\p^m_t \K) \p^{j-1}_t \eta_{xt} \p^{m+1-j}_t \K \\
&\quad + \alpha q \sum^m_{j=1} {m\choose{j}}\int \Sigma^{-\delta m q} |\p^m_t \K|^{q-1} \sgn(\p^m_t \K) \p^{j-1}_t \Sigma_t \p^{m-j}_t \p_x \K \\
&\quad  - \tfrac{1}{2}q  \sum^m_{j=1} {m\choose{j}} \int \Sigma^{-\delta m q} |\p^m_t \K|^{q-1} \sgn(\p^m_t \K) \eta_x \p^j_t \Z \p^{m-j}_t \K \\
&\quad - \tfrac{1}{2}q  \sum^m_{j=1} {m\choose{j}} \int \Sigma^{-\delta m q} |\p^m_t \K|^{q-1} \sgn(\p^m_t \K)\p^{j-1}_t (\eta_x \W)_t \p^{m-j}_t \K \\
&\quad  - \tfrac{1}{2}q \sum_{\substack{j_1+j_2+j_3 = m \\ j_1 \geq 1}} {m\choose{j_1 j_2 j_3}} \int \Sigma^{-\delta m q} |\p^m_t \K|^{q-1} \sgn(\p^m_t \K) \p^{j_1-1}_t \eta_{xt} \p^{j_2}_t \K \p^{j_3}_t \Z.
\end{align*}
Simplifying the first four terms on the righthand side of this equation gives us
\begin{align}
\dot{\E}^{m,q}_k 
& = D_k \int \Sigma^{-\delta m q} |\p^m_t \K|^q \eta_x \W 
+ (D_k + \alpha m q)\int \Sigma^{-\delta m q} \eta_x |\p^m_t \K|^q \Z 
\notag \\
&\quad - \tfrac{\alpha}{2\gamma} mq \int \Sigma^{-\delta m q} \eta_x |\p^m_t \K|^q \Sigma \K 
\notag \\
&\quad  - q\sum^m_{j=2} {m\choose{j}}\int \Sigma^{-\delta m q} |\p^m_t \K|^{q-1} \sgn(\p^m_t \K) \p^{j-1}_t \eta_{xt} \p^{m+1-j}_t \K 
\notag \\
&\quad + \alpha q \sum^m_{j=1} {m\choose{j}}\int \Sigma^{-\delta m q} |\p^m_t \K|^{q-1} \sgn(\p^m_t \K) \p^{j-1}_t \Sigma_t \p^{m-j}_t \p_x \K 
\notag \\
&\quad  - \tfrac{1}{2}q  \sum^m_{j=1} {m\choose{j}} \int \Sigma^{-\delta m q} |\p^m_t \K|^{q-1} \sgn(\p^m_t \K) \eta_x \p^j_t \Z \p^{m-j}_t \K 
\notag \\
&\quad - \tfrac{1}{2}q  \sum^m_{j=1} {m\choose{j}} \int \Sigma^{-\delta m q} |\p^m_t \K|^{q-1} \sgn(\p^m_t \K)\p^{j-1}_t (\eta_x \W)_t \p^{m-j}_t \K 
\notag \\
&\quad  - \tfrac{1}{2}q \sum_{\substack{j_1+j_2+j_3 = m \\ j_1 \geq 1}} {m\choose{j_1 j_2 j_3}} \int \Sigma^{-\delta m q} |\p^m_t \K|^{q-1} \sgn(\p^m_t \K) \p^{j_1-1}_t \eta_{xt} \p^{j_2}_t \K \p^{j_3}_t \Z,
\label{id:energy:K:dt}
\end{align}
where
\begin{equation}\label{def:D_k}
D_k  = D_k(m,q,\alpha) : = \tfrac{1}{2} + q[ (\tfrac{\alpha}{2}\delta - \tfrac{1+\alpha}{2}) m - \tfrac{1}{2}]
.
\end{equation}

Our lower bound \eqref{def:delta} and the fact that $q>1$ gives us the bounds
\begin{equation}\label{ineq:D_k}
\tfrac{1}{2} +\tfrac{3}{2}mq \max(1,\alpha) \leq D_k < \tfrac{1}{2} mq \alpha (\delta-1).
\end{equation}
Since $D_k > 0$, the inequality \eqref{ineq:W} gives us
\begin{align}  
&\dot{\E}^{m,q}_k + \tfrac{1}{2}D_k \int \Sigma^{-\delta m q} |\p^m_t \K|^q 
\\
& \leq 4 D_k \E^{m,q}_k 
+ (B_zD_k + \alpha m q(B_z +\tfrac{3}{2\gamma} B_k) )\eps \E^{m,q}_k  
\notag \\
&\quad - q\sum^m_{j=2} {m\choose{j}}\int \Sigma^{-\delta m q} |\p^m_t \K|^{q-1} \sgn(\p^m_t \K) \p^{j-1}_t \eta_{xt} \p^{m+1-j}_t \K 
\notag \\
&\quad + \alpha q \sum^m_{j=1} {m\choose{j}}\int \Sigma^{-\delta m q} |\p^m_t \K|^{q-1} \sgn(\p^m_t \K) \p^{j-1}_t \Sigma_t \p^{m-j}_t \p_x \K 
\notag \\
&\quad - \tfrac{1}{2}q  \sum^m_{j=1} {m\choose{j}} \int \Sigma^{-\delta m q} |\p^m_t \K|^{q-1} \sgn(\p^m_t \K) \eta_x \p^j_t \Z \p^{m-j}_t \K 
\notag \\
&\quad - \tfrac{1}{2}q  \sum^m_{j=1} {m\choose{j}} \int \Sigma^{-\delta m q} |\p^m_t \K|^{q-1} \sgn(\p^m_t \K)\p^{j-1}_t (\eta_x \W)_t \p^{m-j}_t \K 
\notag \\
&\quad  - \tfrac{1}{2}q \sum_{\substack{j_1+j_2+j_3 = m \\ j_1 \geq 1}} {m\choose{j_1 j_2 j_3}} \int \Sigma^{-\delta m q} |\p^m_t \K|^{q-1} \sgn(\p^m_t \K) \p^{j_1-1}_t \eta_{xt} \p^{j_2}_t \K \p^{j_3}_t \Z.
\label{ineq:energy:K:dt}
\end{align}
Because
\begin{equation*}
\dot{E}^{m,q}_k = -mq\kappa E^{m,q}_k + \tfrac{(m+1)^{2q} \dot{\E}^{m,q}_k}{(m!)^{q} C^{mq}_t},
\end{equation*}
multiplying \eqref{ineq:energy:K:dt} by $\big(\frac{(m+1)^2}{m! C^m_t}\big)^q$  and using our a priori estimates from \S~\ref{sec:apriori} gives us
\begin{align*}
\dot{E}^{m,q}_k + \tfrac{1}{2} D_k \bigg[ \tfrac{(m+1)^2 \|\Sigma^{-\delta m} \p^m_t \K\|_{L^q_x}}{m! C^m_t} \bigg]^q & \leq [-mq \kappa + (4+B_z\eps)D_k + \alpha m q(B_z +\tfrac{3}{2\gamma} B_k) )\eps ] E^{m,q}_k \\
&\quad + q \bigg[ \tfrac{(m+1)^2 \|\Sigma^{-\delta m} \p^m_t \K\|_{L^q_x}}{m! C^m_t} \bigg]^{q-1} A_kA_z\eps^2 \big( \cdots \big),
\end{align*}
where
\begin{align*}
\big( \cdots \big) 
& \leq (2|1-\alpha| + \tfrac{5\alpha}{\gamma})\sum^m_{j=2} \tfrac{(m+1)^2(m+1-j)}{j^3(m+2-j)^2}
+ 50\alpha(1+ \tfrac{1}{\gamma})  \sum^m_{j=1} \tfrac{(m+1)^2(m+1-j)}{j^3(m+2-j)^2} \\
&\quad  + \tfrac{3}{2}  \sum^m_{j=1} \tfrac{(m+1)^2}{(j+1)^2(m+1-j)^2} 
+ \tfrac{5\alpha}{8\gamma} \tfrac{3^\delta}{C_t}  \sum^m_{j=1} \tfrac{(m+1)^2}{j^3(m+1-j)^2} \\
&\quad  + \tfrac{1}{3}\tfrac{(1+\alpha)3^\delta}{C_t} \sum^{m-1}_{j=0} \tfrac{m+1}{(j+1)^2(m-j)^2} 
+  (|1-\alpha| + \tfrac{5\alpha}{2\gamma}) \tfrac{3^\delta A_z \eps}{C_t} \sum_{\substack{j_1+j_2+j_3 = m \\ j_1 \geq 1}} \tfrac{(m+1)^2}{j^3_1 (j_2+1)^2 (j_3+1)^2}.
\end{align*}
Since $(1+\alpha)3^\delta \ll C_t(0)$, $A_z\eps < 2B_z \eps \ll 1$, and
\begin{equation}\label{ineq:sum:10m}
\sum^m_{j=1} \tfrac{(m+1)^2(m+1-j)}{j^3(m+2-j)^2}  
= m + \sum^m_{j=2} \tfrac{(m+1)^2(m+1-j)}{j^3(m+2-j)^2} 
\leq m + m(1+\tfrac{1}{m})\sum^m_{j=2} \tfrac{m+1}{j^3(m+2-j)}  \leq m + 10 m,
\end{equation}
it follows from our bounds in \S~\ref{sec:appendix:freqcomp} that 
\begin{equation*}
\big( \cdots \big) \leq (1+\alpha)m\OO(1) + \OO(1).
\end{equation*}
Therefore,
\begin{align}
&\dot{E}^{m,q}_k + \tfrac{1}{2} D_k \bigg[ \tfrac{(m+1)^2 \|\Sigma^{-\delta m} \p^m_t \K\|_{L^q_x}}{m! C^m_t} \bigg]^q 
\notag\\
& \leq [-mq \kappa + (4+B_z\eps)D_k + \alpha m q(B_z +\tfrac{3}{2\gamma} B_k) )\eps ] E^{m,q}_k 
\notag \\
&\quad + q m\bigg[ \tfrac{(m+1)^2 \|\Sigma^{-\delta m} \p^m_t \K\|_{L^q_x}}{m! C^m_t} \bigg]^{q-1}\OO( (1+\alpha)A_kA_z\eps^2) 
+ q \bigg[ \tfrac{(m+1)^2 \|\Sigma^{-\delta m} \p^m_t \K\|_{L^q_x}}{m! C^m_t} \bigg]^{q-1} \OO(A_k A_z\eps^2)
\notag \\
&\leq [-mq \kappa + (4+B_z\eps)D_k + \alpha m q(B_z +\tfrac{3}{2\gamma} B_k) )\eps ] E^{m,q}_k 
+ m\OO((1+\alpha)A_k A_z \eps^{3/2})^q 
\notag \\
&\quad
+ (m+1)(q-1) \eps^{\frac{q}{2(q-1)}}\bigg[ \tfrac{(m+1)^2 \|\Sigma^{-\delta m} \p^m_t \K\|_{L^q_x}}{m! C^m_t} \bigg]^q
 +\OO(A_kA_z \eps^{3/2})^q.
\label{ineq:energy:K:n}
\end{align}
Since $\eps < 1$ and $\frac{q}{q-1} > 1$, this can be rewritten as
\begin{align*}
&\dot{E}^{m,q}_k + \bigg[\tfrac{1}{2} D_k -(m+1)(q-1) \eps^{\frac{1}{2}}\bigg]  \bigg[ \tfrac{(m+1)^2 \|\Sigma^{-\delta m} \p^m_t \K\|_{L^q_x}}{m! C^m_t} \bigg]^q \\
& \leq [-mq \kappa + (4+B_z\eps)D_k + \alpha m q(B_z +\tfrac{3}{2\gamma} B_k) )\eps ] E^{m,q}_k 
+ m\OO((1+\alpha)A_k A_z \eps^{3/2})^q +\OO(A_kA_z \eps^{3/2})^q .
\end{align*}
It now follows from the lower and upper bounds in \eqref{ineq:D_k} that
\begin{equation}\label{ineq:energy:ODE:K}
\dot{E}^{m,q}_k   \leq -mq \max(1,\alpha) E^{m,q}_k  + m\OO((1+\alpha)A_k A_z \eps^{3/2})^q +\OO(A_kA_z \eps^{3/2})^q . 
\end{equation}
This implies (see Lemma \ref{lem:ODE}) that for all $t \in [0,T]$ we have
\begin{align*}
E^{m,q}_k(t)  & \leq \max\bigg(E^{m,q}_k(0),  \frac{\OO( (1+\alpha)A_kA_z \eps^{3/2})^q}{q \max(1,\alpha)} + \frac{\OO(A_kA_z \eps^{3/2})^q}{mq\max(1,\alpha)}\bigg), \\
\implies E^{m,q}_k(t)^{1/q}  &\leq \max\bigg( E^{m,q}_k(0)^{1/q},  \OO((1+\alpha)^{1-\frac{1}{q}}A_kA_z \eps^{3/2}) + \OO(A_kA_z \eps^{3/2}) \bigg).
\end{align*}
 Therefore, \eqref{ineq:bsh:t=0} and the fact that $A_k < 2B_k, A_z < 2B_z,$ now gives us
\begin{align*}
E^{m,q}_k(t)^{1/q}  & \leq \max\big(\tfrac{1}{2^m B_k} , \OO(B_z(1+\alpha)\eps^{\frac{1}{2}})\big) B_k\eps & \forall \: t \in [0,T].
\end{align*}
Sending $q \rightarrow \infty$ now yields
\begin{align*}
\frac{(m+1)^2 \|\Sigma^{-\delta m} \p^m_t \K \|_{L^\infty_x}}{m! C^m_t} 
& \leq \max\big(\tfrac{1}{2^m B_k} , \OO(B_z(1+\alpha)\eps^{\frac{1}{2}})\big) B_k\eps, & \forall \: t \in [0,T].
\end{align*}
Since $(1+\alpha)^2B^2_z \eps \ll 1$, $m \geq 1$, and $B_k > 1$, \eqref{ineq:prop:EE:K_t} now follows.

To obtain the bound \eqref{ineq:prop:EE:Z_t} for $t \in [0,T]$ and thus conclude our bootstrap argument, it is best to bound the sum $E^{m,q}_z + E^{m,q}_k$ instead of dealing with $E^{m,q}_z$ on its own. The proof for bounding $E^{m,q}_z + E^{m,q}_k$, however, is essentially the same as the computations that were just carried out to bound $E^{m,k}_q$.

Taking $\p^m_t$ of \eqref{id:Z_t} yields the identity
\begin{align}
\eta_x \p^{m+1}_t \Z 
& = 2\alpha \Sigma \p^m_t \p_x \Z 
- [ \tfrac{1-\alpha}{2} \eta_x \W + (1+\alpha) \eta_x \Z + m \eta_{xt}-\tfrac{\alpha}{4\gamma} \eta_x \Sigma \K ] \p^m_t \Z 
- \tfrac{\alpha}{4\gamma} \Sigma \p^m_t \K \eta_x \W 
\notag \\
&\quad - \sum^m_{j=2} {m\choose{j}} \p^{j-1}_t \eta_{xt} \p^{m+1-j}_t \Z + 2\alpha \sum^m_{j=1} {m\choose{j}} \p^{j-1}_t \Sigma_t \p^{m-j}_t \p_x \Z 
\notag \\
&\quad -\tfrac{1+\alpha}{2} \eta_x \sum^{m-1}_{j=1}{m\choose{j}} \p^j_t \Z \p^{m-j}_t \Z + \tfrac{\alpha}{4\gamma} \eta_x \Sigma \sum^m_{j=1} {m\choose{j}} \p^j_t \K \p^{m-j}_t \Z 
\notag \\
&\quad - \sum^m_{j=1} {m\choose{j}}\bigg[ \tfrac{1-\alpha}{2} \p^{j-1}_t (\eta_x \W)_t \p^{m-j}_t \Z + \tfrac{\alpha}{4\gamma} \Sigma \p^{j-1}_t (\eta_x \W)_t \p^{m-j}_t \K + \tfrac{\alpha}{4\gamma} \p^{j-1}_t \Sigma_t \eta_x \W \p^{m-j}_t \K\bigg] 
\notag \\
&\quad + \sum_{\substack{j_1+j_2+j_3 = m \\ j_1 \geq 1}} {m\choose{j_1 j_2 j_3}} \p^{j_1-1}_t \eta_{xt} \big( -\tfrac{1+\alpha}{2} \p^{j_2}_t \Z + \tfrac{\alpha}{4\gamma} \Sigma \p^{j_2}_t \K\big) \p^{j_3}_t \Z 
\notag \\
&\quad + \tfrac{\alpha}{4\gamma} \eta_x \sum_{\substack{j_1+j_2+j_3 = m \\ j_1 \geq 1}} {m\choose{j_1 j_2 j_3}} \p^{j_1-1}_t \Sigma_t  \p^{j_2}_t \K \p^{j_3}_t \Z 
\notag \\
&\quad - \tfrac{\alpha}{4\gamma}  \sum_{\substack{j_1+j_2+j_3 = m \\ j_1, j_3 \geq 1}} {m\choose{j_1 j_2 j_3}} \p^{j_1-1}_t \Sigma_t  \p^{j_2}_t \K \p^{j_3-1}_t (\eta_x \W)_t 
\notag \\
&\quad + \tfrac{\alpha}{4\gamma}  \sum_{\substack{j_1+j_2+j_3 +j_4= m \\ j_1, j_2 \geq 1}} {m\choose{j_1 j_2 j_3 j_4}} \p^{j_1-1}_t \eta_{xt} \p^{j_2-1}_t \Sigma_t \p^{j_3}_t \K \p^{j_4}_t \Z.
\label{id:Z_t:n+1}
\end{align}

Computations analogous to those performed above for $E^{m,q}_k$ give us the inequality 
\begin{align}
&\dot{E}^{m,q}_z + \tfrac{1}{2} D_z \bigg[ \tfrac{(m+1)^2 \|\Sigma^{-\delta m} \p^m_t \Z\|_{L^q_x}}{m! C^m_t} \bigg]^q 
\notag\\
& \leq [-mq\kappa + 4D_z + \OO( (1+\alpha)(mq+1)B_z\eps) ] E^{m,q}_z 
\notag\\
&\quad + \tfrac{\alpha}{\gamma} q \bigg[ \tfrac{(m+1)^2 \|\Sigma^{-\delta m} \p^m_t \Z\|_{L^q_x}}{m! C^m_t} \bigg]^{q-1} \bigg[ \tfrac{(m+1)^2 \|\Sigma^{-\delta m} \p^m_t \K\|_{L^q_x}}{m! C^m_t} \bigg]
\notag \\
&\quad + qm \bigg[ \tfrac{(m+1)^2 \|\Sigma^{-\delta m} \p^m_t \Z\|_{L^q_x}}{m! C^m_t} \bigg]^{q-1} \OO((1+\alpha)A^2_z\eps^2) 
+ q \bigg[ \tfrac{(m+1)^2 \|\Sigma^{-\delta m} \p^m_t \Z\|_{L^q_x}}{m! C^m_t} \bigg]^{q-1} \OO(A_z^2\eps^2),
\label{ineq:energy:Z:n}
\end{align}
where $D_z$ is defined as
\begin{equation}\label{def:D_z}
D_z : = \tfrac{1-\alpha}{2} + q[ (\alpha\delta - \tfrac{1+\alpha}{2}) m - \tfrac{1-\alpha}{2}].
\end{equation}
The constant $D_z$ satisfies the bounds
\begin{equation}\label{ineq:D_z}
-\tfrac{1}{2}(q-1) +5mq \max(1,\alpha) \leq D_z < mq \alpha \delta.
\end{equation}
Adding \eqref{ineq:energy:K:n} and \eqref{ineq:energy:Z:n}  together gives us
\begin{align*}
& \dot{E}^{m,q}_k + \dot{E}^{m,q}_z
+ \bigg[ \tfrac{1}{2}D_k - \tfrac{\alpha}{\gamma} - (m+1)(q-1) \eps^{\frac{1}{2}} \bigg] \bigg[ \tfrac{(m+1)^2 \|\Sigma^{-\delta m} \p^m_t \K\|_{L^q_x}}{m! C^n_t} \bigg]^q \\
&\quad + \bigg[ \tfrac{1}{2}D_z -(q-1)(\tfrac{\alpha}{\gamma} + (m+1) \eps^{\frac{1}{2}}) \bigg] \bigg[ \tfrac{(m+1)^2 \|\Sigma^{-\delta m} \p^m_t \Z\|_{L^q_x}}{m! C^m_t} \bigg]^q \\
& \leq [-mq\kappa + 4\max(D_k,D_z) + \OO((1+\alpha)(mq+1)B_z\eps) ] \big( E^{m,q}_k + E^{m,q}_z\big) \\
&\quad  + 2m\OO((1+\alpha)A^2_z \eps^{3/2})^q +2\OO(A^2_z \eps^{3/2})^q.
\end{align*}
Using \eqref{ineq:D_k}, \eqref{ineq:D_z} , and \eqref{def:delta} gives us
\begin{align*}
\dot{E}^{m,q}_k + \dot{E}^{m,q}_z 
& \leq -mq\max(1,\alpha) \big( E^{m,q}_k + E^{m,q}_z\big) + 2m\OO((1+\alpha)A^2_z \eps^{3/2})^q +2\OO(A^2_z \eps^{3/2})^q. 
\end{align*}
From here, carrying out the same ODE comparison argument as before and then sending $q \rightarrow \infty$ gives us
\begin{align*}
\max\bigg( \frac{(m+1)^2 \|\Sigma^{-\delta m} \p^m_t \K \|_{L^\infty_x}}{m! C^m_t}, \frac{(m+1)^2 \|\Sigma^{-\delta m} \p^m_t \Z \|_{L^\infty_x}}{m! C^m_t} \bigg) & \leq B_z \eps.
\end{align*}
\end{proof}

\subsection{Bounds on the remaining derivatives}
\label{sec:ineq:dx}
It follows from the definition \eqref{def:delta} of $\kappa$ that $C_t$ satisfies
\begin{align*}
C_t(t) & \leq C_t(0) e^{\kappa T_*} < C_t(0) e^{2(1+\eps^{\frac{1}{2}})(2+5\delta)}
\end{align*}
for all $t \in [0,T_*)$. Therefore, if we define the constant
\begin{align}\label{def:C_t:bar}
\bar{C}_t & : = C_t(0) e^{(11+\log 3) \delta + 5},
\end{align}
then
\begin{equation}\label{ineq:C_t:bar}
\bar{C}_t  > 3^\delta C_t(t) 
\end{equation}
for all $t \in [0,T_*)$. It follows from Proposition \ref{prop:EE:infty} and \eqref{ineq:0thorder} that
\begin{align}\label{ineq:KZ:dt:new}
\frac{(m+1)^2 \|\p^m_t \K \|_{L^\infty_x}}{m! \bar{C}^m_t} & \leq B_k \eps, &
\frac{(m+1)^2 \|\p^m_t \Z \|_{L^\infty_x}}{m! \bar{C}^m_t} & \leq B_z \eps,
\end{align}
for all $t \in [0, T_*), 0 \leq m \leq 2n+1$.

\begin{proposition}\label{prop:ineq:dx}
Suppose the hypotheses of Proposition \ref{prop:EE:infty} are satisfied, and additionally $\bar{C}_x$ is a constant satisfying
\begin{align*}
\bar{C}_x & \gg 1, &
\alpha\bar{C}_x & \gg \bar{C}_t.
\end{align*}
Then if $(1+\alpha)B_z \eps \ll 1$ the following bounds hold for all $t \in [0,T_*)$ and $|\beta| \leq 2n+1$:
\begin{subequations}
\begin{align}
&
\frac{(|\beta|+1)^2 \inorm{\p^\beta \K}_{L^\infty_x}}{|\beta|! \bar{C}^{\beta_x}_x \bar{C}^{\beta_t}_t}  \leq B_k \eps, 
&&
\frac{(|\beta|+1)^2 \inorm{\p^\beta \Z}_{L^\infty_x}}{|\beta|! \bar{C}^{\beta_x}_x \bar{C}^{\beta_t}_t}  \leq B_z \eps, \label{ineq:KZ:dx}
\\
&
\frac{(|\beta|+1)^2 \inorm{\p^\beta \Sigma_t}_{L^\infty_x}}{|\beta|! \bar{C}^{\beta_x}_x \bar{C}^{\beta_t}_t} \leq 4\alpha  B_z \eps,
&&
\frac{(|\beta|+1)^2 \inorm{\p^\beta (\Sigma^{-1})_t}_{L^\infty_x}}{|\beta|! \bar{C}^{\beta_x}_x \bar{C}^{\beta_t}_t} \leq 4\alpha  B_z \eps,
\label{ineq:Sigma_t:dx}
\\
&
\frac{(|\beta|+1)^2 \inorm{\p^\beta \eta_x}_{L^\infty_x}}{|\beta|! \bar{C}^{\beta_x}_x \bar{C}^{\beta_t}_t} \leq 3, 
\label{ineq:eta_x:dx}
&&
\frac{(|\beta|+1)^2 \inorm{\p^\beta (\eta_x \W)}_{L^\infty_x}}{|\beta|! \bar{C}^{\beta_x}_x \bar{C}^{\beta_t}_t}  \leq  \tfrac{4}{3}, 
\\
&
\frac{(|\beta|+1)^2 \inorm{\p^\beta \Sigma_x}_{L^\infty_x}}{|\beta|! \bar{C}^{\beta_x}_x \bar{C}^{\beta_t}_t} \leq 1, 
&&
\frac{(|\beta|+1)^2 \inorm{\p^\beta (\Sigma^{-1})_x}_{L^\infty_x}}{|\beta|! \bar{C}^{\beta_x}_x \bar{C}^{\beta_t}_t}  \leq 10.
\label{ineq:Sigma_x:dx}
\end{align}
\end{subequations}
\end{proposition}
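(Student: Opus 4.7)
The plan is to prove the eight bounds \eqref{ineq:KZ:dx}--\eqref{ineq:Sigma_x:dx} simultaneously by finite induction on $\beta_x \in \{0, 1, \ldots, 2n+1\}$, with $\beta_t$ ranging freely subject to $|\beta|\leq 2n+1$. The base case $\beta_x = 0$ for $\p_t^m \K$ and $\p_t^m\Z$ is precisely \eqref{ineq:KZ:dt:new}, which combines Proposition \ref{prop:EE:infty} with the key comparison \eqref{ineq:C_t:bar}. Since the bootstrap constants $A_k = B_k$ and $A_z = B_z$ have now been validated on the full interval $[0,T_*)$, Proposition \ref{prop:apriori:est} and Corollary \ref{cor:apriori:est} supply the $\beta_x = 0$ case for the remaining quantities ($\p_t^m\Sigma_t$, $\p_t^m(\Sigma^{-1})_t$, $\p_t^m\eta_x$, $\p_t^m(\eta_x\W)$, $\p_t^m\Sigma_x$, $\p_t^m(\Sigma^{-1})_x$), with the $e^{\kappa t}$-growth of $C_t$ absorbed into $\bar{C}_t$ by \eqref{ineq:C_t:bar}.

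For the inductive step, assume the bounds hold at all levels $\beta_x \leq N$ with $|\beta|\leq 2n+1$; one then establishes them at $\beta_x = N+1$. The bounds on $\p^\beta\K$ and $\p^\beta\Z$ follow by applying $\p^{\beta - e_x}$ to the pivotal algebraic identities \eqref{id:K_x}--\eqref{id:Z_x}, expanding by Leibniz, and noting that every factor on the right carries at most $N$ spatial derivatives. The dominant contribution is the top-order term $\alpha^{-1}\Sigma^{-1}\eta_x\,\p^{\beta - e_x + e_t}\K$ (respectively the analogue for $\Z$), which by the inductive hypothesis contributes a factor of order $\bar{C}_t/(\alpha\bar{C}_x)$ relative to the target bound; this factor is small precisely because of the hypothesis $\alpha\bar{C}_x \gg \bar{C}_t$.

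With the $\p^\beta\K$, $\p^\beta\Z$ bounds in hand at level $\beta_x = N+1$, the bounds on $\p^\beta\Sigma_t$, $\p^\beta(\Sigma^{-1})_t$, $\p^\beta(\eta_x\W)_t$, $\p^\beta\eta_{xt}$, $\p^\beta\Sigma_x$, and $\p^\beta(\Sigma^{-1})_x$ all follow by applying $\p^\beta$ to the system identities in \eqref{system:lagrange} and to \eqref{id:Sigma-1:dx}, expanding via Leibniz, and invoking the just-proven bounds together with the inductive hypothesis on lower-level factors. To recover the bounds on $\p^\beta\eta_x$ and $\p^\beta(\eta_x\W)$ themselves, one uses the relations $\p^\beta\eta_x = \p^{\beta - e_t}\eta_{xt}$ and $\p^\beta(\eta_x\W) = \p^{\beta - e_t}(\eta_x\W)_t$ when $\beta_t \geq 1$; when $\beta_t = 0$ one integrates in time from $t = 0$, using $\p_x^j\eta_x\bigr\vert_{t=0} = \delta_{j0}$ from \eqref{eq:eta_x:dx:t=0}, the initial data control of Proposition \ref{prop:t=0}, and the fact that $T_*$ is of order one, so that the time integration costs no additional powers of $\bar{C}_x$ or $\bar{C}_t$.

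The main obstacle is the combinatorial bookkeeping. Each Leibniz expansion generates sums of the form $\sum_{\gamma\leq\beta}\binom{\beta}{\gamma}\inorm{\p^\gamma A}\inorm{\p^{\beta-\gamma}B}$ which, once the inductive bounds are inserted, must be collapsed using the frequency combinatorics inequalities of \S~\ref{sec:appendix:freqcomp} together with the multi-index Leibniz identity of Lemma \ref{lem:id:multi}, into harmless factors bounded uniformly in $|\beta|$. Closing the induction requires every subdominant cross term to supply a small factor---either an explicit $\eps$, a factor $1/\bar{C}_x$, or the ratio $\bar{C}_t/(\alpha\bar{C}_x)$---to be absorbed into the right-hand-side constants $B_k$, $B_z$, $4\alpha B_z$, $3$, $\tfrac{4}{3}$, $1$, $10$. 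The three standing smallness hypotheses $\bar{C}_x\gg 1$, $\alpha\bar{C}_x\gg \bar{C}_t$, and $(1+\alpha)B_z\eps\ll 1$ are exactly what is needed to make this absorption succeed uniformly at every step of the induction, mirroring the calibration already performed in Proposition \ref{prop:t=0} at time $t=0$.
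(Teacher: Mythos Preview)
Your overall architecture---induction on $\beta_x$, with the base case supplied by \eqref{ineq:KZ:dt:new} together with Proposition~\ref{prop:apriori:est} and Corollary~\ref{cor:apriori:est} (applied with $\delta=0$, $C_t=\bar C_t$), and the inductive step for $\p^\beta\K,\p^\beta\Z$ driven by \eqref{id:K_x}--\eqref{id:Z_x} with the crucial gain $\bar C_t/(\alpha\bar C_x)\ll 1$---is exactly the paper's strategy, and your identification of the top-order mechanism is correct.

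There is, however, a genuine gap in your ordering at the inductive step. You assert that once $\p^\beta\K,\p^\beta\Z$ are known at level $\beta_x=N+1$, the bounds on $\p^\beta\eta_{xt}$, $\p^\beta(\eta_x\W)_t$, and $\p^\beta\Sigma_x$ follow from ``the inductive hypothesis on lower-level factors.'' But the Leibniz expansion of each of these contains $\p^\gamma\eta_x$ and $\p^\gamma(\eta_x\W)$ with $\gamma_x=N+1$, which are \emph{not} lower-level: for instance $\p^\beta\eta_{xt}$ contains the dominant term $\tfrac{1+\alpha}{2}\p^\beta(\eta_x\W)$, and $\p^\beta\Sigma_x$ contains $\tfrac12\p^\beta(\eta_x\W)$ as well. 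Unlike $\p^\gamma\Sigma$ (which you can rewrite as $\p^{\gamma-e_x}\Sigma_x$ and drop a spatial level), there is no such rewriting available for $\p^\gamma\eta_x$ or $\p^\gamma(\eta_x\W)$. So your scheme of first bounding $\p^\beta\eta_{xt},\p^\beta(\eta_x\W)_t$ and only afterwards recovering $\p^\beta\eta_x,\p^\beta(\eta_x\W)$ is circular.

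The paper resolves this by inserting a short bootstrap-in-time argument at precisely this point: one assumes the weaker bounds $4$ and $2$ (in place of $3$ and $\tfrac43$) for the normalized $\p^\beta\eta_x$ and $\p^\beta(\eta_x\W)$ on $[0,T]$, feeds these into \eqref{id:eta_xt} and \eqref{id:W_t} to bound $\p^\beta\eta_{xt}$ and $\p^\beta(\eta_x\W)_t$, and then integrates in time (using $T_*\lesssim 1$ and the $t=0$ control from Proposition~\ref{prop:t=0}) to improve the bootstrap assumptions to $3$ and $\tfrac43$. The feedback terms at the new level are either multiplied by $\K,\Z=\OO(\eps)$ or absorbed by the factor $\tfrac{1+\alpha}{2}T_*\leq 1+\eps^{1/2}$, so the bootstrap closes. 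Only after this does one turn to $\p^\beta\Sigma_x$ and then $\p^\beta(\Sigma^{-1})_x$. Your proposal would be correct once this bootstrap step is inserted and the order of the last four quantities is adjusted accordingly.
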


\begin{proof}
We proceed by induction on $\beta_x$. 

{\it{Base Case:}} When $\beta_x = 0$, \eqref{ineq:KZ:dx} is implied by \eqref{ineq:KZ:dt:new}. The rest of the inequalities \eqref{ineq:Sigma_t:dx}--\eqref{ineq:Sigma_x:dx} in the case $\beta_x = 0$ now follow by applying the a priori estimates of \S~\ref{sec:apriori} with $A_k= B_k, A_z = B_z, \delta = 0$, and $C_t = \bar{C}_t$.

{\it{Inductive Step:}} Fix $m$ with $0 \leq m \leq 2n$ and suppose that the estimates \eqref{ineq:KZ:dx} - \eqref{ineq:Sigma_x:dx} are true for choices of $\beta$ with $|\beta| \leq 2n+1$ and $\beta_x \leq m$.

We first prove that \eqref{ineq:KZ:dx} is true for choices of $\beta$ with $|\beta| \leq 2n+1$ and $\beta_x = m+1$. This needs to be done first, because it will be used to prove that the rest of the bounds \eqref{ineq:Sigma_t:dx} - \eqref{ineq:Sigma_x:dx} hold for $\beta$ with $|\beta| \leq 2n+1$ and $\beta_x = m+1$. 

Fix a choice of $\beta$ with $|\beta| \leq 2n$ and $\beta_x =m$. We will prove that \eqref{ineq:KZ:dx} is true for $\beta + e_x$. To do this, take $\p^\beta$ of \eqref{id:K_x} to obtain
\begin{align*}
\alpha \p^{\beta+e_x} \K & = \Sigma^{-1} \eta_x \p^{\beta+e_t} \K + \sum_{\substack{je_x + \gamma_2 + \gamma_3 = \beta \\ j \geq 1}} { \beta \choose je_x \gamma_2 \gamma_3} \p^{j-1}_x(\Sigma^{-1})_x \p^{\gamma_2} \eta_x \p^{\gamma_3+e_t} \K \\
& \hspace{-15mm} + \sum_{\substack{\gamma_1 + \gamma_2 + \gamma_3 = \beta \\ \gamma_1 \geq e_t}} { \beta \choose \gamma_1 \gamma_2 \gamma_3} \p^{\gamma_1-e_t}(\Sigma^{-1})_t \p^{\gamma_2} \eta_x \p^{\gamma_3+e_t} \K + \tfrac{1}{2} \Sigma^{-1} \sum_{ \gamma \leq \beta} {\beta \choose \gamma} \p^\gamma \K \p^{\beta-\gamma} (\eta_x \W)  \\
& \hspace{-15mm} + \tfrac{1}{2}\Sigma^{-1} \hspace{-5mm} \sum_{\gamma_1+\gamma_2 + \gamma_3 = \beta} {\beta \choose \gamma_1 \gamma_2 \gamma_3} \p^{\gamma_1} \eta_x \p^{\gamma_2} \K \p^{\gamma_3} \Z \\
& \hspace{-15mm}+ \tfrac{1}{2}\hspace{-6mm}\sum_{\substack{je_x + \gamma_2 + \gamma_3 = \beta \\ j \geq 1}} { \beta \choose je_x \gamma_2 \gamma_3} \p^{j-1}_x(\Sigma^{-1})_x \p^{\gamma_2} \K \p^{\gamma_3}(\eta_x \W) \hspace{1mm}+ \hspace{1mm} \tfrac{1}{2}\hspace{-9mm}\sum_{\substack{je_x + \gamma_2 + \gamma_3+\gamma_4 = \beta \\ j \geq 1}} { \beta \choose je_x \gamma_2 \gamma_3 \gamma_4} \p^{j-1}_x(\Sigma^{-1})_x \p^{\gamma_2} \eta_x \p^{\gamma_3} \K \p^{\gamma_4} \Z \\
& \hspace{-15mm}+ \tfrac{1}{2}\hspace{-6mm}\sum_{\substack{\gamma_1 + \gamma_2 + \gamma_3 = \beta \\ \gamma_1 \geq e_t}} { \beta \choose \gamma_1 \gamma_2 \gamma_3} \p^{\gamma_1-e_t} (\Sigma^{-1})_t \p^{\gamma_2} \K \p^{\gamma_3}(\eta_x \W) \hspace{1mm}+ \hspace{1mm} \tfrac{1}{2}\hspace{-9mm}\sum_{\substack{je_x + \gamma_2 + \gamma_3+\gamma_4 = \beta \\ \gamma_1 \geq e_t}} { \beta \choose \gamma_1 \gamma_2 \gamma_3 \gamma_4} \p^{\gamma_1-e_t} (\Sigma^{-1})_t \p^{\gamma_2} \eta_x \p^{\gamma_3} \K \p^{\gamma_4} \Z. 
\end{align*}
Applying our inductive hypothesis and Lemma \ref{lem:id:multi} gives us 
\begin{align*}
\alpha\frac{(|\beta|+e_x|+1)^2 \|\p^\beta \K\|_{L^\infty_x}}{|\beta +e_x|! \bar{C}^{\beta_x+1}_x \bar{C}^{\beta_t}_t B_k \eps} & \leq 9\tfrac{\bar{C}_t}{\bar{C}_x} + 30 \tfrac{\bar{C}_t}{\bar{C}^2_x} \tfrac{|\beta|+2}{|\beta|+1}\hspace{-6mm}\sum_{\substack{j_1+j_2+j_3 = |\beta| \\ j_1 \geq 1}} \tfrac{|\beta|+2}{j^3_1 (j_2+1)^2(j_3+1)^2}   \\
& \hspace{-35mm}+ \tfrac{1}{\bar{C}_x} \tfrac{|\beta|+2}{|\beta|+1}\bigg[ 24\alpha B_z \eps \hspace{-6mm}\sum_{\substack{j_1+j_2+j_3 = |\beta| \\ j_1 \geq 1}} \tfrac{|\beta|+2}{j^3_1 (j_2+1)^2(j_3+2)} + 2\sum^{|\beta|}_{j=0} \tfrac{|\beta|+2}{(j+1)^2(|\beta|+1-j)^2} + \tfrac{9}{2} B_z \eps \hspace{-5mm} \sum_{j_1+j_2+j_3=|\beta|} \tfrac{|\beta|+2}{(j_1+1)^2(j_2+1)^2(j_3+1)^2} \bigg] \\
& \hspace{-35mm}+ \tfrac{1}{\bar{C}^2_x} \tfrac{|\beta|+2}{|\beta|+1}\bigg[ \hspace{2mm} \tfrac{20}{3} \hspace{-5mm} \sum_{\substack{j_1+j_2+j_3 = |\beta| \\ j_1 \geq 1}} \tfrac{|\beta|+2}{j^3_1 (j_2+1)^2(j_3+1)^2} + 15B_z \eps \hspace{-8mm} \sum_{\substack{j_1+j_2+j_3 +j_4 = |\beta| \\ j_1 \geq 1}} \tfrac{|\beta|+2}{j^3_1 (j_2+1)^2(j_3+1)^2(j_4+1)^2}  \bigg] \\
& \hspace{-35mm}+ \tfrac{4\alpha B_z \eps}{\bar{C}_x \bar{C}_t} \tfrac{|\beta|+2}{|\beta|+1}\bigg[ \hspace{2mm} \tfrac{4}{3} \hspace{-5mm} \sum_{\substack{j_1+j_2+j_3 = |\beta| \\ j_1 \geq 1}} \tfrac{|\beta|+2}{j^3_1 (j_2+1)^2(j_3+1)^2} + 3B_z \eps \hspace{-8mm} \sum_{\substack{j_1+j_2+j_3 +j_4 = |\beta| \\ j_1 \geq 1}} \tfrac{|\beta|+2}{j^3_1 (j_2+1)^2(j_3+1)^2(j_4+1)^2}  \bigg] .
\end{align*}
The computations from \S~\ref{sec:appendix:freqcomp} now imply that
\begin{equation*}
\frac{(|\beta|+e_x|+1)^2 \|\p^\beta \K\|_{L^\infty_x}}{|\beta +e_x|! \bar{C}^{\beta_x+1}_x \bar{C}^{\beta_t}_t B_k \eps} \lesssim \tfrac{1}{\alpha \bar{C}_x} [\bar{C}_t + \tfrac{\bar{C}_t}{\bar{C}_x} + 1 + (1+\alpha)B_z \eps + \tfrac{1+B_z\eps}{\bar{C}_x} + \tfrac{\alpha B_z \eps(1+B_z \eps)}{\bar{C}_t}].
\end{equation*}
If $\alpha \bar{C}_x \gg \bar{C}_t$ and $\bar{C}_x \gg 1$,  then the righthand side is less than 1, which proves \eqref{ineq:KZ:dx} for $\p^{\beta+e_x} \K$. The bound on $\p^{\beta+e_x} \Z$ can be proven in an identical manner, so the inductive step for \eqref{ineq:KZ:dx} is complete.

We next prove \eqref{ineq:Sigma_t:dx} is true for $\beta$ with $|\beta| \leq 2n+1$ and $\beta_x = m+1$. Pick  $\beta$ with $|\beta| \leq 2n+1$ and $\beta_x = m+1$, and take $\p^\beta$ of \eqref{id:Sigma_t}. Using the the inductive hypothesis and the fact that \eqref{ineq:KZ:dx} has been proven already for multi-indices with $\beta_x \leq m+1$, our hypotheses on $\bar{C}_t, \bar{C}_x$ and $\eps$ imply that 
\begin{align*}
\frac{(|\beta|+1)^2 \inorm{\p^\beta \Sigma_t}_{L^\infty_x}}{|\beta|! \bar{C}^{\beta_x}_x \bar{C}^{\beta_t}_t \alpha B_z \eps} & \leq 3 + \tfrac{9}{2\gamma} \tfrac{B_k}{B_z} + \OO(\tfrac{\alpha B_z \eps}{\bar{C}_t} + \tfrac{1}{\bar{C}_x}) 
< 4, \\
\frac{(|\beta|+1)^2 \inorm{\p^\beta (\Sigma^{-1})_t}_{L^\infty_x}}{|\beta|! \bar{C}^{\beta_x}_x \bar{C}^{\beta_t}_t \alpha B_z \eps} & \leq 3 + \tfrac{1}{2\gamma} \tfrac{B_k}{B_z} + \OO(\tfrac{\alpha B_z \eps}{\bar{C}_t} + \tfrac{1}{\bar{C}_x}) 
< 4.
\end{align*}

Now we prove the inequalities \eqref{ineq:eta_x:dx} are  true for $\beta$ with $|\beta| \leq 2n+1$ and $\beta_x = m+1$.  We will do this via a bootstrap argument. Fix  a time  $T \in [0,T_*)$, and suppose that 
\begin{equation*}
\frac{(|\beta|+1)^2 \inorm{\p^\beta \eta_x}_{L^\infty_x}}{|\beta|! \bar{C}^{\beta_x}_x \bar{C}^{\beta_t}_t} \leq 4,
\qquad \mbox{and} \qquad
\frac{(|\beta|+1)^2 \inorm{\p^\beta (\eta_x \W)}_{L^\infty_x}}{|\beta|! \bar{C}^{\beta_x}_x \bar{C}^{\beta_t}_t} \leq  2,
\end{equation*}
for all $t \in [0,T]$ and all $\beta$ with$|\beta| \leq 2n+1,\beta_x = m+1$ and. Taking $\p^\beta$ of \eqref{id:eta_xt} and \eqref{id:W_t} and applying our bootstrap hypotheses, our inductive hypotheses, and the fact that \eqref{ineq:KZ:dx} has already been proven for multi-indices with $\beta_x \leq m+1$, we arrive at
\begin{align*}
\frac{(|\beta|+1)^2 \inorm{\p^\beta \eta_{xt}}_{L^\infty_x}}{|\beta|! \bar{C}^{\beta_x}_x \bar{C}^{\beta_t}_t} 
& \leq \tfrac{1+\alpha}{2}\frac{(|\beta|+1)^2 \inorm{\p^\beta (\eta_x \W)}_{L^\infty_x}}{|\beta|! \bar{C}^{\beta_x}_x \bar{C}^{\beta_t}_t} + \OO((1+\alpha) B_z \eps)  
\leq \tfrac{1+\alpha}{2}(2 + \OO(B_z \eps) ), \\
\frac{(|\beta|+1)^2 \inorm{\p^\beta (\eta_x \W)_t}_{L^\infty_x}}{|\beta|! \bar{C}^{\beta_x}_x \bar{C}^{\beta_t}_t} & \lesssim B_z \eps,
\end{align*}
for all $t \in [0,T]$ and $\beta$ with $|\beta| \leq 2n+1, \beta_x = m+1$. At time $t=0$, Proposition \ref{prop:t=0} and  \eqref{ineq:W_x:t=0}  imply that
\begin{align*}
\frac{(|\beta|+1)^2 \inorm{\p^\beta (\eta_x \W)}_{L^\infty_x}}{|\beta|! \bar{C}^{\beta_x}_x \bar{C}^{\beta_t}_t} & \leq \tfrac{1}{2}
\end{align*}
for all $\beta$ with $1 \leq |\beta| \leq 2n+1$. Now the identities
\begin{align*}
\p^\beta \eta_x & = \int^t_0 \p^\beta \eta_{xt} \: ds, \\
\p^\beta (\eta_x \W) & = \p^\beta (\eta_x \W)(\cdot, 0) + \int^t_0 \p^\beta (\eta_x \W)_t \: ds
\end{align*}
together with \eqref{ineq:T_*}  and our hypotheses on $\eps$ imply
\begin{align*}
\frac{(|\beta|+1)^2 \inorm{\p^\beta \eta_x}_{L^\infty_x}}{|\beta|! \bar{C}^{\beta_x}_x \bar{C}^{\beta_t}_t} & \leq (1+\eps^{\frac{1}{2}})(2 + \OO(B_z \eps) ) < 3, \\
\frac{(|\beta|+1)^2 \inorm{\p^\beta (\eta_x \W)}_{L^\infty_x}}{|\beta|! \bar{C}^{\beta_x}_x \bar{C}^{\beta_t}_t} & \leq  1.
\end{align*}
This completes our bootstrap argument and proves \eqref{ineq:eta_x:dx}  for $\beta$ with $|\beta| \leq 2n+1$ and $\beta_x \leq m+1$.

From here,  one can prove that \eqref{ineq:Sigma_x:dx}  is true for $\p^\beta \Sigma_x$ with $|\beta| \leq 2n+1$ and $\beta_x = m+1$ by applying $\p^\beta$ to \eqref{id:Sigma_x} and using the inductive hypotheses along with the fact that we have already proven \eqref{ineq:KZ:dx} for $\beta$ with $|\beta| \leq 2n+1$ and $\beta_x = m+1$.

Lastly, our bounds on $\p^\beta (\Sigma^{-1})_x$ can now be proven by taking $\p^\beta$ of the identity \eqref{id:Sigma-1:dx} and using the fact that our bounds on $\p^\beta \Sigma_x$ have been proven for $\beta$ with $|\beta| \leq 2n+1$ and $\beta_x \leq m+1$.
\end{proof}

\begin{corollary}\label{cor:ineq:dx}
Under the hypotheses of the previous proposition, we also have that
\begin{subequations}
\begin{align}\label{ineq:Sigma:dx}
\frac{(|\beta|+1)^2 \|\p^\beta \Sigma\|_{L^\infty_x}}{|\beta|! \bar{C}^{\beta_x}_x \bar{C}^{\beta_t}_t} & \leq 3, \\
\label{ineq:W_t:dx}
\frac{(|\beta|+1)^2 \inorm{\p^\beta (\eta_x \W)_t}_{L^\infty_x}}{|\beta|! \bar{C}^{\beta_x}_x \bar{C}^{\beta_t}_t} & \leq  \tfrac{5}{8} B_z \eps, \\
\label{ineq:eta_xt:dx}
\frac{(|\beta|+1)^2 \inorm{\p^\beta \eta_{xt}- \tfrac{1+\alpha}{2} \p^\beta (\eta_x \W)}_{L^\infty_x}}{|\beta|! \bar{C}^{\beta_x}_x \bar{C}^{\beta_t}_t} & \leq 20 \max(1,\alpha) B_z \eps,
\end{align}
\end{subequations}
for $|\beta| \leq 2n+1$.
\end{corollary}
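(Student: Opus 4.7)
The three estimates should follow as direct corollaries of Proposition \ref{prop:ineq:dx} by systematic application of the multi-Leibniz rule and the one-dimensional convolution bounds from \S\ref{sec:appendix:freqcomp}; no new analytic input beyond what was developed there should be required. My plan would be to handle the three inequalities in the order they are stated, since \eqref{ineq:W_t:dx} and \eqref{ineq:eta_xt:dx} will reuse the bound \eqref{ineq:Sigma:dx} on $\Sigma$ itself.

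For \eqref{ineq:Sigma:dx}, the strategy is the fundamental theorem of calculus. The $|\beta|=0$ case is immediate from Proposition \ref{prop:0thorder}. For $|\beta|\geq 1$ I would split into two subcases. If $\beta_t\geq 1$, I write $\p^\beta\Sigma = \p^{\beta-e_t}\Sigma_t$ and invoke \eqref{ineq:Sigma_t:dx}; the resulting bound picks up an extra factor $(|\beta|+1)^2/(|\beta|^3\bar{C}_t)\leq 4/\bar{C}_t$ relative to the target, and is therefore $\ll 3$ since $\bar{C}_t\gg 1$. If instead $\beta_x\geq 1$ and $\beta_t=0$, I write $\p^\beta\Sigma = \p^{\beta-e_x}\Sigma_x$ and use \eqref{ineq:Sigma_x:dx} in the same way, this time gaining a factor $1/\bar{C}_x$.

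For \eqref{ineq:W_t:dx}, I would apply $\p^\beta$ to \eqref{id:W_t}, namely $(\eta_x\W)_t = \tfrac{\alpha}{4\gamma}\Sigma\K(\eta_x\W+\eta_x\Z)$, and expand with the trilinear Leibniz rule to obtain sums of products $\p^{\gamma_1}\Sigma\,\p^{\gamma_2}\K\,\p^{\gamma_3}(\eta_x\W)$ and $\p^{\gamma_1}\Sigma\,\p^{\gamma_2}\K\,\p^{\gamma_3}(\eta_x\Z)$. Each factor is controlled by Proposition \ref{prop:ineq:dx} together with the just-established \eqref{ineq:Sigma:dx}, with envelope $\bar{C}_x^{\gamma_{i,x}}\bar{C}_t^{\gamma_{i,t}}/(|\gamma_i|+1)^2$. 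Applying Lemma \ref{lem:id:multi} collapses the multi-index sums to one-dimensional convolutions of the form $\sum_{j_1+j_2+j_3=|\beta|}(|\beta|+1)^2\prod_i(j_i+1)^{-2}$, which are bounded uniformly in $|\beta|$ by the inequalities in \S\ref{sec:appendix:freqcomp}. The resulting coefficient is of size $\tfrac{\alpha}{\gamma}B_k\eps$ times an absolute constant, which is comfortably smaller than $\tfrac{5}{8}B_z\eps$ since $B_k\leq B_z$ and $\alpha/\gamma\leq 1$.

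For \eqref{ineq:eta_xt:dx}, the key algebraic observation is that \eqref{id:eta_xt} rearranges to the exact identity $\eta_{xt}-\tfrac{1+\alpha}{2}(\eta_x\W) = \tfrac{1-\alpha}{2}\,\eta_x\Z + \tfrac{\alpha}{2\gamma}\,\eta_x\Sigma\K$, in which every term is $\OO(\max(1,\alpha)\eps)$-small. Applying $\p^\beta$ and expanding by Leibniz then yields a bilinear sum in $\eta_x,\Z$ and a trilinear sum in $\eta_x,\Sigma,\K$. Using Proposition \ref{prop:ineq:dx} and \eqref{ineq:Sigma:dx} together with the same convolution inequalities as in the previous paragraph produces an upper bound of the form $|1-\alpha|\cdot 3 B_z\eps\cdot(\text{const}) + \tfrac{\alpha}{\gamma}\cdot 9 B_k\eps\cdot(\text{const})$, and a careful accounting of the numerical constants (all of which come from $\sum_j(j+1)^{-2}$-type sums already tabulated in \S\ref{sec:appendix:freqcomp}) keeps the total below $20\max(1,\alpha)B_z\eps$. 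The only real obstacle throughout is the bookkeeping of combinatorial constants, which is by now routine after the work already carried out in Propositions \ref{prop:t=0}, \ref{prop:apriori:est}, and \ref{prop:ineq:dx}.
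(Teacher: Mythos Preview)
Your proposal is correct and follows essentially the same approach as the paper: the bound on $\p^\beta\Sigma$ via reduction to $\Sigma_t$ or $\Sigma_x$ plus the $|\beta|=0$ case, and the other two via Leibniz expansion of \eqref{id:W_t} and of the rearranged \eqref{id:eta_xt}, controlled by Proposition~\ref{prop:ineq:dx} and the convolution sums in \S\ref{sec:appendix:freqcomp}. The paper carries out the numerics for \eqref{ineq:eta_xt:dx} explicitly (obtaining $2\pi^2|1-\alpha|$ from the bilinear term and using the smallness of $B_k/B_z$ to crush the trilinear constant), but your outline captures exactly this.
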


\begin{proof}
The inequality \eqref{ineq:Sigma:dx} follows from \eqref{ineq:0thorder}, \eqref{ineq:Sigma_t:dx}, \eqref{ineq:Sigma_x:dx}, and our hypotheses on $\bar{C}_t$ and $\bar{C}_x$. The inequality \eqref{ineq:W_t:dx} follows from taking $\p^\beta$ of \eqref{id:W_t} and applying the bounds from Proposition \ref{prop:ineq:dx}.

Taking $\p^\beta$ of \eqref{id:eta_xt} and applying \eqref{ineq:KZ:dx}, \eqref{ineq:eta_x:dx}, \eqref{ineq:Sigma:dx}, and Lemma \ref{lem:ineq:sum:nk} gives us
\begin{align*}
\frac{(|\beta|+1)^2 \inorm{\p^\beta \eta_{xt}- \tfrac{1+\alpha}{2} \p^\beta (\eta_x \W)}_{L^\infty_x}}{|\beta|! \bar{C}^{\beta_x}_x \bar{C}^{\beta_t}_t B_z \eps} & \leq \tfrac{3|1-\alpha|}{2}\sum^{|\beta|}_{j=0} \tfrac{(|\beta|+1)^2}{(j+1)^2 (|\beta|+1-j)^2} + \tfrac{9\alpha}{2\gamma} \tfrac{B_k}{B_z}\sum_{j_1 +j_2 +j_3 = |\beta|} \tfrac{(|\beta|+1)^2}{(j_1+1)^2(j_2+1)^2(j_3+1)^2} \\
& \leq \tfrac{3|1-\alpha|}{2} \cdot \tfrac{4\pi^2}{3} + \tfrac{9\alpha}{2\gamma} \cdot \tfrac{1}{3e^{21}} \cdot \tfrac{3\pi^4}{4} \\
& = 2\pi^2|1-\alpha| + \tfrac{9\pi^4 \alpha}{8(2\alpha+1) e^{21}} \\
& < 20\max(1,\alpha) .
\end{align*}

\end{proof}

\begin{corollary}\label{cor:eta_x:approx}
Under the hypotheses of the previous propositions, we have that
\begin{subequations}
\begin{align}
\label{approx:eta_xt:A}
\|\p^i_x \eta_{xt} - \tfrac{1+\alpha}{2} \p^{i+1}_x w_0\|_{L^\infty_x} & \leq 21\max(1,\alpha) \tfrac{i!}{(i+1)^2} \bar{C}^i_x B_z \eps
\end{align}
for all $t \in [0, T_*), i = 0, \hdots, 2n+1$,
\begin{align}
\label{approx:eta_x:A}
\|\p^i_x \eta_x - (\delta_{i0}+ \tfrac{1+\alpha}{2} t\p^{i+1}_x w_0)\|_{L^\infty_x} & \leq 21\max(1,\alpha) t \tfrac{i!}{(i+1)^2} \bar{C}^i_x B_z \eps, 
\end{align}
for all $t \in [0, T_*], i = 0, \hdots, 2n+1$, and
\begin{align}\label{ineq:eta_xtt:dx}
\|\p^i_x \p_t \eta_{xt}\|_{L^\infty_x} & \leq 21(1+\alpha) \tfrac{i!}{i+1} \bar{C}^i_x B_z \eps,
\end{align}
\end{subequations}
for $t \in [0, T_*), i = 0, \hdots, 2n$.
\end{corollary}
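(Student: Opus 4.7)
For \eqref{approx:eta_xt:A}, the plan is to apply \eqref{ineq:eta_xt:dx} with multi-index $\beta=(i,0)$, thereby reducing matters to estimating the remainder $\tfrac{1+\alpha}{2}\|\p^i_x(\eta_x\W) - \p^{i+1}_x w_0\|_{L^\infty_x}$. Since $\eta_x(x,0)\equiv 1$, at $t=0$ one has $(\eta_x\W)(x,0) = w'_0(x) - \tfrac{1}{2\gamma}\sigma_0(x)k'_0(x)$, and combining this with the fundamental theorem of calculus in $t$ yields
\[
\p^i_x(\eta_x\W)(x,t) - \p^{i+1}_x w_0(x) = -\tfrac{1}{2\gamma}\p^i_x(\sigma_0 k'_0)(x) + \int_0^t \p^i_x(\eta_x\W)_t(x,s)\,ds.
\]
The first term on the right I would control via Leibniz and the initial data bounds \eqref{data:all} (using $C_0\le \bar{C}_x/(2e^3)$); the second via \eqref{ineq:W_t:dx} together with $T_*\lesssim(1+\alpha)^{-1}$ from \eqref{ineq:T_*}. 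Both are $O(B_z\eps \cdot \tfrac{i!}{(i+1)^2}\bar{C}^i_x)$, and when combined with the contribution from \eqref{ineq:eta_xt:dx} fit within the target constant $21\max(1,\alpha)$ under the smallness assumption $(1+\alpha)B_z\eps \ll 1$.

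Inequality \eqref{approx:eta_x:A} then follows by integrating \eqref{approx:eta_xt:A} in time on $[0,t]$: since $\p^i_x\eta_x(x,0)=\delta_{i0}$ and $\p^{i+1}_x w_0$ is time-independent,
\[
\p^i_x\eta_x(x,t) - \delta_{i0} - \tfrac{1+\alpha}{2}t\,\p^{i+1}_x w_0(x) = \int_0^t \bigl[\p^i_x\eta_{xt}(x,s) - \tfrac{1+\alpha}{2}\p^{i+1}_x w_0(x)\bigr]\,ds,
\]
and the $L^\infty_x$-bound follows directly from \eqref{approx:eta_xt:A}.

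The last inequality \eqref{ineq:eta_xtt:dx} is the delicate one. A direct application of \eqref{ineq:eta_x:dx} with $\beta=(i,2)$ would introduce a prohibitive $\bar{C}_t^2$ factor, and using \eqref{ineq:eta_x:dx} with $\beta=(j,1)$ to bound $\p^j_x\eta_{xt}$ factors after a Leibniz expansion would introduce an unwanted $\bar{C}_t$. I would instead differentiate \eqref{id:eta_xt} in $t$ to obtain
\[
\p_t\eta_{xt} = \tfrac{1+\alpha}{2}(\eta_x\W)_t + \tfrac{1-\alpha}{2}\bigl[\eta_{xt}\Z + \eta_x\Z_t\bigr] + \tfrac{\alpha}{2\gamma}\bigl[\eta_{xt}\Sigma\K + \eta_x\Sigma_t\K + \eta_x\Sigma\K_t\bigr],
\]
and then substitute \eqref{id:Sigma_t}, \eqref{id:Z_t}, and \eqref{id:K_t} to rewrite $\eta_x\Z_t$, $\Sigma_t$, and $\Sigma\cdot\eta_x\K_t$ as polynomial expressions in $\eta_x,\eta_x\W,\Sigma,\Sigma^{-1},\Z,\K,\p_x\Z,\p_x\K$, together with the small factor $(\eta_x\W)_t$. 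Then $\p^i_x$ is expanded via the multivariate Leibniz rule, and each resulting term is bounded using Proposition~\ref{prop:ineq:dx}, Corollary~\ref{cor:ineq:dx}, \eqref{ineq:W_t:dx}, and crucially the just-proved \eqref{approx:eta_xt:A} (which bounds $\p^j_x\eta_{xt}$ without any $\bar{C}_t$ factor). The $O(B_z\eps)$ smallness of $\Z,\K,\Sigma_t,(\eta_x\W)_t$ supplies the $B_z\eps$ weight in every term, while the combinatorial convolutions over partitions are handled by the frequency-composition estimates of \S~\ref{sec:appendix:freqcomp}. The additional Leibniz summation introduced by the second time differentiation accounts for the loss of one power of $(i+1)$ compared to \eqref{approx:eta_xt:A}, producing the stated $\tfrac{i!}{i+1}$ weight. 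The main obstacle --- and the reason for performing the substitution step before taking $\p^i_x$ --- is to ensure that no residual $\bar{C}_t$ factor survives on the right-hand side.
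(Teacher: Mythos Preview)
Your argument for \eqref{approx:eta_xt:A} and \eqref{approx:eta_x:A} is exactly the paper's.

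For \eqref{ineq:eta_xtt:dx} the paper's proof is a single line: write
\[
\p^i_x\p_t\eta_{xt}=\tfrac{1+\alpha}{2}\,\p^i_x(\eta_x\W)_t+\p^\beta\bigl[\eta_{xt}-\tfrac{1+\alpha}{2}(\eta_x\W)\bigr],\qquad\beta=(i,1),
\]
and bound the two pieces by \eqref{ineq:W_t:dx} with $\beta=(i,0)$ and \eqref{ineq:eta_xt:dx} with $\beta=(i,1)$. Your observation is correct: the second application introduces a factor $\bar{C}_t$ that is absent from the displayed right-hand side. This is a typo in the statement --- the only downstream use of \eqref{ineq:eta_xtt:dx} (in the proof of Proposition~\ref{prop:Lip}) actually invokes it with the $\bar{C}_t$ factor present, which is precisely what the one-line argument delivers.

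Your substitution workaround does not rescue the bound as printed. Replacing $\eta_x\p_t\Z$ via \eqref{id:Z_t} and $\eta_x\p_t\K$ via \eqref{id:K_t} introduces the terms $\alpha(1-\alpha)\,\Sigma\,\p_x\Z$ and $\tfrac{\alpha^2}{2\gamma}\Sigma^2\p_x\K$; after applying $\p^i_x$ and Leibniz these force $\p_x^{j+1}\Z$ and $\p_x^{j+1}\K$ into the sum, and by \eqref{ineq:KZ:dx} with $\beta_t=0$ those carry weight $\bar{C}_x^{j+1}$. The net effect is an extra factor $\bar{C}_x$ on the right-hand side --- you have traded $\bar{C}_t$ for $\bar{C}_x$ rather than eliminating the large constant, and since $\alpha\bar{C}_x\gg\bar{C}_t$ this is no improvement. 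The intended inequality carries $\bar{C}_t$; with that correction the paper's direct argument suffices and your more elaborate route is unnecessary.
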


\begin{proof}
Since 
\begin{equation*}
\p^i_x \eta_{xt} - \tfrac{1+\alpha}{2}\p^{i+1}_x w_0 = \p^i_x \eta_{xt} - \tfrac{1+\alpha}{2}\p^i_x(\eta_x \W) - \tfrac{1+\alpha}{2} \tfrac{1}{2\gamma} \p^i_x( \sigma_0 k'_0) + \tfrac{1+\alpha}{2} \int^t_0 \p^i_x(\eta_x \W)_t \: ds,
\end{equation*}
it follows from  Corollary \ref{cor:ineq:dx} that for $i=0,\hdots, 2n+1$ we have
\begin{align*}
\tfrac{(i+1)^2 \|\p^i_x \eta_{xt} - \tfrac{1+\alpha}{2}\p^{i+1}_x w_0 \|_{L^\infty_x}}{i! \bar{C}^i_x} & \leq 20 \max(1,\alpha) B_z \eps + \tfrac{1+\alpha}{4}\tfrac{(i+1)^2 \|\p^i_x( \sigma_0 k'_0) \|_{L^\infty_x}}{i! \bar{C}^i_x} + \tfrac{1+\alpha}{2}T_* \tfrac{5\alpha}{4\gamma} B_z \eps.
\end{align*}
Since $\bar{C}_x > \bar{C}_t > 3^\delta C_t(0) > 6^\delta C_0$, it follows (see the computations in the proof of Proposition \ref{prop:t=0}) that 
\begin{equation*}
\tfrac{(i+1)^2 \|\p^i_x( \sigma_0 k'_0) \|_{L^\infty_x}}{i! \bar{C}^i_x} < \eps.
\end{equation*}
Therefore, we arrive at
\begin{align*}
\tfrac{(i+1)^2 \|\p^i_x \eta_{xt} - \tfrac{1+\alpha}{2}\p^{i+1}_x w_0 \|_{L^\infty_x}}{i! \bar{C}^i_x} & \leq \max(1,\alpha) B_z \eps[ 20 + \tfrac{1}{2B_z} + \tfrac{5}{8}(1+ \eps^{\frac{1}{2}})] < 21\max(1,\alpha) B_z \eps.
\end{align*}
This bound implies \eqref{approx:eta_xt:A}.

To obtain \eqref{approx:eta_x:A},  integrate \eqref{approx:eta_xt:A} in time. The last inequality \eqref{ineq:eta_xtt:dx} is a direct application of \eqref{ineq:W_t:dx} and \eqref{ineq:eta_xt:dx}. 
\end{proof}



\section{Stability estimates  with respect to perturbations of $w_0$}
\label{sec:stab}

We will now quantify the stability of solutions with initial data in $\mathcal A_n(\eps, C_0)$ with respect to perturbations in $w_0$. Fix a specific choice of data $(\bar{w}_0, z_0, k_0) \in \mathcal A_n(\eps, C_0)$. In this section, we will consider initial data $(w_0,z_0,k_0)$ of the form
\begin{equation*}
w_0 = \bar{w}_0 + \lambda \tilde{w}_0\,, \qquad z_0 = z_0\,, \qquad k_0 = k_0 \, ,
\end{equation*}
where $\tilde{w}_0 \in W^{2n+2, \infty}(\TT)$ satisfies 
\begin{align*}
\|\tilde{w}_0\|_{L^\infty_x} & \leq L, &  \\
\frac{\|\p^{i+1}_x \tilde{w}_0\|_{L^\infty}}{i!} & \leq L C_0^i & \forall \: i = 0, \hdots, 2n+1,
\end{align*}
for some positive constant $L$, and $\lambda$ is in a small enough neighborhood of 0 that $(w_0, z_0, k_0)$ is still in $\mathcal A_n(\eps, C_0)$ for all $\lambda$ considered. Such initial data now defines a family of solutions $\K = \K(x,t,\lambda), \Z = \Z(x,t,\lambda)$, etc. with initial data in $\mathcal A_n(\eps, C_0)$.  Our goal is the estimate the partial derivatives $(\eta_x \W)_\lambda, \Z_\lambda, \K_\lambda, \Sigma_\lambda,$ etc. and their derivatives in space $x$ and time $t$. In the end, we will obtain the estimates stated in \S~\ref{sec:ineq:dx:stab}, which should be viewed as extensions to the bounds obtained earlier in \S~\ref{sec:ineq:dx}. These estimates will be used in \S~ \ref{sec:FCBM}. The results in this section and their proofs will be essentially analogous to those found in \S~ \ref{sec:HOE}, with only minor differences. For this reason, less detail will be provided; however, the key differences between the proofs in this section and the corresponding proofs in the previous section will be highlighted.

Taking $\p_\lambda$ of \eqref{system:lagrange} gives us the system
\begin{subequations}\label{system:stab}
\begin{align}
\label{id:Sigma_t:stab}
\p_\lambda \Sigma_t 
& = \alpha\Sigma_\lambda (- \Z + \tfrac{1}{\gamma} \Sigma \K) - \alpha\Sigma \Z_\lambda + \tfrac{\alpha}{2\gamma} \Sigma^2 \K_\lambda, 
\\
\label{id:eta_xt:stab}
\p_\lambda \eta_{xt} 
& = \tfrac{1+\alpha}{2} (\eta_x \W)_\lambda + \p_\lambda \eta_x ( \tfrac{1-\alpha}{2} \Z + \tfrac{\alpha}{2\gamma} \Sigma \K) + \tfrac{\alpha}{2\gamma} \eta_x \Sigma_\lambda \K 
+ \tfrac{1-\alpha}{2} \eta_x \Z_\lambda + \tfrac{\alpha}{2\gamma} \eta_x \Sigma \K_\lambda, 
\\
\label{id:W_t:stab}
 \p_\lambda (\eta_x \W)_t 
& = \tfrac{\alpha}{4\gamma}(\Sigma_\lambda \K + \Sigma \K_\lambda)(\eta_x \W + \eta_x \Z) + \tfrac{\alpha}{4\gamma}\Sigma \K [ (\eta_x \W)_\lambda + \p_\lambda \eta_x \Z + \eta_x \Z_\lambda], \\
\label{id:Sigma_x:stab}
\p_\lambda \Sigma_x & = \tfrac{1}{2}(\eta_x \W)_\lambda - \tfrac{1}{2}\p_\lambda \eta_x \Z - \tfrac{1}{2}\eta_x \Z_\lambda + \tfrac{1}{2\gamma}[ \p_\lambda \eta_x \Sigma \K + \eta_x \Sigma_\lambda \K + \eta_x \Sigma \K_\lambda ],
\\
\label{id:K_t:stab}
\eta_x \p_t \K_\lambda & = \alpha \Sigma \p_x \K_\lambda - \tfrac{1}{2}\K_\lambda[ \eta_x \W + \eta_x \Z]  - \p_\lambda \eta_x \p_t \K + \alpha \Sigma_\lambda \p_x \K\\
& - \tfrac{1}{2} \eta_x \K \Z_\lambda - \tfrac{1}{2}\K [ (\eta_x \W)_\lambda + \p_\lambda \eta_x \Z], \notag \\
\label{id:Z_t:stab}
\eta_x \p_t \Z_\lambda & = 2\alpha \Sigma \p_x \Z_\lambda + \Z_\lambda [ - \tfrac{1-\alpha}{2} \eta_x \W - (1+\alpha) \eta_x \Z + \tfrac{\alpha}{4\gamma} \eta_x \Sigma \K ] \\
& - \p_\lambda \eta_x \p_t \Z + 2\alpha \Sigma_\lambda \p_x \Z + \tfrac{\alpha}{4\gamma} \K_\lambda \Sigma(\eta_x \Z- \eta_x \W) \notag \\
& + \Z[ - \tfrac{1-\alpha}{2} (\eta_x \W)_\lambda - \tfrac{1+\alpha}{2} \p_\lambda \eta_x \Z + \tfrac{\alpha}{4\gamma} (\p_\lambda \eta_x \Sigma + \eta_x \Sigma_\lambda) \K ]. \notag
\end{align}
Mulitplying \eqref{id:K_t:stab} and \eqref{id:Z_t:stab} by $\Sigma^{-1}$ and rearranging gives us
\begin{align}
\label{id:K_x:stab}
 \alpha  \p_x \K_\lambda& = \eta_x \Sigma^{-1} \p_t \K_\lambda + \tfrac{1}{2}\K_\lambda \Sigma^{-1}[ \eta_x \W + \eta_x \Z]  + \p_\lambda \eta_x \Sigma^{-1} \p_t \K - \alpha \Sigma_\lambda \Sigma^{-1} \p_x \K\\
& + \tfrac{1}{2} \eta_x \K \Sigma^{-1} \Z_\lambda + \tfrac{1}{2}\Sigma^{-1}\K [ (\eta_x \W)_\lambda + \p_\lambda \eta_x \Z], \notag \\
\label{id:Z_x:stab}
2\alpha \p_x \Z_\lambda  & = \eta_x \Sigma^{-1} \p_t \Z_\lambda - \Z_\lambda [ - \tfrac{1-\alpha}{2}\Sigma^{-1}( \eta_x \W) - (1+\alpha) \eta_x \Sigma^{-1} \Z + \tfrac{\alpha}{4\gamma} \eta_x \K ] \\
& + \p_\lambda \eta_x \Sigma^{-1} \p_t \Z - 2\alpha \Sigma_\lambda \Sigma^{-1} \p_x \Z - \tfrac{\alpha}{4\gamma} \K_\lambda (\eta_x \Z- \eta_x \W) \notag \\
& - \Z[ - \tfrac{1-\alpha}{2} \Sigma^{-1} (\eta_x \W)_\lambda - \tfrac{1+\alpha}{2} \p_\lambda \eta_x \Sigma^{-1} \Z + \tfrac{\alpha}{4\gamma} (\p_\lambda \eta_x  + \eta_x \Sigma_\lambda \Sigma^{-1}) \K ]. \notag
\end{align}
\end{subequations}
Notice that $\p_\lambda(\Sigma^{-1})$ does not appear in any of the above equations; for this reason, we will not have to estimate $\p_\lambda(\Sigma^{-1})$ or any of its derivatives.

\subsection{Estimates at time zero}

\begin{proposition}\label{prop:t=0:stab}
 Let $\C_x$ and $\C_t$ and be constants satisfying
\begin{itemize}[leftmargin=*]
\item $\C_x \geq 2e^3 C_0$,
\item $\C_x \gg 1$,
\item $\C_t \gg (1+\alpha)$,
\item $\alpha \C_x \ll \C_t$.
\end{itemize}
Then if $(1+\alpha)\eps \ll 1$ we have the following bounds at time $t=0$ for all multi indices $\beta$ with $|\beta| \leq 2n$:
\begin{align*}
\frac{(|\beta|+1)^2\| \p^\beta \K_\lambda\|_{L^\infty_x}}{|\beta|! L  \C^{\beta_x}_x \C^{\beta_t}_t} & \leq \eps,
&
\frac{(|\beta|+1)^2\| \p^\beta \Z_\lambda\|_{L^\infty_x}}{|\beta|! L \C^{\beta_x}_x \C^{\beta_t}_t} & \leq \eps, \\
\frac{(|\beta|+1)^2\| \p^\beta \p_\lambda \Sigma_t\|_{L^\infty_x}}{|\beta|! L\C^{\beta_x}_x \C^{\beta_t}_t} & \leq 7\alpha \eps, 
&
\frac{(|\beta|+1)^2\| \p^\beta \p_\lambda \eta_{xt}\|_{L^\infty_x}}{|\beta|! L \C^{\beta_x}_x \C^{\beta_t}_t} & \leq (1+\alpha), \\
\frac{(|\beta|+1)^2\| \p^\beta \p_\lambda (\eta_x \W)_t\|_{L^\infty_x}}{|\beta|! L \C^{\beta_x}_x \C^{\beta_t}_t} & \leq 8\eps. & &
\end{align*}
\end{proposition}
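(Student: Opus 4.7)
The plan is to prove Proposition~\ref{prop:t=0:stab} by induction on $\beta_t$, following the exact same architecture as the proof of Proposition~\ref{prop:t=0}. The role that the $\mathcal A_n(\eps, C_0)$ bounds played there will now be split into two contributions: for the \emph{unperturbed} quantities $\K, \Z, \Sigma, \Sigma_t, \eta_x, \eta_{xt}, (\eta_x\W)_t$ appearing as coefficients in the $\p_\lambda$-equations \eqref{system:stab}, we appeal directly to Proposition~\ref{prop:t=0} (which holds at $t=0$ for $(w_0,z_0,k_0)\in \mathcal A_n(\eps, C_0)$); for the $\p_\lambda$ quantities themselves we use the inductive hypothesis together with the initial-time identities
\[
\K_\lambda\bigr|_{t=0} = 0,\qquad \p_\lambda\eta_x\bigr|_{t=0}=0,\qquad \Sigma_\lambda\bigr|_{t=0} = \tfrac{\tilde w_0}{2},\qquad (\eta_x\W)_\lambda\bigr|_{t=0} = \tilde w_0^\prime - \tfrac{1}{4\gamma}\tilde w_0 k_0^\prime,
\]
\[
\Z_\lambda\bigr|_{t=0} = \tfrac{1}{4\gamma}\tilde w_0 k_0^\prime,
\]
which follow by differentiating in $\lambda$ the initial-data formulas $w_0 = \bar w_0 + \lambda \tilde w_0$, $\sigma_0 = \tfrac 12(w_0+z_0)$ together with $\mathring w_0 = w_0^\prime - \tfrac{1}{2\gamma}\sigma_0 k_0^\prime$, $\mathring z_0 = z_0^\prime + \tfrac{1}{2\gamma}\sigma_0 k_0^\prime$, and $\eta_x(x,0)=1$.

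For the base case $\beta_t = 0$, I would plug the above initial values into \eqref{id:Sigma_t:stab}--\eqref{id:W_t:stab} at $t=0$, differentiate $\beta_x$ times, and bound each Leibniz term using the convolution inequalities of \S~\ref{sec:appendix:freqcomp} together with the $C_0$-controlled derivative bounds on $\tilde w_0, \bar w_0, z_0, k_0$. For example, the identity $(\eta_x\W)_\lambda|_{t=0} = \tilde w_0^\prime - \tfrac{1}{4\gamma}\tilde w_0 k_0^\prime$ will immediately give the bound $(1+\tfrac{\eps}{\gamma})L C_0^m \cdot m!/(m+1)^2$, and hence the $L \C_x^m$ bound once $\C_x \geq 2e^3 C_0$; the estimate on $\p_\lambda \eta_{xt}|_{t=0}$ then reduces to the bound on $(\eta_x \W)_\lambda|_{t=0}$ and the $\Z_\lambda, \Sigma_\lambda, \K$ factors, yielding the stated $(1+\alpha)L$ bound since the dominant term is $\tfrac{1+\alpha}{2}\tilde w_0^\prime$ of size $(1+\alpha)L/2$ plus $\OO((1+\alpha)L\eps)$ corrections. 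The bounds on $\p_\lambda \Sigma_t|_{t=0}$ and $\p_\lambda(\eta_x\W)_t|_{t=0}$ follow in the same way, exploiting the vanishing of $\K_\lambda$ and $\p_\lambda\eta_x$ at $t=0$ to see that the leading terms carry a factor of $\eps$.

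For the inductive step, assume the bounds hold for all multi-indices of time order $\leq m$ with $|\beta|\leq 2n$, fix $\beta$ with $|\beta|\leq 2n-1$ and $\beta_t = m$, and apply $\p^\beta$ to each equation of \eqref{system:stab}, setting $t=0$ afterward and simplifying using $\p_x^i \eta_x(x,0) = \delta_{i0}$ and $\p_x^i \p_\lambda \eta_x(x,0)=0$. This produces a Leibniz expansion whose summands are products of factors of the form $\p^\gamma$ applied to either an unperturbed quantity (controlled by Proposition~\ref{prop:t=0}) or a $\p_\lambda$ quantity at smaller or equal time order (controlled by the inductive hypothesis). The resulting bounds reduce, term by term, to the combinatorial sums of \S~\ref{sec:appendix:freqcomp}, yielding the factor $\frac{|\beta|!}{(|\beta|+1)^2}\C_x^{\beta_x}\C_t^{\beta_t+1}$ with a total constant of the form $A + \OO(1/\C_x + \alpha\C_x/\C_t + (1+\alpha)/\C_t + (1+\alpha)\eps)$; the smallness hypotheses $\C_x\gg 1$, $\alpha\C_x\ll \C_t$, $\C_t\gg(1+\alpha)$ and $(1+\alpha)\eps\ll 1$ then absorb the error and close the induction at the required constants $1,1,7\alpha,(1+\alpha),8$ (times $\eps$ where applicable).

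The main obstacle is purely bookkeeping: the $\p_\lambda$ equations \eqref{id:Sigma_t:stab}--\eqref{id:Z_t:stab} contain \emph{additional} trilinear terms (e.g.~$\p_\lambda\eta_x\,\p_t\K$, $\Sigma_\lambda\,\p_x\K$, $\K_\lambda\eta_x\W$) not present in \eqref{system:lagrange}, so each Leibniz-expansion carries several more terms than in the proof of Proposition~\ref{prop:t=0}. The saving feature is that each such new term always pairs a $\p_\lambda$ quantity (of size $L$ or $L\eps$) with an unperturbed quantity (of size $\leq B_z\eps$ or $\leq 3$), so after dividing by the target constant $L\eps$ (or $L$) the contribution is $\OO(\eps)$ or $\OO(1/\C_t)$ and is easily absorbed. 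The only genuinely delicate sign is in the bound for $\p_\lambda \eta_{xt}$, whose leading constant is exactly $(1+\alpha)/2$ (from $\tilde w_0^\prime$), so one needs to verify that the remaining terms contribute at most $(1+\alpha)/2$ more --- but the factor $\eps$ they carry, combined with $(1+\alpha)\eps\ll 1$, ensures this.
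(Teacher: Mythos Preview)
Your proposal is correct and follows essentially the same approach as the paper: induction on $\beta_t$, with the base case coming from the explicit initial identities you list (these are exactly the paper's \eqref{id:t=0:stab}) and the inductive step obtained by applying $\p^\beta$ to the $\p_\lambda$-system and invoking Proposition~\ref{prop:t=0} for the unperturbed factors together with the inductive hypothesis for the $\p_\lambda$-factors. The only cosmetic difference is that the paper phrases the inductive step as ``apply $\p_\lambda$ to the already-expanded $\p^{\beta+e_t}\K$, $\p^{\beta+e_t}\Z$ identities from the proof of Proposition~\ref{prop:t=0}'' rather than ``apply $\p^\beta$ to \eqref{id:K_t:stab}--\eqref{id:Z_t:stab}'', but since mixed partials commute these produce identical Leibniz expansions.
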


\begin{proof}
Our proof will be almost identical to the proof of Proposition \ref{prop:t=0}. We will do induction on $\beta_t$.

{\it{Base Case:}} Recall from the proof of Proposition \ref{prop:t=0} that at time $t=0$
\begin{align*}
\Sigma & = \sigma_0, &
\eta_x & = 1, &
\eta_x \W & = w'_0 - \tfrac{1}{2\gamma} \sigma_0 k'_0, &
\Z & = z'_0 + \tfrac{1}{2\gamma} \sigma_0 k'_0, \\
\K & = k'_0, &
\Sigma_t & = -\alpha \sigma_0 z'_0 &
\eta_{xt} & = \tfrac{1+\alpha}{2}w'_0 + \tfrac{1-\alpha}{2} z'_0, &
(\eta_x \W)_t & = \tfrac{\alpha}{4\gamma} \sigma_0 k'_0(w'_0 + z'_0). &
\end{align*}
Taking $\p_\lambda$ of these equations gives
\begin{equation}\label{id:t=0:stab}
\begin{aligned}
\Sigma_\lambda & = \tfrac{1}{2}\tilde{w}_0 &
\p_\lambda \eta_x & = 0, &
(\eta_x \W)_\lambda & = \tilde{w}'_0 - \tfrac{1}{4\gamma} \tilde{w}_0 k'_0, &
\Z_\lambda & = \tfrac{1}{4\gamma} \tilde{w}_0 k'_0, \\
\K_\lambda & = 0, &
\p_\lambda \Sigma_t & = -\tfrac{\alpha}{2} \tilde{w}_0 z'_0 &
\p_\lambda \eta_{xt} & = \tfrac{1+\alpha}{2}\tilde{w}'_0, &
\p_\lambda (\eta_x \W)_t & = \tfrac{\alpha}{4\gamma}[ \tfrac{1}{2}\tilde{w}_0 k'_0(w'_0 + z'_0) + \sigma_0 k'_0 \tilde{w}_0'], &
\end{aligned}
\end{equation}
at time $t=0$. Now take $p^m_x$ of these equations for $m =0, \hdots, 2n$ and perform computations analogous to those in the $\beta_t =0$ step of the proof of Proposition \ref{prop:t=0} to conclude our base case. One can also compute the analog of \eqref{ineq:W_x:t=0},
\begin{equation*}
\frac{(m+1)^2\|\p^m_x (\eta_x \W)_\lambda\|_{L^\infty_x}}{m! L \C^m_x} \leq 1 + \tfrac{\eps}{4\gamma},
\end{equation*}
to use in the inductive step.

{\it{Inductive Step:}} The inductive step will also be analogous to the inductive step in the proof of Proposition \ref{prop:t=0}: fix $0 \leq m \leq 2n-1$, and suppose our result is true for all multi indices $\beta$ with $|\beta| \leq 2n$ and $\beta_t \leq m$.

We first do the inductive step for $\K_\lambda$ and $\Z_\lambda$.  Let $|\beta| \leq 2n-1, \beta_t = m$. Take the identities for $\p^{\beta+e_t} \K$ and $\p^{\beta+e_t} \Z$ used in the proof of Proposition \ref{prop:t=0}, apply $\p_\lambda$ to each of these equations, and use \eqref{id:t=0:stab} to simplify. From here, use the inductive hypotheses and \eqref{ineq:prop:t=0} to obtain bounds on $\p^{\beta+e_t} \K_\lambda$ and $\p^{\beta+e_t} \Z_\lambda$ via the same types of computations as used in the proof of Proposition \ref{prop:t=0}. Because $\C_x \gg 1, \C_t \gg \alpha \C_x,$ and $\C_t \gg 1+\alpha$, the computations go through just like before.

The inductive step for the remaining three inequalities can be done using  \eqref{ineq:prop:t=0}  and our inductive hypotheses in conjunction with our bounds on $\p^\beta \K_\lambda$ and $\p^\beta \Z_\lambda$ in a manner completely analogous to the corresponding steps in the proof of Proposition \ref{prop:t=0}. 
\end{proof}


\subsection{A Priori estimates for time derivatives} \label{sec:apriori:stab}

In this section, we will fix a time $T \in (0, T_*]$, a positive function $C_t:[0,T_*) \rightarrow \R^+$, a constant $\delta \geq 0$, and constants $A,M$ satisfying
\begin{equation*}
A \geq 10B_z,
\qquad
M \geq \max(1, 5L),
\end{equation*}
and assume that 
\begin{equation}\label{ineq:apriori:hyp:stab}
\frac{(m+1)^2 \|\Sigma^{-\delta m} \p^m_t \K_\lambda\|_{L^\infty_x}}{m! M C^m_t} \leq A \eps, 
\qquad
\frac{(m+1)^2 \|\Sigma^{-\delta m} \p^m_t \Z_\lambda\|_{L^\infty_x}}{m! M C^m_t} \leq A \eps,
\end{equation}
for all  $t \in [0,T], \: 0 \leq m \leq 2n.$ We will furthermore assume that 
\begin{equation}\label{ineq:apriori:hyp:previous}
\frac{(m+1)^2 \|\Sigma^{-\delta m} \p^m_t \K\|_{L^\infty_x}}{m!C^m_t} \leq B_k \eps, 
\qquad
\frac{(m+1)^2 \|\Sigma^{-\delta m} \p^m_t \Z\|_{L^\infty_x}}{m! C^m_t} \leq B_z \eps,
\end{equation}
for all  $t \in [0,T], \: 0 \leq m \leq 2n+1.$ Note that the hypotheses \eqref{ineq:apriori:hyp:previous} imply that the a priori estimates of \S~\ref{sec:apriori} all hold with $A_k = B_k, A_z = B_z$ for these choices of $C_t$ and $\delta$. We will use this fact throughout this subsection and the next.

\begin{proposition}\label{prop:apriori:0:stab}
If $(1+\alpha) B_z \eps \ll1$ and $A \eps \ll 1$, then for all $t \in [0,T]$ we have
\begin{equation}\label{ineq:0:stab}
\frac{\|\Sigma_\lambda\|_{L^\infty_x}}{M} \leq  \tfrac{1}{9}, 
\qquad
\frac{\|\p_\lambda \eta_x\|_{L^\infty_x}}{M}  \leq 1,
\qquad
\frac{\|(\eta_x \W)_\lambda\|_{L^\infty_x}}{M} \leq \tfrac{3}{4}.
\end{equation}
\end{proposition}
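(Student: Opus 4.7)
The proof proceeds by a bootstrap argument parallel to that of Proposition~\ref{prop:0thorder}, but applied to the three $\lambda$-differentiated quantities $\Sigma_\lambda$, $(\eta_x\W)_\lambda$, and $\p_\lambda\eta_x$. The key observation is that equations~\eqref{id:Sigma_t:stab}, \eqref{id:W_t:stab}, and \eqref{id:eta_xt:stab} form a linear inhomogeneous $3\times 3$ ODE system in these three quantities (viewed as functions of $t$, pointwise in $x$), with coefficients and forcing controlled by quantities already bounded: the undifferentiated variables via \eqref{ineq:0thorder}, and $\K_\lambda,\Z_\lambda$ via the hypothesis \eqref{ineq:apriori:hyp:stab} with $m=0$. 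Since all forcing terms are $\OO(MA\eps)$ and coefficient terms are $\OO((1+\alpha)B_z\eps)$, a Gr\"onwall-type argument on the interval $[0,T]\subset[0,T_*)$ with $T_*<\frac{2}{1+\alpha}(1+\eps^{1/2})$ closes the bootstrap provided $A\eps\ll 1$ and $(1+\alpha)B_z\eps\ll 1$.

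The initial data is handled using \eqref{id:t=0:stab}, which gives $\Sigma_\lambda(\cdot,0)=\tfrac12\tilde w_0$, $\p_\lambda\eta_x(\cdot,0)=0$, and $(\eta_x\W)_\lambda(\cdot,0)=\tilde w_0'-\tfrac{1}{4\gamma}\tilde w_0 k_0'$. Combined with the assumptions $\|\tilde w_0\|_{L^\infty_x},\|\tilde w_0'\|_{L^\infty_x}\le L$ and $\|k_0'\|_{L^\infty_x}\le \eps$, and the normalization $M\ge\max(1,5L)$, this yields at $t=0$
\begin{equation*}
\tfrac{\|\Sigma_\lambda(\cdot,0)\|_{L^\infty_x}}{M}\le\tfrac{L}{2M}\le\tfrac{1}{10},\qquad
\tfrac{\|(\eta_x\W)_\lambda(\cdot,0)\|_{L^\infty_x}}{M}\le\tfrac{L}{M}\bigl(1+\tfrac{\eps}{4\gamma}\bigr)\le\tfrac{1}{4}.
\end{equation*}

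The three bounds are then propagated as follows. For $\Sigma_\lambda$, equation \eqref{id:Sigma_t:stab} gives $|\p_\lambda\Sigma_t|\le\alpha|\Sigma_\lambda|(|\Z|+\tfrac{1}{\gamma}|\Sigma\K|)+3\alpha M A\eps+\tfrac{9\alpha}{2\gamma}MA\eps$; integrating and using $\alpha T_*\le 2(1+\eps^{1/2})$ yields $\|\Sigma_\lambda\|_{L^\infty_x}/M\le\tfrac{1}{10}\exp(\OO(B_z\eps))+\OO(A\eps)<\tfrac{1}{9}$ once $A\eps$ and $B_z\eps$ are small. For $(\eta_x\W)_\lambda$, equation \eqref{id:W_t:stab} shows $|\p_\lambda(\eta_x\W)_t|\le\OO(MA\eps)\cdot\eps$ (every term carries an explicit factor of $\eps$ coming either from $\K$ or from the product $\Sigma\K$), so integration gives $\|(\eta_x\W)_\lambda\|_{L^\infty_x}/M\le\tfrac14+\OO(\eps)<\tfrac34$. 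For $\p_\lambda\eta_x$, starting from zero and integrating \eqref{id:eta_xt:stab}, the dominant contribution is $\int_0^t\tfrac{1+\alpha}{2}\|(\eta_x\W)_\lambda\|_{L^\infty_x}\,ds$, all remaining terms contributing $\OO(MA\eps)$; using $T_*\le\tfrac{2}{1+\alpha}(1+\eps^{1/2})$ and the improved bound on $(\eta_x\W)_\lambda$ gives $\|\p_\lambda\eta_x\|_{L^\infty_x}/M\le\tfrac34(1+\eps^{1/2})+\OO(A\eps)<1$.

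The main (mild) obstacle is that the bound $\|\p_\lambda\eta_x\|/M\le 1$ is close to saturation, since the bootstrap on $(\eta_x\W)_\lambda$ produces a factor $\tfrac34$, and the time integral adds a factor $\tfrac{1+\alpha}{2}\cdot T_*\approx 1$. This forces the smallness condition on $\eps$ and $A\eps$ to be quantitative but poses no structural difficulty. Everything else is parallel to the proof of Proposition~\ref{prop:0thorder} and requires only the smallness assumptions stated in the hypothesis.
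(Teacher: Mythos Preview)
Your approach is correct and essentially matches the paper's. The only structural difference is that the paper observes equation \eqref{id:Sigma_t:stab} for $\Sigma_\lambda$ is decoupled from $\p_\lambda\eta_x$ and $(\eta_x\W)_\lambda$, so it bounds $\Sigma_\lambda$ first by a direct Duhamel estimate and then runs the bootstrap only on the remaining two quantities (with hypotheses $\|\p_\lambda\eta_x\|_{L^\infty_x}/M\le 2$ and $\|(\eta_x\W)_\lambda\|_{L^\infty_x}/M\le\tfrac34+\eps$). One small slip: your estimate $|\p_\lambda(\eta_x\W)_t|\le\OO(MA\eps)\cdot\eps$ overcounts by one power of $\eps$ --- for instance the term $\tfrac{\alpha}{4\gamma}\,\Sigma\,\K_\lambda\cdot\eta_x\W$ is $\OO(MA\eps)$, not $\OO(MA\eps^2)$ --- but since a single factor of $\eps$ already suffices for the integrated bound, this does not affect the conclusion.
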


\begin{proof}
\eqref{id:Sigma_t:stab} gives us a Duhamel formula for $\Sigma_\lambda$ from which it follows that 
\begin{align*}
|\Sigma_\lambda (x,t)| & \leq \tfrac{1}{2}|\tilde{w}_0(x)| e^{\OO(\alpha t B_z \eps)} + \alpha t MA\eps [3 + \tfrac{9}{2\gamma}]
\end{align*}
for all $(x,t) \in \TT \times [0,T]$. Since $M \geq \max(1, 5L)$, if $ B_z \eps$ and $A\eps$ are small enough we have $\|\Sigma\|_{L^\infty_x} \leq \frac{M}{9}$.

The remaining bounds follow from a bootstrap argument. Let $T' \in [0,T]$  and assume that
\begin{equation*}
\frac{\|\p_\lambda \eta_x\|_{L^\infty_x}}{M} \leq 2,
\qquad
\frac{\|(\eta_x \W)_\lambda\|_{L^\infty_x}}{M} \leq \tfrac{3}{4}+ \eps,
\end{equation*}
for all $t \in [0,T']$. Then using \eqref{id:eta_xt:stab}, \eqref{id:W_t:stab}, and the equations
\begin{align*}
\p_\lambda \eta_x & = \int^t_0 \p_\lambda \eta_{xt} \: ds, &
(\eta_x \W)_\lambda & = \tilde{w}_0' - \tfrac{1}{4\gamma} \tilde{w}_0 k'_0 + \int^t_0 \p_\lambda (\eta_x \W)_t \: ds
\end{align*}
allows us to prove \eqref{ineq:0:stab} for $t \in [0,T']$, which concludes our bootstrap argument.
\end{proof}

\begin{proposition}\label{prop:apriori:est:stab}
If the hypotheses of the previous proposition are satisfied, and $C_t(0), \eps$ also satisfy
\begin{equation*}
(1+\alpha)3^\delta \ll C_t(0),
\qquad
(1+\alpha)B_z \eps \ll 1,
\end{equation*}
then
\begin{equation}\label{apriori:est:stab}
\begin{aligned}
\frac{(m+1)^2\|\Sigma^{-\delta m} \p^{m}_t \p_\lambda \Sigma_t\|_{L^\infty_x}}{m! M C^m_t} & \leq 8\alpha A\eps, & & \\
\frac{(m+1)^2\|\Sigma^{-\delta m} \p^{m}_t \p_\lambda \eta_{xt} \|_{L^\infty_x}}{m! M C^m_t} & \leq  \tfrac{1+\alpha}{2}(\delta_{0m} + 9A\eps), &
\frac{(m+1)^2\|\Sigma^{-\delta m} \p^{m}_t \p_\lambda (\eta_x\W)_t\|_{L^\infty_x}}{m! M C^m_t} & \leq A\eps,
\end{aligned}
\end{equation}
for all $t \in [0,T], 0 \leq m \leq 2n$.
\end{proposition}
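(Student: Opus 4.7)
The proof will mirror that of Proposition~\ref{prop:apriori:est}, proceeding by induction on $m$, with the system \eqref{id:Sigma_t:stab}, \eqref{id:eta_xt:stab}, \eqref{id:W_t:stab} playing the role that \eqref{id:Sigma_t}, \eqref{id:eta_xt}, \eqref{id:W_t} played there. The hypothesis \eqref{ineq:apriori:hyp:previous} is exactly what is needed so that Proposition~\ref{prop:apriori:est} and Corollary~\ref{cor:apriori:est} apply with $A_k = B_k,\: A_z = B_z$, giving uniform control of $\p_t^j \Sigma_t,\: \p_t^j \eta_{xt},\: \p_t^j(\eta_x\W)_t,\: \p_t^j \Sigma_x,\: \p_t^j (\Sigma^{-1})_x,$ and $\p_t^{j-1}\p_x\K,\: \p_t^{j-1}\p_x\Z$ up to order $2n+1$. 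These, together with \eqref{ineq:apriori:hyp:stab}, Proposition~\ref{prop:apriori:0:stab}, and the zeroth-order bounds \eqref{ineq:0thorder}, are the only ingredients used throughout.

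For the base case $m=0$, I plug the bounds of Proposition~\ref{prop:apriori:0:stab}, \eqref{ineq:0thorder}, and the $m=0$ case of \eqref{ineq:apriori:hyp:stab}--\eqref{ineq:apriori:hyp:previous} directly into \eqref{id:Sigma_t:stab}, \eqref{id:eta_xt:stab}, \eqref{id:W_t:stab}. For $\p_\lambda\Sigma_t$ the dominant contribution is $-\alpha\Sigma\Z_\lambda + \tfrac{\alpha}{2\gamma}\Sigma^2\K_\lambda$, which gives $\le (3 + \tfrac{9}{2\gamma})\alpha M A\eps$, which is $\le 8\alpha MA\eps$ once $A \geq 10 B_z$ is used to absorb the contribution from $\Sigma_\lambda(-\Z + \tfrac{1}{\gamma}\Sigma\K)$. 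For $\p_\lambda\eta_{xt}$ the term $\tfrac{1+\alpha}{2}(\eta_x\W)_\lambda$ produces the $\tfrac{1+\alpha}{2}\delta_{0m}$ contribution (using $\|(\eta_x\W)_\lambda\|_{L^\infty_x} \le \tfrac{3}{4}M \le M$), while the remaining terms are $O((1+\alpha)B_z\eps)\cdot M + O((1+\alpha)A\eps)\cdot M$, so the smallness $(1+\alpha)B_z\eps \ll 1$ hides them inside $\tfrac{1+\alpha}{2}\cdot 9A\eps$. For $\p_\lambda(\eta_x\W)_t$ every factor is $O(\eps)$, so all terms are of the form $O(B_z\eps)\cdot MA\eps$ or $O(A\eps)\cdot MB_z\eps$, bounded by $MA\eps$ under $(1+\alpha)B_z\eps \ll 1$.

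For the inductive step, I apply $\p_t^m$ to each of \eqref{id:Sigma_t:stab}, \eqref{id:eta_xt:stab}, \eqref{id:W_t:stab} via the Leibniz rule, and isolate (a) the top-order terms involving $\p_t^m \K_\lambda,\,\p_t^m\Z_\lambda$ (controlled by \eqref{ineq:apriori:hyp:stab}), (b) the top-order term $\p_t^{m-1}\p_\lambda(\eta_x\W)_t$ in the equation for $\p_t^m \p_\lambda \eta_{xt}$ (controlled by the inductive hypothesis and gaining a factor $1/C_t$ from the shift in $\beta_t$), and (c) all mixed terms of the form $\p_t^{a}(\cdots)\,\p_t^{b}(\cdots)_\lambda$ with $a+b = m$, $a,b < m$. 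Each mixed product is estimated using Proposition~\ref{prop:apriori:est} on the non-$\lambda$-factor and the inductive hypothesis on the $\lambda$-factor, together with the combinatorial sum bounds of \S\ref{sec:appendix:freqcomp} (in particular the ratios $\tfrac{(m+1)^2 j!(m-j)!}{m!(j+1)^2(m+1-j)^2}$ and $\tfrac{(m+1)^2(j-1)!(m-j)!}{m!j^3(m+1-j)^2}$ and their trivariate analogs) to produce an $O(1)$ combinatorial factor. The resulting bounds on $\p_t^m\p_\lambda\Sigma_t$ and $\p_t^m \p_\lambda(\eta_x\W)_t$ are the leading constants from the $m$-th time-derivative of the transport-type right-hand sides, plus a correction of size $\tfrac{3^\delta}{C_t}\cdot(\text{tame})$ from mixed terms, which is absorbed by $(1+\alpha)3^\delta \ll C_t(0)$. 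For $\p_t^m \p_\lambda \eta_{xt}$ with $m\ge 1$, the $(\eta_x\W)_\lambda$ term no longer contributes at top order; instead the inductive bound on $\p_t^{m-1}\p_\lambda(\eta_x\W)_t$ from \eqref{apriori:est:stab} gives the main $\tfrac{1+\alpha}{2}\cdot 9A\eps$ coefficient, once the additional smallness from $1/C_t$ and $(1+\alpha)B_z\eps \ll 1$ is used.

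The main technical obstacle will be bookkeeping: because the right-hand sides of \eqref{id:Sigma_t:stab}--\eqref{id:W_t:stab} have quadratic, cubic and (for $\p_\lambda(\eta_x\W)_t$) quartic-in-unknowns structure, the Leibniz expansion after $\p_t^m$ produces up to four-fold convolution sums, and matching each such sum to one of the bounds in \S\ref{sec:appendix:freqcomp} requires care, especially to confirm that the constants $8\alpha A\eps$, $\tfrac{1+\alpha}{2}(\delta_{0m}+9A\eps)$, and $A\eps$ on the right-hand side of \eqref{apriori:est:stab} are not inadvertently exceeded. The point where this bookkeeping is tightest is in the $\p_t^m\p_\lambda\eta_{xt}$ estimate, since the single $\tfrac{1+\alpha}{2}(\eta_x\W)_\lambda$ factor from the $m=0$ identity must be replaced, for $m\ge 1$, by $\tfrac{1+\alpha}{2}\p_t^{m-1}\p_\lambda(\eta_x\W)_t$, and the inductive bound on the latter gives exactly a factor $A\eps$ at leading order—so the coefficient $9$ in $\tfrac{1+\alpha}{2}(1+ 9A\eps)$ has to comfortably dominate $\tfrac{1+\alpha}{2}\cdot 1\cdot A\eps$ plus all error contributions from Leibniz, which uses $(1+\alpha)3^\delta \ll C_t(0)$ in an essential way.
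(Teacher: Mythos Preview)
Your proposal is correct and follows essentially the same approach as the paper's own proof, which is even briefer: the paper simply states that the argument is analogous to Proposition~\ref{prop:apriori:est}, proceeding by induction on $m$, taking $\p^m_t$ of the identities \eqref{system:stab}, applying \eqref{ineq:apriori:hyp:stab}, the inductive hypotheses, the estimates from \S\ref{sec:apriori}, and the bounds from \S\ref{sec:appendix:freqcomp}, together with the smallness of $\tfrac{(1+\alpha)3^\delta}{C_t}$. You have fleshed out exactly those ingredients with the correct bookkeeping.
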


\begin{proof}
The proof of these estimates is analogous to the proof of Proposition \ref{prop:apriori:est}: do induction on $m$; at each step, take $\p^m_t$ of the identities \eqref{system:stab}, apply \eqref{ineq:apriori:hyp:stab}, the inductive hypotheses, and the estimates from \S~\ref{sec:apriori}; then use the bounds from \S~\ref{sec:appendix:freqcomp} and the fact that $\frac{(1+\alpha)3^\delta}{C_t}$ is sufficiently small to conclude.
\end{proof}

\begin{corollary}\label{cor:apriori:est:stab}
Under the hypotheses of the previous proposition, we have
\begin{align*}
\frac{(m+1)^2 \|\Sigma^{-\delta m} \p^m_t \p_\lambda \eta_x\|_{L^\infty_x}}{m! M C^m_t} & \leq 1,  & 
\frac{(m+1)^2 \|\Sigma^{-\delta m} \p^m_t( \eta_x \W)_\lambda \|_{L^\infty_x}}{m! C^m_t} & \leq \tfrac{3}{4}, \\
\frac{(m+1)^2 \|\Sigma^{-\delta m} \p^m_t \p_\lambda \Sigma_x\|_{L^\infty_x}}{m! M C^m_t} & \leq 1,  &  &  
\end{align*}
for all $t \in [0, T], 0 \leq m \leq 2n$ and
\begin{equation*}
\frac{(m+1)^2 \|\Sigma^{-\delta m} \p^{m-1}_t \p_x\K_\lambda\|_{L^\infty_x}}{m! M C^m_t} \leq \tfrac{10}{\alpha} A\eps,  \qquad
\frac{(m+1)^2 \|\Sigma^{-\delta m} \p^{m-1}_t \p_x \Z_\lambda \|_{L^\infty_x}}{m! M C^m_t} \leq \tfrac{5}{\alpha}A\eps , 
\end{equation*}
for all  $t \in [0,T], \: 1 \leq m \leq 2n$.
\end{corollary}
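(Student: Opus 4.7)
The plan is to mimic the proof of Corollary \ref{cor:apriori:est} line by line, using the hypotheses \eqref{ineq:apriori:hyp:stab}--\eqref{ineq:apriori:hyp:previous}, Proposition \ref{prop:apriori:0:stab}, Proposition \ref{prop:apriori:est:stab}, and the (already established) non-$\lambda$ analogues from \S~\ref{sec:apriori} in place of the estimates from \S~\ref{sec:HOE}. Throughout we shall use freely that $(1+\alpha)3^\delta \ll C_t$, $(1+\alpha)B_z \eps \ll 1$, and $A\eps \ll 1$.

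First I would handle the three estimates on $\p^m_t \p_\lambda \eta_x$, $\p^m_t (\eta_x \W)_\lambda$, $\p^m_t \p_\lambda \Sigma_x$. For $m \geq 1$, the first two are trivially restatements of \eqref{apriori:est:stab}, since $\p^m_t \p_\lambda \eta_x = \p^{m-1}_t \p_\lambda \eta_{xt}$ and $\p^m_t (\eta_x \W)_\lambda = \p^{m-1}_t \p_\lambda (\eta_x \W)_t$; the prefactors $\frac{(m+1)^2}{m}$ are harmless thanks to the assumption $(1+\alpha)3^\delta \ll C_t$, which absorbs the $\tfrac{1+\alpha}{2}\cdot 9A\eps$ constant into the budget of $1$. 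For $m=0$, use that $\p_\lambda \eta_x(\cdot,0) = 0$ (since $\eta_x(\cdot,0) \equiv 1$) and hence
\begin{equation*}
\p_\lambda \eta_x(x,t) = \int_0^t \p_\lambda \eta_{xt}(x,s)\,ds,
\end{equation*}
which combined with \eqref{apriori:est:stab} and $t \leq T_* = \tfrac{2}{1+\alpha} + \OO(\eps)$ yields $\|\p_\lambda \eta_x\|_{L^\infty_x} \leq M$. Likewise, for $(\eta_x \W)_\lambda$ at $t=0$ the quantity equals $\tilde{w}_0' - \tfrac{1}{4\gamma}\tilde{w}_0 k_0'$, whose $L^\infty_x$-norm is bounded by $L + \tfrac{1}{4\gamma}L\eps \leq \tfrac{2}{3}M$, and an integration of $\p_\lambda(\eta_x \W)_t$ furnishes the $\tfrac{3}{4}M$ bound. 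The estimate on $\p^m_t \p_\lambda \Sigma_x$ is proved by applying $\p^m_t$ to \eqref{id:Sigma_x:stab}, using Leibniz, the bounds already established in this proof for $\p^j_t \p_\lambda \eta_x, \p^j_t (\eta_x\W)_\lambda$, the hypothesis \eqref{ineq:apriori:hyp:stab} on $\p^j_t \K_\lambda, \p^j_t \Z_\lambda$, Proposition \ref{prop:apriori:0:stab}, and the zeroth order bounds \eqref{ineq:0thorder}, together with the frequency-localization inequalities of \S~\ref{sec:appendix:freqcomp}.

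Next, for the two spatial derivative estimates on $\K_\lambda$ and $\Z_\lambda$, I would apply $\p^{m-1}_t$ to the identities \eqref{id:K_x:stab} and \eqref{id:Z_x:stab}. Each of these identities solves algebraically for $\alpha \p_x \K_\lambda$ (resp.\ $2\alpha \p_x \Z_\lambda$) in terms of $\Sigma^{-1}$ multiplied by products that involve: one factor of a $\lambda$-derivative ($\p_t \K_\lambda, \p_t \Z_\lambda, \K_\lambda, \Z_\lambda, \Sigma_\lambda, \p_\lambda \eta_x, (\eta_x \W)_\lambda$) and one or more factors of non-$\lambda$ quantities, or one factor of $\p_x \K, \p_x \Z$ (multiplied by $\Sigma_\lambda \Sigma^{-1}$). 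Expanding $\p^{m-1}_t$ via Leibniz, using \eqref{ineq:apriori:hyp:stab}, \eqref{ineq:apriori:hyp:previous}, Proposition \ref{prop:apriori:0:stab}, Proposition \ref{prop:apriori:est:stab}, Corollary \ref{cor:apriori:est}, and the bounds on $\Sigma^{-1}$ from the a priori estimates, every summand produces a factor that is at most $M A\eps$ times a combinatorial sum controlled by the lemmas in \S~\ref{sec:appendix:freqcomp}. The terms containing $\p_x \K$ and $\p_x \Z$ (which supply the ``source'' multiplying $\Sigma_\lambda$) are bounded by \eqref{cor:apriori:KZ:dx} together with $\|\Sigma_\lambda\|_{L^\infty_x} \leq M/9$.

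The main obstacle is bookkeeping the combinatorial factors: since each $\lambda$-derivative introduces the additional weight $M$ and the prefactor $A$ in place of $B_z/B_k$, one must verify that the same sum estimates from \S~\ref{sec:appendix:freqcomp} that made the proof of Corollary \ref{cor:apriori:est} close (leading to the explicit constants $\tfrac{10}{\alpha}, \tfrac{5}{\alpha}$) are inherited verbatim in this setting, since exactly one factor in every product carries the $\lambda$-derivative and hence the $MA\eps$ weight, while the remaining factors carry the weights tracked in \S~\ref{sec:apriori}. With this structural observation in hand, repeating the computations of the proof of Corollary \ref{cor:apriori:est} with $\K \rightsquigarrow \K_\lambda$, $\Z \rightsquigarrow \Z_\lambda$, etc., and absorbing every $\OO\bigl(\tfrac{(1+\alpha)3^\delta}{C_t}\bigr)$ and $\OO((1+\alpha)B_z \eps)$ contribution into the spare constants, yields the claimed bounds on $\p^{m-1}_t \p_x \K_\lambda$ and $\p^{m-1}_t \p_x \Z_\lambda$.
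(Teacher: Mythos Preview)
Your proposal is correct and follows essentially the same approach as the paper, which simply states that the corollary follows from Proposition~\ref{prop:apriori:est:stab} in the same way that Corollary~\ref{cor:apriori:est} follows from Proposition~\ref{prop:apriori:est}. Your write-up fills in the details of that analogy faithfully; note only that the $m=0$ cases for $\p_\lambda \eta_x$ and $(\eta_x\W)_\lambda$ are already contained in Proposition~\ref{prop:apriori:0:stab}, so you need not re-derive them via time integration.
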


\begin{proof}
This follows from Proposition \ref{prop:apriori:est:stab} the same way that Corollary \ref{cor:apriori:est} follows from Proposition \ref{prop:apriori:est}.
\end{proof}

\subsection{Energy estimates for time derivatives}

\begin{proposition}\label{prop:EE:stab}
Define the constant
\begin{equation}\label{def:B:lambda}
B_\lambda : = \tfrac{5}{2} \cdot 9^{\frac{3}{\min(1,\alpha)}} \cdot e^{31}.
\end{equation}
Let $\delta, \kappa$ be as in \eqref{def:delta}, and let $C_t$ and $\bar{C}_t$ be defined as in \eqref{def:C_t} and \eqref{def:C_t:bar} with these values of $\delta, \kappa$. If  $C_t(0)$ and $M$ satisfy
\begin{equation*}
M \geq 5 L,
\qquad
C_t(0) \gg (1+\alpha) 3^\delta, 
\qquad
C_t(0) \gg \alpha C_0
\end{equation*}
and $\eps$ satisfies
\begin{align*}
(1+\alpha)^2 B_z^2 \eps & \ll 1, & B_\lambda \bar{C}_tB_z\eps & \ll 1, 
\end{align*}
then 
\begin{equation}\label{ineq:EE:dt:stab}
\frac{(m+1)^2 \|\Sigma^{-\delta m} \p^{m}_t \K_\lambda\|_{L^\infty_x}}{m! M C^m_t} 
\leq B_\lambda\bar{C}_tB_z\eps, 
\qquad
\frac{(m+1)^2 \|\Sigma^{-\delta m} \p^{m}_t \Z_\lambda\|_{L^\infty_x}}{m! M C^m_t} 
\leq B_\lambda \bar{C}_tB_z\eps,
\end{equation}
for all  $t \in [0,T_*), \: 0 \leq m \leq 2n$.
\end{proposition}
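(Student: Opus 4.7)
The plan is to mirror the proof of Proposition~\ref{prop:EE:infty}, replacing the system~\eqref{system:lagrange} by its linearization~\eqref{system:stab} and using the bounds already established for $(\eta_x,\Sigma,\eta_x\W,\Z,\K)$ as inputs. I would run a bootstrap on $T\in[0,T_*)$: pick constants $B_\lambda < A_k < A_z < 2B_\lambda$ and assume
\begin{equation*}
\frac{(m+1)^2\|\Sigma^{-\delta m}\p^m_t\K_\lambda\|_{L^\infty_x}}{m! M C_t^m}\le A_k\bar C_t B_z\eps,\qquad
\frac{(m+1)^2\|\Sigma^{-\delta m}\p^m_t\Z_\lambda\|_{L^\infty_x}}{m! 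M C_t^m}\le A_z\bar C_t B_z\eps,
\end{equation*}
for all $t\in[0,T]$ and $0\le m\le 2n$. Since $B_\lambda\bar C_t B_z\eps\ll 1$, the hypothesis \eqref{ineq:apriori:hyp:stab} of Section~\ref{sec:apriori:stab} is satisfied with $A=2B_\lambda\bar C_t B_z$, so Proposition~\ref{prop:apriori:est:stab} and Corollary~\ref{cor:apriori:est:stab} supply uniform control on $\p_\lambda\Sigma_t$, $\p_\lambda\eta_{xt}$, $\p_\lambda(\eta_x\W)_t$, $\p_\lambda\eta_x$, $(\eta_x\W)_\lambda$, $\p_\lambda\Sigma_x$, $\p_x\K_\lambda$, $\p_x\Z_\lambda$. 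Simultaneously, the bounds \eqref{ineq:apriori:hyp:previous} ensure all estimates of Section~\ref{sec:apriori} with $A_k=B_k$, $A_z=B_z$ are available for the background solution. Finally, Proposition~\ref{prop:t=0:stab} (applied with $\C_t=\tfrac12 2^{-\delta}C_t(0)$ so that $\C_t\le C_t(t)/3^\delta$ and $\bar C_t>3^\delta C_t$) gives, at $t=0$,
\begin{equation*}
\frac{(m+1)^2\|\Sigma^{-\delta m}\p^m_t\K_\lambda\|_{L^\infty_x}}{m! M C_t^m}\le \tfrac{L}{M}\tfrac{\eps}{2^m}\le \tfrac{\eps}{2^m},\qquad
\frac{(m+1)^2\|\Sigma^{-\delta m}\p^m_t\Z_\lambda\|_{L^\infty_x}}{m! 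M C_t^m}\le \tfrac{\eps}{2^m},
\end{equation*}
which lies well inside the bootstrap once $B_\lambda\bar C_tB_z\ge 1$.

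Next I would introduce, for $1<q<\infty$ and $1\le m\le 2n$, the energies
\begin{equation*}
\E^{m,q}_{k,\lambda}:=\int_\TT \Sigma^{-\delta mq}\eta_x|\p^m_t\K_\lambda|^q\,dx,\qquad
\E^{m,q}_{z,\lambda}:=\int_\TT \Sigma^{-\delta mq}\eta_x|\p^m_t\Z_\lambda|^q\,dx,
\end{equation*}
together with the normalized versions $E^{m,q}_{k,\lambda},E^{m,q}_{z,\lambda}$ obtained by the $\big(\tfrac{(m+1)^2}{m!C_t^m}\big)^q$ prefactor. Applying $\p^m_t$ to \eqref{id:K_t:stab} and \eqref{id:Z_t:stab} produces an identity for $\eta_x\p^{m+1}_t\K_\lambda$ and $\eta_x\p^{m+1}_t\Z_\lambda$ analogous to \eqref{id:K_t:n+1} and \eqref{id:Z_t:n+1}, but augmented with the linear forcing terms
\begin{equation*}
-\p^m_t(\p_\lambda\eta_x\,\p_t\K)+\alpha\p^m_t(\Sigma_\lambda\p_x\K)-\tfrac12\p^m_t(\eta_x\K\Z_\lambda+\K(\eta_x\W)_\lambda+\K\Z\p_\lambda\eta_x),
\end{equation*}
and the analogous forcing for $\Z_\lambda$. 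Repeating the integration-by-parts computation that produced \eqref{id:energy:K:dt} then yields the same damping coefficient $D_k$ (respectively $D_z$) as in \eqref{def:D_k}, \eqref{def:D_z}. Because $(D_k,D_z)$ are \emph{unchanged} by the linearization, the lower bounds \eqref{ineq:D_k} and \eqref{ineq:D_z} still hold and eventually absorb the $(m+1)(q-1)\eps^{1/2}$ cross-term coming from H\"older on the $L^{q-1}\times L^q$ products. The new forcing terms produce an inhomogeneity of size $\OO\big((1+\alpha)M(B_k+B_z)\eps\big)$ at $m=0$ and, for $m\ge 1$, an inhomogeneity controlled by
\begin{equation*}
M B_z\eps\cdot\max_{0\le j\le m}\frac{(m+1)^2\|\Sigma^{-\delta m}\p^j_t(\K_\lambda,\Z_\lambda)\|_{L^q_x}}{m!\,C_t^m}\;+\;MB_z\eps\cdot\bar C_t
\end{equation*}
after using the frequency-combinatorial inequalities of Section~\ref{sec:appendix:freqcomp} exactly as before, where the $\bar C_t$ factor is the price paid for mixing a $\p_\lambda\eta_x$ (which has no $\eps$) with a time derivative of $\K$ or $\Z$ (which costs one $\eps$ per order).

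This leads to a differential inequality of the shape
\begin{equation*}
\dot E^{m,q}_{k,\lambda}+\dot E^{m,q}_{z,\lambda}\le -mq\max(1,\alpha)\,(E^{m,q}_{k,\lambda}+E^{m,q}_{z,\lambda})+m\,\OO\big((1+\alpha)M B_z\bar C_t \eps^{3/2}\big)^q+\OO\big(MB_z\bar C_t\eps^{3/2}\big)^q,
\end{equation*}
identical in structure to \eqref{ineq:energy:ODE:K}. Applying Lemma~\ref{lem:ODE}, taking $q\to\infty$, and comparing with the $t=0$ data yields
\begin{equation*}
\frac{(m+1)^2\|\Sigma^{-\delta m}\p^m_t(\K_\lambda,\Z_\lambda)\|_{L^\infty_x}}{m!MC_t^m}\le \max\!\Big(\tfrac{\eps}{2^m},\,\OO(\bar C_tB_z(1+\alpha)\eps^{1/2})\Big)\cdot B_z\bar C_t\eps,
\end{equation*}
and the explicit choice of $B_\lambda$ in~\eqref{def:B:lambda} (specifically the factor $9^{3/\min(1,\alpha)}e^{31}$, which absorbs $\Sigma^{-\delta}$ evaluated along trajectories and the accumulated Gr\"onwall growth from $D_k$) closes the bootstrap strictly, improving $A_k,A_z\le 2B_\lambda$ back to $\le B_\lambda$.

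The main obstacle I anticipate is not in the algebraic bookkeeping but in verifying that the extra forcing terms peculiar to the linearized system --- in particular $\p_\lambda\eta_x\,\p_t\K$ and $\Sigma_\lambda\p_x\K$ in \eqref{id:K_t:stab} --- can be absorbed without breaking the $\frac{1}{(m+1)^2}$-Fa\`a-di-Bruno scaling. The delicate point is that $\p_\lambda\eta_x$ carries no factor of $\eps$, so bounding $\sum_{j}\binom{m}{j}\p^j_t(\p_\lambda\eta_x)\p^{m-j+1}_t\K$ costs one $\bar C_t$ (from the $\p_t$ falling on $\eta_x$-type quantities at top order, where there is no intrinsic smallness) rather than one $\eps$; this is precisely why the right-hand side of \eqref{ineq:EE:dt:stab} is larger than \eqref{ineq:prop:EE} by exactly the factor $\bar C_t$, and why the bootstrap closes only under the smallness assumption $B_\lambda\bar C_tB_z\eps\ll 1$.
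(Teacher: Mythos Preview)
Your overall architecture is right, but there is a genuine gap in the closure mechanism for $m\ge 1$, and the $m=0$ case is not really addressed.

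For $m\ge 1$ you claim the forcing leads to an ODE inequality with right-hand side $m\,\OO\big((1+\alpha)MB_z\bar C_t\eps^{3/2}\big)^q$. This cannot be correct: the $\eps^{3/2}$ mechanism in Proposition~\ref{prop:EE:infty} works because every forcing term there carries \emph{two} factors of $\eps$ (one from each of $\K,\Z$), and Young's inequality splits $\eps^2=\eps^{1/2}\cdot\eps^{3/2}$. In the linearized system the principal new forcing terms---$\p_\lambda\eta_x\,\p_t^{m+1}\K$, $\Sigma_\lambda\,\p_t^m\p_x\K$, and $\p_t^m\K\cdot\big(\tfrac12(\eta_x\W)_\lambda+m\,\p_\lambda\eta_{xt}\big)$---carry only \emph{one} factor of $\eps$, because $\p_\lambda\eta_x$, $\Sigma_\lambda$, $(\eta_x\W)_\lambda$, and $\p_\lambda\eta_{xt}$ at zeroth time order have no $\eps$-smallness (Proposition~\ref{prop:apriori:0:stab}, Proposition~\ref{prop:apriori:est:stab}). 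You correctly identify this in your last paragraph, but then the stated $\eps^{3/2}$ bound does not follow. The paper's actual closure is different: one shows that after normalization the forcing is $\lesssim (m+1)\,\bar C_t B_z\eps$ (using~\eqref{ineq:K_t:m+1:EE}), observes that $\bar C_t B_z<\tfrac{1}{200}A$ because $A>B_\lambda\bar C_t B_z$ with $B_\lambda>e^{31}$, applies Young's inequality with a fixed constant (no $\eps^{1/2}$ splitting) to obtain the forcing $(m+1)(\tfrac{1}{10}A\eps)^q$, and closes via $\tfrac{1}{10}A<\tfrac{4}{30}B_\lambda\bar C_t B_z<B_\lambda\bar C_t B_z$. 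The point is that the bootstrap constant $A$ itself appears in the forcing, and the closure is by the factor $\tfrac{1}{10}$, not by an extra half-power of $\eps$.

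Separately, your energies are only defined for $m\ge 1$, and your damping coefficient $-mq\max(1,\alpha)$ vanishes at $m=0$, so the $m=0$ bound is not proved. The paper treats $m=0$ by a separate argument with the weight $\Sigma^{-3q/\min(1,\alpha)}$ (not $\Sigma^0$) and a \emph{growth} Gr\"onwall inequality $\dot{\tilde\E}\le 15\max(1,\alpha)q\,\tilde\E+\ldots$, integrated over $t\le T_*$; this is precisely where the factors $9^{3/\min(1,\alpha)}$ and $e^{31}$ in $B_\lambda$ originate. Your parenthetical attribution of those factors to ``accumulated Gr\"onwall growth from $D_k$'' is off: $D_k$ supplies damping for $m\ge1$, while the exponential growth and the $\Sigma$-weight loss occur only in the $m=0$ step.
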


\begin{proof}
The proof of these estimates is analogous to the energy estimates in the proofs of Proposition \ref{prop:0thorder} and Proposition \ref{prop:EE:infty}. We will detail the keys steps of the proof and highlight the slight differences with the previous energy estimates. We proceed as before with a bootstrap argument. Pick a time $T \in [0, T_*)$ and constant $A$ satisfying
\begin{equation*}
B_\lambda \bar{C}_tB_z < A < \tfrac{4}{3}B_\lambda \bar{C}_tB_z,
\end{equation*}
and assume that
\begin{equation*}
\frac{(m+1)^2 \|\Sigma^{-\delta m} \p^{m}_t \K_\lambda\|_{L^\infty_x}}{m! M C^m_t} 
\leq A\eps, 
\qquad
\frac{(m+1)^2 \|\Sigma^{-\delta m} \p^{m}_t \Z_\lambda\|_{L^\infty_x}}{m! M C^m_t} 
\leq A\eps,
\end{equation*}
for all  $t \in [0,T], \: 0 \leq m \leq 2n$. Since $\delta$ and $\kappa$ satisfy \eqref{def:delta}, if $C_t(0)$ is chosen large enough and $\eps$ is chosen small enough such that they satisfy the hypotheses of Proposition \ref{prop:EE:infty}, we can conclude that \eqref{ineq:apriori:hyp:previous} holds.  Suppose that $B_\lambda \bar{C}_t B_z \eps$ is small enough that the hypotheses of Proposition \ref{prop:apriori:0:stab} are satisfied with our choice of $A$. If $(1+\alpha)B_z \eps$ is small enough, these bootstrap hypotheses now imply that we can apply the a priori estimates of \S~\ref{sec:apriori:stab} to our solution with our chosen values of $A$ and $M$. Just like in the proof of Proposition \ref{prop:EE:infty}, our hypotheses on $C_t(0), M$, and $\eps$ allow us to apply Proposition \ref{prop:t=0:stab} with $\C_t : = \frac{1}{2}2^{-\delta} C_t(0)$ and get
\begin{equation}\label{ineq:bsh:t=0:stab}
\begin{aligned}
\frac{(m+1)^2 \|\Sigma^{-\delta m} \p^m_t \K_\lambda\|_{L^\infty_x}}{m! M C_t(0)^m} & \leq \tfrac{1}{5}\tfrac{\eps}{2^m} , &
\frac{(m+1)^2 \|\Sigma^{-\delta m} \p^m_t \Z_\lambda\|_{L^\infty_x}}{m! MC_t(0)^m} & \leq \tfrac{1}{5}\tfrac{\eps}{2^m} ,
\end{aligned}
\end{equation}
at time $t=0$ for all $0 \leq m \leq 2n$.

For $m=0, \hdots, 2n$ define the energies
\begin{align*}
\tilde{\E}^{m,q}_k & : =\begin{cases} \int_\TT \Sigma^{-q\frac{3}{\min(1,\alpha)}} \eta_x | \K_\lambda|^q \: dx & m=0 \\ \int_\TT \Sigma^{-\delta q m} \eta_x |\p^m_t \K_\lambda|^q \: dx & 1 \leq m \leq 2n \end{cases} ,&
\tilde{\E}^{m,q}_z & : =\begin{cases} \int_\TT \Sigma^{-q\frac{3}{\min(1,\alpha)}} \eta_x | \Z_\lambda|^q \: dx & m=0 \\ \int_\TT \Sigma^{-\delta q m} \eta_x |\p^m_t \Z_\lambda|^q \: dx & 1 \leq m \leq 2n \end{cases} ,
\end{align*}
and for $1 \leq m \leq 2n$ define the energies
\begin{align*}
\tilde{E}^{m,q}_k & : = \frac{(m+1)^{2q} \tilde{\E}^{m,q}_k}{(m!)^q M^q C^{mq}_t},  &
\tilde{E}^{m,q}_z & : = \frac{(m+1)^{2q} \tilde{\E}^{m,q}_z}{(m!)^q M^q C^{mq}_t}  .
\end{align*}
We will prove \eqref{ineq:EE:dt:stab} for $t \in [0,T]$ separately in the case $m=0$ and the case $1 \leq m \leq 2n$, and since this strictly improves upon our bootstrap assumptions our bootstrap argument will be complete.

{\bf{Case 1 ($m=0$):}} We will use our bootstrap hypotheses and the estimates from \S~\ref{sec:HOE} to perform energy estimates on $\tilde{\E}^{0,q}_k, \tilde{\E}^{0,q}_z$ similar to those in the proof of Proposition \ref{prop:0thorder}. Since the parameter $\delta$ does not appear in $\tilde{\E}^{0,q}_k, \tilde{\E}^{0,q}_z$, let us abuse notation and define $\delta : = \frac{3}{\min(1,\alpha)}$ just for the $m=0$ case.

Just like in the proof of Proposition \ref{prop:0thorder}, taking the time derivative of $\tilde{\E}^{0,q}_k$ and using our identities \eqref{system:stab} gives us
\begin{align*}
\dot{\tilde{\E}}^{0,q}_k & = [ \tfrac{\alpha}{2}\delta q - \tfrac{q-1}{2} ]  \int \Sigma^{-\delta q}  |\K_\lambda|^q \eta_x \W +  [ \tfrac{\alpha}{2}\delta q - \tfrac{q-1}{2} ] \int \Sigma^{-\delta q} \eta_x |\K_\lambda|^q \Z
\\
& + q\int \Sigma^{-\delta q}  |\K_\lambda|^{q-1} \sgn(\K_\lambda)[ - \p_\lambda \eta_x \p_t \K + \alpha \p_\lambda \Sigma \p_x \K]
\\
& - q\int \Sigma^{-\delta q}  |\K_\lambda|^{q-1} \sgn(\K_\lambda) \tfrac{1}{2} \eta_x \K \Z_\lambda
\\
& - q\int \Sigma^{-\delta q}  |\K_\lambda|^{q-1} \sgn(\K_\lambda) \tfrac{1}{2}\K[(\eta_x \W)_\lambda + \p_\lambda \eta_x \Z],
\\
\dot{\tilde{\E}}^{0,q}_z & = [\alpha \delta q - \tfrac{1-\alpha}{2}(q-1)]\int \Sigma^{-\delta q}  |\Z_\lambda|^q \eta_x \W 
\\
& +  \int \Sigma^{-\delta q} \eta_x |\Z_\lambda|^q\big[ - \tfrac{1+\alpha}{2}(q-1) \Z + [\alpha \delta q + \alpha (\tfrac{q}{2}-1)] \tfrac{1}{2\gamma} \Sigma \K ]
\\
& + q\int \Sigma^{-\delta q} \eta_x |\Z_\lambda|^{q-1} \sgn(\Z_\lambda)[ -\p_\lambda \eta_x \p_t \Z + 2\alpha \p_\lambda \Sigma \p_x \Z]
\\
& + q\int \Sigma^{-\delta q} \eta_x |\Z_\lambda|^{q-1} \sgn(\Z_\lambda) \tfrac{\alpha}{4\gamma} \K_\lambda \Sigma( \eta_x \Z- \eta_x \W)
\\ 
& + q\int \Sigma^{-\delta q} \eta_x |\Z_\lambda|^{q-1} \sgn(\Z_\lambda) \Z [-\tfrac{1-\alpha}{2}(\eta_x \W)_\lambda - \tfrac{1+\alpha}{2} \p_\lambda \eta_x \Z] 
\\
& + q\int \Sigma^{-\delta q} \eta_x |\Z_\lambda|^{q-1} \sgn(\Z_\lambda) \tfrac{\alpha}{4\gamma} \K \Z (\p_\lambda \eta_x \Sigma + \eta_x \p_\lambda \Sigma).
\end{align*}
 The bounds \eqref{cor:apriori:KZ:dx} and \eqref{ineq:KZ:dt:new} tell us that
 \begin{equation}\label{ineq:KZ:C_t:bar}
 \begin{aligned}
 \| \p_x \K \|_{L^\infty_x} & \leq \tfrac{5}{2\alpha} \bar{C}_t B_k \eps, &  \| \p_x \Z \|_{L^\infty_x} & \leq \tfrac{5}{4\alpha} \bar{C}_t B_z \eps,
 \\
 \| \p_t \K \|_{L^\infty_x} & \leq \tfrac{1}{4}\bar{C}_tB_k \eps, &  \| \p_t \Z \|_{L^\infty_x} & \leq \tfrac{1}{4}\bar{C}_tB_z \eps.
\end{aligned}
\end{equation}
Therefore, applying \eqref{ineq:W} as in the proof of Proposition \ref{prop:0thorder} gives us
\begin{align*}
& \dot{\tilde{\E}}^{0,q}_k 
+ \bigg[ \tfrac{1}{2}[ \tfrac{\alpha}{2}\delta q - \tfrac{q-1}{2} ]  - (q-1)( \tfrac{B_k}{B_z}+ \tfrac{3}{2}B_k \eps) \bigg] \|\Sigma^{-\delta} \K_\lambda \|^q_{L^q_x} 
- \tfrac{3}{2}B_k \eps \|\Sigma^{-\delta} \Z_\lambda \|^q_{L^q_x}
\\
& \leq (4+B_z \eps) [ \tfrac{\alpha}{2}\delta q - \tfrac{q-1}{2} ] \tilde{\E}^{0,q}_k + \big( 3^\delta MB_z ( \tfrac{19}{36} \bar{C}_t + \tfrac{3}{8} + \tfrac{1}{2}B_z \eps) \eps \big)^q,
\\
 & \dot{\tilde{\E}}^{0,q}_z - \tfrac{\alpha}{\gamma}(1+ \tfrac{9}{4}B_z \eps) \|\Sigma^{-\delta} \K_\lambda\|^q_{L^q_x} + \bigg[ \tfrac{1}{2}[\alpha \delta q- \tfrac{q-1}{2}] - (q-1)[ \tfrac{\alpha}{\gamma}(1+ \tfrac{9}{4}B_z \eps) + \tfrac{1}{2}] \bigg] \|\Sigma^{-\delta} \Z_\lambda\|^q_{L^q_x}
 \\
 & \leq \big( 4 [\alpha \delta q - \tfrac{1-\alpha}{2}(q-1)] + \tfrac{1+\alpha}{2}(q-1)B_z \eps + \tfrac{3}{2\gamma} [\alpha \delta q + \alpha (\tfrac{q}{2}-1)] B_k \eps  \big) \tilde{\E}^{0,q}_z 
 \\
 & + \bigg(2 \cdot 3^\delta M B_z ( \tfrac{19}{36} \bar{C}_t + \tfrac{3|1-\alpha|}{8} + \tfrac{1+\alpha}{2} B_z \eps + \tfrac{5\alpha}{6 \gamma} B_k \eps) \eps \bigg)^q.
\end{align*}
Since $\bar{C}_t > e^5C_t(0) \gg 1+\alpha$, it follows that if $(1+\alpha) B_z \eps$ is small enough then
\begin{align*}
\tfrac{19}{36} \bar{C}_t + \tfrac{3}{8} + \tfrac{1}{2}B_z \eps & < \tfrac{2}{3}\bar{C}_t,
\\
2(\tfrac{19}{36} \bar{C}_t + \tfrac{3|1-\alpha|}{8} + \tfrac{1+\alpha}{2} B_z \eps + \tfrac{5\alpha}{6 \gamma} B_k \eps) & < \tfrac{4}{3}\bar{C}_t.
\end{align*}
Adding our inequalities together and using our choice $\delta = \frac{3}{\min(1,\alpha)}$ implies that 
\begin{align*}
\dot{\tilde{\E}}^{0,q}_k + \dot{\tilde{\E}}^{0,q}_z & \leq 15\max(1,\alpha) q(\tilde{\E}^{0,q}_k + \tilde{\E}^{0,q}_z) + (\tfrac{2}{3} 3^{\frac{3}{\min(1,\alpha)}} M \bar{C}_t B_z \eps)^q + (\tfrac{4}{3} 3^{\frac{3}{\min(1,\alpha)}} M \bar{C}_t B_z \eps)^q.
\end{align*}
ODE comparison now gives us
\begin{align*}
(\tilde{\E}^{0,q}_k(t) + \tilde{\E}^{0,q}_z(t)) & \leq \bigg[ (\tilde{\E}^{0,q}_k(0) + \tilde{\E}^{0,q}_z(0)) + \frac{(\tfrac{2}{3} 3^{\frac{3}{\min(1,\alpha)}} M \bar{C}_t B_z \eps)^q + (\tfrac{4}{3} 3^{\frac{3}{\min(1,\alpha)}} M \bar{C}_t B_z \eps)^q}{15 \max(1,\alpha) q} \bigg] e^{15 \max(1,\alpha) q t}
\end{align*}
for all $t \in [0, T]$. Taking $q$th roots of both sides, sending $q \rightarrow \infty$, and using \eqref{ineq:bsh:t=0:stab} and \eqref{ineq:T_*} now produces
\begin{align*}
\max( \|\Sigma^{-\frac{3}{\min(1,\alpha)}} \K_\lambda\|_{L^\infty_x}, \|\Sigma^{-\frac{3}{\min(1,\alpha)}} \Z_\lambda\|_{L^\infty_x}) \leq M \eps \bigg[\tfrac{2^{\frac{3}{\min(1,\alpha)}}}{5} + 2 \cdot  3^{\frac{3}{\min(1,\alpha)}} \bar{C}_t B_z \bigg] e^{31}
\end{align*}
for all $t \in [0,T]$. This gives us \eqref{ineq:EE:dt:stab} for $m=0$, $t\in [0,T]$.


{\bf{Case 2 ($1 \leq m \leq 2n$):}} Let $m \geq 1$. If $D_k = D_k(m,q,\alpha)$ is the same constant defined by \eqref{def:D_k}, then the same computations as in the proof of Proposition \ref{prop:EE:infty} give us 
\begin{align*}
\dot{\tilde{\E}}^{m,q}_k 
& = D_k \int \Sigma^{-\delta m q} |\p^m_t \K_\lambda|^q \eta_x \W 
+ (D_k + \alpha m q)\int \Sigma^{-\delta m q} \eta_x |\p^m_t \K_\lambda|^q \Z 
\\
&  - \tfrac{\alpha}{2\gamma} mq \int \Sigma^{-\delta m q} \eta_x |\p^m_t \K_\lambda|^q \Sigma \K 
- q \int \Sigma^{-\delta m q} |\p^m_t \K_\lambda|^{q-1} \sgn(\p^m_t \K_\lambda)  \p_\lambda \eta_x \p^{m+1}_t \K \\
& + q\alpha \int \Sigma^{-\delta m q} |\p^m_t \K_\lambda|^{q-1} \sgn(\p^m_t \K_\lambda) \Sigma_\lambda \p^m_t \p_x \K \\
& -q \int \Sigma^{-\delta m q} |\p^m_t \K_\lambda|^{q-1} \sgn(\p^m_t \K_\lambda) \p^m_t \K[\tfrac{1}{2} (\eta_x \W)_\lambda + m \p_\lambda \eta_{xt}] \\
& - q\int \Sigma^{-\delta m q} |\p^m_t \K_\lambda|^{q-1} \sgn(\p^m_t \K_\lambda) \p^m_t \K [ \tfrac{1}{2} \p_\lambda \eta_x \Z + \tfrac{1}{2} \eta_x \Z_\lambda] \\
&  - q\sum^m_{j=2} {m\choose{j}}\int \Sigma^{-\delta m q} |\p^m_t \K_\lambda|^{q-1} \sgn(\p^m_t \K_\lambda)[ \p^{j-1}_t \eta_{xt} \p^{m+1-j}_t \K_\lambda +  \p^{j-1}_t \p_\lambda \eta_{xt} \p^{m+1-j}_t \K] \\
& + \alpha q \sum^m_{j=1} {m\choose{j}}\int \Sigma^{-\delta m q} |\p^m_t \K_\lambda|^{q-1} \sgn(\p^m_t \K_\lambda) [\p^{j-1}_t \Sigma_t \p^{m-j}_t \p_x \K_\lambda + \p^{j-1}_t \p_\lambda \Sigma_t \p^{m-j}_t \p_x \K] \\
&   - \tfrac{1}{2}q  \sum^m_{j=1} {m\choose{j}} \int \Sigma^{-\delta m q} |\p^m_t \K_\lambda|^{q-1} \sgn(\p^m_t \K_\lambda) [\p_\lambda \eta_x \p^j_t \Z \p^{m-j}_t \K + \eta_x \p^j_t \Z_\lambda \p^{m-j}_t \K + \eta_x \p^j_t \Z \p^{m-j}_t \K_\lambda]\\
&  - \tfrac{1}{2}q  \sum^m_{j=1} {m\choose{j}} \int \Sigma^{-\delta m q} |\p^m_t \K_\lambda|^{q-1} \sgn(\p^m_t \K_\lambda )[\p^{j-1}_t \p_\lambda (\eta_x \W)_t \p^{m-j}_t \K + \p^{j-1}_t (\eta_x \W)_t \p^{m-j}_t \K_\lambda]  \\
&  - \tfrac{1}{2}q \sum_{\substack{j_1+j_2+j_3 = m \\ j_1 \geq 1}} {m\choose{j_1 j_2 j_3}} \int \Sigma^{-\delta m q} |\p^m_t \K_\lambda |^{q-1} \sgn(\p^m_t \K_\lambda ) \p^{j_1-1}_t \p_\lambda \eta_{xt} \p^{j_2}_t \K \p^{j_3}_t \Z  \\
&  - \tfrac{1}{2}q \sum_{\substack{j_1+j_2+j_3 = m \\ j_1 \geq 1}} {m\choose{j_1 j_2 j_3}} \int \Sigma^{-\delta m q} |\p^m_t \K_\lambda |^{q-1} \sgn(\p^m_t \K_\lambda )\p^{j_1-1}_t \eta_{xt} \p^{j_2}_t \K_\lambda \p^{j_3}_t \Z \\
& - \tfrac{1}{2}q \sum_{\substack{j_1+j_2+j_3 = m \\ j_1 \geq 1}} {m\choose{j_1 j_2 j_3}} \int \Sigma^{-\delta m q} |\p^m_t \K_\lambda |^{q-1} \sgn(\p^m_t \K_\lambda )\p^{j_1-1}_t \eta_{xt} \p^{j_2}_t \K \p^{j_3}_t \Z_\lambda.
\end{align*}
Just like in the proof of Proposition \ref{prop:EE:infty}, applying \eqref{ineq:W} to the first term of the righthand side, multiplying through by $(\frac{(m+1)^2}{m! M C^m_t})^q$, adding $-\kappa m q \tilde{E}^{m,q}_k$ to both sides, and using the formulas from \S~\ref{sec:appendix:freqcomp} gives us
\begin{align*}
&\dot{\tilde{E}}^{m,q}_k  + \tfrac{1}{2} D_k \bigg[ \tfrac{(m+1)^2 \|\Sigma^{-\delta m} \p^m_t \K_\lambda\|_{L^q_x}}{m! M C^m_t} \bigg]^q  
\notag\\
& \leq [-mq \kappa + (4+B_z\eps)D_k + \alpha m q(B_z +\tfrac{3}{2\gamma} B_k) )\eps ] \tilde{E}^{m,q}_k 
\notag \\
&\quad  + q \bigg[ \tfrac{(m+1)^2 \|\Sigma^{-\delta m} \p^m_t \K_\lambda\|_{L^q_x}}{m! MC^m_t} \bigg]^{q-1}  \bigg[ \tfrac{(m+1)^2 \|\Sigma^{-\delta m} \p^{m+1}_t \K\|_{L^q_x}}{m! C^m_t} \bigg] 
\notag \\
&\quad + \tfrac{\alpha}{9} q \bigg[ \tfrac{(m+1)^2 \|\Sigma^{-\delta m} \p^m_t \K_\lambda\|_{L^q_x}}{m!M C^m_t} \bigg]^{q-1}  \bigg[ \tfrac{(m+1)^2 \|\Sigma^{-\delta m} \p^m_t \p_x \K\|_{L^q_x}}{m! C^m_t} \bigg] 
\notag \\
&\quad + q \bigg[ \tfrac{(m+1)^2 \|\Sigma^{-\delta m} \p^m_t \K_\lambda\|_{L^q_x}}{m! MC^m_t} \bigg]^{q-1} (\tfrac{3}{8}+ m\tfrac{1+\alpha}{2})B_k \eps 
\notag \\
&\quad + q \bigg[ \tfrac{(m+1)^2 \|\Sigma^{-\delta m} \p^m_t \K_\lambda\|_{L^q_x}}{m! MC^m_t} \bigg]^{q-1} \tfrac{15}{2}(1+\alpha)AB_z\eps^2 \sum^m_{j=2} \tfrac{(m+1)^2(m+1-j)}{j^3(m+2-j)^2} \notag \\
&\quad + q \bigg[ \tfrac{(m+1)^2 \|\Sigma^{-\delta m} \p^m_t \K_\lambda\|_{L^q_x}}{m! MC^m_t} \bigg]^{q-1} 40\alpha(1+ 2\tfrac{B_k}{B_z})AB_z\eps^2 \sum^m_{j=1} \tfrac{(m+1)^2(m+1-j)}{j^3(m+2-j)^2} \notag \\
&\quad + q \bigg[ \tfrac{(m+1)^2 \|\Sigma^{-\delta m} \p^m_t \K_\lambda\|_{L^q_x}}{m! MC^m_t} \bigg]^{q-1} \OO((1+\alpha)A B_z\eps^2) \\
&\quad + q \bigg[ \tfrac{(m+1)^2 \|\Sigma^{-\delta m} \p^m_t \K_\lambda\|_{L^q_x}}{m! MC^m_t} \bigg]^{q-1} \OO((1+\alpha) B^2_z\eps^2). 
\end{align*}
Since $m \leq 2n$, Corollary \ref{cor:apriori:est} and \eqref{ineq:C_t:bar} imply
\begin{equation} \label{ineq:K_t:m+1:EE}
\begin{aligned}
\tfrac{(m+1)^2 \|\Sigma^{-\delta m} \p^m_t \p_x \K\|_{L^\infty_x}}{m! C^m_t} 
& \leq \tfrac{10}{\alpha}\tfrac{(m+1)^3}{(m+2)^2} 3^\delta C_tB_k \eps 
\leq \tfrac{10}{\alpha}(m+1) \bar{C}_tB_k \eps, \\
\tfrac{(m+1)^2 \|\Sigma^{-\delta m} \p^{m+1}_t \K\|_{L^\infty_x}}{m! C^m_t} & \leq \tfrac{(m+1)^3}{(m+2)^2} 3^\delta C_tB_k \eps 
\leq (m+1) \bar{C}_tB_k \eps.
\end{aligned}
\end{equation}
Therefore, using \eqref{ineq:sum:10m} and the bounds from \S~\ref{sec:appendix:freqcomp} yields 
\begin{align*}
&\dot{\tilde{E}}^{m,q}_k  + \tfrac{1}{2} D_k \bigg[ \tfrac{(m+1)^2 \|\Sigma^{-\delta m} \p^m_t \K_\lambda\|_{L^q_x}}{m! M C^m_t} \bigg]^q  
\notag\\
& \leq [-mq \kappa + (4+B_z\eps)D_k + \alpha m q(B_z +\tfrac{3}{2\gamma} B_k) )\eps ] \tilde{E}^{m,q}_k 
\notag \\
&\quad  + (m+1)q \bigg[ \tfrac{(m+1)^2 \|\Sigma^{-\delta m} \p^m_t \K_\lambda\|_{L^q_x}}{m! MC^m_t} \bigg]^{q-1} [\tfrac{19}{9}\bar{C}_t + \tfrac{3}{8(m+1)}+ \tfrac{m}{m+1}\tfrac{1+\alpha}{2}(1+9A\eps) ] B_k \eps
\notag \\
&\quad  + mq \bigg[ \tfrac{(m+1)^2 \|\Sigma^{-\delta m} \p^m_t \K_\lambda\|_{L^q_x}}{m! MC^m_t} \bigg]^{q-1} (1+\alpha) AB_z\eps^2(75 + 440(1+2\tfrac{B_k}{B_z}) \tfrac{\alpha}{1+\alpha})
\notag \\
&\quad + q \bigg[ \tfrac{(m+1)^2 \|\Sigma^{-\delta m} \p^m_t \K_\lambda\|_{L^q_x}}{m! MC^m_t} \bigg]^{q-1} \OO((1+\alpha)A B_z\eps^2) 
\notag \\
&\quad + q \bigg[ \tfrac{(m+1)^2 \|\Sigma^{-\delta m} \p^m_t \K_\lambda\|_{L^q_x}}{m! MC^m_t} \bigg]^{q-1} \OO((1+\alpha) B^2_z\eps^2). 
\end{align*}
Since $A  > B_\lambda B_z \bar{C}_t$  and $B_\lambda > e^{31}$, it follows that if $(1+\alpha) B_z \eps$ is small enough we have
 \begin{equation*}
 [\tfrac{19}{9}\bar{C}_t + \frac{3}{8(m+1)}+ \tfrac{m}{m+1}\tfrac{1+\alpha}{2}(1+9A\eps) ]B_z < [\tfrac{19}{9}\bar{C}_t + \frac{3}{16}+ \tfrac{1+\alpha}{2} ]B_z+\tfrac{1}{200}A < \frac{1}{100} A.
 \end{equation*}
Therefore if $(1+\alpha)B_z \eps$ is small enough we have
\begin{align*}
&\dot{\tilde{E}}^{m,q}_k  + \tfrac{1}{2} D_k \bigg[ \tfrac{(m+1)^2 \|\Sigma^{-\delta m} \p^m_t \K_\lambda\|_{L^q_x}}{m!  C^m_t} \bigg]^q  
\notag\\
& \leq [-mq \kappa + (4+B_z\eps)D_k + \alpha m q(B_z +\tfrac{3}{2\gamma} B_k) )\eps ] \tilde{E}^{m,q}_k 
\notag \\
&\quad  + (m+1)q \bigg[ \tfrac{(m+1)^2 \|\Sigma^{-\delta m} \p^m_t \K_\lambda\|_{L^q_x}}{m! C^m_t} \bigg]^{q-1} \tfrac{1}{100}A.
\end{align*}
It now follows from the upper bound \eqref{ineq:D_k} on $D_k$ and the definition of $\kappa$ that if $B_z \eps$ is sufficiently small we get
\begin{align}
& \dot{\tilde{E}}^{m,q}_k 
+ \bigg[ \tfrac{1}{2} D_k - \tfrac{1}{10}(q-1)(m+1)]\bigg] \bigg[ \tfrac{(m+1)^2 \|\Sigma^{-\delta m} \p^m_t \K_\lambda\|_{L^q_x}}{m! MC^m_t} \bigg]^{q}  
\notag \\
& \leq -mq\max(1,\alpha) \tilde{E}^{m,q}_k + (m+1) \big(\tfrac{A}{10}\big)^q.
\label{ineq:energy:K:n:stab}
\end{align}

Analogous computations for $\tilde{\E}^{m,q}_z$ give us
\begin{align*}
\dot{\tilde{\E}}^{m,q}_z
& = D_z \int \Sigma^{-\delta m q} |\p^m_t \Z_\lambda|^q \eta_x \W \\
& + [\tfrac{1+\alpha}{2} -q(\tfrac{1-\alpha}{2} m + 1 + \alpha)] \int \Sigma^{-\delta m q} \eta_x |\p^m_t \Z_\lambda |^q \Z \\
& + \tfrac{\alpha}{2\gamma}[-1 + q( \tfrac{1}{2} + m(\delta-1)] \int \Sigma^{-\delta m q} \eta_x |\p^m_t \Z_\lambda|^q \Sigma \K \\
& - \tfrac{\alpha}{4\gamma} q \int \Sigma^{-\delta m q} |\p^m_t \Z_\lambda|^{q-1} \sgn(\p^m_t \Z_\lambda) \Sigma (\eta_x \W) \p^m_t \K_\lambda \\
& - q \int \Sigma^{-\delta m q} |\p^m_t \Z_\lambda|^{q-1} \sgn(\p^m_t \Z_\lambda) \p_\lambda \eta_x \p^{m+1}_t \Z \\
& + 2\alpha q \int \Sigma^{-\delta m q} |\p^m_t \Z_\lambda|^{q-1} \sgn(\p^m_t \Z_\lambda) \Sigma_\lambda \p^m_t \p_x \Z \\
& -m q \int \Sigma^{-\delta m q} |\p^m_t \Z_\lambda|^{q-1} \sgn(\p^m_t \Z_\lambda) \p_\lambda \eta_{xt} \p^m_t \Z \\
& - \tfrac{\alpha}{4\gamma} q \int \Sigma^{-\delta m q} |\p^m_t \Z_\lambda|^{q-1} \sgn(\p^m_t \Z_\lambda)  \p^m_t \K [ \Sigma_\lambda \eta_x \W + \Sigma (\eta_x \W)_\lambda ] \\
& -\tfrac{1-\alpha}{2} q \int \Sigma^{-\delta m q} |\p^m_t \Z_\lambda|^{q-1} \sgn(\p^m_t \Z_\lambda) (\eta_x \W)_\lambda \p^m_t \Z \\
& - q \sum^m_{j=2} {m\choose{j}}\int \Sigma^{-\delta m q} |\p^m_t \Z_\lambda|^{q-1} \sgn(\p^m_t \Z_\lambda) [ \p^{j-1}_t \p_\lambda \eta_{xt} \p^{m+1-j}_t \Z + \p^{j-1}_t \eta_{xt} \p^{m+1-j}_t \Z_\lambda] \\
& +2\alpha q \sum^m_{j=2} {m\choose{j}}\int \Sigma^{-\delta m q} |\p^m_t \Z_\lambda|^{q-1} \sgn(\p^m_t \Z_\lambda) [\p^{j-1}_t \Sigma_t \p^{m-j}_t \p_x \Z + \p^{j-1}_t \p_\lambda \Sigma_t \p^{m-j}_t \p_x \Z_\lambda] \\
& + (\hdots) 
\end{align*}
where $D_z$ is the same constant defined in \eqref{def:D_z} and the remaining terms referred to as ``$\hdots$'' are bounded  via
\begin{equation*}
\bigg[ \frac{(m+1)^2}{m! M C^m_t}\bigg]^q |(\hdots)| \leq q \bigg[ \frac{(m+1)^2 \|\Sigma^{-\delta m} \p^m_t \Z_\lambda\|_{L^q_x}}{m! MC^m_t} \bigg]^{q-1} \OO( (1+ \alpha)(A+B_z)B_z \eps^2).
\end{equation*}
Therefore, performing the same types of computations done for $\tilde{E}^{m,q}_k$ gives us
\begin{align}
& \dot{\tilde{E}}^{m,q}_z  - \tfrac{\alpha}{\gamma}\bigg[ \tfrac{(m+1)^2 \|\Sigma^{-\delta m} \p^m_t \K_\lambda\|_{L^q_x}}{m! M C^m_t} \bigg]^{q} 
\notag \\
&\quad + \bigg[\tfrac{1}{2} D_z - (q-1)[(m+1)(\tfrac{1}{10}+\tfrac{\alpha}{\gamma}) \bigg] \bigg[ \tfrac{(m+1)^2 \|\Sigma^{-\delta m} \p^m_t \Z_\lambda\|_{L^q_x}}{m! MC^m_t} \bigg]^{q} \notag \\
& \leq -mq\max(1,\alpha) \tilde{E}^{m,q}_z + (m+1)(\tfrac{1}{10}A \eps)^q.
\label{ineq:energy:Z:n:stab}
\end{align}
Using our bounds \eqref{ineq:D_k} and \eqref{ineq:D_z}, we get 
\begin{align*}
\tfrac{1}{2}D_k & > \tfrac{\alpha}{\gamma} + \tfrac{1}{10}(q-1)(m+1), \\
\tfrac{1}{2}D_z & > (q-1)(m+1)(\tfrac{1}{10}+\tfrac{\alpha}{\gamma}),
\end{align*}
so we can add \eqref{ineq:energy:K:n:stab} and \eqref{ineq:energy:Z:n:stab} together to arrive at
\begin{align*}
\dot{\tilde{E}}^{m,q}_k + \dot{\tilde{E}}^{m,q}_z & \leq -mq \max(1,\alpha) ( \tilde{E}^{m,q}_k + \tilde{E}^{m,q}_z) + 2(m+1)(\tfrac{1}{10}A\eps)^q.
\end{align*}
Using ODE comparison (see  Lemma \ref{lem:ODE}), raising both sides to the $\frac{1}{q}$, applying \eqref{ineq:bsh:t=0:stab}, and sending $ q\rightarrow \infty$ gives us
\begin{align*}
\max\bigg( \tfrac{(m+1)^2 \|\Sigma^{-\delta m} \p^m_t \K_\lambda\|_{L^\infty_x}}{m! M C^m_t}, \tfrac{(m+1)^2 \|\Sigma^{-\delta m} \p^m_t \Z_\lambda\|_{L^\infty_x}}{m!  MC^m_t} \bigg) 
& \leq \max\bigg( \tfrac{1}{5}\tfrac{\eps}{2^m}, \tfrac{1}{10}A \eps \bigg) = \tfrac{1}{10}A\eps.
\end{align*}
Since $A  < \frac{4}{3}\bar{C}_t B_\lambda B_z$, we conclude that \eqref{ineq:EE:dt:stab} is true for $t \in [0,T]$. This closes our bootstrap argument.\end{proof}

\subsection{Bounds on the remaining derivatives}
\label{sec:ineq:dx:stab}
The results of this section are all analogs of the results of \S~\ref{sec:ineq:dx}. Their proofs are also completely analogous. For this reason, they will be omitted.

\begin{proposition}\label{prop:ineq:dx:stab}
Suppose the hypotheses of Proposition \ref{prop:EE:stab} are satisfied, and additionally $\bar{C}_x$ is a constant satisfying
\begin{equation*}
\bar{C}_x \gg 1,
\qquad
\alpha\bar{C}_x  \gg \bar{C}_t.
\end{equation*}
Under these conditions, if $\eps$ is small enough such that
\begin{equation*}
(1+\alpha) B_\lambda \bar{C}_t B_z \eps \ll 1,
\end{equation*}
then the following bounds hold for each $t \in [0,T_*)$ and each $\beta$ with $|\beta| \leq 2n$: 
\begin{align*}
&\frac{(|\beta|+1)^2 \|\p^\beta \K_\lambda\|_{L^\infty_x}}{|\beta|! M \bar{C}^{\beta_x}_x \bar{C}^{\beta_t}_t} \leq B_\lambda \bar{C}_tB_z \eps, 
&&\frac{(|\beta|+1)^2 \|\p^\beta \Z_\lambda\|_{L^\infty_x}}{|\beta|! M \bar{C}^{\beta_x}_x \bar{C}^{\beta_t}_t} \leq B_\lambda \bar{C}_tB_z \eps, \\
&\frac{(|\beta|+1)^2 \|\p^\beta \p_\lambda \Sigma_t\|_{L^\infty_x}}{|\beta|! M \bar{C}^{\beta_x}_x \bar{C}^{\beta_t}_t} \leq 8 \alpha B_\lambda \bar{C}_tB_z \eps,
&&  \\
&\frac{(|\beta|+1)^2 \|\p^\beta \p_\lambda \eta_x\|_{L^\infty_x}}{|\beta|! M \bar{C}^{\beta_x}_x \bar{C}^{\beta_t}_t}  \leq 1, 
&&\frac{(|\beta|+1)^2 \|\p^\beta (\eta_x \W)_\lambda \|_{L^\infty_x}}{|\beta|! M \bar{C}^{\beta_x}_x \bar{C}^{\beta_t}_t} \leq  \tfrac{3}{4}, \\
&\frac{(|\beta|+1)^2 \|\p^\beta \p_\lambda \Sigma_x\|_{L^\infty_x}}{|\beta|! M \bar{C}^{\beta_x}_x \bar{C}^{\beta_t}_t} \leq 1.
&&
\end{align*}
\end{proposition}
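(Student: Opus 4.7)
The plan is to mirror the proof of Proposition~\ref{prop:ineq:dx} exactly, performing an induction on $\beta_x$ and using the $\p_\lambda$-differentiated system \eqref{system:stab} in place of \eqref{system:lagrange}. At each stage of the induction, every term appearing in the multivariate Leibniz expansion falls into one of two categories: terms involving $\p^\gamma$ of a stability quantity (namely $\K_\lambda, \Z_\lambda, \p_\lambda \eta_x, (\eta_x \W)_\lambda, \p_\lambda \Sigma_t, \p_\lambda \Sigma_x$), which are controlled either by the inductive hypothesis or by the time-derivative bounds of Proposition~\ref{prop:EE:stab}; and terms involving $\p^\gamma$ of a background quantity ($\K, \Z, \eta_x, \eta_x \W, \Sigma_t, \Sigma_x$, etc.), which are controlled by the already-established Proposition~\ref{prop:ineq:dx} and Corollary~\ref{cor:ineq:dx}. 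The combinatorial bookkeeping is the same as in \S~\ref{sec:ineq:dx}, so the factorial inequalities from \S~\ref{sec:appendix:freqcomp} apply verbatim.

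For the base case $\beta_x = 0$, the bounds on $\p^m_t \K_\lambda$ and $\p^m_t \Z_\lambda$ come directly from Proposition~\ref{prop:EE:stab}, after absorbing the factor $3^\delta$ that arises when passing from $C_t$ to $\bar{C}_t$ via \eqref{ineq:C_t:bar} (the same step used in the transition from \eqref{ineq:prop:EE} to \eqref{ineq:KZ:dt:new}). The remaining $\beta_x = 0$ inequalities follow at once from Proposition~\ref{prop:apriori:est:stab} and Corollary~\ref{cor:apriori:est:stab}, applied with $A = B_\lambda \bar{C}_t B_z$, $\delta = 0$, and $C_t$ replaced by $\bar{C}_t$. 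The hypotheses of those results are ensured by $\alpha \bar{C}_x \gg \bar{C}_t \gg (1+\alpha)3^\delta$ together with the smallness of $(1+\alpha) B_\lambda \bar{C}_t B_z \eps$.

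For the inductive step, fix $0 \leq m \leq 2n-1$ and assume the proposition for all $\beta$ with $\beta_x \leq m$. I would first establish the bounds on $\p^{\beta+e_x}\K_\lambda$ and $\p^{\beta+e_x}\Z_\lambda$ by applying $\p^\beta$ to the identities \eqref{id:K_x:stab} and \eqref{id:Z_x:stab}, expanding via Lemma~\ref{lem:id:multi}, and treating the resulting sums with the inductive hypothesis and the base-case estimates for the stability factors, Proposition~\ref{prop:ineq:dx} for the background factors, and \S~\ref{sec:appendix:freqcomp} for the frequency-comparison sums. As in the original proof, the dominant contribution comes from $\Sigma^{-1}\eta_x \p_t \K_\lambda$ (respectively $\Sigma^{-1}\eta_x \p_t \Z_\lambda$), which produces a factor $\bar{C}_t / \bar{C}_x$ on the right-hand side and is the reason for the assumption $\alpha \bar{C}_x \gg \bar{C}_t$. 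Next, the bound on $\p^{\beta+e_x}\p_\lambda \Sigma_t$ comes from $\p^{\beta+e_x}$ of \eqref{id:Sigma_t:stab}; the bounds on $\p^{\beta+e_x}\p_\lambda \eta_x$ and $\p^{\beta+e_x}(\eta_x\W)_\lambda$ are obtained by a bootstrap in time, integrating \eqref{id:eta_xt:stab} and \eqref{id:W_t:stab} (and using \eqref{id:t=0:stab} plus Proposition~\ref{prop:t=0:stab} at $t=0$); and finally $\p^{\beta+e_x}\p_\lambda \Sigma_x$ follows directly from \eqref{id:Sigma_x:stab}.

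The principal new difficulty, compared with Proposition~\ref{prop:ineq:dx}, is that the right-hand sides of \eqref{system:stab} contain quadratic, cubic, and even quartic mixtures of stability and background quantities (see e.g.~the last line of \eqref{id:Z_x:stab}), so each Leibniz term is a product of up to four factors, exactly one of which is a stability object and the remainder background objects. Verifying that the associated multi-index factorials combine into the canonical shapes $\sum_{j_1+\cdots+j_r=|\beta|} \tfrac{(|\beta|+2)^2 \prod j_i!}{(|\beta|+1)! \prod (j_i+1)^2}$ tabulated in \S~\ref{sec:appendix:freqcomp} is the main bookkeeping burden; once this is in place the frequency sums close exactly as before. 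The smallness hypothesis $(1+\alpha) B_\lambda \bar{C}_t B_z \eps \ll 1$ is precisely what is required to absorb all error terms, now carrying a prefactor $M$, into the main terms after normalizing by $M$, so that the constants on the right-hand side of each claimed inequality can be beaten by the constant on the corresponding left-hand side.
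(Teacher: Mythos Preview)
Your proposal is correct and is exactly the approach the paper intends: the authors omit the proof entirely, stating that it is ``completely analogous'' to that of Proposition~\ref{prop:ineq:dx}, and your outline reproduces precisely that analogy---induction on $\beta_x$, base case from Proposition~\ref{prop:EE:stab} together with the a~priori estimates of \S\ref{sec:apriori:stab}, inductive step via \eqref{id:K_x:stab}--\eqref{id:Z_x:stab} and a bootstrap for $\p_\lambda\eta_x$ and $(\eta_x\W)_\lambda$, with Proposition~\ref{prop:ineq:dx} supplying the background factors.
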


\begin{corollary}\label{cor:ineq:dx:stab}
Under the hypotheses of the previous proposition, we also have that
\begin{align*}
\frac{(|\beta|+1)^2 \|\p^\beta \Sigma_\lambda\|_{L^\infty_x}}{|\beta|! M\bar{C}^{\beta_x}_x \bar{C}^{\beta_t}_t} & \leq \tfrac{1}{9},\\
\frac{(|\beta|+1)^2 \|\p^\beta \p_\lambda (\eta_x \W)_t\|_{L^\infty_x}}{|\beta|! M\bar{C}^{\beta_x}_x \bar{C}^{\beta_t}_t} & \leq  B_\lambda\bar{C}_tB_z \eps, \\
\frac{(|\beta|+1)^2 \|\p^\beta \p_\lambda \eta_{xt}- \tfrac{1+\alpha}{2} \p^\beta (\eta_x \W)_\lambda\|_{L^\infty_x}}{|\beta|! M\bar{C}^{\beta_x}_x \bar{C}^{\beta_t}_t} & \leq 185\max(1,\alpha) B_\lambda \bar{C}_tB_z \eps.
\end{align*}
for all $t \in [0,T_*)$ and $\beta$ with $|\beta| \leq 2n$.
\end{corollary}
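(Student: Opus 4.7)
The plan is to closely mirror the proof of Corollary \ref{cor:ineq:dx}, substituting the $\lambda$-differentiated estimates from Proposition \ref{prop:ineq:dx:stab} (together with the corresponding bounds from Proposition \ref{prop:ineq:dx} for the unperturbed quantities) in place of the a priori bounds of \S~\ref{sec:ineq:dx}. All three inequalities reduce to bookkeeping: take $\p^\beta$ of the appropriate identity from \eqref{system:stab}, distribute with the Leibniz rule, and apply the frequency-cascade bounds collected in \S~\ref{sec:appendix:freqcomp}.

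For the first inequality, I split according to the structure of $\beta$. When $\beta=0$, this is precisely the bound $\|\Sigma_\lambda\|_{L^\infty_x} \leq M/9$ already established in Proposition \ref{prop:apriori:0:stab}. When $\beta_x \geq 1$, write $\p^\beta \Sigma_\lambda = \p^{\beta-e_x}\p_\lambda \Sigma_x$ and invoke the bound on $\p^{\beta-e_x}\p_\lambda \Sigma_x$ from Proposition \ref{prop:ineq:dx:stab}; the loss in the combinatorial prefactor when shifting $|\beta|\to|\beta|-1$ is absorbed into the slack between $1$ and $\tfrac{1}{9}$ once $\bar{C}_x \gg 1$. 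When $\beta_x=0$ and $\beta_t \geq 1$, write instead $\p^\beta \Sigma_\lambda = \p^{\beta-e_t} \p_\lambda \Sigma_t$ and use the corresponding bound from Proposition \ref{prop:ineq:dx:stab}, now losing a factor of $\bar{C}_t$ in the combinatorial prefactor but gaining the factor $8\alpha B_\lambda \bar{C}_t B_z \eps \ll 1/\bar{C}_t$ from the smallness hypothesis $(1+\alpha) B_\lambda \bar{C}_t B_z \eps \ll 1$.

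For the second inequality, apply $\p^\beta$ to \eqref{id:W_t:stab}. The resulting sum has terms of two types: (a) products containing exactly one $\lambda$-derivative (on one of $\Sigma$, $\K$, $\eta_x\W$, $\p_\lambda\eta_x$, or $\Z_\lambda$), multiplied by two or three copies of the unperturbed quantities $\Sigma, \K, \eta_x\W, \eta_x\Z$; and (b) purely unperturbed triple products multiplied by a $\lambda$-derivative factor. For each such term, one factor is bounded by Proposition \ref{prop:ineq:dx:stab} (or by the first inequality just proved), while the remaining factors are bounded by Proposition \ref{prop:ineq:dx} and Corollary \ref{cor:ineq:dx}. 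The resulting multi-index sums are all of the forms $\sum \tfrac{(|\beta|+1)^2}{(j_1+1)^2(j_2+1)^2(j_3+1)^2}$ etc., and are controlled by the inequalities in \S~\ref{sec:appendix:freqcomp}; the dominant contribution comes from the term $\tfrac{\alpha}{4\gamma} \Sigma \K (\eta_x\W)_\lambda$, which produces the $B_\lambda \bar{C}_t B_z \eps$ scaling, and all remaining terms are smaller by a factor of $B_z \eps$ or $1/\bar{C}_x$.

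For the third inequality, apply $\p^\beta$ to \eqref{id:eta_xt:stab} and then cancel the leading term $\tfrac{1+\alpha}{2}\p^\beta(\eta_x\W)_\lambda$. What remains consists of the terms $\tfrac{1-\alpha}{2}\eta_x \Z_\lambda$, $\tfrac{\alpha}{2\gamma}\eta_x\Sigma \K_\lambda$, $\tfrac{\alpha}{2\gamma}\eta_x \Sigma_\lambda \K$, and $\p_\lambda \eta_x(\tfrac{1-\alpha}{2}\Z + \tfrac{\alpha}{2\gamma}\Sigma\K)$, each of which carries at least one factor that is $\OO(B_z\eps)$ (either from a $\lambda$-derivative of $\Z$ or $\K$, which is $\OO(B_\lambda \bar{C}_t B_z \eps)$, or from the factor $\Z$ or $\K$ itself). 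After distributing $\p^\beta$ using Leibniz and applying the same combinatorial bounds as in Corollary \ref{cor:ineq:dx}, one verifies that each summand contributes at most $\OO(\max(1,\alpha) B_\lambda \bar{C}_t B_z \eps)$ to the weighted norm. A direct computation, analogous to the one establishing the $20\max(1,\alpha)$ constant in \eqref{ineq:eta_xt:dx}, but with the $B_\lambda \bar{C}_t$ multiplier inherited from $\Z_\lambda$ and $\K_\lambda$, yields the stated constant $185$.

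The main technical obstacle is purely combinatorial: confirming that the various multi-index convolution sums arising from the three- and four-fold Leibniz expansions fit within the frequency-cascade bounds of \S~\ref{sec:appendix:freqcomp} with the stated constants. This is identical in spirit to the analogous task in Corollary \ref{cor:ineq:dx}, and requires no new ideas beyond the smallness hypothesis $(1+\alpha) B_\lambda \bar{C}_t B_z \eps \ll 1$, which ensures that cross terms involving two $\lambda$-derivatives (or one $\lambda$-derivative together with a factor of $\Z$ or $\K$) are subdominant.
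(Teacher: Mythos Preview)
Your proposal is correct and follows exactly the approach the paper itself takes: the paper explicitly states that the proofs in \S\ref{sec:ineq:dx:stab} are ``completely analogous'' to those in \S\ref{sec:ineq:dx} and omits them, and you have spelled out precisely this analogy for Corollary~\ref{cor:ineq:dx:stab} mirroring Corollary~\ref{cor:ineq:dx}. One very minor slip: in the second inequality the dominant contribution actually comes from $\tfrac{\alpha}{4\gamma}\Sigma\K_\lambda(\eta_x\W+\eta_x\Z)$ (the factor $\K_\lambda\sim MB_\lambda\bar{C}_tB_z\eps$ is what produces the $B_\lambda\bar{C}_tB_z\eps$ scaling), not $\tfrac{\alpha}{4\gamma}\Sigma\K(\eta_x\W)_\lambda$ (which only gives $\sim MB_k\eps$); also, there are no ``cross terms with two $\lambda$-derivatives'' since \eqref{system:stab} is linear in the $\lambda$-differentiated unknowns---but neither point affects the validity of your argument.
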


\begin{corollary}\label{cor:eta_x:approx:stab}
Under the hypotheses of the previous propositions, we have that
\begin{subequations}\label{approx:eta_x:stab}
\begin{align}
\|\p^i_x \p_\lambda \eta_{xt} - \tfrac{1+\alpha}{2} \p^{i+1}_x \tilde{w}_0\|_{L^\infty_x} & \leq 190\max(1,\alpha)  \tfrac{i!}{(i+1)^2} M \bar{C}^i_x \bar{C}_t B_\lambda B_z \eps,
\\
\|\p^i_x \p_\lambda \eta_x - \tfrac{1+\alpha}{2} t\p^{i+1}_x \tilde{w}_0\|_{L^\infty_x} & \leq 190\max(1,\alpha) t \tfrac{i!}{(i+1)^2} M \bar{C}^i_x \bar{C}_t B_\lambda B_z \eps
\end{align}
\end{subequations}
 for all $t \in [0, T_*), i= 0, \hdots, 2n$. 
\end{corollary}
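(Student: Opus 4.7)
The plan is to mimic the proof of Corollary~\ref{cor:eta_x:approx} almost verbatim, with $\bar{w}_0$, $w_0$, $\eta_x$, $\eta_{xt}$, $(\eta_x\W)$, and the size parameter $B_z$ replaced by $\tilde{w}_0$, $\tilde{w}_0$, $\p_\lambda\eta_x$, $\p_\lambda\eta_{xt}$, $(\eta_x\W)_\lambda$, and $M B_\lambda \bar{C}_t B_z$, respectively.

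\textbf{Step 1: Derive the key algebraic identity.} Recall from~\eqref{id:t=0:stab} that at $t=0$ we have $(\eta_x\W)_\lambda = \tilde{w}_0^\prime - \tfrac{1}{4\gamma}\tilde{w}_0 k_0^\prime$. Integrating \eqref{id:W_t:stab} in time then gives
\begin{equation*}
(\eta_x\W)_\lambda(x,t) = \tilde{w}_0^\prime(x) - \tfrac{1}{4\gamma}\tilde{w}_0(x) k_0^\prime(x) + \int_0^t \p_\lambda(\eta_x\W)_t(x,s)\, ds.
\end{equation*}
Subtracting $\tfrac{1+\alpha}{2}\p^{i+1}_x\tilde{w}_0$ from $\p^i_x \p_\lambda\eta_{xt}$ and inserting $\pm \tfrac{1+\alpha}{2}\p^i_x(\eta_x\W)_\lambda$ yields the decomposition
\begin{equation*}
\p^i_x\p_\lambda\eta_{xt} - \tfrac{1+\alpha}{2}\p^{i+1}_x\tilde{w}_0
= \bigl[\p^i_x\p_\lambda\eta_{xt} - \tfrac{1+\alpha}{2}\p^i_x(\eta_x\W)_\lambda\bigr]
- \tfrac{1+\alpha}{8\gamma}\p^i_x(\tilde{w}_0 k_0^\prime)
+ \tfrac{1+\alpha}{2}\int_0^t \p^i_x \p_\lambda(\eta_x\W)_t(\cdot,s)\, ds.
\end{equation*}

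\textbf{Step 2: Estimate each piece.} The first bracket is controlled directly by Corollary~\ref{cor:ineq:dx:stab}, giving the bound $185\max(1,\alpha)\tfrac{i!}{(i+1)^2} M \bar{C}_x^i \bar{C}_t B_\lambda B_z \eps$. For the middle term, I apply the Leibniz formula and the initial-data bounds on $\tilde{w}_0$ and $k_0^\prime$ from the beginning of \S~\ref{sec:stab} and \eqref{data:9}; using $\bar{C}_x \geq 6^\delta C_0$ (which holds under the standing hypotheses since $\bar{C}_x \gg \bar{C}_t \gg 6^\delta C_0$), the exact same summation bookkeeping performed in the proof of Corollary~\ref{cor:eta_x:approx} on $\p^i_x(\sigma_0 k_0^\prime)$ yields $\tfrac{(i+1)^2\|\p^i_x(\tilde{w}_0 k_0^\prime)\|_{L^\infty_x}}{i!\bar{C}_x^i} \les L\eps$. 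The integral term is estimated by Corollary~\ref{cor:ineq:dx:stab} pointwise in $s$, giving $\tfrac{i!}{(i+1)^2} M \bar{C}_x^i \bar{C}_t B_\lambda B_z \eps$; multiplying by $T_* \leq (1+\eps^{1/2})\tfrac{2}{1+\alpha}$ from \eqref{ineq:T_*} converts the factor $\tfrac{1+\alpha}{2}$ into an $\OO(1)$ constant.

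\textbf{Step 3: Combine and integrate.} Summing the three contributions and using $L \leq M/5$ together with $B_\lambda \bar{C}_t B_z \geq e^{31}$ absorbs the $L\eps$ term into the $\OO(M\bar{C}_t B_\lambda B_z\eps)$ term; tracking the worst constant yields $190\max(1,\alpha)$ in place of the $185$ from the first bracket (this is the place where I would check the arithmetic most carefully, analogous to the passage from $20$ to $21$ in the proof of Corollary~\ref{cor:eta_x:approx}). This proves the first estimate. The second estimate follows by writing $\p^i_x\p_\lambda\eta_x - \tfrac{1+\alpha}{2}t\p^{i+1}_x\tilde{w}_0 = \int_0^t \bigl[\p^i_x\p_\lambda\eta_{xt}(\cdot,s) - \tfrac{1+\alpha}{2}\p^{i+1}_x\tilde{w}_0\bigr]ds$ and using the uniform-in-$s$ first estimate.

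No serious obstacle is anticipated: all of the machinery — the stability a priori estimates in \S~\ref{sec:apriori:stab}, the energy estimate Proposition~\ref{prop:EE:stab}, the $(x,t)$-derivative upgrade Proposition~\ref{prop:ineq:dx:stab} and Corollary~\ref{cor:ineq:dx:stab}, and the frequency-composition bounds from \S~\ref{sec:appendix:freqcomp} — has already been put in place. The only mildly delicate point is verifying that the initial-data product $\p^i_x(\tilde{w}_0 k_0^\prime)$ produces no logarithmic loss in $i$; this is handled exactly as in the base case of Proposition~\ref{prop:t=0}, because the derivative bounds on $\tilde{w}_0$ and $k_0^\prime$ are of the same tensorial form $i! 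C_0^i$ required there.
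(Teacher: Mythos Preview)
Your proposal is correct and takes essentially the same approach as the paper, which in fact omits the proof entirely with the remark that all results in \S\ref{sec:ineq:dx:stab} are proved ``completely analogously'' to those in \S\ref{sec:ineq:dx}. You have accurately reconstructed that analogy: the decomposition via $(\eta_x\W)_\lambda(x,0)=\tilde w_0'-\tfrac{1}{4\gamma}\tilde w_0 k_0'$, the use of the $185\max(1,\alpha)$ bound from Corollary~\ref{cor:ineq:dx:stab}, the product estimate on $\p_x^i(\tilde w_0 k_0')$, and the time integration all match the template of Corollary~\ref{cor:eta_x:approx} with the expected replacements.
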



\section{Solution continuity with respect to initial data}
\label{sec:cont}

In this section, we will show that  solutions with initial data in $\mathcal A_n(\eps, C_0)$ depend continuously on their initial data with respect to the $(W^{2n+2,\infty}(\TT))^3$ topology. For $m = 0, \hdots, 2n+1$ define the seminorm\footnote{ It is a seminorm because it doesn't detect constant changes in $k_0$; however our system \eqref{eq:euler:RV} does not detect constant changes in $k_0$ either.} on $(W^{2n+2, \infty}(\TT))^3$
\begin{equation}\label{def:seminorm}
\|(w_0,z_0,k_0)\|_{m, C_0} : = \max_{0 \leq i \leq m}\bigg(\|w_0\|_{L^\infty_x}, \|z_0\|_{L^\infty_x}, \frac{\|\p^{i+1}_x w_0\|_{L^\infty_x}}{i! C^i_0}, \frac{\|\p^{i+1}_x z_0\|_{L^\infty_x}}{i! C^i_0}, \frac{\|\p^{i+1}_x k_0\|_{L^\infty_x}}{i! C^i_0}\bigg).
\end{equation}
Note that $\|(w_0,z_0,k_0)\|_{m, C_0}  \leq \|(w_0,z_0,k_0)\|_{(W^{m+1, \infty}(\TT))^3}$. We will estimate the difference of two solutions in terms of the distance between their initial data with respect to these seminorms.

The methods and results of this section will be completely analogous to those in the previous two sections. For this reason, we will only provide a sketch of the proofs.

For the rest of this section, fix $(w_0,z_0, k_0), (\tilde{w}_0, \tilde{z}_0, \tilde{k}_0) \in \mathcal A_n(\eps, C_0)$ and let the corresponding solutions of \eqref{system:lagrange} with initial data $(w_0,z_0, k_0)$ and $(\tilde{w}_0, \tilde{z}_0, \tilde{k}_0)$ be $\K, \Z, \eta_x, \Sigma, \eta_x \W$ and $\tilde{\K}, \tilde{\Z}, \tilde{\eta}_x, \tilde{\Sigma}, \tilde{\eta}_x \tilde{\W}$ respectively. Let their respective blowup times be $T_*$ and $\tilde{T}_*$. Adopt the notation
\begin{align*}
\mu_m & : = \|(w_0-\tilde{w}_0, z_0-\tilde{z}_0, k_0-\tilde{k}_0)\|_{m, C_0} & m = 0, \hdots, 2n.
\end{align*}

It follows from \eqref{system:lagrange} that the difference of the two solutions satisfies
\begin{subequations}\label{system:diff}
\begin{align}\label{id:Sigma_t:diff}
(\Sigma-\tilde{\Sigma})_t & = \alpha (\Sigma-\tilde{\Sigma})[ -\Z + \tfrac{1}{2\gamma}(\Sigma+\tilde{\Sigma}) \K] -\alpha(\Z-\tilde{\Z}) \tilde{\Sigma} + \tfrac{\alpha}{2\gamma} \tilde{\Sigma}^2 (\K-\tilde{\K}), \\
\label{id:eta_xt:diff}
\eta_{xt} -\tilde{\eta}_{xt} & = \tfrac{1+\alpha}{2}[ (\eta_x\W) - (\tilde{\eta}_x\tilde{\W})] + \tfrac{1-\alpha}{2} (\eta_x -\tilde{\eta}_x)\Z + \tfrac{1-\alpha}{2} \tilde{\eta}_x (\Z-\tilde{\Z}) \\
& + \tfrac{\alpha}{2\gamma} (\eta_x -\tilde{\eta}_x) \Sigma \K + \tfrac{\alpha}{2\gamma} \tilde{\eta}_x (\Sigma-\tilde{\Sigma}) \K + \tfrac{\alpha}{2\gamma} \tilde{\eta}_x \tilde{\Sigma} (\K -\tilde{\K}), \notag \\
\label{id:W_t:diff}
(\eta_x \W - \tilde{\eta}_x \tilde{\W})_t & = \tfrac{\alpha}{4\gamma} (\Sigma - \tilde{\Sigma}) \K (\eta_x \W + \eta_x \Z) +\tfrac{\alpha}{4\gamma}\tilde{\Sigma} (\K - \tilde{\K}) ( \eta_x \W + \eta_x \Z) \\
&  + \tfrac{\alpha}{4\gamma} \tilde{\Sigma} \tilde{\K} (\eta_x \W - \tilde{\eta}_x \tilde{\W}) + \tfrac{\alpha}{4\gamma} \tilde{\Sigma} \tilde{\K} (\eta_x-\tilde{\eta}_x) \Z + \tfrac{\alpha}{4\gamma} \tilde{\Sigma} \tilde{\K} \tilde{\eta}_x (\Z - \tilde{\Z}), \notag \\
\label{id:Sigma_x:diff}
(\Sigma- \tilde{\Sigma})_x & = \tfrac{1}{2}(\eta_x \W - \tilde{\eta}_x \tilde{\W}) - \tfrac{1}{2}(\eta_x - \tilde{\eta}_x) \Z  - \tfrac{1}{2} \tilde{\eta}_x (\Z- \tilde{\Z}) 
\\
& + \tfrac{1}{2\gamma} (\eta_x -\tilde{\eta}_x) \Sigma \K + \tfrac{1}{2\gamma} \tilde{\eta}_x (\Sigma-\tilde{\Sigma}) \K + \tfrac{1}{2\gamma}\tilde{\eta}_x \tilde{\Sigma} (\K - \tilde{\K}). \notag
\end{align}
\end{subequations}

We will also replace \eqref{id:K_t} and \eqref{id:Z_t} with the identities
\begin{subequations}\label{id:KZ:dt:diff}
\begin{align}
\label{id:K_t:diff}
\eta_x \p_t (\K - \tilde{\K}) & = \alpha \Sigma \p_x( \K - \tilde{\K}) - \tfrac{1}{2}(\K - \tilde{\K}) (\eta_x \W) - \tfrac{1}{2} \eta_x (\K - \tilde{\K}) \Z \\
& - (\eta_x-\tilde{\eta}_x) \p_t \tilde{\K} + \alpha (\Sigma - \tilde{\Sigma}) \p_x \tilde{\K} \notag \\
& -\tfrac{1}{2} \tilde{\K}( \eta_x \W - \tilde{\eta}_x \tilde{\W}) - \tfrac{1}{2} (\eta_x - \tilde{\eta}_x) \tilde{\K} \Z - \tfrac{1}{2} \tilde{\eta}_x \tilde{\K} (\Z- \tilde{\Z}), \notag \\
\label{id:Z_t:diff}
\eta_x\p_t(\Z-\tilde{\Z}) & = 2\alpha \Sigma \p_x(\Z-\tilde{\Z}) - \tfrac{1-\alpha}{2} \eta_x \W (\Z-\tilde{\Z}) - \tfrac{1+\alpha}{2} \eta_x \Z (\Z-\tilde{\Z}) + \tfrac{\alpha}{4\gamma} \eta_x \Sigma \K (\Z - \tilde{\Z}) \\
& - (\eta_x-\tilde{\eta}_x) \p_t \tilde{\Z} + 2\alpha (\Sigma - \tilde{\Sigma}) \p_x \tilde{\Z} \notag \\
& -\tfrac{\alpha}{4\gamma} (\Sigma-\tilde{\Sigma}) \K \eta_x \W -\tfrac{\alpha}{4\gamma}\tilde{\Sigma}(\K-\tilde{\K}) \eta_x \W -\tfrac{\alpha}{4\gamma}\tilde{\Sigma}\tilde{\K} (\eta_x \W - \tilde{\eta}_x \tilde{\W}) \notag \\
& -\tfrac{1-\alpha}{2} (\eta_x \W - \tilde{\eta}_x \tilde{\W}) \tilde{\Z} \notag \\
& - \tfrac{1+\alpha}{2} \eta_x (\Z-\tilde{\Z}) \tilde{\Z} - \tfrac{1+\alpha}{2}(\eta_x -\tilde{\eta}_x) \tilde{\Z}^2 \notag \\
& + \tfrac{\alpha}{4\gamma} \eta_x \Sigma (\K - \tilde{\K}) \tilde{\Z} + \tfrac{\alpha}{4\gamma} \eta_x (\Sigma-\tilde{\Sigma}) \tilde{\K} \tilde{\Z} + \tfrac{\alpha}{4\gamma} (\eta_x -\tilde{\eta}_x) \tilde{\Sigma} \tilde{\K} \tilde{\Z}. \notag 
\end{align}
\end{subequations}
Multiplying these identities by $\Sigma^{-1}$ and rearranging gives us
\begin{subequations}
\begin{align}
\label{id:K_x:diff}
\alpha \p_x ( \K - \tilde{\K}) & = \eta_x \Sigma^{-1}\p_t(\K - \tilde{\K}) + (\eta_x-\tilde{\eta}_x) \Sigma^{-1} \p_t \tilde{\K} - \alpha \Sigma^{-1}(\Sigma - \tilde{\Sigma}) \p_x \tilde{\K} \\
& + \tfrac{1}{2}\Sigma^{-1}(\K - \tilde{\K}) (\eta_x \W) + \tfrac{1}{2} \eta_x \Sigma^{-1}(\K - \tilde{\K}) \Z \notag \\
& +\tfrac{1}{2} \Sigma^{-1}\tilde{\K}( \eta_x \W - \tilde{\eta}_x \tilde{\W}) + \tfrac{1}{2} \Sigma^{-1}(\eta_x - \tilde{\eta}_x) \tilde{\K} \Z + \tfrac{1}{2}\Sigma^{-1} \tilde{\eta}_x \tilde{\K} (\Z- \tilde{\Z}), \notag \\
\label{id:Z_x:diff}
2\alpha \p_x (\Z-\tilde{\Z}) & = \eta_x\Sigma^{-1}\p_t (\Z-\tilde{\Z})  +(\eta_x-\tilde{\eta}_x) \Sigma^{-1}\p_t \tilde{\Z} - 2\alpha \Sigma^{-1}(\Sigma - \tilde{\Sigma}) \p_x \tilde{\Z} \\
& + \tfrac{1-\alpha}{2} \Sigma^{-1}\eta_x \W (\Z-\tilde{\Z}) + \tfrac{1+\alpha}{2}\Sigma^{-1} \eta_x \Z (\Z-\tilde{\Z}) - \tfrac{\alpha}{4\gamma} \Sigma^{-1}\eta_x \Sigma \K (\Z - \tilde{\Z}) \notag \\
& +\tfrac{1-\alpha}{2} \Sigma^{-1}(\eta_x \W - \tilde{\eta}_x \Sigma^{-1} \tilde{\W}) \tilde{\Z} \notag \\
& + \tfrac{1+\alpha}{2} \eta_x \Sigma^{-1}(\Z-\tilde{\Z}) \tilde{\Z} + \tfrac{1+\alpha}{2}(\eta_x -\tilde{\eta}_x) \Sigma^{-1}\tilde{\Z}^2 \notag \\
& - \tfrac{\alpha}{4\gamma} \eta_x  (\K - \tilde{\K}) \tilde{\Z} - \tfrac{\alpha}{4\gamma} \eta_x \Sigma^{-1} (\Sigma-\tilde{\Sigma}) \tilde{\K} \tilde{\Z} - \tfrac{\alpha}{4\gamma} (\eta_x -\tilde{\eta}_x) \Sigma^{-1} \tilde{\Sigma} \tilde{\K} \tilde{\Z}. \notag 
\end{align}
\end{subequations}
Notice that the system above doesn't feature any terms with the difference $\Sigma^{-1} - \tilde{\Sigma}^{-1}$, so we do not need equations for $(\Sigma^{-1} - \tilde{\Sigma}^{-1})_t$ or $(\Sigma^{-1} - \tilde{\Sigma}^{-1})_x$.

If the constants $\C_x, \C_t$, and $\eps$ satisfy the same hypotheses as in Propositions \ref{prop:t=0}, \ref{prop:t=0:stab}, one can prove an analog of these propositions for the difference of our two solutions. In particular, at time $t=0$ one obtains
\begin{align*}
\frac{(|\beta|+1)^2\|\p^\beta (\K-\tilde{\K})\|_{L^\infty_x}}{|\beta|! \C^{\beta_x}_x \C^{\beta_t}_t} & \leq 3\mu_{|\beta|}, &
\frac{(|\beta|+1)^2\|\p^\beta (\Z-\tilde{\Z})\|_{L^\infty_x}}{|\beta|! \C^{\beta_x}_x \C^{\beta_t}_t} & \leq 3\mu_{|\beta|}  
\end{align*}
for all $|\beta| \leq 2n$.

Next, one performs a priori estimates similar to those in \S~\ref{sec:apriori}, \ref{sec:apriori:stab}: fix $T \in [0, T_*\wedge \tilde{T}_*]$, a function $C_t : [0,T_* \wedge \tilde{T}_*) \rightarrow \R^+$, and a constant $\delta \geq 0$. Suppose that $A \geq 1$ is a constant such that
\begin{subequations}
\begin{align}
\frac{(m+1)^2 \|\Sigma^{-\delta m} \p^m_t (\K-\tilde{\K})\|_{L^\infty_x}}{m! C^m_t} & \leq A\mu_m, &
\frac{(m+1)^2 \|\Sigma^{-\delta m} \p^m_t (\Z-\tilde{\Z})\|_{L^\infty_x}}{m! C^m_t} & \leq A \mu_m
\end{align}
\end{subequations}
for all  $t \in [0,T], \: 0 \leq m \leq 2n.$ Suppose also that
\begin{subequations} \label{hypotheses:apriori:cont}
\begin{align}
\frac{(m+1)^2 \|\Sigma^{-\delta m} \p^m_t \K\|_{L^\infty_x}}{m! C^m_t} & \leq B_k \eps, &
\frac{(m+1)^2 \|\Sigma^{-\delta m} \p^m_t \Z\|_{L^\infty_x}}{m! C^m_t} & \leq B_z \eps, \\
\frac{(m+1)^2 \|\Sigma^{-\delta m} \p^m_t \tilde{\K}\|_{L^\infty_x}}{m! C^m_t} & \leq B_k \eps, &
\frac{(m+1)^2 \|\Sigma^{-\delta m} \p^m_t \tilde{\Z}\|_{L^\infty_x}}{m! C^m_t} & \leq B_z \eps, 
\end{align}
\end{subequations}
for all  $t \in [0,T], \: 0 \leq m \leq 2n+1.$ These hypotheses \eqref{hypotheses:apriori:cont} allow us to apply the estimates of \S~\ref{sec:apriori} to our two solutions, the same way that \eqref{ineq:apriori:hyp:previous} is used in \S~\ref{sec:apriori:stab}. If $B_z \eps \ll 1$ and $C_t \gg (1+\alpha)3^\delta$ for all $t \in [0,T]$, one obtains
\begin{align}
\|\Sigma - \tilde{\Sigma}\|_{L^\infty_x} & \leq (1+ 16A) \mu_0, & & \\
\|\eta_x \W - \tilde{\eta}_x \tilde{\W}\|_{L^\infty_x} & \leq (3+2A) \mu_0, & \|\eta_x-\tilde{\eta}_x\|_{L^\infty_x} & \leq (5+10A) \mu_0.
\end{align}
and 
\begin{align*}
\frac{(m+1)^2 \|\Sigma^{-\delta m} \p^m_t (\Sigma-\tilde{\Sigma})_t\|_{L^\infty_x}}{m! C^m_t} & \leq 8\alpha A \mu_m, &
& \frac{(m+1)^2 \|\Sigma^{-\delta m} \p^m_t (\eta_{xt}-\tilde{\eta}_{xt}) \|_{L^\infty_x}}{m! C^m_t}  \leq 6(1+\alpha)A \mu_m, \\
\frac{(m+1)^2 \|\Sigma^{-\delta m} \p^m_t (\eta_x\W-\tilde{\eta}_x \tilde{\W})_t\|_{L^\infty_x}}{m! C^m_t}  &\leq \tfrac{5}{8} A \mu_m, &
& \\
\frac{(m+1)^2 \|\Sigma^{-\delta m} \p^m_t (\eta_x-\tilde{\eta}_x)\|_{L^\infty_x}}{m! C^m_t} & \leq 15 A\mu_m,  &
& \frac{(m+1)^2 \|\Sigma^{-\delta m} \p^m_t( \eta_x \W-\tilde{\eta}_x\tilde{\W})\|_{L^\infty_x}}{m! C^m_t}  \leq 5A\mu_m ,\\
\frac{(m+1)^2 \|\Sigma^{-\delta m} \p^m_t (\Sigma-\tilde{\Sigma})\|_{L^\infty_x}}{m! C^m_t} & \leq 17 A \mu_m, &
& \frac{(m+1)^2 \|\Sigma^{-\delta m} \p^m_t (\Sigma-\tilde{\Sigma})_x\|_{L^\infty_x}}{m! C^m_t}  \leq 7A\mu_m,   
\end{align*}
for all  $t \in [0,T], \: 0 \leq m \leq 2n$, as well as
\begin{align}
\frac{(m+1)^2 \|\Sigma^{-\delta m} \p^{m-1}_t (\K-\tilde{\K})_x\|_{L^\infty_x}}{m! C^m_t} & \leq \tfrac{10}{\alpha} A \mu_m,  &
\frac{(m+1)^2 \|\Sigma^{-\delta m} \p^{m-1}_t (\Z-\tilde{\Z})_x \|_{L^\infty_x}}{m! C^m_t} & \leq \tfrac{5}{\alpha}A \mu_m , 
\end{align}
for all  $t \in [0,T], \: 1 \leq m \leq 2n$.

Next, we use these a priori estimates to perform $L^q$ energy estimates on the time derivatives of $(\K-\tilde{\K})$ and $(\Z-\tilde{\Z})$, similar to those in Propositions \ref{prop:EE:infty} and \ref{prop:EE:stab}.

\begin{proposition}\label{prop:EE:cont}
Define the constant
 \begin{equation}\label{def:B:Delta}
 B_d : = 2\cdot 6^{\frac{3}{\min(1,\alpha)}} e^{31}.
 \end{equation}
  Let $\delta$ and $\kappa$ satisfy \eqref{def:delta} and let $C_t$ be defined by \eqref{def:C_t}. Let $\bar{C}_t$ be the constant defined by \eqref{def:C_t:bar}. If $C_t(0)$ satisfies
\begin{align*}
C_t(0) & \gg (1+\alpha) 3^\delta, & 
C_t(0) & \gg \max(1,\alpha) 2^\delta C_0,
\end{align*}
and $\eps$ satisfies
\begin{align*}
 (1+\alpha)(1+B_z) \eps^{\frac{1}{2}} & \ll 1, & 3^{\frac{3}{\min(1,\alpha)}} B_d (\bar{C}_t + 1+\alpha)B_z \eps^{\frac{1}{2}} & \ll 1, 
\end{align*}
then
\begin{subequations}\label{ineq:prop:EE:cont}
\begin{align}
\frac{(m+1)^2 \|\Sigma^{-\delta m} \p^m_t (\K-\tilde{\K}) \|_{L^\infty_x}}{m! C^m_t} & \leq B_d \mu_m, &
\frac{(m+1)^2 \|\Sigma^{-\delta m} \p^m_t (\Z-\tilde{\Z}) \|_{L^\infty_x}}{m! C^m_t} & \leq  B_d \mu_m, 
\end{align}
\end{subequations}
for all $t \in [0,T_*\wedge \tilde{T}_*)$ and $0 \leq m \leq 2n$.
\end{proposition}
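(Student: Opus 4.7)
The proof will closely mirror the bootstrap and $L^q$-energy argument used for Proposition~\ref{prop:EE:stab}, with the role of the parameter-derivative variables $(\K_\lambda, \Z_\lambda, \ldots)$ now played by the differences $(\K-\tilde\K, \Z-\tilde\Z, \ldots)$, and the bound $M$ on the initial perturbation replaced by the sequence of seminorms $\mu_m$. The plan is to first fix $T \in [0, T_* \wedge \tilde T_*)$ and a constant $A$ with $B_d < A < \tfrac{4}{3}B_d$, and to run a bootstrap where we assume the estimates in~\eqref{ineq:prop:EE:cont} hold on $[0,T]$ with $B_d$ replaced by $A$. The hypotheses on $C_t(0), \eps$ then ensure via Proposition~\ref{prop:EE:infty} that the apriori bounds \eqref{hypotheses:apriori:cont} hold for both $(w_0,z_0,k_0)$ and $(\tilde w_0, \tilde z_0, \tilde k_0)$, which allows us to apply the difference-version of the apriori estimates sketched immediately above the statement of Proposition~\ref{prop:EE:cont}. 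Initial data bounds at $t=0$ are controlled by $\mu_m$ via the obvious analog of Proposition~\ref{prop:t=0:stab} applied to the system~\eqref{system:diff}--\eqref{id:Z_x:diff}.

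For the base case $m=0$, I will set $\delta = \tfrac{3}{\min(1,\alpha)}$ and introduce the weighted energies
\begin{equation*}
\tilde\E^{0,q}_{k,d} := \int_\TT \Sigma^{-q\frac{3}{\min(1,\alpha)}} \eta_x |\K-\tilde\K|^q\,dx,
\qquad
\tilde\E^{0,q}_{z,d} := \int_\TT \Sigma^{-q\frac{3}{\min(1,\alpha)}} \eta_x |\Z-\tilde\Z|^q\,dx.
\end{equation*}
Differentiating in time and using \eqref{id:K_t:diff}--\eqref{id:Z_t:diff}, an integration by parts on the $2\alpha\Sigma\p_x(\Z-\tilde\Z)$ and $\alpha\Sigma\p_x(\K-\tilde\K)$ terms yields, exactly as in Propositions~\ref{prop:0thorder} and~\ref{prop:EE:stab}, coercive damping coefficients $\tfrac{\alpha}{2}\delta q - \tfrac{q-1}{2}$ and $\alpha\delta q - \tfrac{1-\alpha}{2}(q-1)$ that are positive (this is why the choice $\delta = \tfrac{3}{\min(1,\alpha)}$ is made). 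The nonhomogeneous terms involving $\p_t\tilde\K, \p_x\tilde\K, \p_t\tilde\Z, \p_x\tilde\Z$ are bounded using~\eqref{ineq:KZ:C_t:bar}, producing a forcing of order $\bar C_t B_z \eps \cdot \mu_0$ (analogous to the $\bar C_t B_z \eps$ factor appearing in Proposition~\ref{prop:EE:stab}); the remaining difference terms are controlled using Proposition~\ref{prop:0thorder} together with the apriori bounds on $\Sigma-\tilde\Sigma$, $\eta_x-\tilde\eta_x$, and $\eta_x\W-\tilde\eta_x\tilde\W$. Summing the two inequalities, applying ODE comparison (Lemma~\ref{lem:ODE}), raising to the $\tfrac{1}{q}$ power, and sending $q\to\infty$ gives \eqref{ineq:prop:EE:cont} for $m=0$.

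For the inductive range $1\leq m \leq 2n$, I will define
\begin{equation*}
\tilde E^{m,q}_{k,d} := \tfrac{(m+1)^{2q}}{(m!)^q C_t^{mq}}\int \Sigma^{-\delta mq}\eta_x|\p^m_t(\K-\tilde\K)|^q\,dx,
\end{equation*}
and its $\Z$-analog, then apply $\p^m_t$ to the identities \eqref{id:K_t:diff}--\eqref{id:Z_t:diff} and compute $\dot{\tilde E}^{m,q}_{k,d}$ as in the proof of Proposition~\ref{prop:EE:stab}. The dominant difference-terms $-\tfrac{1}{2}(\K-\tilde\K)\eta_x\W$ and the $\p_x$-terms will produce the same positive damping coefficients $D_k, D_z$ from \eqref{def:D_k},\eqref{def:D_z}, while all mixed terms of the form $(\eta_x-\tilde\eta_x)\p^m_t\p_t\tilde\K$ or $(\Sigma-\tilde\Sigma)\p^m_t\p_x\tilde\K$ are bounded using the estimates \eqref{ineq:K_t:m+1:EE} adapted to this context, contributing $\bar C_t$ factors; all remaining lower-order mixed terms contribute $(1+\alpha)B_z\eps A \mu_m$ factors summed against the usual combinatorial kernels from Section~\ref{sec:appendix:freqcomp}. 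Using the $\kappa$-absorption from~\eqref{def:delta} together with the smallness assumption $3^{3/\min(1,\alpha)}B_d(\bar C_t + 1+\alpha)B_z\eps^{1/2}\ll 1$, we may close the inequality in the form $\dot{\tilde E}^{m,q}_{k,d} + \dot{\tilde E}^{m,q}_{z,d} \leq -mq\max(1,\alpha)(\tilde E^{m,q}_{k,d}+\tilde E^{m,q}_{z,d}) + (m+1)(\tfrac{1}{10}B_d \mu_m)^q$, and then conclude by ODE comparison and passing $q\to\infty$.

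The main technical obstacle will be the $m=0$ case: the $(\eta_x - \tilde\eta_x)\p_t\tilde\K$ term forces an extra $\bar C_t$ into the right-hand side (since $\|\p_t\tilde\K\|_{L^\infty_x}\lesssim \bar C_t B_k\eps$ by \eqref{ineq:KZ:C_t:bar}), which is why $B_d$ must be taken at least of order $6^{3/\min(1,\alpha)}e^{31}$ and why the smallness condition in the statement involves the product $B_d(\bar C_t + 1+\alpha)B_z\eps^{1/2}$ rather than $(1+\alpha)B_z\eps$. All other estimates are routine extensions of the arguments already carried out in Section~\ref{sec:stab}, so the proof is essentially a transcription of Proposition~\ref{prop:EE:stab}'s proof with $\K_\lambda \leadsto \K-\tilde\K$, $M\leadsto \mu_m$, and $A\eps\leadsto A\mu_m$, using the difference system \eqref{system:diff}--\eqref{id:Z_x:diff} in place of \eqref{system:stab}.
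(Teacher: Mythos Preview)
Your proposal is correct and follows essentially the same approach as the paper: a bootstrap argument with weighted $L^q$ energy estimates on $\p^m_t(\K-\tilde\K)$ and $\p^m_t(\Z-\tilde\Z)$, handling $m=0$ (with weight $\Sigma^{-\frac{3}{\min(1,\alpha)}}$) and $1\le m\le 2n$ (with weight $\Sigma^{-\delta m}$) separately, using the difference system~\eqref{system:diff}--\eqref{id:KZ:dt:diff} and closing via ODE comparison and $q\to\infty$. The paper's bootstrap range is $B_d < A < 2B_d$ rather than your $B_d < A < \tfrac{4}{3}B_d$, and its final differential inequality records the forcing in the form $2(A\mu_m)^q\cdot\OO((\bar C_t+1+\alpha)B_z\eps^{1/2})^q$ before invoking smallness, but these are cosmetic differences.
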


The proof of this proposition is analogous to the proof of Proposition \ref{prop:EE:infty} and Proposition \ref{prop:EE:stab}. Fix $T \in [0, T_*\wedge \tilde{T}_*)$ and pick a constant $A$ satisfying $B_d < A < 2B_d$. Our bootstrap hypothesis will be that 
\begin{align*}
\frac{(m+1)^2 \|\Sigma^{-\delta m} \p^m_t (\K-\tilde{\K})\|_{L^\infty_x}}{m! C^m_t} & \leq A \mu_m, &
\frac{(m+1)^2 \|\Sigma^{-\delta m} \p^m_t (\Z-\tilde{\Z})\|_{L^\infty_x}}{m! C^m_t} & \leq A \mu_m,
\end{align*}
for all  $t \in [0,T], \: 0 \leq m \leq 2n.$ Proposition \ref{prop:EE:infty} implies that \eqref{hypotheses:apriori:cont} holds with the $\delta, \kappa$ and $C_t$ we have chosen. Therefore, all of the a priori estimates we have derived apply on $[0,T]$ for this choice of $A$.

Just like in the proof of Proposition \ref{prop:EE:infty},  if we define $\C_t : = \frac{1}{2}2^{-\delta} C_t(0)$, then our hypotheses on $C_t(0)$ allow us to apply our estimates at time $t=0$ and get
\begin{equation}\label{ineq:bsh:t=0:cont}
\begin{aligned}
\frac{(m+1)^2 \|\Sigma^{-\delta m} \p^m_t (\K-\tilde{\K})\|_{L^\infty_x}}{m! C^m_t} & \leq \tfrac{3\mu_m}{2^m} , &
\frac{(m+1)^2 \|\Sigma^{-\delta m} \p^m_t (\Z-\tilde{\Z})\|_{L^\infty_x}}{m! C^m_t} & \leq \tfrac{3\mu_m}{2^{m-1}} ,
\end{aligned}
\end{equation}
for $m=0, \hdots, 2n, t = 0$. Therefore, our bootstrap hypothesis is true for $T = 0$.

For $0 \leq m \leq 2n$ and $1< q < \infty$ define the energies
\begin{align*}
\hat{\E}^{m,q}_k & : =\begin{cases} \int_\TT \Sigma^{-\frac{3}{\min(1,\alpha)}} \eta_x | (\K-\tilde{\K})|^q \: dx & m=0 \\ \int_\TT \Sigma^{-\delta qm} \eta_x |\p^m_t (\K-\tilde{\K})|^q \: dx & 1 \leq m \leq 2n \end{cases}, \\
\hat{\E}^{m,q}_z & : =\begin{cases} \int_\TT \Sigma^{-\frac{3}{\min(1,\alpha)}} \eta_x | (\Z-\tilde{\Z})|^q \: dx & m=0 \\ \int_\TT \Sigma^{-\delta qm} \eta_x |\p^m_t (\Z-\tilde{\Z})|^q \: dx & 1 \leq m \leq 2n \end{cases} ,
\end{align*}
and for $1 \leq m \leq 2n$, $1 < q < \infty$, define the energies
\begin{align*}
\hat{E}^{m,q}_k & : =\frac{(m+1)^{2q} \hat{\E}^{m,q}_k}{(m!)^qC^{mq}_t} , &
\hat{E}^{m,q}_z & : = \frac{(m+1)^{2q} \hat{\E}^{m,q}_z}{(m!)^q C^{mq}_t}  .
\end{align*}
We prove \eqref{ineq:prop:EE:cont} in the case $m=0$ and $1 \leq m \leq 2n$ separately. In both cases, just like in the proofs of Propositions \ref{prop:EE:infty}, \ref{prop:EE:stab}, we take the time derivative of $\hat{\E}^{m,q}_k$ and $\hat{\E}^{m,q}_z$, simplify and rearrange using \eqref{id:K_t:diff} and \eqref{id:Z_t:diff}, and use \eqref{ineq:W} to produce damping terms which make  $\hat{\E}^{m,q}_k+\hat{\E}^{m,q}_z$ satisfy 
\begin{align*}
&\dot{\hat{\E}}^{0,q}_k + \dot{\hat{\E}}^{0,q}_z \leq 15\max(1,\alpha) q (\hat{\E}^{0,q}_k + \hat{\E}^{0,q}_z) + 2(A\mu_0)^q\OO( 3^\delta(\bar{C}_t + 1+\alpha)B_z \eps^{\frac{1}{2}})^q, \\
&\dot{\hat{E}}^{m,q}_k+ \dot{\hat{E}}^{m,q}_z \leq -mq\max(1,\alpha) (\hat{E}^{m,q}_k + \hat{E}^{m,q}_z) + 2(A\mu_m)^q[ m\OO( [\bar{C}_t B_z  +(1+\alpha) B_z] \eps^{\frac{1}{2}})^q + \OO((1+\alpha) B_z \eps^{\frac{1}{2}})^q].
\end{align*}
From here the bootstrap argument closes by using ODE comparison and \eqref{ineq:bsh:t=0:cont}, provided that $\eps$ is small enough.

After proving Proposition \ref{prop:EE:cont}, one can deduce bounds on the rest of the derivatives of the difference via an induction argument, just like in Proposition \ref{prop:ineq:dx}: if $\bar{C}_x$ is a constant satisfying
\begin{align*}
\bar{C}_x & \gg 1, &
\alpha\bar{C}_x & \gg \bar{C}_t,
\end{align*}
$\eps$ satisfies $(1+\alpha)B_z \eps \ll 1$,  and $C_t(0)$ satisfies $C_t(0) \gg 1$, then the following bounds hold for all $t \in [0,T_*\wedge \tilde{T}_*)$ and $|\beta| \leq 2n$:
\begin{align*}
&
\frac{(|\beta|+1)^2 \inorm{\p^\beta (\K-\tilde{\K})}_{L^\infty_x}}{|\beta|! \bar{C}^{\beta_x}_x \bar{C}^{\beta_t}_t}  \leq B_d \mu_{|\beta|}, 
&&
\frac{(|\beta|+1)^2 \inorm{\p^\beta (\Z-\tilde{\Z})}_{L^\infty_x}}{|\beta|! \bar{C}^{\beta_x}_x \bar{C}^{\beta_t}_t}  \leq B_d \mu_{|\beta|}, 
\\
&
\frac{(|\beta|+1)^2 \inorm{\p^\beta (\Sigma-\tilde{\Sigma})_t}_{L^\infty_x}}{|\beta|! \bar{C}^{\beta_x}_x \bar{C}^{\beta_t}_t} \leq 8\alpha  B_d \mu_{|\beta|},
&&
\\
&
\frac{(|\beta|+1)^2 \inorm{\p^\beta (\eta_x-\tilde{\eta}_x)}_{L^\infty_x}}{|\beta|! \bar{C}^{\beta_x}_x \bar{C}^{\beta_t}_t} \leq 70 B_d \mu_{|\beta|}, 
&&
\frac{(|\beta|+1)^2 \inorm{\p^\beta (\eta_x \W-\tilde{\eta}_x \tilde{\W})}_{L^\infty_x}}{|\beta|! \bar{C}^{\beta_x}_x \bar{C}^{\beta_t}_t}  \leq  7 B_d \mu_{|\beta|}, 
\\
&
\frac{(|\beta|+1)^2 \inorm{\p^\beta (\Sigma-\tilde{\Sigma})_x}_{L^\infty_x}}{|\beta|! \bar{C}^{\beta_x}_x \bar{C}^{\beta_t}_t} \leq 120 B_d \mu_{|\beta|}, 
&&
\end{align*}

Our efforts culminate in the following bounds:
\begin{subequations} \label{cor:ineq:dx:cont}
\begin{align}
\frac{(|\beta|+1)^2 \inorm{\p^\beta (\eta_{xt}-\tilde{\eta}_{xt})}_{L^\infty_x}}{|\beta|! \bar{C}^{\beta_x}_x \bar{C}^{\beta_t}_t} & \leq (1+\alpha) 38 B_d \mu_{|\beta|}
\end{align}
for all $t \in [0, T_* \wedge \tilde{T}_*), |\beta| \leq 2n$, and, as an immediate corollary,
\begin{align}
\| \p^i_x(\eta_x-\tilde{\eta}_x) \|_{L^\infty_x} & \leq \tfrac{i!}{(i+1)^2} 38 (1+\alpha) t  \bar{C}^i_x B_d \mu_i
\end{align}
\end{subequations}
for all $t \in [0, T_* \wedge \tilde{T}_*), i = 0, \hdots, 2n$. We will use these bounds \eqref{cor:ineq:dx:cont}  in \S~\ref{sec:Lip}.


\section{Finite-codimension Banach manifolds of initial data}
\label{FCBM}

Recall that our aim (see Theorem \ref{thm:HR} below) is to construct a codimension-$(2n-2)$ Banach manifold of initial data $(w_0,z_0, k_0) \in (W^{2n+2,\infty}(\TT))^3$ for which the unique classical solutions $(w,z,k)$ to the corresponding Cauchy problem \eqref{eq:euler:RV}  form $C^{0, \frac{1}{2n+1}}$ pre-shocks. In this section, we will show that the initial data $(w_0, z_0, k_0)$ in a particular neighborhood of $(W^{2n+2,\infty}(\TT))^3$ for which the system
\begin{equation*}
\eta_x(x_*, T_*) = 0, \qquad \p_x \eta_x(x_*, T_*) = 0, \qquad \hdots \qquad \p^{2n-1}_x \eta_x(x_*, T_*) = 0,
\end{equation*}
has a solution $(x_*,T_*)$ is a codimension-$(2n-2)$ Banach manifold. In \S~\ref{sec:thm} we will show that  solutions $(w,z,k)$ with initial data in this manifold form cusps resembling $-y^{\frac{1}{2n+1}}$ at their first singularities.

\subsection{Assumptions on the initial data}
Fix a positive integer $n$ and a constant $C_0 \geq 3$. Now fix a function $\bar{w}_0 \in W^{2n+2, \infty}(\TT)$ satisfying
\begin{subequations}\label{def:w_0:bar}
\begin{align}
\bar{w}_0(x)  &= \tfrac{5}{2} -x + \tfrac{x^{2n+1}}{2n+1}, & |x-0| \leq \tfrac{1}{C_0}, \\
\bar{w}'_0(x) & \geq -1 + C^{-2n}_0,   & |x-0| \geq \tfrac{1}{C_0}, \\
\frac{\|\p^{i+1}_x \bar{w}_0\|_{L^\infty_x}}{i!} & \leq C^i_0, & \forall \: i = 0, \hdots, 2n+1.
\end{align}
\end{subequations}
Note that
\begin{align*}
\p^{i+1}_x \bar{w}_0(x) = -\delta_{i0} + \mathbbm{1}_{\{i \leq 2n\}}i!{2n\choose{i}}x^{2n-i}, \qquad |x-0| \leq \tfrac{1}{C_0},
\end{align*}
so that for $i = 1, \hdots, 2n+1$ we have
\begin{align*}
\frac{\|\p^{i+1}_x \bar{w}_0\|_{L^\infty_x(-\frac{1}{C_0}, \frac{1}{C_0})}}{i! C^i_0} \leq  \mathbbm{1}_{\{i \leq 2n\}} {2n\choose{i}} C^{-2n}_0 < 1.
\end{align*}
Therefore, our assumptions on $\bar{w}_0$ are consistent.

With $\bar{w}_0$ chosen as above, define 
\begin{equation}
\label{eq:cal:B:n:def}
\mathcal B_n(\eps, C_0) \subset (W^{2n+2,\infty}(\TT))^3
\end{equation}
to be the set of all $(w_0,z_0, k_0) \in (W^{2n+2, \infty}(\TT))^3$ satisfying
\begin{align*}
\|w_0 - \bar{w}_0\|_{L^\infty_x} & < \eps, & \\
\|z_0\|_{L^\infty_x} & < \eps, & \\
\frac{\|\p^{i+1}_x(w_0- \bar{w}_0)\|_{L^\infty_x}}{i!} & < C^i_0 \eps, & \forall \: i = 0, \hdots, 2n+1, \\
\frac{\|\p^{i+1}_x z_0\|_{L^\infty_x}}{i!} & < C^i_0\eps, & \forall \: i = 0, \hdots, 2n+1, \\
\frac{\|\p^{i+1}_x k_0\|_{L^\infty_x}}{i!} & < C^i_0 \eps, & \forall \: i = 0, \hdots, 2n+1.
\end{align*}
Since $\|\bar{w}_0'\|_{L^\infty_x} = 1$, we know that $\frac{3}{2} \leq \bar{w}_0 \leq \frac{7}{2}$ everywhere in $\TT$. Therefore, if $\eps < \frac{1}{4}$ we have that $\frac{1}{2} < \sigma_0 < 2$ so that $\mathcal B_n(\eps, C_0) \subset \mathcal A_n(\eps, C_0)$. For the remainder of this paper, assume that $\eps < \frac{1}{4}$ so that this inclusion always holds.

\subsection{Preliminary Estimates along the Fast Acoustic Characteristics}
\label{sec:FCBM:prelim}

For the remainder of this section, suppose that $\eps, \bar{C}_t,$ and $\bar{C}_x$ are chosen such that the conclusions of \S~\ref{sec:HOE} hold for all initial data $(w_0,z_0,k_0) \in \mathcal A_n(\eps, C_0)$. Furthermore, we will restrict our attention to only solutions with initial data $(w_0, z_0, k_0) \in \mathcal B_n(\eps, C_0)$.

\begin{lemma}\label{lem:eta_x}
For all $i=0, \hdots, 2n+1, t \in [0, T_*)$, the following approximate identities are true:
\begin{subequations}\label{approx:eta:B}
\begin{align}
\label{approx:eta_xt:B}
\|\p^i_x \eta_{xt}-\tfrac{1+\alpha}{2} \p^{i+1}_x \bar{w}_0\|_{L^\infty_x} & \leq 22 \max(1,\alpha) \tfrac{i!}{(i+1)^2} \bar{C}^i_x B_z \eps, \\
\label{approx:eta_x:B}
\|\p^i_x \eta_x-(\delta_{i0} +\tfrac{1+\alpha}{2} t\p^{i+1}_x \bar{w}_0)\|_{L^\infty_x} & \leq 22 \max(1,\alpha) t \tfrac{i!}{(i+1)^2} \bar{C}^i_x B_z \eps.
\end{align}
\end{subequations}
As a result, for $|x-0| \leq \tfrac{1}{C_0}, t \in [0, T_*), i=0, \hdots, 2n+1$ we have
\begin{subequations}
\begin{align}
\label{approx:eta_xt:C}
\bigg\vert \p^i_x \eta_{xt}(x,t) - \tfrac{1+\alpha}{2} \bigg[-\delta_{i0} + \mathbbm{1}_{\{i \leq 2n\}}i!{2n\choose{i}}x^{2n-i}\bigg] \bigg\vert & \leq  22 \max(1,\alpha) \tfrac{i!}{(i+1)^2} \bar{C}^i_x B_z \eps, \\
\label{approx:eta_x:C}
\bigg\vert\p^i_x \eta_{x}(x,t) -\bigg[ \delta_{i0} + \tfrac{1+\alpha}{2}t\bigg[-\delta_{i0} + \mathbbm{1}_{\{i \leq 2n\}}i!{2n\choose{i}}x^{2n-i}\bigg] \bigg\vert & \leq 22 \max(1,\alpha)t \tfrac{i!}{(i+1)^2} \bar{C}^i_x B_z \eps. 
\end{align}
\end{subequations}
\end{lemma}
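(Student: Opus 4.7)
The plan is to reduce the approximate identities \eqref{approx:eta:B} to Corollary~\ref{cor:eta_x:approx}, which already supplies an analogous bound with $w_0$ in place of $\bar{w}_0$, and then to substitute the explicit polynomial form of $\bar{w}_0$ on $|x|\leq 1/C_0$ to extract the pointwise statements \eqref{approx:eta_xt:C}--\eqref{approx:eta_x:C}.

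Since $(w_0,z_0,k_0)\in \mathcal{B}_n(\eps,C_0)\subset \mathcal{A}_n(\eps,C_0)$, all higher-order estimates from Section~\ref{sec:HOE} apply; in particular Corollary~\ref{cor:eta_x:approx} gives
\[
\|\p^i_x \eta_{xt} - \tfrac{1+\alpha}{2}\p^{i+1}_x w_0\|_{L^\infty_x} \leq 21\max(1,\alpha)\tfrac{i!}{(i+1)^2}\bar{C}_x^i B_z \eps.
\]
Inserting $\tfrac{1+\alpha}{2}\p^{i+1}_x\bar{w}_0$ and using the triangle inequality produces a perturbative remainder $\tfrac{1+\alpha}{2}\|\p^{i+1}_x(w_0-\bar{w}_0)\|_{L^\infty_x}\leq \tfrac{1+\alpha}{2}\, i!\, C_0^i\, \eps$, controlled directly by the definition of $\mathcal{B}_n(\eps,C_0)$. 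It only remains to verify that this perturbative term fits within the one additional unit of slack $\max(1,\alpha)\tfrac{i!}{(i+1)^2}\bar{C}_x^i B_z \eps$ (the gap between $22$ and $21$), which reduces to the elementary inequality $(i+1)^2 \leq 2 B_z (\bar{C}_x/C_0)^i$ for $i = 0,\ldots,2n+1$. This holds because $B_z\geq 72\,e^{21}$ is a large universal constant and the standing hypotheses of Proposition~\ref{prop:ineq:dx} together with \eqref{def:C_t:bar} and Proposition~\ref{prop:EE:infty} force $\bar{C}_x/C_0 \gg 2^\delta \geq 2^{6/\min(1,\alpha)}$; thus $(\bar{C}_x/C_0)^i$ grows at least geometrically while $(i+1)^2$ grows only polynomially. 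This establishes \eqref{approx:eta_xt:B}. Integrating in time from $0$ to $t\in [0,T_*)$ and using the initial condition $\p^i_x \eta_x(\cdot,0) = \delta_{i0}$ immediately yields \eqref{approx:eta_x:B}.

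For the pointwise statements on $|x|\leq 1/C_0$, differentiating the explicit polynomial expression $\bar{w}_0(x) = \tfrac{5}{2} - x + \tfrac{x^{2n+1}}{2n+1}$ from \eqref{def:w_0:bar} gives
\[
\p^{i+1}_x \bar{w}_0(x) = -\delta_{i0} + \mathbbm{1}_{\{i\leq 2n\}}\, i!\,\binom{2n}{i}\, x^{2n-i}, \qquad |x|\leq \tfrac{1}{C_0},
\]
and substituting into \eqref{approx:eta_xt:B}--\eqref{approx:eta_x:B} produces \eqref{approx:eta_xt:C}--\eqref{approx:eta_x:C}. No essential obstacle arises: the entire lemma is a bookkeeping consequence of Corollary~\ref{cor:eta_x:approx}, the $W^{2n+2,\infty}$-closeness of $w_0$ to $\bar{w}_0$ built into the definition of $\mathcal{B}_n(\eps,C_0)$, and the explicit Taylor structure of $\bar{w}_0$ near the origin. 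The only point requiring a moment of care is the verification that the $C_0^i$-error from perturbing $w_0$ to $\bar{w}_0$ is genuinely absorbed by a single unit of the $\bar{C}_x^i B_z$ budget; this is the sole reason the constant jumps from $21$ to $22$.
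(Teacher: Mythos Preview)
Your proof is correct and follows essentially the same approach as the paper's own proof, which is a one-line appeal to Corollary~\ref{cor:eta_x:approx}, the definition of $\mathcal{B}_n(\eps,C_0)$, and the facts $\bar{C}_x \geq e^2 C_0$ and $B_z > 1$. Your verification that the perturbative term $\tfrac{1+\alpha}{2}\,i!\,C_0^i\,\eps$ is absorbed by the single extra unit of $\max(1,\alpha)\tfrac{i!}{(i+1)^2}\bar{C}_x^i B_z\eps$ is slightly more elaborate than the paper's (you invoke $\bar{C}_x/C_0 \gg 2^\delta$ whereas the paper just uses $\bar{C}_x \geq e^2 C_0$, which already suffices since $(i+1)^2 \leq e^{2i}$ for $i\geq 1$ and $B_z > 1$ handles $i=0$), but the logic is the same.
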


\begin{proof}
The bounds \eqref{approx:eta:B} follow immediately from Corollary \ref{cor:eta_x:approx}, the definition of $\mathcal B_n(\eps, C_0)$, and the fact that $\bar{C}_x \geq e^2 C_0$ and $B_z > 1$.
\end{proof}


\begin{lemma}\label{lem:LB:0}
If
\begin{equation*}
C_0^{2n} B_z \eps \ll 1,
\end{equation*}
then
\begin{equation}\label{LB:eta_x:0}
\eta_x(x,t) \geq \tfrac{1}{2}C^{-2n}_0 \qquad \forall \: |x-0| \geq \tfrac{1}{C_0}, t \in [0,T_*].
\end{equation}
It follows from this that $\eta_x(\cdot, T_*)$ must have at least one zero in $|x-0| < \frac{1}{C_0}$.
\end{lemma}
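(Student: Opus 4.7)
The plan is to use the approximate identity \eqref{approx:eta_x:B} with $i=0$, which reads
$$\bigl|\eta_x(x,t) - \bigl(1 + \tfrac{1+\alpha}{2} t\, \bar w_0'(x)\bigr)\bigr| \leq 22 \max(1,\alpha)\, t\, B_z \eps,$$
and combine it with the key assumption on $\bar w_0$ from \eqref{def:w_0:bar} that $\bar w_0'(x) \geq -1 + C_0^{-2n}$ on $|x-0| \geq 1/C_0$. This yields, on that region,
$$\eta_x(x,t) \geq 1 - \tfrac{1+\alpha}{2} t + \tfrac{1+\alpha}{2} t\, C_0^{-2n} - 22 \max(1,\alpha)\, t\, B_z \eps.$$

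Setting $\lambda := \tfrac{1+\alpha}{2} t$ and noting that $\tfrac{\max(1,\alpha)}{1+\alpha} \leq 1$, the right-hand side becomes
$$1 - \lambda(1 - C_0^{-2n}) - 44\, \lambda\, B_z \eps,$$
which is a decreasing function of $\lambda \in [0,\infty)$. By Proposition~\ref{prop:T_*} (specifically \eqref{eq:T_*}), the supremum of $\lambda$ on $[0, T_*]$ is $\tfrac{1+\alpha}{2} T_* = 1 + \OO((1+B_z)\eps)$. Evaluating at this value gives
$$\eta_x(x,t) \geq C_0^{-2n} - \OO((1+B_z)\eps),$$
and the hypothesis $C_0^{2n} B_z \eps \ll 1$ makes the error term smaller than $\tfrac{1}{2} C_0^{-2n}$, yielding \eqref{LB:eta_x:0}. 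Here we interpret $\eta_x(\cdot, T_*)$ as the continuous extension to $t = T_*$, which is justified by the uniform bound on $\eta_{xt}$ (from Proposition~\ref{prop:ineq:dx}) giving uniform-in-$t$ Lipschitz continuity of $\eta_x$ on $\TT \times [0, T_*]$.

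The final assertion then follows by contradiction: by the Eulerian/Lagrangian blowup criterion discussed after Proposition~\ref{prop:T_*}, we have $\lim_{t \to T_*} \min_{x \in \TT} \eta_x(x,t) = 0$, and by continuity $\eta_x(\cdot, T_*)$ attains this minimum value $0$ at some $x_* \in \TT$. The lower bound \eqref{LB:eta_x:0} forbids $x_*$ from lying in $|x-0| \geq 1/C_0$, so $x_* \in (-1/C_0, 1/C_0)$, as claimed.

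No real obstacle arises: the entire argument is a direct consequence of the already-proven first-order expansion \eqref{approx:eta_x:B} and the explicit shape of $\bar w_0$. The only point requiring mild care is tracking the $\max(1,\alpha)/(1+\alpha) \leq 1$ cancellation so that the error term is controlled purely by $B_z \eps$ (independently of $\alpha$), so that the single smallness hypothesis $C_0^{2n} B_z \eps \ll 1$ suffices.
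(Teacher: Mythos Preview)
Your proof is correct and follows essentially the same approach as the paper: apply \eqref{approx:eta_x:B} with $i=0$, use the lower bound $\bar w_0'(x)\geq -1+C_0^{-2n}$ on $|x-0|\geq 1/C_0$, observe the resulting expression is decreasing in $t$, and evaluate at $t=T_*$ using \eqref{eq:T_*}. Your explicit tracking of $\max(1,\alpha)/(1+\alpha)\leq 1$ and the remark about continuous extension of $\eta_x$ to $t=T_*$ are details the paper leaves implicit, but the argument is the same.
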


\begin{proof}
For $|x-0| \geq \frac{1}{C_0}$, using \eqref{approx:eta_x:B} with $i=0$,  \eqref{def:w_0:bar} , and \eqref{eq:T_*} gives us
\begin{align*}
\eta_x & \geq 1 + \tfrac{1+\alpha}{2} t[ -1 + C^{-2n}_0 + \OO( B_z \eps) ] \\
& \geq 1 + \tfrac{1+\alpha}{2}T_*[ -1 + C^{-2n}_0 + \OO( B_z \eps) ]  \\
&=1 + (1+ \OO(B_z \eps))[ -1 + C^{-2n}_0 + \OO( B_z \eps) ] 
= C^{-2n}_0 + \OO(B_z \eps).
\end{align*}
Therefore, if $C^{2n}_0 B_z \eps \ll 1$ then we obtain \eqref{LB:eta_x:0}. 

Since $\min_\TT \eta_x(\cdot, T_*) = 0$, $\eta_x(\cdot, T_*)$ must have a zero in $|x-0| <\frac{1}{C_0}$.
\end{proof}

Now, define the function $\tilde{\eta}_x : \TT \times [0, \infty) \rightarrow \R$,
\begin{equation}\label{def:extension}
\tilde{\eta}_x(x,t) : = \begin{cases} \eta_x(x,t) & t\leq T_* \\ \eta_x(x,T_*) + (t-T_*)\eta_{xt}(x,T_*) & t \geq T_* \end{cases} .
\end{equation}
$\tilde{\eta}_x$ extends $\eta_x$ to a $C^{2n,1}_x C^1_t$ function on all of $\TT \times [0, \infty)$ in a manner analogous to the way that for Burgers equation the family of lines $1+ tw'_0(x)$ extends $\eta_x$ to all of $\TT \times [0, \infty)$ (see \S~\ref{sec:burgers}).

\begin{lemma}\label{lem:extension:approx}
For all $i=0, \hdots, 2n, t  \in [0, \infty)$, the following approximate identities are true:
\begin{subequations}\label{approx:eta:B'}
\begin{align}
\label{approx:eta_xt:B'}
\|\p^i_x \tilde{\eta}_{xt}-\tfrac{1+\alpha}{2} \p^{i+1}_x \bar{w}_0\|_{L^\infty_x} & \leq 22 \max(1,\alpha) \tfrac{i!}{(i+1)^2} \bar{C}^i_x B_z \eps, \\
\label{approx:eta_x:B'}
\|\p^i_x \tilde{\eta}_x-(\delta_{i0} +\tfrac{1+\alpha}{2} \p^{i+1}_x \bar{w}_0)\|_{L^\infty_x} & \leq 22 \max(1,\alpha) t \tfrac{i!}{(i+1)^2} \bar{C}^i_x B_z \eps.
\end{align}
\end{subequations}
As a result, for $|x-0| \leq \tfrac{1}{C_0}, t \geq 0, i=0, \hdots, 2n$, we have that
\begin{subequations}\label{approx:eta:C'}
\begin{align}
\label{approx:eta_xt:C'}
\bigg\vert \p^i_x \tilde{\eta}_{xt}(x,t) - \tfrac{1+\alpha}{2} \bigg[-\delta_{i0} + \mathbbm{1}_{\{i \leq 2n\}}i!{2n\choose{i}}x^{2n-i}\bigg] \bigg\vert & \leq  22 \max(1,\alpha) \tfrac{i!}{(i+1)^2} \bar{C}^i_x B_z \eps, \\
\label{approx:eta_x:C'}
\bigg\vert\p^i_x \tilde{\eta}_{x}(x,t) -\bigg[ \delta_{i0} + \tfrac{1+\alpha}{2}t\bigg[-\delta_{i0} + \mathbbm{1}_{\{i \leq 2n\}}i!{2n\choose{i}}x^{2n-i}\bigg] \bigg\vert & \leq 22 \max(1,\alpha)t \tfrac{i!}{(i+1)^2} \bar{C}^i_x B_z \eps. 
\end{align}
\end{subequations}
In particular, if $B_z \eps \ll 1$ we obtain the bounds
\begin{align}\label{ineq:eta_xt:UL}
-\tfrac{2}{3}(1+\alpha) & \leq \tilde{\eta}_{xt}(x,t) \leq - \tfrac{1+\alpha}{2} && \forall \: |x-0| \leq \frac{1}{C_0}.
\end{align}
\end{lemma}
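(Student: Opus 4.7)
The plan is to reduce everything to the estimates already established in Lemma~\ref{lem:eta_x} and then propagate them past $T_*$ using the linear extension formula~\eqref{def:extension}. For $t \in [0, T_*]$ the bounds \eqref{approx:eta_xt:B'}--\eqref{approx:eta_x:B'} coincide with \eqref{approx:eta_xt:B}--\eqref{approx:eta_x:B} by the very definition of $\tilde{\eta}_x$ (the endpoint $t = T_*$ is handled by continuity of $\p^i_x \eta_x(\cdot, T_*)$ and $\p^i_x \eta_{xt}(\cdot, T_*)$ for $i \leq 2n$, which is furnished by the uniform Lagrangian estimates of \S~\ref{sec:HOE}). So the real content of the lemma lies in the region $t > T_*$.

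For $t > T_*$, differentiating the explicit formula $\tilde{\eta}_x(x,t) = \eta_x(x,T_*) + (t-T_*)\eta_{xt}(x,T_*)$ in $x$ gives, for $0 \leq i \leq 2n$,
\[
\p^i_x \tilde{\eta}_{xt}(x,t) = \p^i_x \eta_{xt}(x,T_*), \qquad \p^i_x \tilde{\eta}_x(x,t) = \p^i_x \eta_x(x,T_*) + (t-T_*)\,\p^i_x \eta_{xt}(x,T_*) \,.
\]
The first identity, combined with \eqref{approx:eta_xt:B} evaluated at $t = T_*$, immediately yields \eqref{approx:eta_xt:B'}. For \eqref{approx:eta_x:B'}, I would substitute the $t=T_*$ versions of \eqref{approx:eta_x:B} and \eqref{approx:eta_xt:B} into the second identity: the leading terms combine as $\delta_{i0} + \tfrac{1+\alpha}{2}[T_* + (t-T_*)] \p^{i+1}_x \bar{w}_0 = \delta_{i0} + \tfrac{1+\alpha}{2} t\, \p^{i+1}_x \bar{w}_0$, while the two error contributions share the prefactor $22\max(1,\alpha)\tfrac{i!}{(i+1)^2} \bar{C}_x^i B_z \eps$ and sum to that prefactor times $T_* + (t - T_*) = t$, which is exactly the claim.

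The pointwise statements \eqref{approx:eta_xt:C'}--\eqref{approx:eta_x:C'} on $|x-0| \leq \tfrac{1}{C_0}$ then follow by direct substitution of $\p^{i+1}_x \bar{w}_0(x) = -\delta_{i0} + \mathbbm{1}_{\{i \leq 2n\}}\, i!\binom{2n}{i} x^{2n-i}$ from \eqref{def:w_0:bar} into \eqref{approx:eta_xt:B'}--\eqref{approx:eta_x:B'}. The two-sided bound \eqref{ineq:eta_xt:UL} is finally obtained by specializing \eqref{approx:eta_xt:C'} to $i=0$, using that $0 \leq x^{2n} \leq C_0^{-2n}$ on $|x| \leq \tfrac{1}{C_0}$, and absorbing the $\OO(\max(1,\alpha) B_z \eps)$ error under the smallness hypothesis $B_z \eps \ll 1$ together with $C_0 \geq 3$. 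The proof is essentially bookkeeping, and the only mildly delicate point is verifying that the $t$-weighted error in \eqref{approx:eta_x:B'} has exactly the form of a single copy of the $\bar C_x^i B_z \eps$ constant times $t$, which works precisely because the error bounds in \eqref{approx:eta_xt:B} and \eqref{approx:eta_x:B} carry the same $i$-dependent prefactor.
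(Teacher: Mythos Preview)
Your proposal is correct and follows essentially the same approach as the paper. The only cosmetic difference is that for \eqref{approx:eta_x:B'} the paper obtains the bound by integrating the already-established estimate \eqref{approx:eta_xt:B'} in time from $0$ (using $\p^i_x\tilde\eta_x(x,0)=\delta_{i0}$), whereas you split into $t\le T_*$ and $t>T_*$ and combine the two error terms from \eqref{approx:eta_x:B} and \eqref{approx:eta_xt:B}; since $\tilde\eta_{xt}$ is constant in $t$ for $t\ge T_*$, these are literally the same computation.
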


\begin{proof}
It is immediate from the definition of $\tilde{\eta}_x$ that
\begin{equation*}
\tilde{\eta}_{xt}(x,t) = \begin{cases} \eta_{xt}(x,t) & t \leq T_* \\ \eta_{xt}(x,T_*) & t \geq T_* \end{cases} .
\end{equation*}
Taking $\p^i_x$ of $\tilde{\eta}_{xt}$ and applying \eqref{approx:eta_xt:B} and \eqref{approx:eta_xt:C} gives us \eqref{approx:eta_xt:B'} and \eqref{approx:eta_xt:C'} respectively.

Integrating \eqref{approx:eta_xt:B'} and \eqref{approx:eta_xt:C'} in time gives us \eqref{approx:eta_x:B'} and \eqref{approx:eta_x:C'} respectively. \eqref{ineq:eta_xt:UL} is an immediate consequence of \eqref{approx:eta_xt:C'} if $44B_z \eps <\frac{1}{9}$.
\end{proof}

\begin{lemma}\label{lem:LB:2n}
If $\bar{C}_x^{2n} B_z \eps \ll 1$, then for all $|x-0| \leq \frac{1}{C_0}$ and $t \geq \frac{2}{3} \frac{2}{1+\alpha}$, we have the lower bound
\begin{equation}\label{ineq:LB:2n}
\p^{2n}_x\tilde{\eta}_x(x,t) \geq \tfrac{(2n)!}{2}.
\end{equation}
It follows that $\eta_x(\cdot, T_*)$ can have at most $n$ zeros in $\TT$. In particular, when $n=1$, $\eta_x$ always has a unique zero in $\TT \times [0, T_*]$.
\end{lemma}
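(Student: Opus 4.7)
The plan is to combine the higher-order approximation identity from Lemma~\ref{lem:extension:approx} with a Rolle-type counting argument based on the non-negativity of $\eta_x$.

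First, I will prove the lower bound \eqref{ineq:LB:2n}. Taking $i=2n$ in the pointwise approximation \eqref{approx:eta_x:C'}, the main term simplifies dramatically: since $n\ge 1$ we have $\delta_{2n,0}=0$, and the indicator $\mathbbm{1}_{\{i\le 2n\}}$ gives $i!\binom{2n}{i}x^{2n-i}\big|_{i=2n}=(2n)!$, with no $x$-dependence. Thus
\begin{equation*}
\bigl|\p^{2n}_x \tilde\eta_x(x,t) - \tfrac{1+\alpha}{2} t\, (2n)!\bigr| \;\le\; 22\max(1,\alpha)\, t\, \tfrac{(2n)!}{(2n+1)^2}\, \bar C_x^{2n} B_z\eps, \qquad |x|\le \tfrac{1}{C_0}.
\end{equation*}
Factoring $t(2n)!$ and using $\max(1,\alpha)\le 1+\alpha$, the hypothesis $\bar C_x^{2n}B_z\eps\ll 1$ ensures the bracketed coefficient is at least $\tfrac{3}{8}(1+\alpha)$. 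Evaluating at $t\ge \tfrac{4}{3(1+\alpha)}$ then yields $\p^{2n}_x\tilde\eta_x(x,t)\ge \tfrac{(2n)!}{2}$, which is \eqref{ineq:LB:2n}.

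Next, for the zero-counting statement, note that $T_*>(1-\eps^{1/2})\tfrac{2}{1+\alpha}\ge \tfrac{4}{3(1+\alpha)}$ for small $\eps$ by \eqref{ineq:T_*}, and on $[0,T_*]$ the extension satisfies $\tilde\eta_x=\eta_x$. So \eqref{ineq:LB:2n} yields $\p^{2n}_x\eta_x(\cdot,T_*)>0$ throughout $(-\tfrac{1}{C_0},\tfrac{1}{C_0})$. On the complement $|x|\ge \tfrac{1}{C_0}$, Lemma~\ref{lem:LB:0} gives $\eta_x(\cdot,T_*)\ge \tfrac{1}{2}C_0^{-2n}>0$, so every zero of $\eta_x(\cdot,T_*)$ lies in the interval $(-\tfrac{1}{C_0},\tfrac{1}{C_0})$.

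Now I invoke the fact that $\eta_x(\cdot,T_*)\ge 0$ on all of $\TT$ (it is strictly positive for $t<T_*$ and the limit is non-negative). If $\eta_x(\cdot,T_*)$ has $k$ distinct zeros $x_1<\cdots<x_k$ in $(-\tfrac{1}{C_0},\tfrac{1}{C_0})$, then each $x_i$ is a local minimum, so $\p_x\eta_x(x_i)=0$; moreover, on each interval $(x_i,x_{i+1})$ the function $\eta_x(\cdot,T_*)$ attains its maximum at an interior point $y_i$ where $\p_x\eta_x(y_i)=0$. This produces at least $2k-1$ zeros of $\p_x\eta_x(\cdot,T_*)$ inside $[x_1,x_k]$. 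Iterating Rolle's theorem $(2n-1)$ more times, $\p^{2n}_x\eta_x(\cdot,T_*)$ would have at least $2k-2n$ zeros in $(x_1,x_k)\subset(-\tfrac{1}{C_0},\tfrac{1}{C_0})$. Since \eqref{ineq:LB:2n} forbids any such zero, we must have $2k-2n\le 0$, i.e.\ $k\le n$.

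Finally, the $n=1$ case follows immediately: the upper bound $k\le 1$ combines with the existence of at least one zero (which Lemma~\ref{lem:LB:0} guarantees at time $T_*$) to give exactly one zero at $t=T_*$, while $\eta_x>0$ strictly for $t<T_*$ by definition of $T_*$. Thus $\eta_x$ has a unique zero in $\TT\times[0,T_*]$. I do not anticipate a serious obstacle here; the main quantitative work was already absorbed into Lemma~\ref{lem:extension:approx}, and the counting step is a clean application of Rolle's theorem exploiting the non-negativity of $\eta_x$.
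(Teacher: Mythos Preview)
Your proof is correct and matches the paper's approach: the lower bound \eqref{ineq:LB:2n} comes directly from \eqref{approx:eta_x:C'} at $i=2n$, and the zero count combines Rolle's theorem with the non-negativity of $\eta_x(\cdot,T_*)$ to halve the naive bound. The only cosmetic difference is that the paper organizes the Rolle step top-down (bounding zeros of $\p^{2n-1}_x\eta_x$, then $\p^{2n-2}_x\eta_x$, down to $\eta_x$, before invoking non-negativity to go from at most $2n-1$ critical points to at most $n$ zeros), whereas you run it bottom-up.
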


\begin{proof}
Apply \eqref{approx:eta_x:C} with $i=2n$ to obtain
\begin{equation*}
\p^{2n}_x \tilde{\eta}_x 
= \tfrac{1+\alpha}{2} t(2n)![1+ \OO(\bar{C}_x^{2n} B_z \eps)] 
\geq \tfrac{2}{3}(2n)![1+ \OO(\bar{C}_x^{2n} B_z \eps)].
\end{equation*}
If $\bar{C}_x^{2n} B_z \eps$ is small enough, the lower bound \eqref{ineq:LB:2n} follows.

It now follows from \eqref{ineq:T_*} that $\p^{2n}_x \eta_x(x, T_*) \geq \frac{(2n)!}{2}$ for all $|x-0| \leq \frac{1}{C_0}$. Therefore $\p^{2n-1}_x \eta_x( \cdot, T_*)$ is a strictly increasing function on the interval $[-\frac{1}{C_0}, \frac{1}{C_0}]$, so it has no critical points and at most one zero in $[-\frac{1}{C_0}, \frac{1}{C_0}]$.
This implies that $\p^{2n-2}_x \eta_x( \cdot, T_*)$ has at most one critical point and at most two zeros in $[-\frac{1}{C_0}, \frac{1}{C_0}]$. Continuing inductively in this manner, $\eta_x(\cdot, T_*)$ has at most $2n-1$ critical points and at most $2n$ zeros in $[-\frac{1}{C_0}, \frac{1}{C_0}]$. However, we know that $\eta_x(\cdot, T_*) \geq 0$ everywhere, so it follows from \eqref{LB:eta_x:0} that all zeros of $\eta_x(\cdot, T_*)$ in $\TT$ must themselves be critical points in the interval $|x-0| \leq \frac{1}{C_0}$. In between any two zeros of $\eta_x(\cdot, T_*)$ must be at least one other critical point, so $\eta_x(\cdot, T_*)$ can have at most $n$ zeros.
\end{proof}

\begin{proposition}\label{prop:vanish:apriori}
Suppose $(x_*,t_*) \in \TT \times [0, \infty)$ is a point such that $|x_* - 0| \leq \frac{1}{C_0}$ and
\begin{equation}\label{system:eta_x:vanish:t_*}
\tilde{\eta}_x(x_*, t_*) = \p_x \tilde{\eta}_x(x_*, t_*) = \hdots = \p^{2n-1}_x \tilde{\eta}_x(x_*,t_*) = 0.
\end{equation}
Then $t_* = T_*$, and $x_*$ is the unique zero of $\eta_x$ in $\TT \times [0,T_*]$. Furthermore,  $x_*$  satisfies the bound
\begin{subequations} \label{ineq:x_*:apriori}
\begin{align}
\label{ineq:x_*:1}
|x_*|^{2n} & \lesssim B_z \eps.
\end{align}
\end{subequations}
\end{proposition}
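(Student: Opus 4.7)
The plan is to proceed in five steps, exploiting the fact that the hypothesis $\p^i_x \tilde\eta_x(x_*,t_*) = 0$ for $i = 0, \ldots, 2n-1$ forces $\tilde\eta_x(\cdot, t_*)$ to vanish to order $2n$ at $x_*$, so that Taylor's theorem with integral remainder together with the uniform lower bound on $\p^{2n}_x \tilde\eta_x$ coming from Lemma~\ref{lem:LB:2n} controls the whole profile.

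\textbf{Step 1: a preliminary two-sided bound on $t_*$.} From $\tilde\eta_x(x_*,0) = 1$, the Newton--Leibniz identity $\tilde\eta_x(x_*,t_*) = 1 + \int_0^{t_*} \tilde\eta_{xt}(x_*,s)\,ds = 0$ combined with the bounds \eqref{ineq:eta_xt:UL} (valid for $|x_*| \leq \sfrac{1}{C_0}$) yields $\tfrac{3}{2(1+\alpha)} \leq t_* \leq \tfrac{2}{1+\alpha}$. In particular $t_* \geq \tfrac{2}{3}\cdot\tfrac{2}{1+\alpha}$, so Lemma~\ref{lem:LB:2n} applies and gives $\p^{2n}_x\tilde\eta_x(x,t_*) \geq \tfrac{(2n)!}{2}$ for all $|x| \leq \sfrac{1}{C_0}$.

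\textbf{Step 2: non-negativity of $\tilde\eta_x(\cdot,t_*)$ near $0$.} Since $\p^i_x\tilde\eta_x(x_*,t_*) = 0$ for $i = 0,\ldots,2n-1$, Taylor's theorem with integral remainder gives, for every $x$ with $|x| \leq \sfrac{1}{C_0}$,
\begin{equation*}
\tilde\eta_x(x,t_*) = \frac{1}{(2n-1)!}\int_{x_*}^{x}(x-\xi)^{2n-1}\,\p^{2n}_x\tilde\eta_x(\xi,t_*)\,d\xi \geq \tfrac{1}{2}(x-x_*)^{2n} \geq 0,
\end{equation*}
where we used the lower bound from Step~1 (the segment from $x_*$ to $x$ stays in $[-\sfrac{1}{C_0},\sfrac{1}{C_0}]$, since $|x_*|\le 1/C_0$).

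\textbf{Step 3: showing $t_* = T_*$.} If $t_* < T_*$, then $\eta_x(\cdot,t_*) > 0$ on $\TT$ (since $T_*$ is the first blowup time), contradicting $\tilde\eta_x(x_*,t_*) = \eta_x(x_*,t_*) = 0$. Conversely, if $t_* > T_*$, pick any zero $x_0$ of $\eta_x(\cdot,T_*)$; by Lemma~\ref{lem:LB:0} we have $|x_0| < \sfrac{1}{C_0}$, and the linear extension formula \eqref{def:extension} combined with \eqref{ineq:eta_xt:UL} gives $\tilde\eta_x(x_0,t_*) = (t_*-T_*)\,\eta_{xt}(x_0,T_*) < 0$, contradicting the non-negativity of Step~2. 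Thus $t_* = T_*$.

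\textbf{Step 4: uniqueness of the zero.} From Step~2, $\tilde\eta_x(x,T_*) \geq \tfrac{1}{2}(x-x_*)^{2n}$ on $[-\sfrac{1}{C_0},\sfrac{1}{C_0}]$, which vanishes only at $x = x_*$; outside this interval, Lemma~\ref{lem:LB:0} gives $\eta_x(x,T_*) \geq \tfrac{1}{2}C_0^{-2n} > 0$. So $x_*$ is the unique zero of $\eta_x(\cdot,T_*)$, and since no zero occurs at any earlier time, $x_*$ is the unique zero of $\eta_x$ in $\TT \times [0,T_*]$.

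\textbf{Step 5: the estimate $|x_*|^{2n} \lesssim B_z\eps$.} Apply \eqref{approx:eta_x:C'} with $i = 0$ at $(x_*,t_*) = (x_*,T_*)$: since $|x_*| \leq \sfrac{1}{C_0}$ lies in the region where $\bar w_0'(x) = -1 + x^{2n}$, this reads
\begin{equation*}
0 = \tilde\eta_x(x_*,T_*) = 1 - \tfrac{1+\alpha}{2}T_* + \tfrac{1+\alpha}{2}T_* \,x_*^{2n} + \OO\bigl(\max(1,\alpha)\,T_*\,B_z\eps\bigr).
\end{equation*}
Plugging in $T_* = \tfrac{2}{1+\alpha} + \OO((1+B_z)\eps)$ from \eqref{eq:T_*} makes $1 - \tfrac{1+\alpha}{2}T_* = \OO((1+\alpha)(1+B_z)\eps)$, and after dividing by $\tfrac{1+\alpha}{2}T_* = 1 + \OO((1+B_z)\eps)$ we obtain $|x_*|^{2n} \lesssim B_z\eps$. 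There is no real obstacle; the only subtle point is the correct sign bookkeeping in Step~3, where one must use both the linear-extension definition of $\tilde\eta_x$ and the strict negativity of $\eta_{xt}$ on $|x| \leq \sfrac{1}{C_0}$ to contradict Step~2.
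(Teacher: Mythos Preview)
Your proof is correct and follows essentially the same approach as the paper's: both establish the Taylor lower bound $\tilde\eta_x(x,t_*) \geq \tfrac{1}{2}(x-x_*)^{2n}$ via Lemma~\ref{lem:LB:2n}, use it together with the strict negativity of $\eta_{xt}$ to force $t_* = T_*$ and uniqueness of the zero, and then extract $|x_*|^{2n} \lesssim B_z\eps$ from \eqref{approx:eta_x:C'} and \eqref{approx:T_*}. The only cosmetic difference is that the paper evaluates the Taylor bound at $x=0$ (getting $|x_*|^{2n} \leq 2\eta_x(0,T_*)$) rather than evaluating \eqref{approx:eta_x:C'} directly at $x_*$ as you do; both routes are equivalent.
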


\begin{proof}
Since $\tilde{\eta}_x(x_*, t_*) =0$, we know $t_* \geq T_*$. Taylor expanding $\tilde{\eta}_x(\cdot, t_*)$ about $\x$  and using \eqref{ineq:LB:2n} gives us
\begin{equation} \label{ineq:Taylor:apriori}
\tilde{\eta}_x(x,t_*) 
\geq \tfrac{1}{2} (x-x_*)^{2n},
\qquad  \forall |x-0| \leq \tfrac{1}{C_0}.
\end{equation}
In particular, $\tilde{\eta}_x(\cdot, t_*) \geq 0$. It now follows from \eqref{approx:eta_xt:C} that
\begin{align*}
\eta_x(x,T_*) & = \tilde{\eta}_x(x,t_*) -(t_*-T_*)\eta_{xt}(x,T_*) \\
& \geq \tfrac{1}{2} (x-x_*)^{2n} -(t_*-T_*) \tfrac{1+\alpha}{2}\bigg[ -1 + x^{2n} + \OO(B_z \eps) \bigg] \\
& = \tfrac{1}{2} (x-x_*)^{2n} +(t_*-T_*) \tfrac{1+\alpha}{2}\bigg[ 1 - x^{2n} + \OO(B_z \eps) \bigg] \\
& \geq \tfrac{1}{2} (x-x_*)^{2n} +(t_*-T_*) \tfrac{1+\alpha}{2}\bigg[ 1 - C_0^{-2n} + \OO(B_z \eps) \bigg] \\
& \geq \tfrac{1}{2} (x-x_*)^{2n} +(t_*-T_*) \tfrac{1+\alpha}{4}
\end{align*}
for all $|x-0| \leq \frac{1}{C_0}$. We know from Lemma \ref{lem:LB:0} that $\eta_x(\cdot, T_*)$ must have a zero and all zeros of $\eta_x(\cdot, T_*)$ must be in the interval  $\{|x-0| \leq \frac{1}{C_0}\}$. Since $t_* \geq T_*$, this is only possible if $t_* = T_*$ and the zero must be $x= x_*$.

Since $t_* = T_*$, plugging $x=0$ into \eqref{ineq:Taylor:apriori} and using \eqref{approx:eta_x:C} and \eqref{approx:T_*} gives us
\begin{align*}
|x_*|^{2n} & \leq 2|\eta_x(0, T_*)| \\
& \leq 2|\tfrac{1+\alpha}{2}T_*-1| + 44\max(1,\alpha)T_*B_z \eps \\
& \lesssim B_z \eps.
\end{align*}
\end{proof}

Now define the function $G: \TT \times [0, \infty) \rightarrow \R^2$,
\begin{equation}\label{def:G}
G(x,t) : = \begin{bmatrix} \tfrac{1}{(2n)!}\p^{2n-1}_x \tilde{\eta}_x(x,t) \\  -\tfrac{2}{1+\alpha}\tilde{\eta}_x(x,t)  \end{bmatrix} .
\end{equation}

\begin{proposition}\label{prop:G}
If  
\begin{align*}
(1+\alpha) C_0  < \bar{C}_x,  \qquad
 \bar{C}^{2n}_x B_z \eps  \ll 1, 
\end{align*}
then there exists a unique point $(\x, \T)$ in a ball of radius $\frac{1}{3(1+\alpha) C_0}$ around the point $(0, \frac{2}{1+\alpha})$ such that
\begin{equation*}
G(\x,\T) = (0,0).
\end{equation*}
\end{proposition}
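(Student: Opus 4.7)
The plan is to prove Proposition~\ref{prop:G} via a Banach fixed-point argument applied to the Newton map
\[
T(p) \,:=\, p - G(p), \qquad p = (x,t),
\]
on the closed ball $\overline{B_r(p_0)}$ about the Burgers-type approximate zero $p_0 := (0, \tfrac{2}{1+\alpha})$ of radius $r := \tfrac{1}{3(1+\alpha) C_0}$. Since fixed points of $T$ are exactly zeros of $G$, Proposition~\ref{prop:G} reduces to showing that $T$ strictly contracts $\overline{B_r(p_0)}$ into itself. Note that $\overline{B_r(p_0)} \subset \{|x|\leq \tfrac{1}{C_0}\}\times(0,\infty)$ because $r < \tfrac{1}{C_0}$ and $\tfrac{2}{1+\alpha} > r$, so the pointwise bounds \eqref{approx:eta_xt:C'}--\eqref{approx:eta_x:C'} of Lemma~\ref{lem:extension:approx} apply throughout.

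First I would show that $DG$ is close to $\Id$ uniformly on the ball. Using \eqref{approx:eta_xt:C'}--\eqref{approx:eta_x:C'} with $i\in\{0,1,2n-1,2n\}$, a direct computation identifies
\[
DG(x,t) \,=\, \begin{pmatrix} \tfrac{1+\alpha}{2}\,t & \tfrac{1+\alpha}{2}\,x \\[2pt] -2n\,t\,x^{2n-1} & 1 - x^{2n}\end{pmatrix} \,+\, O(\bar{C}_x^{2n} B_z\eps),
\]
where the explicit matrix is the Jacobian of the Burgers approximation $(\tfrac{1+\alpha}{2}tx,\;t - \tfrac{2}{1+\alpha} - t x^{2n})$ and equals $\Id$ at $p_0$. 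On $\overline{B_r(p_0)}$ the entries of this explicit matrix deviate from $\Id$ by at most $\tfrac{1+\alpha}{2}\,r = \tfrac{1}{6C_0}$ in the top row and by the much smaller $O(r^{2n-1})$, $O(r^{2n})$ in the bottom row. Combining these with the $O(\bar{C}_x^{2n}B_z\eps)$ error terms, and using the hypothesis $(1+\alpha)C_0 < \bar{C}_x$ to absorb subordinate factors (e.g.\ $\bar{C}_x^{2n-1}B_z\eps \leq \bar{C}_x^{2n}B_z\eps / ((1+\alpha)C_0)$), one obtains the uniform bound $\|DG - \Id\|_{\mathrm{op}} \leq \tfrac{1}{2}$ on the ball, provided $\bar{C}_x^{2n}B_z\eps \ll 1$ with a sufficiently small implicit constant. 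Hence $T$ is a $\tfrac{1}{2}$-Lipschitz map on $\overline{B_r(p_0)}$.

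Next I would verify that $p_0$ is a sufficiently accurate approximate zero. Applying \eqref{approx:eta_x:C'} at $p_0$ with $i=0$ and $i=2n-1$ yields $|\tilde\eta_x(p_0)| \lesssim B_z\eps$ and $|\partial_x^{2n-1}\tilde\eta_x(p_0)| \lesssim (2n-1)!\,\bar{C}_x^{2n-1}B_z\eps / (2n)^2$, so
\[
\|G(p_0)\| \,\lesssim\, \bar{C}_x^{2n-1}B_z\eps \,\leq\, \frac{\bar{C}_x^{2n}B_z\eps}{(1+\alpha)C_0} \,\ll\, \frac{1}{(1+\alpha)C_0} \,\asymp\, r,
\]
using $(1+\alpha)C_0 < \bar{C}_x$ in the middle inequality and $\bar{C}_x^{2n}B_z\eps \ll 1$ in the last. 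Combining with the Lipschitz bound, for any $p\in\overline{B_r(p_0)}$ one has $\|T(p) - p_0\| \leq \tfrac{1}{2}\|p-p_0\| + \|G(p_0)\| \leq \tfrac{r}{2} + \tfrac{r}{2} = r$, so $T$ maps the ball into itself. The Banach fixed-point theorem then produces the required unique fixed point $(\x,\T) \in \overline{B_r(p_0)}$ of $T$, equivalently the unique zero of $G$ in the ball.

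The main technical delicacy is in the operator-norm estimate of the first step: the leading $O(\tfrac{1}{C_0})$ contribution to $\|DG - \Id\|_{\mathrm{op}}$ is not uniformly small, since $C_0$ is only required to satisfy $C_0 \geq 3$. Consequently, the Lipschitz constant is strictly less than $1$ only by a fixed margin, and the numerical constants in the two top-row entries $\tfrac{1+\alpha}{2}(t - \tfrac{2}{1+\alpha})$ and $\tfrac{1+\alpha}{2}x$ must be tracked carefully to ensure the contraction constant stays below $1$ uniformly in $n$, $\alpha$, and $C_0$. Once this is secured, all remaining error contributions are controlled by $\bar{C}_x^{2n}B_z\eps$ and the argument closes routinely.
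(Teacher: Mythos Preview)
Your proposal is correct and follows essentially the same approach as the paper. The paper computes the same Jacobian expansion for $DG$, obtains the bound $\|\Id - DG\| \leq \tfrac{2}{3}$ on the ball (versus your $\tfrac{1}{2}$, a harmless numerical difference), verifies $|G(p_0)| < \tfrac{1}{3}r$, and then invokes its Lemma~\ref{lem:IFT:1}, which is precisely the contraction-mapping statement you spell out directly; your identification of the $O(1/C_0)$ contribution as the main non-perturbative term is exactly the point that drives the paper's explicit constant tracking.
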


\begin{proof}
For $|x-0| \leq \frac{1}{C_0}$, the formulas \eqref{approx:eta:C'} give us
\begin{align}
DG(x,t) & = \begin{pmatrix} \tfrac{1}{(2n)!}\p^{2n}_x \tilde{\eta}_x(x,t) & \tfrac{1}{(2n)!}\p^{2n-1}_x \tilde{\eta}_{xt}(x,t) \\
-\tfrac{2}{1+\alpha}\p_x \tilde{\eta}_x(x,t) & -\tfrac{2}{1+\alpha}\tilde{\eta}_{xt}(x,t) \end{pmatrix} \notag \\ \label{eq:DG}
& = \begin{pmatrix} \tfrac{1+\alpha}{2}t[1+ \tfrac{\OO(\bar{C}^{2n}_x B_z \eps)}{(2n+1)^2}] & \tfrac{1+\alpha}{2}[x+ \frac{\OO(\bar{C}^{2n}_x B_z \eps)}{(2n)^3\bar{C}_x}] \\
t[-2nx^{2n-1} + \frac{\OO(\bar{C}^{2n}_x B_z \eps)}{\bar{C}^{2n-1}_x}] & 1 -x^{2n} + \tfrac{\OO(\bar{C}^{2n}_x B_z \eps)}{\bar{C}^{2n}_x} \end{pmatrix}.
\end{align}

For $(x,t)$ in a ball of radius $R : = \frac{1}{3(1+\alpha) C_0}$ around $(0, \frac{2}{1+\alpha})$, the fact that $C_0 \geq 3$ implies that
\begin{align*}
|\tfrac{1+\alpha}{2}t-1| & \leq \tfrac{1+\alpha}{2}R \leq \tfrac{1}{18},    & 2n |x|^{2n-1} t & \leq 2nR^{2n-1}(\tfrac{2}{1+\alpha} + R) \\
\tfrac{1+\alpha}{2}|x| & \leq \tfrac{1+\alpha}{2}R \leq \tfrac{1}{18},   & & = 12 C_0 n(R^2)^n + 2n R^{2n} \\
|x|^{2n} & \leq \tfrac{1}{81(1+\alpha)^2}, & & \leq 12C_0 R^2 + 2R^2 \\
& & & \leq \tfrac{38}{81(1+\alpha)^2}.
\end{align*}
Therefore,
\begin{align*}
\bigg\vert \Id - DG(x,t) \bigg\vert & < \tfrac{1}{2} + \OO\big( \bar{C}^{2n}_x B_z \eps \big) && \forall \: (x,t) \in B_R((0, \tfrac{2}{1+\alpha})).
\end{align*}
It follows that if $\bar{C}^{2n}_x B_z \eps$ is small enough we have $\|\Id - DG \|_{L^\infty_{x,t}(B_R(0, \frac{2}{1+\alpha}))} \leq \frac{2}{3}$.

Plugging $(x,t) = (0, \frac{2}{1+\alpha})$ into the formulas \eqref{approx:eta:C'} yields
\begin{align*}
\big\vert \tfrac{1}{(2n)!} \p^{2n-1}_x \tilde{\eta}_x(0, \tfrac{2}{1+\alpha}) \big\vert & \leq \frac{44 \bar{C}^{2n}_x B_z \eps}{(2n)^3 \bar{C}_x},
\\
\big\vert \tfrac{2}{1+\alpha} \tilde{\eta}_x(0, \tfrac{2}{1+\alpha}) \big\vert & \leq 88 B_z \eps.
\end{align*}
Since $\bar{C}_x > (1+\alpha) C_0$, it follows that if $\bar{C}^{2n}_x B_z \eps$ is small enough we have $|G(0, \frac{2}{1+\alpha})| < \frac{1}{3} R$. Our result now follows from Lemma \ref{lem:IFT:1}.
\end{proof}

Notice that one always has the inequality $\T \geq T_*$: indeed, if  $ \: \T \leq T_*$ then
\begin{equation*}
\eta_x(\x,\T) = \tilde{\eta}_x(\x,\T) = 0,
\end{equation*}
which means that $\T = T_*$.

\begin{corollary}\label{cor:x:ring}
If $(\x,\T)$ is the solution of $G = (0,0)$ given by Proposition \ref{prop:G}, then $\x, \T$ satisfy the bounds
\begin{align}\label{ineq:T:ring}
|\tfrac{1+\alpha}{2}\T-1| & < \tfrac{1}{18},
\\
\label{ineq:x:ring}
|\x| & \leq \tfrac{93}{(2n)^3} \bar{C}^{2n-1}_x B_z \eps.
\end{align}
It follows that if $n \geq 2$ and $\bar{C}^{2n}_x B_z \eps \ll 1$  then 
\begin{subequations}
\begin{align}
\label{approx:eta_xt:D}
|\p^i_x \tilde{\eta}_{xt}(\x,\T)| & \leq \tfrac{1+\alpha}{2} \tfrac{50}{(2n)} (2n)! \bar{C}^{2n-2}_x B_z \eps, \\
\label{approx:eta_x:D}
|\p^i_x \tilde{\eta}_{x}(\x,\T)| & \leq  \tfrac{53}{(2n)} (2n)! \bar{C}^{2n-2}_x B_z \eps ,
\end{align}
for $1 \leq i \leq 2n-2$.
\end{subequations}
\end{corollary}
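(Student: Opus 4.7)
The proof will have three parts, corresponding to the three assertions.

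The first bound \eqref{ineq:T:ring} is essentially free: Proposition~\ref{prop:G} places $(\x, \T)$ in the ball of radius $R = \frac{1}{3(1+\alpha) C_0}$ around $(0, \frac{2}{1+\alpha})$, so $|\T - \frac{2}{1+\alpha}| \leq R$, and multiplying by $\frac{1+\alpha}{2}$ together with the standing assumption $C_0 \geq 3$ yields $|\frac{1+\alpha}{2}\T - 1| \leq \frac{1}{6 C_0} \leq \frac{1}{18}$.

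For \eqref{ineq:x:ring}, the key is that the first component of $G(\x, \T) = 0$ reads $\p^{2n-1}_x \tilde{\eta}_x(\x, \T) = 0$. I will apply the Taylor-type approximation \eqref{approx:eta_x:C'} with $i = 2n-1$, noting that $\mathbbm{1}_{\{i \leq 2n\}} i! \binom{2n}{i} x^{2n-i} = (2n)!\, x$ in this case, so
\begin{equation*}
0 = \p^{2n-1}_x \tilde{\eta}_x(\x, \T) = \tfrac{1+\alpha}{2} \T (2n)! \x + \OO\bigl( \tfrac{(2n-1)!}{(2n)^2} \bar{C}_x^{2n-1} B_z \eps \bigr).
\end{equation*}
Solving for $\x$ and using $\T \geq \frac{17}{9(1+\alpha)}$ from \eqref{ineq:T:ring} to bound $\frac{1}{\T}$ above will deliver the estimate $|\x| \leq \frac{93}{(2n)^3} \bar{C}_x^{2n-1} B_z \eps$ after tracking the explicit constant from \eqref{approx:eta_x:C'}.

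For the bounds \eqref{approx:eta_xt:D} and \eqref{approx:eta_x:D}, I will plug the point $(\x,\T)$ into the approximations \eqref{approx:eta_xt:C'} and \eqref{approx:eta_x:C'} for each $1 \leq i \leq 2n-2$. The explicit polynomial term in these approximations is $\tfrac{1+\alpha}{2} i!\binom{2n}{i} \x^{2n-i}$ (times $\T$ in the second case); since $2n-i \geq 2$ in our range and \eqref{ineq:x:ring} gives $|\x|^2 \lesssim \bar{C}_x^{4n-2}(B_z\eps)^2 = \bar{C}_x^{2n-2} B_z \eps \cdot \bar{C}_x^{2n} B_z \eps$, the smallness hypothesis $\bar{C}_x^{2n} B_z \eps \ll 1$ makes these polynomial contributions strictly subleading to the error term $\bar{C}_x^i B_z \eps$ from the approximations. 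Thus the bounds reduce to checking that the error constant $22 \max(1,\alpha) \frac{i!}{(i+1)^2} \bar{C}_x^i$ fits (possibly with $\T$ factor) inside the target constants $\frac{1+\alpha}{2}\cdot\frac{50}{2n}(2n)!\bar{C}_x^{2n-2}$ and $\frac{53}{2n}(2n)!\bar{C}_x^{2n-2}$. This is a routine arithmetic comparison using $\bar{C}_x \geq 1$, $\frac{i!}{(2n)!} \leq \frac{1}{2n(2n-1)}$ for $i \leq 2n-2$, and $\frac{\max(1,\alpha)}{1+\alpha} \leq 1$.

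The only step with any real content is the second one; the rest is essentially substitution and constant-tracking. I don't anticipate any serious obstacle, since all the analytic work has been done in Lemma~\ref{lem:extension:approx} and Proposition~\ref{prop:G}, and the corollary merely unpacks the consequences of $G(\x, \T) = 0$ at the level of individual Taylor coefficients of $\tilde{\eta}_x$.
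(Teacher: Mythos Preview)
Your proposal is correct and follows essentially the same route as the paper. The only notable difference is in establishing \eqref{ineq:x:ring}: you apply \eqref{approx:eta_x:C'} directly at $x=\x$ (where the leading term is $\tfrac{1+\alpha}{2}\T(2n)!\,\x$), whereas the paper applies it at $x=0$ and invokes the lower bound \eqref{ineq:LB:2n} on $\p_x^{2n}\tilde\eta_x$ together with the fundamental theorem of calculus to compare $\p_x^{2n-1}\tilde\eta_x(0,\T)$ with $\p_x^{2n-1}\tilde\eta_x(\x,\T)=0$. Your direct route is slightly cleaner---note that the error in \eqref{approx:eta_x:C'} also carries a factor of $\T$, so the $\T$'s cancel on both sides and the step of bounding $1/\T$ is in fact unnecessary; one obtains the sharper constant $44$ in place of $93$. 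For \eqref{approx:eta_xt:D}--\eqref{approx:eta_x:D} both arguments are identical: plug $(\x,\T)$ into \eqref{approx:eta_xt:C'}--\eqref{approx:eta_x:C'}, use $2n-i\ge 2$ and \eqref{ineq:x:ring} to make the polynomial term $\OO(\bar C_x^{2n-2}B_z\eps\cdot \bar C_x^{2n}B_z\eps)$, and absorb the remaining error term.
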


\begin{proof}
The inequality \eqref{ineq:T:ring} is immediate from Proposition \ref{prop:G} and the fact that $C_0 \geq 3$. Since $\p^{2n-1}_x \tilde{\eta}_x(\x,\T) = 0$, \eqref{ineq:LB:2n}  gives us
\begin{align*}
 \big\vert \p^{2n-1}_x \tilde{\eta}_x (0, \T) \big\vert 
= \bigg\vert \int^0_{\x} \p^{2n}_x\tilde{\eta}_x(x,\T) \: dx \bigg\vert 
\geq \tfrac{(2n)!}{2}|\x|.
\end{align*}
Plugging $i=2n-1, x = 0, t= \T$, into \eqref{approx:eta_x:C'} now yields
\begin{align*}
|\x| & \leq \tfrac{44}{(2n)^3}\max(1,\alpha) \T \bar{C}^{2n-1}_x B_z \eps.
\end{align*}
Our bound \eqref{ineq:x:ring} now follows from \eqref{ineq:T:ring}.

Now suppose $n \geq 2$. Since $|\x| < 1$, we have
\begin{align*}
\max_{1 \leq i \leq 2n-2} \tfrac{(2n)!}{(2n-i)!} |\x|^{2n-i}   = \tfrac{(2n)!}{2} |\x|^2 \leq \tfrac{(2n)!}{2}\big( \tfrac{93}{(2n)^3}\bar{C}^{2n-1}_x B_z \eps)^2.
\end{align*}
For $1 \leq i \leq 2n-2$ the bound \eqref{approx:eta_xt:C'} now implies
\begin{align*}
|\p^i_x \tilde{\eta}_{xt}(\x, \T)| & \leq \tfrac{1+\alpha}{2} (2n)! \bar{C}^{2n-1}_x B_z \eps \bigg[ \tfrac{(93)^2}{2(2n)^3} \bar{C}^{2n-1}_x B_z \eps + 44\tfrac{(2n-4)!}{(2n)! \bar{C}_x} \bigg].
\end{align*}
Therefore, if $\bar{C}^{2n}_x B_z \eps$ is small enough we get \eqref{approx:eta_xt:D}.  The bound \eqref{approx:eta_x:D} follows from analogous computations using \eqref{approx:eta_x:C'}.
\end{proof}

Now for $n \geq 2$ define the function $f_n:\mathcal B_n(\eps, C_0) \rightarrow \R^{2n-2},$
\begin{equation}\label{def:f}
f_n(w_0,z_0,k_0) : = \begin{bmatrix} \p_x \tilde{\eta}_x(\x, \T) \\ \p^2_x \tilde{\eta}_x(\x, \T) \\ \vdots  \\ \p^{2n-2}_x \tilde{\eta}_x(\x,\T) \end{bmatrix} .
\end{equation}
The zero set of $f_n$ is precisely the initial data in $\mathcal B_n(\eps, C_0)$ for which the flow $\eta$ forms an initial singularity of the form 
\begin{equation}\label{system:eta_x:vanish:T_*}
\eta_x(x_*, T_*) = \p_x \eta_x(x_*, T_*) = \hdots = \p^{2n-1}_x \eta_x(x_*,T_*) = 0,
\end{equation}
at a point $x_* \in \TT$, and whenever such a root $x_*$ exists it must be the only root of $\eta_x(\cdot, T_*)$. To see this, if $f_n(w_0,z_0,k_0) = 0$ then it follows from the definitions of $G$ and $(\x,\T)$ that $(x_*, t_*) = (\x,\T)$ solves \eqref{system:eta_x:vanish:t_*}, and Proposition \ref{prop:vanish:apriori} implies that $(x_*, t_*) = (\x,\T)$ is the unique root of $\eta_x$ in $\TT \times[0,T_*]$ . Conversely, if $x_* \in \TT$ is a point solving \eqref{system:eta_x:vanish:T_*} then Lemma \ref{lem:LB:0} implies that $|x_*-0| < \frac{1}{C_0}$ and Proposition \ref{prop:vanish:apriori} therefore implies that $x_*$ is the only root of $\eta_x(\cdot, T_*)$ and also satisfies the bound \eqref{ineq:x_*:1}. Since we are assuming that $\bar{C}^{2n}_x B_z \eps$ is small and $(1+\alpha) C_0 < \bar{C}_x$, as in the premise of Proposition \ref{prop:G}, it follows that $|x_*| < \frac{1}{3(1+\alpha)C_0}$ and therefore $(x_*,T_*) = (\x,\T)$.

We will now quantify the regularity of $f_n$ so that we can apply the implicit function theorem and conclude that the zero set of $f_n$ is the graph of a Lipschitz function from a codimension-$(2n-2)$ subspace of $(W^{2n+2, \infty}(\TT))^3$ into $\R^{2n-2}$.


\subsection{Stability of solutions}\label{sec:Lip}

Now suppose further that $\bar{C}_x, \bar{C}_t$ are chosen large enough and $\eps$ is chosen small enough such that the hypotheses of \S~\ref{sec:cont} are satisfied, and therefore the bounds \eqref{cor:ineq:dx:cont} apply for all initial data in $\mathcal B_n(\eps, C_0)$.

\begin{proposition}\label{prop:Lip}
The blowup time, $T_*$, and the extensions $\tilde{\eta}_x$ depend on the initial data in way that is Lipschitz with respect to the $W^{2n+2, \infty}$ norm on $\mathcal B_n(\eps, C_0)$: i.e. if $\|\cdot\|_{m, C_0}$ is the seminorm on $(W^{2n+2, \infty}(\TT))^3$ defined by \eqref{def:seminorm} and $(w^1_0,z^1_0, k^1_0), (w^2_0,z^2_0, k^2_0) \in \mathcal B_n(\eps, C_0)$ we have
\begin{align}
\label{ineq:T_*:Lip}
|T_*(w^1_0,z^1_0, k^1_0) - T_*(w^2_0,z^2_0, k^2_0)| & \leq \tfrac{140}{1+\alpha} B_d \| (w^1_0 - w^2_0, z^1_0-z^2_0, k^1_0-k^2_0)\|_{0, C_0}, 
\end{align}
and if $\bar{C}_t B_z \eps \ll 1$ then for all $(x,t) \in \TT \times [0, \infty)$, $i=0, \hdots, 2n-1$ we have
\begin{subequations}
\begin{align}
\label{ineq:eta_xt:Lip}
\hspace{-5mm} |\p^i_x\tilde{\eta}_{xt}(x,t, w^1_0, z^1_0, k^1_0)-\p^i_x\tilde{\eta}_{xt}(x,t, w^2_0, z^2_0, k^2_0)| & \leq 45\frac{(1+\alpha) i! \bar{C}^i_x B_d}{i+1}\| (w^1_0 - w^2_0, z^1_0-z^2_0, k^1_0-k^2_0)\|_{i, C_0}, 
\\
\label{ineq:eta_x:Lip}
|\p^i_x\tilde{\eta}_{x}(x,t, w^1_0, z^1_0, k^1_0)-\p^i_x\tilde{\eta}_{x}(x,t, w^2_0, z^2_0, k^2_0)| & \leq 45\frac{(1+\alpha) t i! \bar{C}^i_x B_d}{i+1}\| (w^1_0 - w^2_0, z^1_0-z^2_0, k^1_0-k^2_0)\|_{i, C_0} .
\end{align}
\end{subequations}
Here $\tilde{\eta}_x(\cdot, \cdot, w^j_0, z^j_0, k^j_0)$ is the function $\tilde{\eta}_x$ corresponding to the solution with initial data $(w^j_0, z^j_0, k^j_0)$.
\end{proposition}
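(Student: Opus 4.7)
The plan is to prove the three bounds in sequence, starting with the time estimate \eqref{ineq:T_*:Lip}, which is then the crucial ingredient for handling the extensions across differing blowup times. For \eqref{ineq:T_*:Lip}, assume without loss of generality that $T_*^1 \leq T_*^2$, and pick $x_*^1 \in \TT$ at which $\eta_x^1(x_*^1, T_*^1) = 0$; by Lemma~\ref{lem:LB:0} such a point satisfies $|x_*^1| < 1/C_0$. The continuity estimates proven in \S\ref{sec:cont} (see the zeroth-order bound on $\eta_x - \tilde{\eta}_x$ displayed just before \eqref{cor:ineq:dx:cont}), valid up to and including time $T_*^1$ by continuity, give $|\tilde{\eta}_x^2(x_*^1, T_*^1)| = |\eta_x^2(x_*^1, T_*^1) - \eta_x^1(x_*^1, T_*^1)| \leq 70 B_d \mu_0$; on the other hand, $\tilde{\eta}_x^2(x_*^1, T_*^2) = \eta_x^2(x_*^1, T_*^2) \geq 0$ since $T_*^2$ is the blowup time of the second solution. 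Since $|x_*^1| < 1/C_0$, the uniform bound $\tilde{\eta}_{xt}^2 \leq -(1+\alpha)/2$ from \eqref{ineq:eta_xt:UL} applies along the line $x = x_*^1$, and integrating in $t$ yields
\[
0 \leq \tilde{\eta}_x^2(x_*^1, T_*^2) \leq \tilde{\eta}_x^2(x_*^1, T_*^1) - \tfrac{1+\alpha}{2}(T_*^2 - T_*^1) \leq 70 B_d \mu_0 - \tfrac{1+\alpha}{2}(T_*^2 - T_*^1),
\]
which rearranges to \eqref{ineq:T_*:Lip}.

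For the derivative bound \eqref{ineq:eta_xt:Lip}, the strategy is to split into regimes based on the location of $t$ relative to $T_*^1$ and $T_*^2$. On $[0, T_*^1 \wedge T_*^2)$ the two extensions coincide with the genuine flows, so \eqref{cor:ineq:dx:cont} at $\beta = (i, 0)$ already gives a bound with prefactor $38(1+\alpha)/(i+1)^2 \leq 45(1+\alpha)/(i+1)$. For $t \geq T_*^1$ (still with $T_*^1 \leq T_*^2$), the definition of the extension gives $\tilde{\eta}_{xt}^1(x, t) = \eta_{xt}^1(x, T_*^1)$, and I insert the intermediate quantity $\eta_{xt}^2(x, T_*^1)$ to write
\[
\p^i_x \tilde{\eta}_{xt}^1(x,t) - \p^i_x \tilde{\eta}_{xt}^2(x,t) = \bigl[\p^i_x \eta_{xt}^1(x, T_*^1) - \p^i_x \eta_{xt}^2(x, T_*^1)\bigr] + \bigl[\p^i_x \eta_{xt}^2(x, T_*^1) - \p^i_x \tilde{\eta}_{xt}^2(x, t)\bigr].
\]
The first bracket is controlled by \eqref{cor:ineq:dx:cont}. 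For the second bracket, since $\tilde{\eta}_{xt}^2(x, \cdot)$ is stationary past $T_*^2$, the fundamental theorem of calculus together with the bound $\|\p^i_x \p_t \eta_{xt}^2\|_{L^\infty_x} \leq 21(1+\alpha)(i!/(i+1))\bar{C}_x^i B_z \eps$ from \eqref{ineq:eta_xtt:dx} applied on $[T_*^1, t \wedge T_*^2]$ yields a bound of order $(1+\alpha)(i!/(i+1))\bar{C}_x^i B_z \eps \cdot (T_*^2 - T_*^1)$. Plugging in \eqref{ineq:T_*:Lip} produces a contribution of order $B_z B_d \eps \mu_0 \cdot i!\bar{C}_x^i/(i+1)$, which is absorbed into $45 (1+\alpha) i!\bar{C}_x^i B_d \mu_i /(i+1)$ since $\mu_0 \leq \mu_i$ and $(1+\alpha) B_z \eps \ll 1$ (a consequence of the running hypothesis $\bar{C}_t B_z \eps \ll 1$).

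Finally, for \eqref{ineq:eta_x:Lip}, the identity $\tilde{\eta}_x^j(x, 0) = 1$ common to both solutions gives $\p^i_x(\tilde{\eta}_x^1 - \tilde{\eta}_x^2)(x, t) = \int_0^t \p^i_x(\tilde{\eta}_{xt}^1 - \tilde{\eta}_{xt}^2)(x, s)\,ds$, and pointwise application of the bound just proven yields the result with the expected factor of $t$. The main delicacy lies in the mixed regime where one extension still uses the genuine flow while the other has already been linearized (or both have been linearized from different base times): the inserted intermediate $\eta_{xt}^2(x, T_*^1)$ is precisely what is needed to separate the ``same-time'' difference, controlled by the continuity estimates of \S\ref{sec:cont}, from the ``time-shift'' part, controlled by the $L^\infty$ estimate on $\p^i_x \p_t \eta_{xt}$ from \S\ref{sec:HOE} combined with \eqref{ineq:T_*:Lip}. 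Tracking numerical constants to reach the stated prefactor $45$ is routine given the standing smallness hypothesis on $B_z \eps$.
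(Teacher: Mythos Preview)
Your proof is correct and follows essentially the same approach as the paper: the same WLOG ordering $T_*^1 \leq T_*^2$, the same use of the zeroth-order continuity bound $70 B_d \mu_0$ together with \eqref{ineq:eta_xt:UL} to squeeze $T_*^2 - T_*^1$, the same splitting into $t \leq T_*^1$ versus $t \geq T_*^1$ with the intermediate term $\eta_{xt}^2(x, T_*^1)$ inserted, and the same use of \eqref{ineq:eta_xtt:dx} on $[T_*^1, t \wedge T_*^2]$ for the time-shift piece. The constant tracking and the final integration in time are also identical.
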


\begin{proof}
Let us adopt the notation $T^j_* : = T_*(w^j_0,z^j_0, k^j_0)$ for $j=1,2$  and $\mu_i : =\| (w^1_0 - w^2_0, z^1_0-z^2_0, k^1_0-k^2_0)\|_{i, C_0}$. Suppose without loss of generality that $T^1_* \leq T^2_*$. There exists a point $x^1_* \in \TT$ with $|x^1_* - 0| < \frac{1}{C_0}$ such that
\begin{equation*}
\eta_x(x^1_*, T^1_*, w^1_0, z^1_0, k^1_0) = 0.
\end{equation*}
Since $T^2_* \geq T^1_*$, it now follows from \eqref{cor:ineq:dx:cont} and \eqref{ineq:eta_xt:UL} that
\begin{equation*}
0 \leq \eta_x(x^1_*, T^2_*, w^2_0, z^2_0, k^2_0) \leq 70B_d \mu_0 - \tfrac{1+\alpha}{2}(T^2_*-T^1_*).
\end{equation*}
The inequality \eqref{ineq:T_*:Lip} now follows immediately.

For $t \leq T^1_*$, Corollary \ref{cor:ineq:dx:cont} implies
\begin{align*}
|\p^i_x\tilde{\eta}_{xt}(x,t, w^1_0, z^1_0, k^1_0)-\p^i_x\tilde{\eta}_{xt}(x,t, w^2_0, z^2_0, k^2_0)| & \leq 38\frac{(1+\alpha) i! \bar{C}^i_x B_d}{(i+1)^2} \mu_i
\end{align*}
for all $x \in \TT$. For $T^1_* \leq t$, \eqref{ineq:eta_xtt:dx} gives us
\begin{align*}
 |\p^i_x\tilde{\eta}_{xt}(x,t, w^1_0, z^1_0, k^1_0)-\p^i_x\tilde{\eta}_{xt}(x,t, w^2_0, z^2_0, k^2_0)| & \leq  \|\p^i_x \p_t \eta_{xt}(w^2_0, z^2_0, k^2_0) \|_{L^\infty_{x,t}} (t\wedge T^2_* - T^1_*) + 38\tfrac{(1+\alpha) i! \bar{C}^i_x B_d}{(i+1)^2} \mu_i
 \\
 & \leq \frac{(1+\alpha) \bar{C}^i_x i!}{i+1} \big[ 21 \bar{C}_t B_z \eps (T^2_*-T^1_*) + 38 \tfrac{B_d \mu_i}{i+1} \big] \\
 & \leq \frac{(1+\alpha) \bar{C}^i_x i! B_d}{i+1} \mu_i \big[ 21 \bar{C}_t B_z \eps \cdot \tfrac{140}{1+\alpha} \tfrac{\mu_0}{\mu_i} + 38  \big] .
\end{align*}
Therefore, if $\bar{C}_t B_z \eps$ is small enough we obtain \eqref{ineq:eta_xt:Lip}. The inequality \eqref{ineq:eta_x:Lip} now follows from using the fact that $\p^i_x\eta_x$ is always constant at time $t=0$ with $\p^i_x \eta_x(x,0)= \delta_{i0}$ and integrating \eqref{ineq:eta_xt:Lip} in time.
\end{proof}

One immediate consequence of Proposition \ref{prop:Lip} is  that the function $G$ defined in \eqref{def:G} is Lipschitz in $(w_0,z_0,k_0)$ with
\begin{align*}
|G(x,t, w^1_0, z^1_0, k^1_0) - G(x,t, w^2_0, z^2_0, k^2_0) | & \leq  135 B_d \big[ \tfrac{\bar{C}^{2n-1}_x}{(2n)^2} + \tfrac{2}{1+\alpha}\big] \|(w^1_0-w^2_0,z^1_0-z^2_0, k^1_0-k^2_0) \|_{2n-1,C_0}
\end{align*}
for all $(x,t) \in \TT \times [0, \frac{3}{1+\alpha}]$, $(w^1_0, z^1_0, k^1_0), (w^2_0, z^2_0, k^2_0) \in \mathcal B_n(\eps, C_0).$ As a result (see \S~\ref{sec:appendix:IFT}), $(\x,\T)$ is a Lipschitz function of $(w_0,z_0,k_0)$ with
\begin{align*}
|(\x,\T)(w^1_0, z^1_0, k^1_0) - (\x,\T)(w^2_0, z^2_0, k^2_0) | & \leq  405 B_d \big[ \tfrac{\bar{C}^{2n-1}_x}{(2n)^2} + \tfrac{2}{1+\alpha}\big] \|(w^1_0-w^2_0,z^1_0-z^2_0, k^1_0-k^2_0) \|_{2n-1,C_0} .
\end{align*}
It now also follows from Proposition \ref{prop:Lip} that $f_n$ is a Lipschitz function of $(w_0,z_0,k_0)$ as well.


\subsection{An implicit function theorem argument}
\label{sec:FCBM}

In this subsection, we will further restrict our attention to the case $n \geq 2$. Define the subspace
\begin{equation}\label{def:X_n}
X_n : = \{ \tilde{w}_0 \in W^{2n+2, \infty}(\TT) : \p^i_x \tilde{w}_0(0) = 0 \:  \forall \: i = 2, \hdots, 2n-1 \}. 
\end{equation}
Note that $\bar{w}_0 \in X_n$.

Pick functions $\tilde{w}^n_1, \hdots, \tilde{w}^n_{2n-2} \in C^\infty_x(\TT)$ such that
\begin{subequations}\label{def:w_0:tilde}
\begin{align}
\|\tilde{w}^n_j\|_{L^\infty_x} & \leq L_n, & \\
\p^{i+1}_x\tilde{w}^n_j(0) & = \delta_{ij}, & i = 0, \hdots, 2n-2, \\
\frac{\|\p^{i+1}_x \tilde{w}^n_j\|_{L^\infty_x}}{i!} & \leq L_n C^i_0, & i = 0, \hdots, 2n+1, \\
|\p^{2n}_x \tilde{w}^n_j(x)| & \leq (2n-1)! L_n (\tfrac{C_0}{2})^{2n-1} \eps, & |x-0| \leq \tfrac{1}{C_0},
\end{align}
\end{subequations}
for some constant $L_n$. For such functions, Taylor expanding about $x =0$  for $x$ with $|x-0| \leq \frac{1}{C_0}$ gives us
\begin{align*}
\bigg\vert \p^{i+1}_x \tilde{w}^n_j(x) - \mathbbm{1}_{\{i \leq j\}} \tfrac{x^{j-i}}{(j-i)!} \bigg\vert & \leq  \tfrac{(2n-1)! L_n (\tfrac{C_0}{2})^{2n-1} \eps}{(2n-1-i)!} |x|^{2n-1-i}\\
& \leq i! L_n {2n-1\choose{i}}\tfrac{1}{2^{2n-1}}C^i_0 \eps  \\
& \leq i! L_n C^i_0 \eps
\end{align*}
for $i =0, \hdots, 2n$.

For a concrete example of such a collection of functions, pick a smooth bump function $\chi \in C^\infty_c(\R)$ satisfying
\begin{align*}
\mathbbm{1}_{[-1,1]} \leq \chi \leq \mathbbm{1}_{[-\frac{C_0}{2}, \frac{C_0}{2}]}
\end{align*}
and then define the $\ZZ$-periodic functions $\tilde{w}_j$ via
\begin{align*}
\tilde{w}_j (x) & : = \tfrac{x^{j+1}}{(j+1)!} \chi(C_0x)   & \text{ for } |x| \leq \tfrac{1}{2}.
\end{align*}
One can check that for any $n \geq 2$ if we define the functions $\tilde{w}^n_1, \hdots \tilde{w}^n_{2n-2}$ to be $\tilde{w}^n_j : = \tilde{w}_j$, then these functions satisfy \eqref{def:w_0:tilde} with
\begin{equation*}
L_n = C_0 (2n+2) \max_{0 \leq k \leq 2n+2} \tfrac{\|\p^k\chi\|_{L^\infty_x}}{k!}.
\end{equation*} 
Note, however, that in this example $L_n$ must diverge to $\infty$ as $n$ grows.

Since $W^{2n+2,\infty}(\TT) =  \RR \tilde{w}^n_1 \oplus \cdots \oplus \RR\tilde{w}^n_{2n-2} \oplus X_n$ and $\bar{w}_0 \in X_n$, we have the affine change of coordinates $ X_n \times \R^{2n-2} \longleftrightarrow W^{2n+2, \infty}(\TT)$
\begin{align*}
 ( \tilde{w}_0, \lambda) & \longleftrightarrow w_0, \\
 w_0 &  = \bar{w}_0 + \tilde{w}_0 + \lambda_1 \tilde{w}^n_1 + \hdots + \lambda_{2n-2} \tilde{w}^n_{2n-2}, \\
 \lambda_j & = \p^{j+1}_x w_0(0).
 \end{align*}
This extends to an affine isomorphism  $X_n \times  (W^{2n+2, \infty}(\TT))^2 \times \R^{2n-2} \longleftrightarrow (W^{2n+2, \infty}(\TT))^3$
\begin{align*}
 (\tilde{w}_0, z_0, k_0, \lambda) & \longleftrightarrow (w_0, z_0, k_0), \\
 w_0 &  = \bar{w}_0 + \tilde{w}_0 + \lambda_1 \tilde{w}^n_1 + \hdots + \lambda_{2n-2} \tilde{w}^n_{2n-2}, \\
 \lambda_j & = \p^{j+1}_x w_0(0).
 \end{align*}
This change of coordinates for $(W^{2n+2, \infty}(\TT))^3$ will allow us to characterize the zero set of $f_n$ as a codimension-$(2n-2)$ Banach manifold in a neighborhood of the point $(\bar{w}_0, 0,0) \in (W^{2n+2, \infty}(\TT))^3$.

With this choice of coordinates in mind, define the open set
\begin{align}
\Lambda_n(\eps, L_n) & : = \big\{ \lambda \in \R^{2n-2} :  L_n \sum^{2n-2}_{j=1} |\lambda_j| < \tfrac{\eps}{2} \big\}, 
\end{align}
and define the open set
\begin{equation*}
U_n(\eps, C_0) \subset X_n \times (W^{2n+2, \infty}(\TT))^2
\end{equation*}
to be $(\tilde{w}_0, z_0, k_0) \in X_n \times (W^{2n+2, \infty}(\TT))^2$ satisfying
\begin{subequations}\label{def:U_n}
\begin{align}
\|\tilde{w}_0\|_{L^\infty_x} & < \tfrac{\eps}{2}, &  \\
\|z_0\|_{L^\infty_x} & < \eps, & \\
\frac{\|\p^{i+1}_x \tilde{w}_0\|_{L^\infty_x}}{i!} & < C^i_0 \tfrac{\eps}{2}, & i=0, \hdots, 2n+1, \\
\frac{\|\p^{i+1}_x z_0\|_{L^\infty_x}}{i!} & < C^i_0 \eps, & i=0, \hdots, 2n+1, \\
\frac{\|\p^{i+1}_x k_0\|_{L^\infty_x}}{i!} & < C^i_0 \eps, & i=0, \hdots, 2n+1.
\end{align}
\end{subequations}
Note that if $(\tilde{w}_0, z_0, k_0, \lambda) \in  U_n(\eps, C_0) \times \Lambda_n(\eps, L_n)$ and $w_0 = \bar{w}_0 + \tilde{w}_0 + \lambda_1 \tilde{w}^n_1 + \hdots + \lambda_{2n-2} \tilde{w}^n_{2n-2}$, then $(w_0,z_0, k_0) \in \mathcal B_n(\eps, C_0)$. For this reason, we can treat the functions $\x, \T,$ and $f_n$ defined in \S~\ref{sec:FCBM:prelim} as functions of  $(w_0,z_0,k_0) \in\mathcal B_n(\eps, C_0)$ as functions of $( \tilde{w}_0, z_0, k_0, \lambda) \in U_n(\eps, C_0) \times \Lambda_n(\eps, L_n)$ by identifying $(\tilde{w}_0, z_0, k_0, \lambda)$ with the corresponding $(w_0,z_0,k_0) \in \mathcal B_n(\eps, C_0)$ via the affine change of coordinates. We will make this tacit identification for the rest of this subsection. 

We will further assume for the rest of this section that $\bar{C}_t, \bar{C}_x$ and $\eps$ are chosen to satisfy the hypotheses of the propositions in \S~\ref{sec:stab}
 with $L = L_n, M= 5L_n$. Therefore, for all solutions with initial data $( \tilde{w}_0, z_0, k_0, \lambda) \in U_n(\eps, C_0) \times \Lambda_n(\eps, L_n)$, the partial derivatives $\Z_{\lambda_j}, \p_{\lambda_j} \eta_x,$ etc. exist for $j=1, \hdots, 2n-2$ and the solution satisfies the estimates from \S~\ref{sec:ineq:dx:stab}.

It follows from \eqref{ineq:T_*:Lip} that if $j \in \{1, \hdots, 2n-2\}$, $(w_0, z_0, k_0) \in \mathcal B_n(\eps, C_0)$, and $(w_0 + \Delta \lambda_j \tilde{w}^n_j, z_0, k_0) \in \mathcal B_n(\eps, C_0)$ for some scalar $\Delta \lambda_j$ we have
\begin{align*}
|T_*(w_0 + \Delta \lambda_j \tilde{w}^n_j, z_0, k_0) - T_*(w_0, z_0, k_0) | & \leq \tfrac{140}{1+\alpha} L_n B_d \Delta \lambda_j.
\end{align*}
Therefore, for each $(\tilde{w}_0, z_0, k_0) \in U_n(\eps, C_0)$ the map $\lambda \in \Lambda_n(\eps, C_0) \rightarrow T_*( \tilde{w}_0, z_0, k_0, \lambda)$ is $\frac{140}{1+\alpha} L_n B_d$-Lipschitz in each $\lambda_j$. It follows that the map $ \lambda  \rightarrow T_*( \tilde{w}_0, z_0, k_0, \lambda)$ is a $(2n-2)^{\frac{1}{2}} \frac{140}{1+\alpha} L_n B_d$-Lipschitz map of $\lambda \in \Lambda_n(\eps, C_0)$ for each $(\tilde{w}_0, z_0, k_0) \in U_n(\eps, C_0)$ and therefore is differentiable at almost every $\lambda \in \Lambda_n(\eps, C_0)$ with
\begin{subequations}
\begin{align}\label{ineq:T_*:dlambda:1}
\|\p_{\lambda_j} T_*\|_{L^\infty_\lambda (\Lambda_n(\eps, L_n))} & \leq \tfrac{140}{1+\alpha} L_n B_d && \forall \:  j=1, \hdots, 2n-2.
\end{align} 
Since $B_\lambda > 4B_d$, \eqref{ineq:T_*:dlambda:1} implies
\begin{align}\label{ineq:T_*:dlambda:2}
\|\p_{\lambda_j} T_*\|_{L^\infty_\lambda (\Lambda_n(\eps, L_n))} & \leq \tfrac{35}{1+\alpha} L_n B_\lambda && \forall \: j=1, \hdots, 2n-2.
\end{align}
\end{subequations}

\begin{lemma}\label{lem:eta_x:lambda}
For all $(\tilde{w}_0, z_0, k_0) \in U_n(\eps, C_0)$, $i=0, \hdots, 2n-1$, $j=1, \hdots, 2n-2$, and $(x,t) \in \TT\times [0, \infty)$ we have that
\begin{subequations}
\begin{align}\label{id:para:eta_x:global}
\|\p^i_x \p_{\lambda_j} \tilde{\eta}_x(x,t) - \tfrac{1+\alpha}{2} t \p^{i+1}_x \tilde{w}^n_j(x) \|_{L^\infty_\lambda (\Lambda_n(\eps, C_0))} 
&  \lesssim (1+\alpha) t i! L_n \bar{C}^i_x \bar{C}_t B_\lambda B_z \eps, 
\end{align}
and for values of $x \in \TT$ with $|x-0| \leq \frac{1}{C_0}$ we have that
\begin{align}
\label{id:para:eta_x:nbhd}
\|\p^i_x \p_{\lambda_j} \tilde{\eta}_x(x,t) - \tfrac{1+\alpha}{2} t \mathbbm{1}_{\{i \leq j \}} \tfrac{x^{j-i}}{(j-i)!}\|_{L^\infty_\lambda (\Lambda_n(\eps, C_0))} 
&  \lesssim (1+\alpha) t i! L_n \bar{C}^i_x \bar{C}_t B_\lambda B_z \eps.
\end{align}
It follows that
\begin{align}
\label{id:df^id_j}
\|\p^i_x \p_{\lambda_j} \tilde{\eta}_x(\x,\T) - \tfrac{1+\alpha}{2} \T \delta_{ij} \|_{L^\infty_\lambda (\Lambda_n(\eps, C_0))} 
&  \lesssim \tfrac{1+\alpha}{2} \T \big(\tfrac{(2n)!\bar{C}^{2n-1}_x}{(2n)^3} + i! L_n \bar{C}^i_x \bar{C}_t B_\lambda\big) B_z \eps.
\end{align}
\end{subequations}
\end{lemma}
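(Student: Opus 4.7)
The strategy is to reduce each of the three bounds to the stability results of \S\ref{sec:stab} (specifically Corollary~\ref{cor:eta_x:approx:stab} together with the companion bound on $\p^i_x \p_\lambda \eta_{xt}$ coming from Corollary~\ref{cor:ineq:dx:stab}), using the piecewise definition~\eqref{def:extension} to extend $\tilde{\eta}_x$ past $T_*$, and the Taylor expansion of $\tilde{w}^n_j$ near the origin prescribed by~\eqref{def:w_0:tilde}. Throughout, I apply the stability framework with base point $(\bar{w}_0 + \tilde{w}_0 + \sum_k \lambda_k \tilde{w}^n_k, z_0, k_0)$ and one-parameter perturbation in the direction $\tilde{w}^n_j$; the properties in~\eqref{def:w_0:tilde} then permit the choice $L = L_n$ and $M = 5L_n$ as fixed at the start of \S\ref{FCBM}.

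The bound~\eqref{id:para:eta_x:global} is immediate from Corollary~\ref{cor:eta_x:approx:stab} whenever $t \in [0, T_*]$. For $t \geq T_*$, I differentiate~\eqref{def:extension} in $\lambda_j$; the Lipschitz estimate~\eqref{ineq:T_*:dlambda:1} guarantees that $\p_{\lambda_j} T_*$ exists for almost every $\lambda$, and the two occurrences of $\eta_{xt}(x,T_*)\,\p_{\lambda_j} T_*$ arising from the chain rule cancel, leaving
\begin{equation*}
\p^i_x \p_{\lambda_j} \tilde{\eta}_x(x,t) = \p^i_x \p_{\lambda_j} \eta_x(x, T_*) + (t-T_*) \p^i_x \p_{\lambda_j} \eta_{xt}(x, T_*) + (t-T_*) \p^i_x \eta_{xtt}(x, T_*)\, \p_{\lambda_j} T_*.
\end{equation*}
The first two summands approximate $\tfrac{1+\alpha}{2} t \p^{i+1}_x \tilde{w}^n_j(x)$ within the claimed error by Corollary~\ref{cor:eta_x:approx:stab}. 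The third is controlled by~\eqref{ineq:eta_xtt:dx} and~\eqref{ineq:T_*:dlambda:1} as $\les t L_n B_d i! \bar{C}^i_x B_z \eps$, which is absorbed into the right-hand side of~\eqref{id:para:eta_x:global} using that $B_d \les B_\lambda$ and $\bar{C}_t \geq 1$.

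For the localized bound~\eqref{id:para:eta_x:nbhd}, the normalizations in~\eqref{def:w_0:tilde} and Taylor's theorem give, for $|x| \leq 1/C_0$,
\begin{equation*}
\bigl|\p^{i+1}_x \tilde{w}^n_j(x) - \mathbbm{1}_{\{i \leq j\}} \tfrac{x^{j-i}}{(j-i)!}\bigr| \leq i! L_n C_0^i \eps,
\end{equation*}
as already observed after~\eqref{def:w_0:tilde}. Combining this with~\eqref{id:para:eta_x:global} and using $C_0 \leq \bar{C}_x$ yields~\eqref{id:para:eta_x:nbhd}. For the evaluation at the critical point~\eqref{id:df^id_j}, I specialize~\eqref{id:para:eta_x:nbhd} to $(x,t) = (\x, \T)$ and invoke $|\x| \leq \tfrac{93}{(2n)^3} \bar{C}^{2n-1}_x B_z \eps$ from~\eqref{ineq:x:ring}: when $i \geq j$ the indicator term agrees with $\delta_{ij}$, and when $i < j$ the discrepancy $|\tfrac{\x^{j-i}}{(j-i)!}| \leq |\x|$ produces precisely the $\tfrac{(2n)!\bar{C}^{2n-1}_x}{(2n)^3} B_z \eps$ contribution (the numerical constant $93$ being absorbed into the $(2n)!$ factor uniformly in $n \geq 2$).

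The main technical delicacy is handling the $\lambda$-dependence of $T_*$ when differentiating~\eqref{def:extension}: a priori the term $(t-T_*)\,\p_{\lambda_j} T_*$ grows linearly in $t$ and would spoil the bound were it not for the $B_z \eps$ smallness factor in the $\eta_{xtt}$ estimate~\eqref{ineq:eta_xtt:dx}. It is precisely this smallness, combined with the control~\eqref{ineq:T_*:dlambda:1} on $\p_{\lambda_j} T_*$, that renders the extension term subordinate to the claimed right-hand side.
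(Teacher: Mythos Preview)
Your proof is correct and follows the same approach as the paper. The paper's proof is terse (``an application of Corollary~\ref{cor:eta_x:approx:stab} and~\eqref{ineq:T_*:dlambda:2}''), whereas you explicitly carry out the differentiation of the extension~\eqref{def:extension} in $\lambda_j$, observe the cancellation of the two $\eta_{xt}(x,T_*)\,\p_{\lambda_j}T_*$ terms, and bound the residual $(t-T_*)\p^i_x\eta_{xtt}\,\p_{\lambda_j}T_*$ contribution via~\eqref{ineq:eta_xtt:dx}; this is exactly the content hidden behind the paper's one-line reference.
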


\begin{proof}
The inequality \eqref{id:para:eta_x:global} is an application of Corollary \ref{cor:eta_x:approx:stab} and \eqref{ineq:T_*:dlambda:2}. The bound \eqref{id:para:eta_x:nbhd} follows from \eqref{id:para:eta_x:global} and the identity
\begin{align*}
\p^{i+1}_x \tilde{w}^n_j(x) & = \mathbbm{1}_{\{i \leq j\}} \tfrac{x^{j-i}}{(j-i)!} + \OO\big(i! L_n C^i_0 \eps \big) & |x-0| \leq \tfrac{1}{C_0}, i = 0, \hdots, 2n-1
\end{align*}
derived earlier in this section. The last inequality \eqref{id:df^id_j} follows from plugging $(x,t) = (\x,\T)$ into \eqref{id:para:eta_x:nbhd} and applying \eqref{ineq:x:ring}.
\end{proof}

\begin{proposition}\label{prop:Df}
Let $n \geq 2$ and let $f_n$ be the function defined in \eqref{def:f}. If 
\begin{equation*}
(2n)! L_n \bar{C}^{2n-1}_x \bar{C}_t B_\lambda B_z \eps \ll 1
\end{equation*}
then there exists a function $\lambda_* : U_n(\eps^2, C_0) \rightarrow \Lambda_n(\eps, C_0)$, Lipschitz with respect to the $(W^{2n+2, \infty}(\TT))^3$ norm, such that
\begin{equation*}
\bigg\{ (\tilde{w}_0, z_0, k_0, \lambda) \in U_n(\eps^2, C_0) \times \Lambda_n(\eps, C_0) : f_n  = 0 \bigg\} = \bigg\{ (\tilde{w}_0, z_0, k_0, \lambda_*( \tilde{w}_0, z_0, k_0)) : (\tilde{w}_0, z_0, k_0) \in U_n(\eps, C_0) \bigg\}.
\end{equation*}
\end{proposition}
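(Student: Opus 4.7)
The plan is to solve $f_n(\tilde w_0, z_0, k_0, \lambda) = 0$ for $\lambda$ as a Lipschitz function of $(\tilde w_0, z_0, k_0)$ by invoking a contraction-mapping (Lipschitz implicit function) argument, with the linearization provided by Lemma \ref{lem:eta_x:lambda}. The first observation is that when $(\tilde w_0, z_0, k_0, \lambda) = 0$, the initial data is exactly $(\bar w_0, 0, 0)$, for which $S \equiv 0$ and $u \equiv \sigma$, so the Euler dynamics reduces to rescaled Burgers with $w \equiv \bar w$. In this case $\eta_x(x,t) = 1 + \tfrac{1+\alpha}{2} t \bar w_0'(x)$; using that $\bar w_0'(x) = -1 + x^{2n}$ for $|x| \leq 1/C_0$, we read off that $\eta_x(0, \tfrac{2}{1+\alpha}) = 0$ together with $\p_x^i \eta_x(0, \tfrac{2}{1+\alpha}) = 0$ for $i = 1, \dots, 2n-1$. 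Hence $(\x, \T) = (0, \tfrac{2}{1+\alpha})$ and $f_n = 0$ at the base point. By the continuity estimates of Proposition \ref{prop:Lip} and \S\ref{sec:cont}, for any $(\tilde w_0, z_0, k_0) \in U_n(\eps^2, C_0)$ one then obtains $|f_n(\tilde w_0, z_0, k_0, 0)| \lesssim \bar C_x^{2n-2} B_d \eps^2$.

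Next I would compute $D_\lambda f_n$. Since $(\x,\T)$ depends on $\lambda$, the chain rule gives, for $i = 1, \dots, 2n-2$ and $j = 1, \dots, 2n-2$,
\begin{equation*}
\p_{\lambda_j} f_n^i = \p_x^{i+1}\tilde \eta_x(\x,\T) \, \p_{\lambda_j}\x + \p_x^i \tilde \eta_{xt}(\x,\T) \, \p_{\lambda_j}\T + \p_x^i \p_{\lambda_j} \tilde\eta_x(\x,\T).
\end{equation*}
The first factor is either a component of $f_n$ (for $i+1 \leq 2n-2$), zero (for $i+1 = 2n-1$, by definition of $\T$), or controlled by \eqref{approx:eta_x:D}; together with the bound on $\p_{\lambda_j}\x$ coming from the Lipschitz estimate for $(\x,\T)$ at the end of \S\ref{sec:Lip}, this contribution is $O((2n)!\bar C_x^{2n-2} B_d B_z\eps)$. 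The second is bounded via \eqref{approx:eta_xt:D} and \eqref{ineq:T_*:dlambda:2} by $O((2n)! L_n \bar C_x^{2n-2} B_\lambda B_z \eps)$. The third, by \eqref{id:df^id_j}, equals $\tfrac{1+\alpha}{2}\T \delta_{ij}$ up to an error of the same order. Assembling these pieces and using the smallness assumption $(2n)! L_n \bar C_x^{2n-1} \bar C_t B_\lambda B_z \eps \ll 1$, I would conclude
\begin{equation*}
D_\lambda f_n = \tfrac{1+\alpha}{2}\T \cdot \mathrm{Id}_{2n-2} + \mathrm E, \qquad \|\mathrm E\| \leq \tfrac{1}{4}(1+\alpha)\T,
\end{equation*}
uniformly on $U_n(\eps, C_0) \times \Lambda_n(\eps, L_n)$. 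In particular $D_\lambda f_n$ is uniformly invertible with $\|(D_\lambda f_n)^{-1}\| \leq \tfrac{4}{1+\alpha}$.

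With this quantitative invertibility in hand, I would set $A := \tfrac{1+\alpha}{2}\T \cdot \mathrm{Id}$ and consider, for each fixed $(\tilde w_0, z_0, k_0) \in U_n(\eps^2, C_0)$, the map $\Phi_{(\tilde w_0,z_0,k_0)}(\lambda) := \lambda - A^{-1} f_n(\tilde w_0, z_0, k_0, \lambda)$. The estimate on $D_\lambda f_n - A$ shows that $\Phi$ is a $\tfrac12$-Lipschitz self-map on the closed ball of some radius $r \lesssim \bar C_x^{2n-2} B_d \eps^2 / (1+\alpha)$ about $0$ in $\R^{2n-2}$, provided $(2n)! L_n \bar C_x^{2n-1} \bar C_t B_\lambda B_z \eps$ is sufficiently small; in particular, the fixed point $\lambda_*(\tilde w_0, z_0, k_0)$ lies well inside $\Lambda_n(\eps, L_n)$. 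Banach's fixed-point theorem yields both existence and uniqueness of $\lambda_*$ in $\Lambda_n(\eps, L_n)$.

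The Lipschitz dependence of $\lambda_*$ on $(\tilde w_0, z_0, k_0)$ follows from a standard argument: since $f_n$ is Lipschitz jointly in all arguments with respect to the seminorms $\|\cdot\|_{2n-1, C_0}$ (by the extension of Proposition \ref{prop:Lip} discussed at the end of \S\ref{sec:Lip}), and $(D_\lambda f_n)^{-1}$ is uniformly bounded, subtracting the defining equations $f_n(\tilde w_0^a, z_0^a, k_0^a, \lambda_*^a) = 0$ for $a = 1,2$ and using uniform invertibility of $D_\lambda f_n$ along the segment joining the two fixed points yields the required Lipschitz estimate in the $(W^{2n+2,\infty}(\TT))^3$ norm. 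The anticipated main obstacle is organizing the chain-rule bookkeeping for $D_\lambda f_n$: one must confirm that every error term — in particular the off-diagonal contributions from $\p_x^{i+1}\tilde\eta_x(\x,\T)$ which are themselves components of $f_n$ — is dominated by the diagonal term $\tfrac{1+\alpha}{2}\T$ under the stated smallness hypothesis, which is exactly why the restriction to $U_n(\eps^2, C_0)$ (rather than $U_n(\eps, C_0)$) on the initial data is needed to guarantee $\lambda_*$ lands in the prescribed ball $\Lambda_n(\eps, L_n)$.
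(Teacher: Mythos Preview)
Your overall strategy—compute $D_\lambda f_n$ via the chain rule, show it is close to a scalar multiple of the identity, then apply a contraction/Lipschitz implicit function theorem—is exactly the paper's approach. However, there is a quantitative gap in your bookkeeping for the first two chain-rule contributions that prevents the argument from closing under the stated hypothesis.

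The issue is your bound on $\p_{\lambda_j}\x$ and $\p_{\lambda_j}\T$. The Lipschitz estimate for $(\x,\T)$ at the end of \S\ref{sec:Lip} gives a Lipschitz constant of order $L_n B_d\,\bar C_x^{2n-1}$, which is \emph{not} $O(\eps)$. Multiplying this by $|\p_x^{i+1}\tilde\eta_x(\x,\T)|\lesssim (2n)!\,\bar C_x^{2n-2}B_z\eps$ from \eqref{approx:eta_x:D} yields a first contribution of order $(2n)!\,L_n B_d\,\bar C_x^{4n-3}B_z\eps$, not the $O((2n)!\,\bar C_x^{2n-2}B_d B_z\eps)$ you claim; the extra factor $L_n\bar C_x^{2n-1}$ is not absorbed by the hypothesis $(2n)!\,L_n\bar C_x^{2n-1}\bar C_t B_\lambda B_z\eps\ll 1$ since $\bar C_x^{2n-2}/\bar C_t$ can be arbitrarily large. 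The same problem afflicts the second contribution: \eqref{ineq:T_*:dlambda:2} bounds $\p_{\lambda_j}T_*$, not $\p_{\lambda_j}\T$, and on $\Lambda_n$ these differ (one has $\T>T_*$ whenever $f_n\neq 0$); falling back on the Lipschitz bound for $\T$ again costs you the same unwanted factor.

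The paper avoids this by first differentiating the defining relation $G(\x,\T)=0$ in $\lambda_j$ and using the invertibility $\|DG(\x,\T)^{-1}\|\le 3$ from Proposition~\ref{prop:G} together with \eqref{id:df^id_j} (applied at $i=0$ and $i=2n-1$) to obtain
\[
|\p_{\lambda_j}\x|,\ |\p_{\lambda_j}\T|\ \lesssim\ \tfrac{1}{2n}(1+L_n\bar C_t B_\lambda)\,\bar C_x^{2n-1}B_z\eps,
\]
which \emph{is} $O(\eps)$. With this sharper input, the products in the chain rule become $O(\eps^2)$-small and are controlled by the stated smallness condition. Similarly, for the smallness of $f_n$ at $\lambda=0$ the paper does not go through the Lipschitz constant of $f_n$: it observes that data in $U_n(\eps^2,C_0)$ with $\lambda=0$ lies in $\mathcal B_n(\eps^2,C_0)$ and applies \eqref{approx:eta_x:D} directly with $\eps$ replaced by $\eps^2$, giving $|f_n(\cdot,0)|\le \eps/(10L_n)$. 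Finally, your choice $A=\tfrac{1+\alpha}{2}\T\cdot\Id$ depends on $\lambda$; since $|\tfrac{1+\alpha}{2}\T-1|<\tfrac{1}{18}$ by \eqref{ineq:T:ring}, it is cleaner (and this is what the paper does) to take $A=\Id$ and work with $\|D_\lambda f_n-\Id\|\le \tfrac19$.
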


\begin{proof}
First, we apply $\p_{\lambda_j}$ to the equation $G(\x,\T) = (0,0)$ and get 
\begin{equation*}
\begin{bmatrix} \p_{\lambda_j} \x \\ \p_{\lambda_j} \T \end{bmatrix} = - DG(\x,\T) ^{-1}\begin{bmatrix} \tfrac{1}{(2n)!}\p_{\lambda_j} \p^{2n-1}_x \tilde{\eta}_x(\x,\T) \\ -\tfrac{2}{1+\alpha}\p_{\lambda_j} \eta_x(\x,\T) \end{bmatrix} .
\end{equation*} 
We know from the proof of Proposition \ref{prop:G} that $\| \Id- DG(\x,\T)\|_{\text{operator}} \leq | \Id - DG(\x,\T)| \leq \frac{2}{3}$, so it follows (use Neumann series) that $\|DG(\x,\T)^{-1}\|_{\text{operator}} \leq 3$. It therefore follows from \eqref{id:df^id_j} that
\begin{align*}
\| \p_{\lambda_j} \x \|_{L^\infty_\lambda (\Lambda_n(\eps, C_0))}  & \lesssim \tfrac{1+\alpha}{2} \T \tfrac{1}{2n}\big(1+L_n\bar{C}_t B_\lambda\big) \bar{C}^{2n-1}_xB_z \eps,
\\
\| \p_{\lambda_j} \T \|_{L^\infty_\lambda (\Lambda_n(\eps, C_0))}  &  \lesssim \tfrac{1+\alpha}{2} \T \tfrac{1}{2n}\big(1+L_n\bar{C}_t B_\lambda\big) \bar{C}^{2n-1}_xB_z \eps.
\end{align*}

Using these bounds for $\p_{\lambda_j} \x$ and $\p_{\lambda_j} \T$ in conjunction with the bounds \eqref{approx:eta_xt:D},  \eqref{approx:eta_x:D}, and \eqref{id:df^id_j}, we compute that for $i, j = 1, \hdots, 2n-2$ we have that
\begin{align*}
\frac{\p f^i_n}{\p \lambda_j} & = \frac{\p}{\p \lambda_j}\big[ \p^i_x \tilde{\eta}_x(\x, \T) \big] \\
& = \p^i_x \p_{\lambda_j} \tilde{\eta}_x(\x,\T) + \p^{i+1}_x \tilde{\eta}_x(\x,\T) \p_{\lambda_j} \x + \p^i_x \tilde{\eta}_{xt}(\x,\T) \p_{\lambda_j} \T 
\\
& = \tfrac{1+\alpha}{2} \T [\delta_{ij} + \tfrac{1}{(2n)^3}\OO((2n)!\bar{C}^{2n-1}_x B_z \eps) +  \OO(i! L_n \bar{C}^i_x \bar{C}_t B_\lambda B_z \eps)] 
\\
&\quad  + \tfrac{1+\alpha}{2}\T \tfrac{1}{2n}\OO((2n)!\bar{C}^{2n-1}_x B_z \eps)\OO \big(\tfrac{1}{2n}\big(1+L_n\bar{C}_t B_\lambda\big) \bar{C}^{2n-1}_xB_z \eps \big).
\end{align*}
It follows that if $(2n)! L_n \bar{C}^{2n-1}_x \bar{C}_t B_\lambda B_z \eps$ is small enough we have that
\begin{align*}
\| \p_{\lambda_j} f^i_n - \tfrac{1+\alpha}{2}\T \delta_{ij} \|_{L^\infty_\lambda (\Lambda_n(\eps, C_0))} \leq \tfrac{1}{19}\tfrac{1+\alpha}{2}\T \tfrac{1}{(2n)^2}
\end{align*}
for all $i,j = 1, \hdots, 2n-2$. Therefore,
\begin{align*}
\big\| D_\lambda f_n - \Id \|_{L^\infty_\lambda (\Lambda_n(\eps, C_0))} \leq \tfrac{1}{9}.
\end{align*}

If $(\tilde{w}_0, z_0, k_0) \in U_n(\eps^2, C_0)$ and $\lambda = 0$, then $(w_0,z_0,k_0) = (\tilde{w}_0, z_0, k_0) \in U_n(\eps^2, C_0) \subset \mathcal B_n(\eps^2, C_0)$. It follows from \eqref{approx:eta_x:D} that
\begin{align*}
|f^i_n(\tilde{w}_0, z_0, k_0, 0)| & \leq 50 \frac{(2n)!L_n \bar{C}^{2n-1}_x B_z \eps}{(2n) } \frac{\eps}{L_n}.
\end{align*}
for $i=1, \hdots, 2n-2$, $(\tilde{w}_0, z_0, k_0) \in U_n(\eps^2, C_0)$. Therefore, if $(2n)!L_n \bar{C}^{2n-1}_x B_z \eps$ is small enough we have
\begin{align*}
|f_n(\tilde{w}_0, z_0, k_0, 0)| & \leq \tfrac{\eps}{10L_n} && \forall \: (\tilde{w}_0, z_0, k_0) \in U_n(\eps^2, C_0).
\end{align*}
It now follows (see \S~\ref{sec:appendix:IFT}) that we can apply the implicit function theorem to $f_n$ and conclude our result.
\end{proof}

Another way of phrasing Proposition \ref{prop:Df} is as follows:  if we define the open neighborhood
\begin{equation}\label{def:B:tilde}
\tilde{\mathcal B}_n(\eps, C_0, L_n) : = \{ (\bar{w}_0 + \sum^{2n-2}_{j=1} \lambda_j \tilde{w}^n_j + \tilde{w}_0, z_0, k_0) : (\tilde{w}_0, z_0, k_0, \lambda ) \in U_n(\eps^2, C_0) \times \Lambda_n(\eps, L_n)  \big\}
\end{equation}
 of $(\bar{w}_0, 0,0)$ in $(W^{2n+2, \infty}(\TT))^3$, then the zero set of $f_n$ in $\tilde{\mathcal B}_n(\eps, C_0)$ is a codimension-$(2n-2)$, $C^{0,1}$ Banach manifold.


\section{A detailed description of the cusp structure at the first singularity}
\label{sec:thm}

We will now prove our main theorem, which will imply Theorem \ref{thm:HR:a}. 

\begin{theorem}[\bf Finite-codimension stable shock formation]\label{thm:HR}
Fix an integer $n \geq 1$,  a value of $\alpha = \frac{\gamma-1}{2}> 0$, and a constant $C_0 \geq 3$. Define $\bar{w}_0$ as in \eqref{def:w_0:bar}. Then there exists a codimension-$(2n-2)$ Banach manifold\footnote{In the case $n=1$, $\mathcal M_1$ is simply an open subset of $(W^{4, \infty}(\TT))^3$. When $n\geq2$, it is the graph of a Lipschitz function.} $\mathcal M_n \subset (W^{2n+2, \infty}(\TT))^3$ containing $(\bar{w}_0, 0,0)$ and a constant 
$$ \eps_0 = \eps_0(n, \alpha, C_0) $$
such that if $\eps < \eps_0$ then for all initial data $(w_0,z_0, k_0) \in \mathcal M_n$ satisfying
\begin{equation} \label{eq:thm:hyp}
\begin{aligned}
\|w_0 - \bar{w}_0\|_{L^\infty_x} & < \eps, & \\
\|z_0\|_{L^\infty_x} & < \eps, & \\
\frac{\|\p^{i+1}_x(w_0- \bar{w}_0)\|_{L^\infty_x}}{i!} & < C^i_0 \eps, & \forall \: i = 0, \hdots, 2n+1, \\
\frac{\|\p^{i+1}_x z_0\|_{L^\infty_x}}{i!} & < C^i_0\eps, & \forall \: i = 0, \hdots, 2n+1, \\
\frac{\|\p^{i+1}_x k_0\|_{L^\infty_x}}{i!} & < C^i_0 \eps & \forall \: i = 0, \hdots, 2n+1,
\end{aligned}
\end{equation}
 the unique locally-well posed solution $(w,z,k)$ to \eqref{eq:euler:RV} with initial data $(w_0,z_0,k_0)$ satisfies the following:
\begin{enumerate}[leftmargin=*]
\item $(w,z,k)$ exists as a classical solution up until a finite time $T_*$ with 
\begin{equation}\label{thm:eq:T_*}
 T_* = \tfrac{2}{1+\alpha}[1 + \OO_\alpha(\eps)];
 \end{equation}
 \item the functions $z$ and $k$ remain uniformly $C^{1,\frac{1}{2n+1}}$ on $\TT \times [0,T_*]$  with 
 \begin{align}
 [\p_y z]_{C^{0,\frac{1}{2n+1}}_y}  \lesssim_{n, \alpha, C_0} \eps, \qquad [\p_y k]_{C^{0,\frac{1}{2n+1}}_y}  \lesssim_{n, \alpha, C_0} \eps,
 \end{align}
 but $w$ has a gradient blowup at a unique point $(y_*, T_*) \in \TT \times [0,T_*]$ with $y_*$ satisfying 
 \begin{equation}\label{thm:approx:y_*}
  |y_*-\tfrac{1}{2}| \lesssim_{n, \alpha, C_0} \eps;
 \end{equation}
 \item away from $(y_*,T_*)$, $w, z,$ and $k$ remain locally $C^{2n+1,1}$ in $\TT \times[0,T_*]$ ;
 \item for all $y$ in the interval $|y-y_*| < \tfrac{1}{(2n+2)(2n+1)2^{2n+2}C^{2n+1}_0}$ we have
 \begin{subequations}
 \begin{align}\label{thm:expansion:0}
 w(y,T_*) & = b_0 + b_1 (y-y_*)^{\frac{1}{2n+1}} + \OO_{C_0} (|y-y_*|^{\frac{2}{2n+1}}), \\
 \label{thm:expansion:1}
 \p_y w(y,T_*) & = \tfrac{1}{2n+1} b_1 (y-y_*)^{-\frac{2n}{2n+1}} + \OO_{n, \alpha, C_0} ( |y-y_*|^{-\frac{2n-1}{2n+1}}),
 \end{align}
 \end{subequations}
 where $b_0$ and $b_1$ are {\it{constants}} satisfying
 \begin{align*}
 b_0 & = \tfrac{5}{2} + \OO_{n, \alpha, C_0} (\eps), \\
 b_1 & = -(2n+1)^{\frac{1}{2n+1}}[1+ \OO_{n, \alpha, C_0}(\eps)].
\end{align*}
\end{enumerate}
\end{theorem}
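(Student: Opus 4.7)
The plan is to reduce the Eulerian cusp analysis to a Puiseux inversion of the fast-acoustic flow $\eta(\cdot,T_*)$ at the pre-shock, and then to transfer the estimates from Section~\ref{sec:HOE}-\ref{FCBM} from Lagrangian to Eulerian coordinates. First I would define $\mathcal{M}_n$ using the Banach submanifold constructed in Section~\ref{FCBM}: for $n=1$ take $\mathcal{M}_1 = \mathcal{B}_1(\eps_0,C_0)$ (an open set since $f_1$ is trivial), and for $n\ge 2$ take $\mathcal{M}_n = \{f_n = 0\} \cap \tilde{\mathcal{B}}_n(\eps_0,C_0,L_n)$ with $\tilde{\mathcal{B}}_n$ as in \eqref{def:B:tilde}; Proposition~\ref{prop:Df} identifies this zero set with the graph of the Lipschitz map $\lambda_*$, giving the claimed codimension and regularity. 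By the discussion after \eqref{def:f} and Proposition~\ref{prop:vanish:apriori}, for every $(w_0,z_0,k_0)\in\mathcal{M}_n$ there is a unique $(x_*,T_*)$ with $\partial_x^i\eta_x(x_*,T_*) = 0$ for $i = 0,\ldots,2n-1$, and Lemma~\ref{lem:LB:2n} gives $\partial_x^{2n}\eta_x(x_*,T_*) \ge (2n)!/2$. The estimate \eqref{thm:eq:T_*} is Proposition~\ref{prop:T_*}; for \eqref{thm:approx:y_*}, I would integrate $\partial_t\eta(x_*,\cdot) = \lambda_3\circ\eta$ and use $\lambda_3 = \tfrac{1+\alpha}{2}w + \tfrac{1-\alpha}{2}z$, $\bar{w}_0(0) = 5/2$, together with $|x_*|^{2n}\lesssim B_z\eps$ from \eqref{ineq:x_*:1}, yielding $y_* = \tfrac{1+\alpha}{2}T_*\cdot\tfrac{5}{2} + \OO_{n,\alpha,C_0}(\eps) = \tfrac{5}{2} + \OO_{n,\alpha,C_0}(\eps) \equiv \tfrac{1}{2}\pmod 1$.

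Next I would invert $\eta(\cdot,T_*)$ near $x_*$ via a Puiseux series. Using Corollary~\ref{cor:eta_x:approx} together with the explicit form of $\bar{w}_0$ near $0$, one obtains $\partial_x^{2n}\eta_x(x_*,T_*) = (2n)!(1+\OO_{n,\alpha,C_0}(\eps))$, and by Taylor expanding $\eta_x(\cdot,T_*)$ at $x_*$ and integrating, for $|x-x_*| \le 1/((2n+2)2C_0)$,
\begin{equation*}
\eta(x,T_*) - y_* = A(x-x_*)^{2n+1}\bigl[1 + r(x-x_*)\bigr],\qquad A = \tfrac{\partial_x^{2n}\eta_x(x_*,T_*)}{(2n+1)!} = \tfrac{1}{2n+1}\bigl(1 + \OO_{n,\alpha,C_0}(\eps)\bigr),
\end{equation*}
with $r$ a $C^1$ remainder satisfying $|r(\zeta)| \lesssim_{C_0} |\zeta|$. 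A direct contraction-mapping inversion (or the standard Puiseux inversion described in Appendix~\ref{sec:appendix:poly}) then produces
\begin{equation*}
\eta^{-1}(y,T_*) - x_* = A^{-1/(2n+1)}(y-y_*)^{1/(2n+1)}\bigl[1 + R\bigl((y-y_*)^{1/(2n+1)}\bigr)\bigr],
\end{equation*}
where $R$ is continuous, $R(0) = 0$, and $|R(\zeta)| \lesssim_{C_0}|\zeta|$, on the interval in the theorem.

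The cusp description for $w$ is then obtained by writing $(w\cir\eta)(x,T_*) = (w\cir\eta)(x_*,T_*) + \int_{x_*}^x\bigl[\eta_x\W + \tfrac{1}{2\gamma}\Sigma\K\eta_x\bigr]\,dx'$ and noting that the integrand at $x_*$ equals $-1 + \OO_{n,\alpha,C_0}(\eps)$: indeed, $(\eta_x\W)_t = \OO(\eps)$ by \eqref{id:W_t}, so $(\eta_x\W)(x_*,T_*) = w_0'(x_*) - \tfrac{1}{2\gamma}\sigma_0(x_*)k_0'(x_*) + \OO(\eps) = -1 + \OO_{n,\alpha,C_0}(\eps)$ using $\bar{w}_0'(x_*) = -1 + \OO(B_z\eps)$, while $(\Sigma\K\eta_x)(x_*,T_*) = \OO(\eps)$. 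Composing with the Puiseux series yields \eqref{thm:expansion:0} with $b_0 = (w\cir\eta)(x_*,T_*) = \tfrac{5}{2} + \OO_{n,\alpha,C_0}(\eps)$ and $b_1 = -A^{-1/(2n+1)}(1 + \OO_{n,\alpha,C_0}(\eps)) = -(2n+1)^{1/(2n+1)}(1 + \OO_{n,\alpha,C_0}(\eps))$. The differentiated form \eqref{thm:expansion:1} follows from $\p_y w = \partial_x(w\cir\eta)/\eta_x$ together with $\eta_x(x,T_*) = (2n+1)A(x-x_*)^{2n}[1+\OO_{C_0}(x-x_*)]$. For the regularity of $z,k$, the crucial observation is that $\partial_x(z\cir\eta) = (\Z - \tfrac{1}{2\gamma}\Sigma\K)\eta_x$ and $\partial_x(k\cir\eta) = \K\eta_x$ both vanish to order $2n$ at $x_*$ because $\eta_x$ does; hence $(z\cir\eta)(x,T_*)$ and $(k\cir\eta)(x,T_*)$ differ from their values at $x_*$ by $\OO_{n,\alpha,C_0}(\eps\,(x-x_*)^{2n+1})$ smooth terms, and composing with $\eta^{-1}$ yields the $C^{1,\frac{1}{2n+1}}$ regularity with the Hölder seminorm bounded by $\eps$ times a constant depending only on $n,\alpha,C_0$. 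Away from $(y_*,T_*)$, $\eta_x > 0$ by Lemma~\ref{lem:LB:0}-\ref{lem:LB:2n}, so $\eta(\cdot,T_*)$ is a $C^{2n+1,1}$-diffeomorphism onto its image and the $C^{2n+1,1}$ regularity of $(w,z,k)(\cdot,T_*)$ off the blowup point is inherited from the Lagrangian estimates of Section~\ref{sec:HOE}.

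The main obstacle is the Puiseux inversion with the claimed precision: extracting $b_1 = -(2n+1)^{1/(2n+1)}(1 + \OO_{n,\alpha,C_0}(\eps))$ while keeping the radius of validity independent of $n$ (namely $\le 1/[(2n+2)(2n+1)2^{2n+2}C_0^{2n+1}]$) requires carefully tracking how the $\OO((x-x_*)^{2n+2})$ remainder in the Taylor expansion of $\eta(\cdot,T_*)$ compares to the leading $A(x-x_*)^{2n+1}$ term; this is where the combinatorial machinery of Appendix~\ref{sec:appendix:poly} and the high-frequency product identities of Lemma~\ref{lem:id:multi} are essential, since the Faà di Bruno coefficients appearing in the composition with $\eta^{-1}$ must be handled uniformly in $n$.
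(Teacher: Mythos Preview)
Your overall strategy matches the paper closely: the definition of $\mathcal{M}_n$, the Puiseux inversion of $\eta(\cdot,T_*)$, the computation of $b_0,b_1$, the expansion of $\p_y w$, the location of $y_*$, and the $C^{2n+1,1}$ regularity away from the blowup point are all essentially as in the paper's proof.

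The genuine gap is in your argument for the $C^{1,\frac{1}{2n+1}}$ regularity of $z$ and $k$. The observation that $\p_x(z\circ\eta)=(\Z-\tfrac{1}{2\gamma}\Sigma\K)\eta_x$ vanishes to order $2n$ at $x_*$ is correct but does not yield what the theorem claims. The assertion is $[\p_y z]_{C^{0,1/(2n+1)}_y}\lesssim\eps$ \emph{uniformly} on $\TT\times[0,T_*]$, meaning $|\p_y z(y_1,t)-\p_y z(y_2,t)|\lesssim\eps\,|y_1-y_2|^{1/(2n+1)}$ for \emph{arbitrary} $y_1,y_2\in\TT$ and $t\in[0,T_*]$. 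Since $\p_y z\circ\eta=\Z-\tfrac{1}{2\gamma}\Sigma\K$ is Lipschitz in $x$ with constant $\lesssim\eps\bar{C}_x$, this reduces to the global H\"older bound $|x_1-x_2|\lesssim|y_1-y_2|^{1/(2n+1)}$, i.e.\ $\eta^{-1}(\cdot,t)\in C^{0,1/(2n+1)}$ uniformly in $t$. Your Puiseux inversion establishes this only locally near $y_*$ at $t=T_*$; the vanishing of $\p_x(z\circ\eta)$ at $x_*$ is just the statement that $\p_y z$ is bounded there, and says nothing about pairs $y_1,y_2$ away from $y_*$ or about earlier times. The paper proves the global bound \eqref{ineq:holder:eta-1} by a case split: for $|x_i|\ge 1/C_0$ it uses the lower bound $\eta_x\ge\tfrac12 C_0^{-2n}$ from Lemma~\ref{lem:LB:0}; for $|x_i|\le 1/C_0$ it uses the monotonicity $\eta_{xt}<0$ (from \eqref{approx:eta_xt:C}) to reduce to $t=T_*$, and then integrates the Taylor expansion \eqref{eq:Taylor:eta_x} of $\eta_x(\cdot,T_*)$ explicitly on the intervals $[x_1,x_*]$ and $[x_*,x_2]$.

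A minor remark: Lemma~\ref{lem:id:multi} is a multinomial identity used in the Leibniz-rule estimates of \S\ref{sec:HOE}, not in the Puiseux inversion; the inversion itself uses only Lemmas~\ref{lem:appendix:1}--\ref{lem:appendix:fracseries}, and no Fa\`a di Bruno combinatorics enter the proof of Theorem~\ref{thm:HR}.
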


\begin{proof}
Recall that the constants $B_k, B_z, B_d, B_\lambda$ are determined by $\alpha$ via the definitions
\begin{align*}
B_k  & := 6^{\frac{1}{\alpha}}, &
B_z & := 6^{\frac{2}{\min(1,\alpha)}}(2+\tfrac{1}{\gamma})e^{21}, 
\\
B_d & : = 2\cdot 6^{\frac{3}{\min(1,\alpha)}} e^{31}, &
B_\lambda & : = \tfrac{5}{2} \cdot 9^{\frac{3}{\min(1,\alpha)}} \cdot e^{31}.
\end{align*}
Choose
\begin{equation*}
\delta := \tfrac{6}{\min(1,\alpha)} 
\qquad 
\mbox{and}
\qquad
\kappa  := \max(1,\alpha)(2 + 5\delta)
\end{equation*}
so that $\delta$ and $\kappa$ satisfy \eqref{def:delta}. With this choice of $\delta$ and $\kappa$, choose the positive constant $C_t(0)$ to satisfy
\begin{align*}
C_t(0)  \gg (1+\alpha) 3^\delta, \qquad
C_t(0)  \gg \max(1,\alpha)2^\delta C_0,
\end{align*}
where the implicit constants here are those given by the hypotheses of Propositions \ref{prop:EE:infty},  \ref{prop:EE:stab}, and \ref{prop:EE:cont}. Now define
\begin{align*}
\bar{C}_t & : = C_t(0) e^{(11+\log 3) \delta + 5}
\end{align*}
in accordance with definition \eqref{def:C_t:bar}, and pick a constant $\bar{C}_x$ large enough that 
\begin{align*}
\bar{C}_x  \gg 1, \qquad
\alpha \bar{C}_x  \gg \bar{C}_t + 1, \qquad
\bar{C}_x  \gg (1+\alpha) C_0,
\end{align*}
where the implicit constants are sufficient to satisfy the hypotheses on $\bar{C}_x$ in \S~\ref{sec:HOE}--\ref{sec:cont} and Proposition \ref{prop:G}.

Now choose $\eps_0> 0$ small enough that
\begin{align*}
(1+\alpha)^2B^2_z \eps_0 & \ll 1, &
3^{\frac{3}{\min(1,\alpha)}} B_d (\bar{C}_t + 1+\alpha)B_z \eps^{\frac{1}{2}} & \ll 1, \\
\bar{C}_t B_\lambda B_z \eps & \ll 1, &
\bar{C}^{2n}_x B_z \eps_0 & \ll 1,
\end{align*}
so that $\mathcal B_n(\eps, C_0) \subset \mathcal A_n(\eps, C_0)$ and the hypotheses of \S \ref{chp-estimates}--\ref{sec:FCBM:prelim} are satisfied for any choice of $\eps \leq \eps_0$. If $n \geq 2$, pick a choice of functions $\tilde{w}^n_1, \hdots, \tilde{w}^n_{2n-2}$ with a corresponding constant $L_n$ as described in \S \ref{sec:FCBM} and add the constraint 
\begin{align*}
(2n)! L_n \bar{C}^{2n-1}_x \bar{C}_tB_\lambda B_z \eps_0 & \ll 1,
\end{align*}
so that the hypotheses of Proposition \ref{prop:Df} are satisfied for any $\eps \leq \eps_0$. Lastly, if $R_n$ and $M_n$ are the constants defined in \S~\ref{sec:appendix:poly}, we will add the constraints that $\eps_0$ is small enough  that
\begin{equation}\label{thm:constraints:eps_0}
\begin{aligned}
93\bar{C}^{2n}_x B_z \eps_0 & < 1, \\
 \tfrac{22(1+\eps_0^{\frac{1}{2}})}{(n+1)^2}(\tfrac{1}{C_0} + \tfrac{1}{6n^3 \bar{C}_x}) \bar{C}^{2n+1}_x B_z \eps_0 & < 1, \\
  \tfrac{88(1+\eps_0^{\frac{1}{2}})}{(n+1)^2 R_n}(\tfrac{1}{C_0} + \tfrac{1}{6n^3 \bar{C}_x})\bar{C}^{2n+1}_x B_z \eps_0 & < 1,\\
  \tfrac{88(1+\eps_0^{\frac{1}{2}}) M_n}{(n+1)^2 R_n}\bar{C}^{2n+1}_x B_z \eps_0 & < 1, \\
  \tfrac{44(1+\eps_0^{\frac{1}{2}})}{(n+1)^2}\bar{C}^{2n+1}_x B_z \eps_0 & < 1.
\end{aligned}
\end{equation}
These last constraints \eqref{thm:constraints:eps_0} will be used in some of our computations below.

With the constants $\bar{C}_x, \bar{C}_t, L_n$, and $\eps_0$ now chosen, define $\mathcal M_n \subset (W^{2n+2, \infty}(\TT))^3$ to be
\begin{equation*}
\mathcal M_n : = \begin{cases} \mathcal B_1(\eps_0, C_0) & n = 1  \\\{ (w_0,z_0, k_0) \in \tilde{\mathcal B}_n(\eps_0, C_0, L_n) : f_n(w_0,z_0,k_0) =0\} & n \geq 2 \end{cases} ,
\end{equation*}
where $\mathcal B_1(\eps_0, C_0) \subset \mathcal A_1(\eps_0, C_0)$ is the open set defined by \eqref{eq:cal:B:n:def} when $n=1$, $f_n$ is the function defined by \eqref{def:f}, and $\tilde{\mathcal B}_n(\eps, C_0, L_n)$ is the open set defined by \eqref{def:B:tilde}. We know from Proposition \ref{prop:Df} that when $n \geq 2$, $\mathcal M_n$ is a codimension-$(2n-2)$ Banach submanifold of $(W^{2n+2, \infty}(\TT))^3$, i.e. it is the graph of a Lipschitz function from a codimension-$(2n-2)$ linear subspace of $(W^{2n+2, \infty}(\TT))^3$ into $\RR^{2n-2}$.

Fix $\eps \leq \eps_0$ and a choice of initial data $(w_0, z_0, k_0) \in \mathcal M_n$ satisfying \eqref{eq:thm:hyp}. According to Proposition \ref{prop:T_*}, the blowup time $T_*$ is finite and satisfies \eqref{ineq:T_*}--\eqref{eq:T_*}. Since
\begin{equation*}
\p_t(w\cir\eta) = \tfrac{\alpha}{2\gamma} \Sigma^2 \K, 
\qquad
\p_t(z\cir\eta) = 2\alpha \Sigma \Z - \tfrac{\alpha}{2\gamma} \Sigma^2 \K, 
\qquad
\p_t(k\cir\eta) = \alpha \Sigma \K,
\end{equation*}
it follows from \eqref{ineq:0thorder} and our assumptions on $w_0,z_0, k_0$ that
\begin{equation}\label{id:Eulerian}
w\cir \eta = \bar{w}_0 + \OO(B_k \eps), 
\qquad
z\circ \eta = z_0 + \OO(\alpha B_z \eps), 
\qquad
k\circ \eta  = k_0 + \OO(\alpha B_k \eps).
\end{equation}

We know (see the discussion at the end of \S~\ref{sec:FCBM:prelim}) that because  $(w_0, z_0, k_0) \in \mathcal M_n$, $\eta_x$ has a unique zero in $\TT \times [0,T_*]$ at a point $(x_*,T_*) $, that
\begin{equation}\label{eq:vanish}
\p_x\eta_x(x_*,T_*) = \hdots = \p^{2n-1}_x \eta_x(x_*, T_*) = 0,
\end{equation} 
 and that $x_*$ satisfies the bounds $|x_*-0| < \frac{1}{3(1+\alpha)C_0}$ and \eqref{ineq:x_*:apriori}. \eqref{ineq:x:ring} and \eqref{thm:constraints:eps_0} also imply that
 \begin{equation}\label{ineq:x_*}
|x_*|  \leq \tfrac{93}{(2n)^3}\bar{C}^{2n-1}_x B_z \eps < \tfrac{1}{(2n)^3 \bar{C}_x}.
\end{equation}

The estimates in \S~\ref{sec:HOE} prove that the map $\eta(\cdot, T_*): \TT \rightarrow \TT$ is a $C^{2n+1,1}$ homeomorphism. It follows from \eqref{eq:vanish} that  $\eta(\cdot,T_*)$ lifts to a map $x \rightarrow y$ from $\R$ to $\R$ with a Taylor expansion
\begin{align*}
y & = y_* + a_{2n+1} (x-x_*)^{2n+1} + a_{2n+2}(x) (x-x_*)^{2n+2}.
\end{align*}
In what follows, we will continue to abuse notation and use $x$ to refer both to the Lagrangian variable $x \in \TT$ and to the lifted variable $x \in \R$. Additionally, we will abuse notation and use $y$ to refer both to the Eulerian variable $y = \eta(x, T_*) \in \TT$ and the lifted variable $y \in \R$. So, for example, when we show below that the zeroth order term $y_* \in \R$ of the above Taylor expansion satisfies $y_* = \frac{5}{2} + \OO_{n,\alpha, C_0} (\eps)$ we can conclude that the corresponding Eulerian variable $y_* = \eta(x_*, T_*) \in \TT = \R/\ZZ$ satisfies $|y_* - \frac{1}{2}| \lesssim_{n,\alpha,C_0} \eps$.

Because
\begin{equation*}
a_{2n+1} = \tfrac{\p^{2n}_x \eta_x(x_*, T_*)}{(2n+1)!},
\end{equation*}
it follows from \eqref{approx:eta_x:C}, \eqref{eq:T_*}, and \eqref{ineq:LB:2n} that
\begin{subequations}
\begin{align}
\label{a:2n+1:1}
a_{2n+1} 
 &  
= \tfrac{1}{2n+1}[1+ \OO( \bar{C}^{2n}_x B_z \eps)], \\
\label{a:2n+1:2}
a_{2n+1} & > \tfrac{1}{2(2n+1)}.
\end{align}
\end{subequations}
Using \eqref{approx:eta_x:C} and \eqref{ineq:T_*}, we obtain
\begin{equation}\label{ineq:a_2n+2:local}
|a_{2n+2}(x)| 
\leq 44(1+\eps^{\frac{1}{2}}) \tfrac{\bar{C}^{2n+1}_x B_z \eps}{(2n+2)^3},
\qquad \forall \: |x| \leq \tfrac{1}{C_0}.
\end{equation}
The inequalities \eqref{ineq:x_*},\eqref{a:2n+1:2}, \eqref{ineq:a_2n+2:local} together with the hypotheses \eqref{thm:constraints:eps_0} allow us to apply Lemma \ref{lem:appendix:fracseries} with $r = \frac{1}{C_0}$ and obtain
\begin{align}\label{thm:id:x-x_*}
\bigg\vert (x-x_*)- \big( \tfrac{y-y_*}{a_{2n+1}}\big)^{\frac{1}{2n+1}} \bigg\vert & < \big\vert \tfrac{y-y_*}{a_{2n+1}}\big\vert^{\frac{2}{2n+1}} && \forall \: |x| \leq \tfrac{1}{C_0}.
\end{align}
Lemma \ref{lem:appendix:fracseries} and the fact that $\frac{1}{C_0} - |x_*| > \frac{1}{2C_0}$ lets us conclude that $\{ y : \eta(x, T_*) = y, |x| \leq \frac{1}{C_0} \}$ contains the ball
\begin{equation*}
|y-y_*| < \tfrac{1}{(2n+2)(2n+1)2^{2n+2}C^{2n+1}_0}.
\end{equation*}

Taylor expanding $w\circ \eta(\cdot, T_*)$ in $x$ about $x_*$ gives us
\begin{equation*}
w\cir \eta(x,T_*) = w\cir \eta(x_*, T_*) + \p_x(w\cir \eta)(x_*, T_*) (x-x_*) + \OO\big( \|\p^2_x(w\cir \eta)(\cdot, T_*) \|_{L^\infty_x} (x-x_*)^2\big).
\end{equation*}
\eqref{id:Eulerian}, \eqref{ineq:x_*}, and our hypotheses on $\bar{w}_0$ imply that if we define $B^w_0 : = w\cir \eta (x_*, T_*)$ then
\begin{align*}
w\cir \eta(x_*,T_*) & = \tfrac{5}{2} + \OO( \tfrac{\bar{C}^{2n-1}_x B_z \eps}{(2n)^3}) + \OO(B_k \eps).
\end{align*}
Since
\begin{align*}
\p_x(w\cir \eta) = w'_0 - \tfrac{1}{2\gamma} \sigma_0 k'_0 + \int^t_0 (\eta_x \W)_t \: ds + \tfrac{1}{2\gamma} \eta_x \Sigma \K,
\end{align*}
our derivative estimates from \S~\ref{sec:HOE} tell us that
\begin{align}\label{thm:approx:w}
\frac{(i+1)^2\|\p^{i+1}_x (w\cir \eta) - \p^{i+1}_x \bar{w}_0\|_{L^\infty_x}}{i! \bar{C}^i_x } & \lesssim B_k \eps
\end{align}
for $t \in [0,T_*]$, $i=0, \hdots, 2n+1$. Therefore, if we define
\begin{equation}
B^w_1 : = \p_x(w\cir \eta)(x_*, T_*) = -1 + x^{2n}_* + \p_x(w\cir \eta)(x_*, T_*)-\bar{w}'_0(x_*)
\end{equation}
then \eqref{ineq:x_*:1} and \eqref{thm:approx:w} give us
\begin{equation}\label{id:Bw1}
B^w_1  = -1 + \OO(B_z \eps).
\end{equation}
Our assumptions on $\bar{C}_x$ and $\eps_0$ imply that $\bar{C}_x B_z \eps_0 < 1$, so \eqref{thm:approx:w} also yields
\begin{equation}\label{thm:bd:w:2}
\|\p^2_x(w\cir \eta)(\cdot, T_*) \|_{L^\infty_x} \leq C_0 + \OO( \bar{C}_x B_k \eps) \lesssim C_0.
\end{equation}
We thus arrive at the expansion
\begin{equation*}
w\cir \eta(x, T_*) = B^w_0 + B^w_1 (x-x_*) + \OO(C_0(x-x_*)^2)
\end{equation*}
for all $x$. Plugging \eqref{thm:id:x-x_*} into this equation gives us
\begin{align*}
w(y,T_*) & = B^w_0 + B^w_1 \big(\tfrac{y-y_*}{a_{2n+1}}\big)^{\frac{1}{2n+1}}[1 + \OO\big(\big\vert\tfrac{y-y_*}{a_{2n+1}}\big\vert^{\frac{1}{2n+1}}\big)] + \OO\big(C_0\big\vert\tfrac{y-y_*}{a_{2n+1}}\big\vert^{\frac{2}{2n+1}}\big) \\
& = B^w_0 + B^w_1 \big(\tfrac{y-y_*}{a_{2n+1}}\big)^{\frac{1}{2n+1}} + \OO\big((1+C_0)\big\vert\tfrac{y-y_*}{a_{2n+1}}\big\vert^{\frac{2}{2n+1}}\big)
\end{align*}
for all $|y-y_*| < \frac{1}{(2n+2)(2n+1)2^{2n+2}C^{2n+1}_0}$. If we define
\begin{equation}
\begin{split}
b_0 & : = B^w_0 = \tfrac{5}{2} + \OO(\bar{C}^{2n}_x B_z \eps) , \\
b_1 & : = B^w_1 a^{-\frac{1}{2n+1}}_{2n+1} = -[1+ \OO(B_z \eps)](\tfrac{1}{2n+1})^{-\frac{1}{2n+1}}[1 + \OO(\bar{C}^{2n}_x B_z \eps)]^{-\frac{1}{2n+1}} \\
& = -(2n+1)^{\frac{1}{2n+1}}[1+ \OO(\bar{C}^{2n}_x B_z \eps)],
\end{split}
\end{equation}
then, using \eqref{a:2n+1:2}, we arrive at our expansion \eqref{thm:expansion:0} for $w(\cdot, T_*)$.

Taylor expanding $\eta_x(\cdot, T_*)$ about $x_*$ and using \eqref{eq:vanish}, \eqref{approx:eta_x:C}, \eqref{ineq:T_*}, and \eqref{thm:constraints:eps_0} gives us
\begin{subequations}\label{eq:Taylor:eta_x}
\begin{align}
\eta_x(x, T_*) & = (2n+1)a_{2n+1}(x-x_*)^{2n} + h_{2n+1}(x)(x-x_*)^{2n+1},  && \\
|h_{2n+1}(x)| & \leq \tfrac{44(1+\eps^{\frac{1}{2}})}{(2n+2)^2} \bar{C}^{2n+1}_x B_z \eps < \tfrac{1}{4} && \forall \: |x| \leq \tfrac{1}{C_0}.
\end{align}
\end{subequations}
We know from  \eqref{thm:bd:w:2} that
\begin{align*}
\p_x(w\cir \eta)(x,T_*) & = B^w_1 + \OO\big( \|\p^2_x(w\cir \eta)(\cdot, T_*) \|_{L^\infty_x} (x-x_*)\big) \\
 & = B^w_1 + \OO\big( C_0(x-x_*)\big)
\end{align*}
for all $x$. Therefore, it follows from \eqref{thm:id:x-x_*} that for all $y$ in the interval $|y-y_*| < \frac{1}{(2n+2)(2n+1)2^{2n+2}C^{2n+1}_0}$ we have that
\begin{align*}
\p_y w(y, T_*) & = \frac{\p_x(w\cir\eta)(x,T_*)}{\eta_x(x, T_*)} \\
& = \frac{\big[ B^w_1+ \OO\big( C_0 (x-x_*)\big) \big] }{(2n+1)a_{2n+1}}(x-x_*)^{-2n} \big[ 1 + \OO(\bar{C}^{2n+1}_x B_z \eps (x - x_*) ) \big] \\
& = \frac{B^w_1 + \OO(C_0|\frac{y-y_*}{a_{2n+1}}|)}{(2n+1)a_{2n+1}}\big(\tfrac{y-y_*}{a_{2n+1}}\big)^{-\frac{2n}{2n+1}}\big[1 + \OO\big(\bar{C}^{2n+1}_x B_z \eps\big\vert\tfrac{y-y_*}{a_{2n+1}}\big\vert^{\frac{1}{2n+1}}\big)] \\
& = -\tfrac{1}{2n+1} b_1(y-y_*)^{-\frac{2n}{2n+1}} + \OO_{n,\alpha, C_0}\big(\big\vert\tfrac{y-y_*}{a_{2n+1}}\big\vert^{-\frac{2n-1}{2n+1}}\big)
\end{align*}
which gives us \eqref{thm:expansion:1}.

To approximate the location of $y_*$, using \eqref{eq:T_*}, \eqref{id:Eulerian}, and our bounds on $|x_*|$ gives us
\begin{equation*}
y_* 
= \eta(x_*,T_*) 
= x_* + \int^{T_*}_0 \lambda_3 \cir \eta (x_*, t) \: dt 
= \tfrac{5}{2} + \tfrac{\OO(\bar{C}^{2n}_x B_z \eps)}{n^3 \bar{C}_x} + \OO((1+\alpha) B_z \eps).
\end{equation*}
This implies \eqref{thm:approx:y_*}.

To prove $C^{1,\frac{1}{2n+1}}_y$ estimates on $z$ and $k$, we will first show that $\eta^{-1}(\cdot, t)$ is uniformly $C^{0, \frac{1}{2n+1}}_y$ up to time $T_*$. In particular, we will show that
\begin{align}\label{ineq:holder:eta-1}
|x_1-x_2| & \leq 2^{2-\frac{1}{2n+1}} C^{2n}_0 |\eta(x_1,t) - \eta(x_2,t)|^{\frac{1}{2n+1}} && \forall \: x_1, x_2 \in \TT, t \in [0, T_*].
\end{align}
We will prove \eqref{ineq:holder:eta-1} by bounding $|x_1-x_2|$ separately on the segments $|x-0| \leq \frac{1}{C_0}$ and $|x-0| \geq \frac{1}{C_0}$. 

We know from  Lemma \ref{lem:LB:0} that 
\begin{equation*}
\eta_x \geq \tfrac{1}{2}C^{-2n}_0
\end{equation*}
for $|x-0| \geq \frac{1}{C_0}$. Therefore, for all $x_1, x_2$ in this segment we have the bound
\begin{align*}
|x_1-x_2| & \leq 2C^{2n}_0 |\eta(x_1,t) - \eta(x_2,t)| \leq 2C^{2n}_0 |\eta(x_1,t) - \eta(x_2,t)|^{\frac{1}{2n+1}} && \forall \: t \in [0,T_*].
\end{align*}

Now consider $x_1,x_2$ with $-\frac{1}{C_0} \leq x_1 < x_2 < \frac{1}{C_0}$. We know from \eqref{approx:eta_xt:C} that $\eta_{xt}(x,t) < 0$ for all $|x-0| \leq \frac{1}{C_0}, t \in [0, T_*]$, and therefore,
\begin{align*}
|\eta(x_2,t)- \eta(x_1,t)|  = \int^{x_2}_{x_1} \eta_x(x, t) \: dx \geq \int^{x_2}_{x_1} \eta_x(x, T_*) \: dx = |\eta(x_2,T_*) -\eta(x_1,T_*)|
\end{align*}
for all $t \in [0,T_*]$. Therefore, it suffices to bound $|\eta(x_2,T_*) -\eta(x_1,T_*)|$ below. In the case where $x_* \leq  x_1$, \eqref{eq:Taylor:eta_x}  and \eqref{a:2n+1:2} imply
\begin{align*}
\eta(x_2,T_*) -\eta(x_1,T_*) & = \int^{x_2}_{x_1} \eta_x(x,T_*) \: dx \\
& \geq a_{2n+1} \int^{x_2}_{x_1} (2n+1)(x-x_*)^{2n} \: dx - \tfrac{1}{4} \int^{x_2}_{x_1} (x-x_*)^{2n+1} \: dx \\
& = a_{2n+1}\big[ (x_2-x_*)^{2n+1}-(x_1-x_*)^{2n+1}\big] - \tfrac{1}{4(2n+2)} \big[ (x_2-x_*)^{2n+2}-(x_1-x_*)^{2n+2}\big] \\
& \geq \big[a_{2n+1} - \tfrac{1}{4(2n+2)} \tfrac{2n+1}{2n+2}\big] (x_2-x_1)^{2n+1} \\
& \geq \tfrac{1}{4(2n+1)}(x_2-x_1)^{2n+1}.
\end{align*}
The same argument works for the case where $x_2 \leq x_*$. In the last case where $x_1 < x_* < x_2$,  \eqref{thm:id:x-x_*} gives us the bound
\begin{align*}
|x_2-x_1| & \leq \big\vert\tfrac{y_2-y_1}{a_{2n+1}}\big\vert^{\frac{1}{2n+1}} + \big\vert\tfrac{y_2-y_*}{a_{2n+1}}\big\vert^{\frac{2}{2n+1}} + \big\vert\tfrac{y_1-y_*}{a_{2n+1}}\big\vert^{\frac{2}{2n+1}} \\
& \leq 3\big\vert\tfrac{y_2-y_1}{a_{2n+1}}\big\vert^{\frac{1}{2n+1}}  \\
& \leq 3(2(2n+1))^{\frac{1}{2n+1}} |y_2-y_1|^{\frac{1}{2n+1}} ,
\end{align*}
where $y_i = \eta(x_i, T_*)$.  Therefore
\begin{equation*}
|x_2-x_1| \leq 6|\eta(x_2,t)- \eta(x_1,t)|^{\frac{1}{2n+1}}
\end{equation*}
for all $x_1,x_2$ with $|x_i| \leq \frac{1}{C_0}$, $t \in [0, T_*]$. Putting this together with our bounds for $|x-0| \geq \tfrac{1}{C_0}$, we conclude that \eqref{ineq:holder:eta-1} holds.

It now follows from our estimates in \S~\ref{sec:HOE} that 
\begin{equation*}
|\p_y k(y_1,t) - \p_y k(y_2, t)| 
= |\K(x_1, t) - \K(x_2,t)|
\leq \|\p_x \K\|_{L^\infty_{x,t}} |x_1-x_2| 
\leq  C^{2n}_0 \bar{C}_x B_k \eps  |y_1-y_2|^{\frac{1}{2n+1}}.
\end{equation*}
Analogous computations also give us uniform $C^{0,\frac{1}{2n+1}}_y$ estimates on $\p_y z$.

It is also straightforward consequence of our estimates from \S~\ref{sec:HOE} that $w \circ \eta, z\circ \eta,$ and $k \circ \eta$ are $C^{2n+1,1}_{x,t}$  on $\TT \times [0,T_*]$ and the inverse map $(x,t) \rightarrow (y,t)$ is locally $C^{2n+1,1}_{x,t}$ away from $(x_*,T_*)$, so $w,z,$ and $k$ are locally $C^{2n+1,1}_{y,t}$ on $\TT \times [0,T_*] \setminus \{(y_*,T_*)\}$. 
\end{proof}

\appendix


\section{Polynomial inversion}
\label{sec:appendix:poly}

This section generalizes the results from the appendix of a previous work of the authors \cite{NeShVi2023}. Let $\CC((z))$ denote the field of formal Laurent series in the variable $z$ with coefficients in $\CC$, i.e. the field of formal power series with coefficients in $\CC$ that also allow for finitely many terms of negative degree. The field of {\it{Puiseux series}} in the variable $x$ with coefficients in $\CC$ is then defined to be the union $ \bigcup_{j > 0} \CC((x^{1/j})) $, which is itself a field. The Puiseux-Newton theorem states that $\bigcup_{j >0} \CC((x^{1/j}))$  is in fact an algebraically closed field. We will now introduce a useful special case of the Puiseux-Newton theorem:  

\begin{theorem}[Analytic Puiseux-Newton]
\label{APN}
If $\CC\{x\}$ denotes the ring of convergent power series in $x$, and $f(x,y) \in \CC\{x\}[y]$ is a polynomial of degree $m> 0$, irreducible in $\CC\{x\}[y]$, then there exists a convergent power series $y \in \CC\{z\}$ such that the roots of $f$ in $\bigcup_{j > 0} \CC(( x^{1/j}))$ are all given by
\begin{equation*}
y(x^{1/m}), y(e^{2\pi i/m} x^{1/m}), \hdots, y(e^{2\pi i\tfrac{m-1}{m}} x^{1/m}).
\end{equation*}
It follows that in general if $f(x,y) \in \CC\{x\}[y]$ then for each Puiseux series solution $y$ of $f(x,y(x)) = 0$ there exists some $\bar{y} \in \CC\{z\}$ and $m \leq \deg f$ such that $y(x) = \bar{y}(x^{1/m})$.
\end{theorem}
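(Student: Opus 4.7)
The plan is to follow the classical Newton--Puiseux procedure to extract a formal Puiseux series root of $f$, then upgrade the formal series to a convergent one, and finally exploit the irreducibility hypothesis together with Galois symmetry to recover all $m$ roots from a single analytic power series.

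First I would write $f(x,y) = \sum_{j=0}^m a_j(x) y^j$ with $a_j \in \CC\{x\}$ and $a_m$ a unit, and draw the Newton polygon of $f$: the lower-left convex hull in $\RR^2$ of the points $(v(a_j), j)$, where $v$ denotes the vanishing order at $x = 0$. Each edge of slope $-\mu/\nu$ with $\gcd(\mu,\nu)=1$ yields a characteristic polynomial whose nonzero roots $c$ give candidate leading monomials $c\, x^{\mu/\nu}$ of a Puiseux series solution; substituting $y = x^{\mu/\nu}(c+u)$ and dividing by an appropriate power of $x^{1/\nu}$ produces a polynomial in $(x^{1/\nu}, u)$ of the same form, to which the algorithm is applied recursively. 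Iterating formally yields $m$ Puiseux series roots $\bar y_1, \ldots, \bar y_m$ (counted with multiplicity), all lying in $\CC((x^{1/N}))$ for some common denominator $N$.

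The irreducibility of $f$ enters through Galois symmetry: the substitution $x^{1/N} \mapsto \zeta x^{1/N}$, $\zeta^N = 1$, permutes the $\bar y_j$, and for an irreducible polynomial this action is transitive on the root set. Hence the $m$ roots form a single orbit, which forces $N = m$, and the roots are precisely $\bar y(\zeta x^{1/m})$ as $\zeta$ ranges over the $m$-th roots of unity, for a single formal power series $\bar y \in \CC[[z]]$.

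The main obstacle is establishing the analytic convergence $\bar y \in \CC\{z\}$. I would favor a majorant-series approach: using that the $a_j$ are holomorphic on a fixed disc, one constructs an explicit dominating series $M(z) = \sum_k M_k z^k$ with positive coefficients satisfying $|c_k| \leq M_k$ for every coefficient $c_k$ of $\bar y$, with $M$ of positive radius of convergence, by dominating the Newton-polygon recursion for the $c_k$ term-by-term by the recursion for $M_k$ arising from an explicit quadratic majorant equation in $M$. An alternative route: by irreducibility, $\bar y$ is a simple root of $g(z,y) := f(z^m, y)$ in the Puiseux field (a multiple root would split off a proper factor of $g$ which, pulled back via $z \mapsto z^m$, would produce a nontrivial factorization of $f$ in $\CC\{x\}[y]$), so after a Weierstrass preparation to regularize a possible vanishing of $\p_y g$ at $z = 0$, the holomorphic implicit function theorem yields a convergent solution, which formal uniqueness identifies with $\bar y$. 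Either way, the delicate ingredient is the quantitative estimate making the radius of convergence strictly positive. The concluding general statement, for arbitrary $f \in \CC\{x\}[y]$, follows from the irreducible case by factoring $f$ into irreducibles in the unique factorization domain $\CC\{x\}[y]$ and applying the above to each factor; the ramification index of each Puiseux root is then bounded by the degree of the corresponding irreducible factor, and in particular by $\deg f$.
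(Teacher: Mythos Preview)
The paper does not actually prove this theorem: its entire proof reads ``See~[BrKn1986, Section 8.3].'' So there is nothing to compare against except the cited reference, which is the classical treatment in Brieskorn--Kn\"orrer. Your sketch is precisely that classical route (Newton polygon iteration to produce a formal Puiseux root, Galois action by $x^{1/N}\mapsto \zeta x^{1/N}$ to show all roots of an irreducible $f$ form a single orbit of size $m$, then an analytic upgrade), so in spirit you are aligned with what the paper defers to.

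Your sketch is broadly sound, but two points deserve tightening. First, in the convergence step via the implicit function theorem, you correctly note that $\partial_y g(0,\bar y(0))$ may vanish even though $\bar y$ is a simple root of $g(z,y)=f(z^m,y)$ over $\CC\{z\}$; invoking ``Weierstrass preparation'' here is vague. The clean argument is: the discriminant of $g$ in $y$ is a nonzero element of $\CC\{z\}$, hence nonvanishing on a punctured disc; on that punctured disc the $m$ roots are holomorphic and distinct by the ordinary implicit function theorem, and they are bounded (e.g.\ by a Cauchy-type bound from the coefficients of $g$), so each extends holomorphically across $z=0$ by Riemann's removable singularity theorem. Second, the claim that irreducibility forces transitivity of the $\zeta$-action deserves one line: if the orbit had size $d<m$, the product $\prod_{\zeta^d=1}(y-\bar y(\zeta x^{1/d}))$ would be a degree-$d$ factor of $f$ with coefficients fixed by all $d$-th roots of unity, hence lying in $\CC\{x\}[y]$, contradicting irreducibility. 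With these two clarifications your argument is complete.
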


\begin{proof}[Proof of Theorem~\ref{APN}]
See~\cite[Section 8.3]{BrKn1986}. 
\end{proof}

Now, fix a positive integer $n$. Recursively define the sequence
\begin{align*}
c^n_0 & : = 1, \\
c^n_m & : = \sum_{\ell_1+\cdots + \ell_{2n+2} = m-1} c^n_{\ell_1} \cdots c^n_{\ell_{2n+2}} - \tfrac{1}{2n+1} \sum_{\substack{j_1 + \cdots j_{2n+1} = m \\ 0 \leq j_i \leq m-1}} c^n_{j_1} \cdots c^n_{j_{2n+1}},
\end{align*}
and then define the formal power series
\begin{equation*}
 \bar{y}_n(x) : = \sum^\infty_{j=0} \tfrac{(-1)^j}{(2n+1)^j} c_j x^{j}. 
\end{equation*}
One can check that
\begin{equation*}
 y_0(x) : = x^{\frac{1}{2n+1}}\bar{y}_n(x^{\frac{1}{2n+1}}) 
 \end{equation*}
is a Puiseux series solution of the algebraic equation
\begin{equation*}
 -x + y_0^{2n+1} + y^{2n+2}_0 = 0. 
 \end{equation*}
It follows from Theorem \ref{APN} that $\bar{y}_n$ is a convergent power series with some positive radius of convergence 
\begin{equation*}
 R_n : = \frac{2n+1}{\limsup_{j \rightarrow \infty} |c^n_j|^{1/j}} > 0. 
 \end{equation*}

If $a_{2n+1} \in \R^\times,a_{2n+2} \in \R$ one can check that 
\begin{align*}
y(x) & : = (\tfrac{x}{a_{2n+1}})^{\frac{1}{2n+1}}\bar{y}_n\big( \tfrac{a_{2n+2}}{a_{2n+1}} (\tfrac{x}{a_{2n+1}})^{\frac{1}{2n+1}}\big) \\
&  = (\tfrac{x}{a_{2n+1}})^{\frac{1}{2n+1}} \sum^\infty_{j=0} \big( \tfrac{-a_{2n+2}}{(2n+1)a_{2n+1}} \big)^j c^n_j (\tfrac{x}{a_{2n+1}})^{\frac{j}{2n+1}} 
\end{align*}
solves
\begin{equation*}
 -x + a_{2n+1} y^{2n+1} + a_{2n+2} y^{2n+2} = 0 
 \end{equation*}
for all $x$ satisfying
\begin{equation*}
 |a_{2n+2}|^{2n+1} |x| < a_{2n+1}^{2n+2} R_n^{2n+1}. 
 \end{equation*}

Since $\bar{y}_n$ is holomorphic, if $0 < r< R_n$ then Cauchy's estimate gives us
\begin{align*}
|\tfrac{c_j}{(2n+1)^j}| & \leq r^{-j} \max_{|z|=r|} |\bar{y}_n(z)|
\end{align*}
for all nonnegative integers $j$. It follows that for $0< r < R_n$ and $N \geq 0$, we have
\begin{align*}
\big\vert\bar{y}_n(z) - \sum^N_{j=0} \tfrac{(-1)^j}{(2n+1)^j} c_j z^{j} \big\vert & \leq \frac{|\tfrac{z}{r}|^{N+1}}{1-|\tfrac{z}{r}|} \max_{|z| = r} |\bar{y}_n(z)| & \forall \: |z| < r.
\end{align*}
Therefore, if we define
\begin{equation}\label{def:M_n}
M_n : =  \max_{|z| = \frac{3R_n}{4}} |\bar{y}_n(z)|,
\end{equation}
then for any $N \geq 0$ we get the bound
\begin{align}\label{ineq:appendix:N}
\big\vert y(x) - (\tfrac{x}{a_{2n+1}})^{\frac{1}{2n+1}} \sum^N_{j=0} \big( \tfrac{-a_{2n+2}}{(2n+1)a_{2n+1}} \big)^j c^n_j (\tfrac{x}{a_{2n+1}})^{\frac{j}{2n+1}} \big\vert & \leq  3M_n (\tfrac{4a_{2n+2}}{3a_{2n+1} R_n})^{N+1} |\tfrac{x}{a_{2n+1}}|^{\frac{N+2}{2n+1}} \\
\forall \: x \text{ s.t. } \: |a_{2n+2}|^{2n+1} |x| &  < a_{2n+1}^{2n+2} (\tfrac{R_n}{2})^{2n+1}. \notag
\end{align}

\begin{lemma}\label{lem:appendix:1}
Let $n$ be a positive integer, let $I \subset \R$ be an interval, and let $y \in C^{2n+1,1}(I)$. Suppose $y$ has a Taylor series expansion about $x_* \in I$ of the form
\begin{equation*}
y(x) = y_* + a_{2n+1}(x-x_*)^{2n+1} + a_{2n+2}(x)(x-x_*)^{2n+2}
\end{equation*}
where $a_{2n+1} > 0$. Define $J \subset I$ to be 
\begin{equation*}
J : = \{ x \in I : |a_{2n+2}(x)|^{2n+1} |y(x)-y_*| < a_{2n+1}^{2n+2} R^{2n+1}_n, \quad \tfrac{(2n+2)a_{2n+2}(x) (x-x_*)}{(2n+1)a_{2n+1}} >-1 \},
\end{equation*}
and define $\tilde{J}$ to be the connected component of $J$ containing $x_*$. Then for all $x \in \tilde{J}$ we have 
\begin{equation}\label{eq:appendix:fracseries}
(x-x_*) = \big(\tfrac{y(x)-y_*}{a_{2n+1}}\big)^{\frac{1}{2n+1}} \sum^\infty_{j=0} \big( \tfrac{-a_{2n+2(x)}}{(2n+1)a_{2n+1}} \big)^j c^n_j \big(\tfrac{y(x)-y_*}{a_{2n+1}}\big)^{\frac{j}{2n+1}} .
\end{equation}
\end{lemma}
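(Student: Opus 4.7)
The plan is to reduce the claim, for each fixed $x \in \widetilde J$, to an algebraic uniqueness statement for a polynomial in one real variable, and then to promote this pointwise identity to the full connected component $\widetilde J$ by a continuity and connectedness argument. Abbreviate $a := a_{2n+1} > 0$, $b := a_{2n+2}(x)$, $v := y(x) - y_*$, $u := x - x_*$. The Taylor expansion hypothesis is exactly the identity $v = a u^{2n+1} + b u^{2n+2}$, so that $u$ is a real root of
\begin{equation*}
P(z) := a z^{2n+1} + b z^{2n+2} - v.
\end{equation*}
In parallel, define
\begin{equation*}
\widetilde u \;:=\; (v/a)^{1/(2n+1)}\, \bar{y}_n\!\Bigl( \tfrac{b}{a}\,(v/a)^{1/(2n+1)} \Bigr),
\end{equation*}
with $(v/a)^{1/(2n+1)}$ denoting the real odd root. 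The first condition in the definition of $J$ rearranges exactly to $\bigl|\tfrac{b}{a}(v/a)^{1/(2n+1)}\bigr| < R_n$, which places the argument of $\bar y_n$ strictly inside its disk of convergence; hence $\widetilde u$ is a well-defined real number, and by the rescaled algebraic identity recalled in the preamble (with $a_{2n+1}=a$, $a_{2n+2}=b$), $\widetilde u$ is also a root of $P$. The content of \eqref{eq:appendix:fracseries} is therefore the equality $u = \widetilde u$.

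The key uniqueness input will be strict monotonicity of $P$ on a distinguished interval. A direct computation gives $P'(z) = z^{2n}\bigl[(2n+1)\,a + (2n+2)\,b\,z\bigr]$, so $P$ is strictly increasing on the open set
\begin{equation*}
I_b \;:=\; \bigl\{ z \in \R : (2n+1)\,a + (2n+2)\,b\,z > 0 \bigr\},
\end{equation*}
which is an open interval containing $z = 0$; in particular $P$ has at most one root in $I_b$. The second condition in the definition of $J$ states exactly $u \in I_b$. It therefore suffices to prove the a priori non-obvious inclusion $\widetilde u \in I_b$: combined with $P(u) = P(\widetilde u) = 0$, strict monotonicity on $I_b$ forces $u = \widetilde u$.

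I would obtain $\widetilde u \in I_b$ by a connectedness argument on $\widetilde J$, exploiting that $b(x),\,v(x),\,u(x)$ and $\widetilde u(x) := \widetilde u(v(x), b(x))$ are all continuous on $\widetilde J$ (the odd root is continuous through $0$ and $\bar y_n$ is continuous on its disk of convergence), and that the defining inequality for $I_b$ is open in $(b,z)$. Set $S := \{x \in \widetilde J : u(x) = \widetilde u(x)\}$. Then $x_* \in S$, since $v(x_*) = 0$ forces both $u$ and $\widetilde u$ to vanish at $x_*$, and $S$ is closed in $\widetilde J$ by continuity. For openness, fix $x_0 \in S$ with values $b_0, u_0$; then $\widetilde u(x_0) = u_0 \in I_{b_0}$, so by continuity $\widetilde u(x) \in I_{b(x)}$ for $x$ in a neighborhood of $x_0$ in $\widetilde J$, while $u(x) \in I_{b(x)}$ holds throughout $\widetilde J$ by the very definition of $\widetilde J$; strict monotonicity of $P$ on $I_{b(x)}$ then yields $u(x) = \widetilde u(x)$ on that neighborhood, so $S$ is open. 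Connectedness of $\widetilde J$ gives $S = \widetilde J$, which is \eqref{eq:appendix:fracseries}.

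The main obstacle is precisely this propagation step: the Puiseux formula alone produces only \emph{some} real root of the degree-$(2n+2)$ polynomial $P$ and offers no direct certificate that the particular root $\widetilde u$ lies in the monotonicity interval $I_b$, so the pointwise algebraic identity cannot be read off from the series. The openness of the constraint $z \in I_b$ and the trivial base case $\widetilde u(x_*) = 0 \in I_{b(x_*)}$ are what allow the identity $u = \widetilde u$ to be propagated to the entire connected component $\widetilde J$ via continuity.
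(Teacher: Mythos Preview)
Your proof is correct and follows essentially the same approach as the paper: both reduce to showing that the Puiseux-series root $\widetilde u$ and the actual increment $u$ coincide via strict monotonicity of $z\mapsto a z^{2n+1}+b z^{2n+2}$ on the interval $I_b$, and both propagate the identity from $x_*$ to all of $\widetilde J$ by an open--closed--nonempty continuity argument. The paper phrases this as a ``bootstrap'' with hypothesis $H(x)$ (namely $\widetilde u(x)\in I_{b(x)}$) and conclusion $C(x)$ (namely $\widetilde u(x)=u(x)$), but the logic is identical to your direct argument on the set $S$.
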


\begin{proof} Without loss of generality, $x_* = y_* = 0$ . For $(y,a) \in \R^2$ satisfying
$$ |a|^{2n+1} |y| < a_{2n+1}^{2n+2} R^{2n+1}_n $$
define the function $\tilde{x}$ via the series
$$ \tilde{x}(y,a) : = \big(\tfrac{y}{a_{2n+1}}\big)^{\frac{1}{2n+1}} \sum^\infty_{j=0} \big( \tfrac{-a}{(2n+1)a_{2n+1}} \big)^j c^n_j \big(\tfrac{y}{a_{2n+1}}\big)^{\frac{j}{2n+1}} $$
which we know to converge. For $(y,a)$ in the domain of $\tilde{x}$, $\tilde{x}(y,a)$ solves
\begin{equation*}
\label{eq:appendix:xtilde}
-y + a_{2n+1} \tilde{x}(y,a)^{2n+1} + a\tilde{x}(y,a)^{2n+2} = 0.
\end{equation*}
For all $x \in J$, $(y(x), a_{2n+2}(x))$ is in the domain of $\tilde{x}$ and we have
\begin{align}
\label{eq:appendix:xtilde:J}
y(x) & =  a_{2n+1} \tilde{x}(y(x),a_{2n+2}(x)))^{2n+1} + a_{2n+2}(x)\tilde{x}(y(x),a_{2n+2}(x))^{2n+2} = 0.
\end{align}

For $x \in \tilde{J}$, define the bootstrap hypothesis and conclusion
\begin{align*}
H(x) & : = `` \tfrac{(2n+2)a_{2n+2}(x)\tilde{x}(y(x), a_{2n+2}(x))}{(2n+1)a_{2n+1}}  > -1 " \\
C(x) & : = ``\tilde{x}(y(x), a_{2n+2}(x)) =x. "
\end{align*}
We note that
\begin{enumerate}[leftmargin=*]
\item $H(0)$ is true,
\item the set of $x \in \tilde{J}$ where $C(x)$ is true is a closed subset of $\tilde{J}$ because $x \rightarrow \tilde{x}(y(x), a_{2n+2}(x))$ is continuous, and
\item the set of $x \in \tilde{J}$ where $C(x)$ is true is in the interior of the set where $H(x)$ is true, by virtue of the definition of $J$ and the fact that  $x \rightarrow \tilde{x}(y(x), a_{2n+2}(x))$ is continuous.
\end{enumerate}
Lastly, the set where $H(x)$ is true contains the set where $C(x)$ is true. To see this, for $(x,z) \in I \times \R$ satisfying 
\begin{equation*}
\tfrac{(2n+2)a_{2n+2}(x)z}{(2n+1)|a_{2n+1}|} >-1
\end{equation*}
define the function
\begin{equation*}
f(x,z) : = a_{2n+1} z^{2n+1} + a_{2n+2}(x) z^{2n+2}.
\end{equation*}
For all $(x,z)$ in the domain of $f$ we have
\begin{align*}
\p_zf(x,z) & = (2n+1)a_{2n+1} z^{2n}[1+\tfrac{(2n+2)a_{2n+2}(x)z}{(2n+1)a_{2n+1}}]
\end{align*}
so that $f$ is a strictly increasing function of $z$ everywhere in its domain. If $x \in \tilde{J}$ is such that $H(x)$ is true, then $\tilde{x}(y(x), a_{2n+2}(x))$ is in the domain of $f$ and \eqref{eq:appendix:xtilde:J} implies that
\begin{equation*}
f(x, \tilde{x}(y(x), a_{2n+2}(x))) = y(x) = f(x,x).
\end{equation*}
Since $f$ is a strictly increasing function of its second argument on its domain, we conclude that $C(x)$ is true. This completes our bootstrap argument, and since $\tilde{J}$ is connected we conclude that $C(x)$ is true for all $x \in \tilde{J}$.
\end{proof}

\begin{lemma}\label{lem:appendix:fracseries}
In the context of the previous lemma, if we further assume that  $I = [-r,r]$ and $r$ satisfies
\begin{subequations}\label{hyp:appendix:lem}
\begin{align}\label{hyp:appendix:1}
\tfrac{(2n+2)\|a_{2n+2}\|_{L^\infty_x} }{(2n+1)a_{2n+1}}(r+|x_*|) & < 1, \\
\label{hyp:appendix:2}
\tfrac{\|a_{2n+2}\|_{L^\infty_x}}{a_{2n+1} R_n} (r+|x_*|)  & < \tfrac{1}{4},
\end{align}
\end{subequations}
then for all $x \in I$ we have
\begin{equation}\label{ineq:appendix:fracseries}
\bigg\vert (x-x_*)-\big(\tfrac{y(x)-y_*}{a_{2n+1}}\big)^{\frac{1}{2n+1}} \bigg\vert \leq \tfrac{4M_n \|a_{2n+2}\|_{L^\infty_x}}{a_{2n+1} R_n} |\tfrac{y(x)-y_*}{a_{2n+1}}|^{\frac{2}{2n+1}}.
\end{equation}
Furthermore, the range of $y$ contains the ball
\begin{align}\label{ball:y}
|y-y_*| & \leq  \tfrac{a_{2n+1} (r-|x_*|)^{2n+1}}{2n+2}.
\end{align}
\end{lemma}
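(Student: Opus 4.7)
The proof plan has four steps: establish $I \subset \tilde{J}$ so that the series representation of Lemma \ref{lem:appendix:1} is valid on all of $I$; along the way derive a stronger bound so that the truncation estimate \eqref{ineq:appendix:N} may also be invoked on $I$; extract \eqref{ineq:appendix:fracseries} as the $N=0$ case of this estimate; and finally prove the range bound \eqref{ball:y} via the intermediate value theorem applied to $y$ on $I$ together with endpoint estimates from the Taylor expansion.

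To show $I \subset \tilde{J}$, I need to verify that both conditions defining $J$ hold throughout $I$, after which connectedness of $I$ and $x_* \in I$ yield $I \subset \tilde{J}$. The second condition is direct: by \eqref{hyp:appendix:1}, for any $x \in I$,
\[
\Bigl| \tfrac{(2n+2)a_{2n+2}(x)(x-x_*)}{(2n+1)a_{2n+1}} \Bigr| \leq \tfrac{(2n+2)\|a_{2n+2}\|_{L^\infty_x}(r+|x_*|)}{(2n+1)a_{2n+1}} < 1.
\]
For the first condition I combine both hypotheses. Writing $|y(x)-y_*| \leq a_{2n+1}|x-x_*|^{2n+1}\bigl(1+\|a_{2n+2}\|_{L^\infty_x}|x-x_*|/a_{2n+1}\bigr)$ from the Taylor expansion, I use \eqref{hyp:appendix:1} to bound the parenthesized factor by $2$ and \eqref{hyp:appendix:2} to bound $\bigl(\|a_{2n+2}\|_{L^\infty_x}|x-x_*|/a_{2n+1}\bigr)^{2n+1}$ by $(R_n/4)^{2n+1}$. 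The product then satisfies
\[
|a_{2n+2}(x)|^{2n+1}|y(x)-y_*| < 2 \bigl( \tfrac{R_n}{4} \bigr)^{2n+1} a_{2n+1}^{2n+2} < \bigl( \tfrac{R_n}{2}\bigr)^{2n+1} a_{2n+1}^{2n+2},
\]
where the last inequality uses $2 \cdot (1/4)^{2n+1} < (1/2)^{2n+1}$, valid for all $n \geq 0$. This gives both the first condition of $J$ and the stronger hypothesis required to invoke \eqref{ineq:appendix:N} at every $x \in I$.

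Granted the series representation \eqref{eq:appendix:fracseries} on $I$, the inequality \eqref{ineq:appendix:fracseries} is immediate from the $N=0$ case of \eqref{ineq:appendix:N}, because the $j=0$ term of the series is exactly $((y-y_*)/a_{2n+1})^{1/(2n+1)}$, so that the estimate reads
\[
\Bigl| (x-x_*) - \bigl(\tfrac{y(x)-y_*}{a_{2n+1}}\bigr)^{\frac{1}{2n+1}} \Bigr| \leq \tfrac{4M_n \|a_{2n+2}\|_{L^\infty_x}}{a_{2n+1} R_n} \Bigl| \tfrac{y(x)-y_*}{a_{2n+1}} \Bigr|^{\frac{2}{2n+1}}.
\]

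For the range \eqref{ball:y}, $y$ is continuous on $I = [-r,r]$ with $y(x_*) = y_*$, so the intermediate value theorem guarantees that the image of $y$ contains $[y(-r), y(r)]$. Using the Taylor expansion and \eqref{hyp:appendix:1} I estimate
\[
y(r) - y_* \geq a_{2n+1}(r-x_*)^{2n+1}\Bigl[1 - \tfrac{\|a_{2n+2}\|_{L^\infty_x}(r-x_*)}{a_{2n+1}}\Bigr] > \tfrac{a_{2n+1}(r-x_*)^{2n+1}}{2n+2} \geq \tfrac{a_{2n+1}(r-|x_*|)^{2n+1}}{2n+2},
\]
the last step using $r - x_* \geq r - |x_*|$. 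A symmetric computation at $x = -r$ gives $y_* - y(-r) \geq a_{2n+1}(r-|x_*|)^{2n+1}/(2n+2)$, which completes the proof of \eqref{ball:y}. The only genuinely delicate step in the plan is the combined use of \eqref{hyp:appendix:1} and \eqref{hyp:appendix:2} to produce the $(R_n/2)^{2n+1}$ bound in paragraph two; once that bookkeeping is in place the remaining deductions are routine.
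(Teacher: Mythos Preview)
Your proof is correct and follows essentially the same route as the paper: verify $J = I$ (hence $\tilde J = I$) from the two hypotheses, invoke Lemma~\ref{lem:appendix:1} to get the series representation on all of $I$, and then truncate via~\eqref{ineq:appendix:N}. Your argument is simply more explicit where the paper is terse. Two small remarks: first, you correctly take $N=0$ in~\eqref{ineq:appendix:N}, whereas the paper writes $N=1$, which appears to be a typo (the $N=0$ bound is exactly~\eqref{ineq:appendix:fracseries}); second, your parenthetical ``valid for all $n\geq 0$'' for the inequality $2(1/4)^{2n+1} < (1/2)^{2n+1}$ is a slight overstatement (it requires $n\geq 1$), but since $n\geq 1$ throughout the paper this is harmless.
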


\begin{proof}
It is immediate from \eqref{hyp:appendix:1} that
\begin{equation*}
|y(x)-y_*| \geq \tfrac{a_{2n+1}}{2n+2} |x-x_*|^{2n+1}
\end{equation*}
for all $x \in I$. This implies that the range of $y$ contains the ball defined by \eqref{ball:y}. Furthermore, \eqref{hyp:appendix:lem} implies that $J=I$ and therefore $\tilde{J} = I$. Lemma \ref{lem:appendix:1} therefore implies that  \eqref{eq:appendix:fracseries} holds for all $x \in I$. Now our assumption \eqref{hyp:appendix:2} allows us to apply \eqref{ineq:appendix:N} with $N=1$ to obtain \eqref{ineq:appendix:fracseries}.
\end{proof}



\section{Frequently used computations}
\label{sec:appendix:freqcomp}

The following lemmas are straightforward exercises:

\begin{lemma}\label{lem:ODE}
If $A > 0$ and $f \in \RR$ are constants, and $x:[0,T] \rightarrow \RR$ satisfies
\begin{equation*}
\dot{x} \leq -A x + f
\end{equation*}
for all $t \in [0,T]$, then
\begin{align*}
x(t) & \leq x(0) e^{-At} + (1-e^{-At})\tfrac{f}{A} \leq \max(x(0), \tfrac{f}{A}) & \forall \: t \in [0,T].
\end{align*}
\end{lemma}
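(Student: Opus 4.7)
The lemma is a classical Gr\"onwall-type comparison estimate, and the proof will be a short computation using an integrating factor. The plan is as follows.

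First I would rewrite the differential inequality $\dot x + Ax \leq f$ and multiply through by the integrating factor $e^{At} > 0$ to obtain
\begin{equation*}
\tfrac{d}{dt}\bigl( e^{At} x(t) \bigr) \;=\; e^{At}\bigl( \dot x(t) + A x(t) \bigr) \;\leq\; e^{At} f.
\end{equation*}
Since the inequality holds pointwise on $[0,T]$ and the right-hand side is continuous in $t$, I would then integrate from $0$ to $t$, yielding
\begin{equation*}
e^{At} x(t) - x(0) \;\leq\; \int_0^t e^{As} f\, ds \;=\; \tfrac{f}{A}\bigl( e^{At} - 1 \bigr).
\end{equation*}
Dividing by $e^{At}$ gives precisely the first claimed bound
$
x(t) \leq x(0) e^{-At} + (1-e^{-At}) \tfrac{f}{A}.
$

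For the second inequality, the point is that $e^{-At} \in (0,1]$ for $t \in [0,T]$, so $x(0) e^{-At} + (1-e^{-At}) \tfrac{f}{A}$ is a convex combination of $x(0)$ and $\tfrac{f}{A}$, and hence is bounded above by $\max(x(0), \tfrac{f}{A})$. Because the argument is elementary there is no real obstacle; the only minor care is that the differential inequality is only assumed to hold (not an equality), which is exactly why multiplication by the \emph{positive} factor $e^{At}$ preserves the inequality and why integrating a pointwise-valid upper bound against a nonnegative kernel is legitimate. I would not expect to need any regularity assumption on $x$ beyond that $\dot x$ exists a.e.\ and the fundamental theorem of calculus applies (absolute continuity of $e^{At}x(t)$), which is implicit in the statement.
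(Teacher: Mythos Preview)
Your proof is correct and is exactly the standard integrating-factor argument one would expect; the paper itself omits the proof entirely, labeling the lemma a ``straightforward exercise,'' so your argument is precisely what is intended.
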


\begin{lemma}\label{lem:id:multi}
If $\beta$ is a multi-index and $j_1+\hdots+j_m = |\beta|$, then
\begin{equation}
 \sum_{\substack{\gamma_1 + \cdots + \gamma_m = \beta \\ |\gamma_i| = j_i}} {\beta\choose{\gamma_1 \cdots \gamma_m}} = {|\beta|\choose{j_1 \cdots j_m}}.
\end{equation}
\end{lemma}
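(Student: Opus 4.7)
The identity is the standard multinomial/Vandermonde convolution written for multi-indices, so the proof is short and essentially algebraic. The strategy is to factor the multi-index multinomial coefficient across coordinates, reduce the coupling between coordinates to a Vandermonde sum on a single coordinate axis, and conclude by cancellation.

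First I would write $\beta = (\beta_x, \beta_t)$ and each $\gamma_i = (\gamma_{i,x}, \gamma_{i,t})$, so that
\begin{equation*}
\binom{\beta}{\gamma_1\cdots\gamma_m}
= \frac{\beta_x!\,\beta_t!}{\prod_{i=1}^m \gamma_{i,x}!\,\gamma_{i,t}!}.
\end{equation*}
The constraints $\gamma_1 + \cdots + \gamma_m = \beta$ and $|\gamma_i| = j_i$ together force $\gamma_{i,t} = j_i - \gamma_{i,x}$ and $\sum_i \gamma_{i,x} = \beta_x$ (so that $\sum_i \gamma_{i,t} = |\beta| - \beta_x = \beta_t$ is automatic). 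Hence the left-hand side collapses to a single sum over $(\gamma_{1,x}, \ldots, \gamma_{m,x}) =: (k_1, \ldots, k_m)$ with $0 \le k_i \le j_i$ and $\sum_i k_i = \beta_x$. Pulling out the $j_i!$ denominators, the left-hand side becomes
\begin{equation*}
\frac{\beta_x!\,\beta_t!}{\prod_{i=1}^m j_i!}\sum_{\substack{0\le k_i\le j_i \\ \sum_i k_i = \beta_x}} \prod_{i=1}^m \binom{j_i}{k_i}.
\end{equation*}

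The remaining inner sum is the classical (multi-)Vandermonde convolution $\sum_{k_1 + \cdots + k_m = \beta_x}\prod_i \binom{j_i}{k_i} = \binom{\sum_i j_i}{\beta_x} = \binom{|\beta|}{\beta_x}$, which one can either quote or derive in one line from $\prod_i (1+x)^{j_i} = (1+x)^{|\beta|}$ by reading off the coefficient of $x^{\beta_x}$. Substituting $\binom{|\beta|}{\beta_x} = |\beta|!/(\beta_x!\,\beta_t!)$ into the display above, the factors of $\beta_x!\,\beta_t!$ cancel and one is left with $|\beta|!/\prod_i j_i! = \binom{|\beta|}{j_1\cdots j_m}$, which is the right-hand side.

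There is no real obstacle; the only care required is in the bookkeeping to verify that the $\gamma_{i,t}$-sum becomes automatic once one fixes the $\gamma_{i,x}$-marginals. I note in passing that the same argument extends verbatim to multi-indices in any $\RR^d$, by factoring the multinomial across all $d$ coordinates and applying the Vandermonde reduction on each coordinate axis (equivalently, a one-shot combinatorial argument counts ordered partitions of a disjoint union of $d$ typed sets into $m$ blocks of prescribed total sizes $j_1,\ldots,j_m$).
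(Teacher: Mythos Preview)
Your proof is correct. The paper does not actually prove this lemma; it lists it among several results introduced as ``straightforward exercises'' and gives no argument, so your coordinate-by-coordinate reduction to the Vandermonde convolution is a valid and standard way to fill in the details.
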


\begin{lemma}\label{lem:ineq:sum:nk}
For all positive integers $m,\ell$ and all $p > 1$, we have the bound
\begin{align*}
\sum_{j_1+\hdots+j_\ell = m} \tfrac{(m+1)^p}{(j_1+1)^p \cdots (j_\ell+1)^p} & < \ell^{1+p} \bigg( \sum^\infty_{j=1} \tfrac{1}{j^p} \bigg)^{\ell-1}.
\end{align*}
In particular, for $p=2$ and $\ell = 2,3$ we have
\begin{align}\label{ineq:sum:nk}
&\sum^m_{j=0} \tfrac{(m+1)^2}{(j+1)^2(m+1-j)^2} < \tfrac{4\pi^2}{3}, &&\sum_{j_1+j_2+j_3 = m} \tfrac{(m+1)^2}{(j_1+1)^2(j_2+1)^2(j_3+1)^2} < \tfrac{3\pi^4}{4}.
\end{align}
\end{lemma}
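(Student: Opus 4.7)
The plan is to prove this symmetric multi-index sum by a pigeonhole decomposition that isolates a single ``dominant'' index. The key observation is that whenever nonnegative integers $j_1, \ldots, j_\ell$ satisfy $\sum_i j_i = m$, one has $\sum_i (j_i + 1) = m + \ell$, so the average of the shifted indices is $(m+\ell)/\ell$; hence at least one $k$ must satisfy $j_k + 1 \geq (m+\ell)/\ell \geq (m+1)/\ell$. This single inequality forces the ``worst'' factor $(m+1)^p/(j_k+1)^p$ to be at most $\ell^p$, a bound independent of $m$. Once this dominant factor has been neutralized, the remaining $\ell-1$ free indices decouple and can be summed against the convergent $p$-series $\sum_{j \geq 1} j^{-p}$.

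To implement this, I would define, for each $k \in \{1, \ldots, \ell\}$, the set $S_k$ of tuples $(j_1, \ldots, j_\ell)$ of nonnegative integers with $\sum_i j_i = m$ and $j_k + 1 \geq (m+1)/\ell$. The pigeonhole observation gives $\bigcup_k S_k = \{(j_i) : \sum j_i = m\}$. A union bound, followed by absorbing the dominant factor into $\ell^p$, yields
\[
\sum_{j_1+\cdots+j_\ell = m} \frac{(m+1)^p}{\prod_i (j_i+1)^p} \;\leq\; \sum_{k=1}^\ell \sum_{(j_i) \in S_k} \frac{(m+1)^p}{\prod_i (j_i+1)^p} \;\leq\; \ell^p \sum_{k=1}^\ell \sum_{(j_i) \in S_k} \frac{1}{\prod_{i \neq k} (j_i + 1)^p}.
\]
For each fixed $k$ I would then drop the defining constraints of $S_k$ (both the simplex condition and the lower bound on $j_k$), allowing the remaining indices $j_i$ with $i \neq k$ to range independently over the nonnegative integers. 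This gives
\[
\sum_{(j_i) \in S_k} \frac{1}{\prod_{i \neq k}(j_i + 1)^p} \;<\; \prod_{i \neq k} \sum_{j=0}^\infty \frac{1}{(j+1)^p} \;=\; \biggl(\sum_{j=1}^\infty \frac{1}{j^p}\biggr)^{\ell-1},
\]
which is strict as soon as $\ell \geq 2$, because the unconstrained product on the right contains infinitely many positive terms beyond the finitely many tuples in $S_k$. Summing over the $\ell$ choices of $k$ produces the claimed bound $\ell^{p+1}\bigl(\sum_{j \geq 1} j^{-p}\bigr)^{\ell-1}$.

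For the two concrete inequalities in \eqref{ineq:sum:nk}, plug in $p = 2$ and use $\sum_{j=1}^\infty 1/j^2 = \pi^2/6$: the $\ell = 2$ case yields $2^3 \cdot \pi^2/6 = 4\pi^2/3$, and the $\ell = 3$ case yields $3^3 \cdot (\pi^2/6)^2 = 3\pi^4/4$. There is no genuine obstacle to this argument; it is purely combinatorial. The only minor cosmetic issue is that in the degenerate case $\ell = 1$ the stated strict inequality collapses to an equality $1 = 1$, but this case plays no role in the main text, where the lemma is applied only with $\ell \in \{2, 3, 4\}$.
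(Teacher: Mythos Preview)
Your proof is correct. The paper does not actually prove this lemma---it is listed among several ``straightforward exercises'' in the appendix---so there is no reference argument to compare against. Your pigeonhole-and-union-bound approach is the natural one, and the injection from $S_k$ into the unconstrained $(\ell-1)$-fold product (by forgetting the determined index $j_k = m - \sum_{i\neq k} j_i$) is the clean way to decouple the remaining factors. Your observation about the $\ell=1$ boundary case is also accurate: the strict inequality degenerates to the equality $1=1$ there; since the paper only invokes the lemma for $\ell \in \{2,3,4\}$, this is harmless.
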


Many variants of the inequalities \eqref{ineq:sum:nk} can be proven via similar computations: 
\begin{equation}\label{ineq:appendix:B}
\begin{aligned}
& \sum^m_{j=0} \tfrac{m+2}{(j+1)^2(m+2-j)} < \tfrac{2\pi^2}{3},
&& \sum^m_{j=1} \tfrac{m+2}{j^3(m+2-j)} < 8,
&&& \sum^m_{j=2} \tfrac{m+1}{j^3(m+2-j)} < 6.1,\\
& \sum^m_{j=1} \tfrac{(m+1)^2}{j^3 (m+1-j)^2} < 25, 
&& \sum^{m-1}_{j=0} \tfrac{(m+1)^2}{(j+1)^2(m-j)^2} < \tfrac{16 \pi^2}{3},
&&& \sum_{\substack{j_1 + j_2 + j_3 = m \\ j_1 \geq 1}} \tfrac{(m+1)^2}{j_1^3 (j_2+1)^2 (j_3+1)^2} < 163 \\
& \sum_{\substack{j_1 + j_2 + j_3 = m \\ j_1 \geq 1}} \tfrac{m+2}{j_1^3 (j_2+1)^2 (j_3+2)} < 21, 
&& \sum_{\substack{j_1 + j_2 + j_3 = m \\ j_1, j_2 \geq 1}} \tfrac{(m+1)^2}{j_1^3 j^3_2 (j_3+1)^2} < 66, 
&&& \sum_{\substack{j_1 + j_2 + j_3 + j_4 = m \\ j_1, j_2 \geq 1}} \tfrac{(m+1)^2}{j^3_1 j^3_2 (j_3+1)^2(j_4+1)^2} < 641.
\end{aligned}
\end{equation}
These inequalities \eqref{ineq:sum:nk}--\eqref{ineq:appendix:B} are used throughout \S~\ref{sec:HOE}-\ref{sec:stab}. Note that the righthand side of the inequalities  \eqref{ineq:sum:nk}--\eqref{ineq:appendix:B} is always {\it{independent of $m$}}.


\section{Implicit function theorem}
\label{sec:appendix:IFT}

In this section, suppose that $X$ is a Banach space, $U \subset X$ is an open set, and $V \subset \R^N$ is an open, convex set with $B_r(y_0) \subseteq V \subseteq B_R(y_0)$ for some $y_0 \in \R^N, 0 < r \leq R$. Let $f : U \times V \rightarrow \R^N$.

\begin{lemma}[Existence of unique solutions]\label{lem:IFT:1}
Suppose that $f$ is Lipschitz in $y$ and that there exists a constant $\theta \in [0, 1)$ such that for all $x \in U$ we have
\begin{subequations}
\begin{align}\label{hyp:IFT:1}
\| \Id - Df_y(x, \cdot) \|_{L^\infty_y(V)} & \leq \theta, \\
\label{hyp:IFT:2}
|f(x,y_0)| & \leq r- \theta R.
\end{align}
\end{subequations}
Then there exists a function $g:  U \rightarrow V$ such that 
\begin{align*}
\bigg\{ (x,y) \in U \times V : f(x,y) = 0 \bigg\} = \bigg\{ (x, g(x)) : x \in U \bigg\}.
\end{align*}
\end{lemma}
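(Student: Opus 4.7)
The plan is to prove this via the Banach fixed-point theorem applied to the natural contraction associated to $f$. For each fixed $x \in U$, I would define the map $T_x : V \to \RR^N$ by
\begin{equation*}
T_x(y) := y - f(x,y),
\end{equation*}
so that zeros of $f(x,\cdot)$ in $V$ correspond exactly to fixed points of $T_x$. The goal is to show that $T_x$ maps the closed ball $\bar{B}_r(y_0)$ into itself and is a contraction on $V$ with constant $\theta$; this yields a unique fixed point $g(x) \in \bar{B}_r(y_0)$, and moreover any fixed point lying anywhere in $V$ must coincide with $g(x)$ (since $T_x(V) \subseteq \bar{B}_r(y_0)$).

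The contraction estimate is where hypothesis \eqref{hyp:IFT:1} enters. Because $V$ is convex, for any $y_1, y_2 \in V$ the fundamental theorem of calculus gives
\begin{equation*}
T_x(y_1) - T_x(y_2) = \int_0^1 \bigl( \Id - Df_y(x, y_2 + t(y_1 - y_2)) \bigr)(y_1 - y_2)\,dt,
\end{equation*}
so taking norms and invoking \eqref{hyp:IFT:1} yields $|T_x(y_1) - T_x(y_2)| \leq \theta\,|y_1 - y_2|$. Note that Lipschitz regularity of $f$ in $y$, together with the supremum bound on $\Id - Df_y$, is exactly what is needed to justify this line (one does not need $Df_y$ to be continuous; it is enough that $f$ be differentiable a.e.\ with the derivative uniformly controlled, or one can simply assume enough smoothness for the fundamental theorem of calculus to apply along the segment).

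Next, to see that $T_x$ sends $V$ into $\bar{B}_r(y_0)$, I would write for any $y \in V$
\begin{equation*}
T_x(y) - y_0 = \bigl[ (y-y_0) - (f(x,y) - f(x,y_0)) \bigr] - f(x,y_0),
\end{equation*}
and apply the same integral identity as above (now between $y_0$ and $y$) to bound the bracketed term by $\theta |y - y_0| \leq \theta R$, using $V \subseteq B_R(y_0)$. Combined with \eqref{hyp:IFT:2}, this gives
\begin{equation*}
|T_x(y) - y_0| \leq \theta R + (r - \theta R) = r,
\end{equation*}
so $T_x(V) \subseteq \bar{B}_r(y_0) \subseteq V$. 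Since $\bar{B}_r(y_0)$ is a complete metric space and $T_x$ is a $\theta$-contraction with $\theta < 1$ mapping it into itself, Banach's fixed-point theorem provides a unique fixed point $g(x) \in \bar{B}_r(y_0)$. Any other fixed point $y^* \in V$ would satisfy $y^* = T_x(y^*) \in \bar{B}_r(y_0)$, hence must equal $g(x)$, establishing the claimed equality of sets.

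No genuine obstacle arises here: the argument is a textbook quantitative implicit function theorem, and the slight delicacy is only bookkeeping the sharp constant in \eqref{hyp:IFT:2}, which is dictated by the need to absorb the $\theta R$ contraction loss on the larger ball $B_R(y_0)$ while still landing inside the smaller ball $\bar{B}_r(y_0)$ that serves as the complete metric space for Banach's theorem. Continuity or Lipschitz dependence of $g$ on $x$ (not asserted in the statement but typically of interest) would follow from the uniform-in-$x$ contraction constant together with Lipschitz dependence of $f$ on $x$, but the statement as given only requires existence of $g$.
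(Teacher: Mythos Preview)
Your proof is correct and follows essentially the same approach as the paper: define the map $T_x(y) = y - f(x,y)$, use convexity of $V$ and hypothesis \eqref{hyp:IFT:1} to show it is a $\theta$-contraction, use hypothesis \eqref{hyp:IFT:2} to show $T_x(V) \subset \bar{B}_r(y_0) \subset V$, and apply the Banach fixed-point theorem. The paper's version is more terse (it compresses the self-mapping estimate into the single chain $\Psi(x)(V) \subset B_{\theta R}(\Psi(x)(y_0)) \subset B_r(y_0) \subset V$), but the content is identical.
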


\begin{proof}
For each $x \in U$, define the map $\Psi(x) : V \rightarrow \R^N$, $\Psi(x)(y) : = y- f(x,y)$. Since $V$ is convex, it follows from \eqref{hyp:IFT:1} that $\Psi(x)$ is $\theta$-Lipschitz for all $x \in U$. Therefore, \eqref{hyp:IFT:2} implies that
\begin{align*}
\Psi(x)(V) \subset B_{\theta R}( \Psi(x)(y_0)) \subset B_r(y_0) \subset V.
\end{align*}
Thus $\Psi(x)$ is a contraction mapping for each $x \in U$. Define $g(x)$ to be the unique fixed point of $\Psi(x)$.
\end{proof}

\begin{lemma}[Lipschitz implicit function theorem]
In the context of the previous lemma, if we additionally assume that $f$ is uniformly Lipschitz in $x$ with
\begin{align}\label{hyp:IFT:3}
|f(x_1,y)-f(x_2,y)| & \leq L|x_1-x_2| && \forall \: x_1, x_2 \in U, y \in V,
\end{align}
then the function $g$ is $\frac{L}{1-\theta}$-Lipschitz.
\end{lemma}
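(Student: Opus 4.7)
The plan is to exploit the contraction mapping structure from the previous lemma. Recall that for each $x \in U$ we defined $\Psi(x) : V \to \R^N$ by $\Psi(x)(y) := y - f(x,y)$, that $\Psi(x)$ is $\theta$-Lipschitz in $y$ (uniformly in $x$) by hypothesis \eqref{hyp:IFT:1}, and that $g(x)$ is the unique fixed point of $\Psi(x)$, i.e.\ $f(x,g(x))=0$ and equivalently $g(x) = \Psi(x)(g(x))$.

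Given $x_1, x_2 \in U$, the strategy is to insert and subtract the ``mixed'' term $\Psi(x_1)(g(x_2))$ and use the triangle inequality:
\begin{align*}
g(x_1) - g(x_2)
& = \Psi(x_1)(g(x_1)) - \Psi(x_2)(g(x_2)) \\
& = \bigl[ \Psi(x_1)(g(x_1)) - \Psi(x_1)(g(x_2)) \bigr] + \bigl[ \Psi(x_1)(g(x_2)) - \Psi(x_2)(g(x_2)) \bigr].
\end{align*}
The first bracket is bounded by $\theta |g(x_1) - g(x_2)|$ by the uniform $\theta$-Lipschitz property of $\Psi(x_1)$ on $V$. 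The second bracket simplifies to $f(x_2, g(x_2)) - f(x_1, g(x_2))$, which by the newly added hypothesis \eqref{hyp:IFT:3} is bounded by $L|x_1 - x_2|$.

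Combining these, $|g(x_1) - g(x_2)| \leq \theta |g(x_1) - g(x_2)| + L |x_1 - x_2|$; since $\theta < 1$ we can absorb the first term on the right into the left side and divide, yielding $|g(x_1) - g(x_2)| \leq \frac{L}{1-\theta} |x_1 - x_2|$, as claimed. There is no real obstacle here — the argument is the standard contraction-mapping Lipschitz dependence estimate, and all hypotheses needed ($\theta$-Lipschitz-in-$y$, range contained in $V$, $L$-Lipschitz-in-$x$) are in place from the preceding lemma and the added assumption.
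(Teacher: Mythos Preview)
Your proof is correct. It is the standard direct argument for Lipschitz dependence of a fixed point on a parameter: write the difference $g(x_1)-g(x_2)$ via the fixed-point identities, split with a mixed term, and absorb using $\theta<1$.

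The paper takes a different route. Instead of working with the fixed points $g(x_1),g(x_2)$ directly, it builds the Picard iterates $g_0(x)\equiv\text{const}$, $g_{n+1}(x)=\Psi(x)(g_n(x))$, shows the recursion $[g_{n+1}]_{C^{0,1}}\le \theta[g_n]_{C^{0,1}}+L$ gives a uniform bound $[g_n]_{C^{0,1}}<\frac{L}{1-\theta}$, and then passes to the pointwise limit $g_n\to g$. Your argument is shorter and avoids the iteration altogether; it also sidesteps the (routine but still necessary) check that the Lipschitz bound survives the pointwise limit. The paper's iterative approach, on the other hand, makes the contraction mechanism more visible and would adapt more readily if one wanted quantitative convergence rates for the iterates themselves.
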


\begin{proof}
Recursively define the sequence of functions
\begin{equation*}
\begin{cases} g_0(x)  : =  x \\
g_{n+1}(x)  :=  \Psi(x)(g_n(x)) \end{cases} .
\end{equation*}
It is immediate that $[g_{n+1}]_{C^{0,1}_x(U)} \leq \theta [g_{n}]_{C^{0,1}_x(U)} + L$ for all $n$. Therefore, we have
\begin{equation*}
[g_{n+1}]_{C^{0,1}_x(U)} \leq L \sum^n_{j=0} \theta^j < \frac{L}{1-\theta}
\end{equation*}
for all $n$. Since $g_n$ converges pointwise to $g$, our result follows.
\end{proof}

\section*{Acknowledgements}
The work of I.N.~was in part supported by the NSF Postdoctoral Fellowship DMS-2401743.
The work of S.S.~was in part supported by the Collaborative NSF grant DMS-2307680. The work of V.V. was in part supported by the Collaborative NSF grant DMS-2307681 and a Simons Investigator Award.


\end{document}